\numberwithin{equation}{section}
\theoremstyle{plain}
\newtheorem{theorem}{Theorem}[subsection]
\newtheorem{proposition}[theorem]{Proposition}
\newtheorem{lemma}[theorem]{Lemma}
\newtheorem{corollary}[theorem]{Corollary}
\newcounter{dummy} \numberwithin{dummy}{section}
\newtheorem{thm}[dummy]{Theorem}
\newtheorem{cor}[dummy]{Corollary}
\newtheorem{lem}[dummy]{Lemma}
\newtheorem{prop}[dummy]{Proposition}
\newtheorem{conj}[dummy]{Conjecture}
\theoremstyle{definition}
\newtheorem{definition}[theorem]{Definition}
\newtheorem{example}[theorem]{Example}
\newtheorem{defn}[dummy]{Definition}
\newtheorem{ex}[dummy]{Example}
\theoremstyle{remark}
\newtheorem{remark}[theorem]{Remark}
\newtheorem*{rmk}{Remark}
\newcommand{\eps}{\varepsilon}
\def\g{\mathfrak{g}}
\newcommand\Z{\mathbb{Z}}
\newcommand\R{\mathbb{R}}
\newcommand\C{\mathbb{C}}
\newcommand\N{\mathbb{N}}
\newcommand\SL{\operatorname{SL}}
\newcommand\GL{\operatorname{GL}}
\newcommand\SU{\operatorname{SU}}
\newcommand\rk{\operatorname{rk}}
\newcommand\Supp{\operatorname{Supp}}
\newcommand\Hom{\operatorname{Hom}}
\newcommand\Grass{\operatorname{Grass}}
\newcommand\Q{\mathbb{Q}}
\def\g{\mathfrak{g}}
\newcommand{\uu}{\mathfrak{u}}
\newcommand{\bbB}{\mathbf}
\newcommand{\bg}{{\bbB g}}
\newcommand{\bX}{{\bbB X}}
\newcommand{\cB}{{\mathcal B}}
\newcommand{\cF}{{\mathcal F}}
\newcommand{\cW}{\mathcal{W}}
\newcommand{\cH}{{\mathcal H}}
\newcommand{\cL}{{\mathcal L}}
\newcommand{\cM}{{\mathcal M}}
\newcommand{\cP}{{\mathcal P}}
\newcommand{\cV}{{\mathcal V}}
\newcommand{\bp}{{\mathbf p}}
\newcommand{\bw}{{\mathbf w}}
\newcommand{\sx}{{\mathsf x}}
\newcommand{\sr}{{\mathsf r}}
\DeclareMathOperator{\sign}{sign}
\DeclareMathOperator{\Span}{Span}
\DeclareMathOperator{\dist}{dist}
\DeclareMathOperator{\Aff}{Aff}
\DeclareMathOperator{\Gal}{Gal}
\DeclareMathOperator{\Aut}{Aut}
\DeclareMathOperator{\vol}{vol}
\DeclareMathOperator{\diag}{diag}
\newcommand\set[1]{\left\{#1\right\}}
\newcommand\pa[1]{\left(#1\right)}
\begin{document}

\title{Diophantine approximation on matrices and Lie groups}
\author[M. Aka, E. Breuillard, L. Rosenzweig, N. de Saxc\'e]{Menny Aka, Emmanuel Breuillard, Lior Rosenzweig,\\ Nicolas de Saxc\'e}

\address{M.A. Departement Mathematik\\
ETH Z\"urich\\
R\"amistrasse 101\\
8092 Zurich\\
Switzerland}
\email{menashe-hai.akka@math.ethz.ch}

\address{Mathematisches Institut\\
62 Einsteinstrasse, Universit\"at M\"unster\\
M\"unster\\
Germany}
\email{emmanuel.breuillard@uni-muenster.de}

\address{Department of mathematics\\
 ORT Braude College\\
P.O.Box 78, Karmiel 21982\\
 Israel}
\email{liorr@braude.ac.il}

\address{CNRS -- Université Paris-Nord, LAGA\\
93430 Villetaneuse\\
France}
\email{desaxce@math.univ-paris13.fr}

\begin{abstract}We study the general problem of extremality for metric diophantine
approximation on submanifolds of matrices. We formulate a
criterion for extremality in terms of a certain family of algebraic obstructions
and show that it is sharp. In general the almost sure diophantine
exponent of a submanifold is shown to depend only on its Zariski closure,
and when the latter is defined over $\Q$, we prove that the exponent
is rational and give a method to effectively compute it. This method
is applied to a number of cases of interest. In particular we prove that the diophantine exponent of rational
nilpotent Lie groups exists and  is a rational number, which we determine explicitly in terms of representation theoretic data.
\end{abstract}

\keywords{Metric diophantine approximation, homogeneous dynamics, extremal manifolds, group actions}

\maketitle
\setcounter{tocdepth}{1}

\tableofcontents

\section{Introduction}

Pick $n+m$ vectors $x_1,\ldots,x_{n+m}$ at random in $\R^m$ according to a certain distribution $\nu$. Take integer linear combinations of the vectors and ask how close they can get to the origin in $\R^m$. One way to measure this is via the diophantine exponent, defined as:

\begin{equation}\label{diophexp-def}\beta(x)= \inf\{\beta>0 ; \|\sum_{i=1}^{m+n}p_ix_i\|> \|\bp\|^{-\beta} \textnormal{ for all but finitely many } \bp \in \Z^{m+n}\},\end{equation}
where $x=(x_1,\ldots,x_{m+n})$, $\bp=(p_1,\ldots,p_{m+n})$ and where on both sides $\|\cdot\|$ is the supremum norm on the coordinates.

In the case where $x_{1}=e_1,\dots,x_{m}=e_m$, where $e_1,\ldots,e_m$ is the canonical basis of $\R^m$, and $(x_{m+1},\dots,x_{m+n})$ is chosen on a submanifold of $(\R^{m})^{n}\simeq M_{m,n}(\R)$, this question is the topic of \emph{Diophantine approximation on submanifolds of matrices}, which studies the quality of approximation by integer vectors of the image of an integer vector under an $m \times n$ matrix chosen at random in some submanifold of $M_{m,n}(\R)$.

When $m=1$, this subject has been studied extensively, starting with a 1932 conjecture of Mahler \cite{mahler}, which posited that the Veronese curve $$\mathcal{M} = \{(x, x^2, \ldots,  x^n) , \textnormal{ } x \in \R\}$$ is extremal, i.e. that a random point on it has the same diophantine exponent as a random point in $\R^n$ chosen with respect to Lebesgue measure. This conjecture was proved by Sprind\v{z}uk  \cite{sprindzuk1,sprindzuk2}. We refer the reader to \cite{sprindzuk1,sprindzuk2, dodsonbook, beresnevich-velani-survey} for background on Diophantine approximation on manifolds.

About twenty years ago, Kleinbock and Margulis \cite{kleinbock-margulis} revolutionized the subject by introducing methods from the dynamics of homogeneous flows into this area. In particular they settled a conjecture of Sprind\v{z}uk by establishing that every analytic submanifold of $\R^n$ that is not contained in a proper affine subspace is extremal (see \cite{beresnevich-bernik} for the state of the art before their work).  This method is based on certain \emph{quantitative non-divergence estimates} for diagonal flows on the space of lattices and takes its roots in the early work of Margulis \cite{margulis} pertaining to the non-divergence of unipotent flows on homogeneous spaces in connection with Margulis' first proof of arithmeticity for non-uniform higher rank lattices. It was later further developed by Kleinbock in an important series of papers \cite{kleinbock-duke, kleinbock-extremal-gafa, kleinbock-anextension, kleinbock-dichotomy}. We refer the reader to \cite{kleinbock-margulisbirthday, kleinbock-pisa} for a nice introduction to these techniques.

\bigskip

Our first goal in this article is to answer a question of Beresnevich, Kleinbock and Margulis \cite{beresnevich-kleinbock-margulis} asking for the right criterion for a submanifold of matrices to be extremal. We were led to this problem after we observed that a solution would enable us to compute diophantine exponents for dense subgroups of nilpotent Lie groups, in the spirit of our previous work \cite{abrs}. Beyond extremality, our criterion allows us to effectively compute the exponent of any rationally defined submanifold of matrices. Further in the paper, we will derive consequences for nilpotent Lie groups. The results of this paper were announced in \cite{abrsCRAS}.

\bigskip
  \centerline{*}
  \centerline{* *}
  \bigskip

We view the $(n+m)$-tuple $x$ as a $m \times (m+n)$ matrix. We note in passing that the exponent $\beta(x)$ depends only on the kernel $\ker x$. Therefore diophantine approximation on submanifolds of matrices is best phrased in terms of diophantine approximation on submanifolds of the grassmannian, here the grassmannian of $n$-planes in $\R^{n+m}$. We will keep this observation as a guiding principle, but will always phrase everything in terms of matrices. We now describe a family of obvious obstructions to extremality.

\begin{defn}[Pencil]\label{pencildef} Given a real vector subspace $W \leq \R^{m+n}$, and an integer $r\leq m$, we define the \emph{pencil} $\mathcal{P}_{W,r}$ to be the set of matrices $x \in M_{m,m+n}(\R)$ such that $$\dim (xW) \leq r.$$ We say that the pencil is rational if $W$ is rational, i.e. admits a basis in~$\Q^{m+n}$.
\end{defn}

\begin{rmk}
In the $m=1$ case, $M_{1,1+n}(\R)$ can be identified with the space of linear forms on $\R^{1+n}$.
For $W\leq\R^{1+n}$, the pencil $\cP_{W,0}$ is the subspace of linear forms vanishing on $W$.
Of course, for $r\geq 1$ we have $\cP_{W,r}=M_{1,1+n}(\R)$.
In the general case, pencils are closed algebraic subvarieties of $M_{m,m+n}(\R)$, but they need not be affine subspaces.
\end{rmk}

Pencils are obvious obstructions to extremality, because if $W$ is rational, then the standard Dirichlet or pigeonhole argument shows that for every $x \in \mathcal{P}_{W,r}$
$$1+ \beta(x) \geq \frac{\dim W}{r}.$$
Indeed there are roughly $R^{\dim W}$ integer points in the ball of radius $R$ in $W$, but they get mapped into a ball of radius $O(R)$ in $\R^r$, so, comparing volumes, $\eps$-balls around each of these points cannot be all disjoint if $\eps$ is at least of order $R^{1-\dim W /r}$. Consequently, we see that if \begin{equation}\label{constrain}\frac{\dim W}{r} > \frac{m+n}{m}\end{equation}
then the point $x$ is not extremal, since $\frac{n}{m}$ is the diophantine exponent of a random point of $M_{m,m+n}(\R)$ chosen with respect to Lebesgue measure.

Pencils satisfying $(\ref{constrain})$  will be called \emph{constraining pencils}, because they constrain the diophantine exponent away from its extremal value. A pencil is called \emph{proper} if it is not all of $M_{m,m+n}(\R)$. When viewed in the grassmannian (i.e. looking at the set of kernels $\ker x$, with $x$ in a pencil), pencils are well-studied objects: they are a certain kind of \emph{Schubert varieties} (see e.g. \cite[ch 1.5]{griffiths-harris} and Section~\ref{schubert} below).

When the Zariski closure of $\mathcal{M}$ is defined over $\Q$, it turns out that these obvious obstructions are the only ones, and the following is our first main result.

\begin{thm}[Exponent for rational manifolds]\label{rational-exponent-babythm} Let $m,n$ be positive integers and $\mathcal{M}$ be a connected analytic submanifold of $M_{m,m+n}(\R)$. Assume that the Zariski closure of $\mathcal{M}$ is defined over $\Q$. Then for Lebesgue almost every point $x \in \mathcal{M}$ the diophantine exponent is rational and equals
\begin{equation}\label{formulabeta}\beta(x) = \max_{\mathcal{M} \subset \mathcal{P}_{W,r}}  \{\frac{\dim W}{r} -1\}.\end{equation}
Moreover, the maximum in the right-hand side is achieved for a rational $W$.
\end{thm}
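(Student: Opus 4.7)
The lower bound $\beta(x) \geq \dim W/r - 1$ for each rational pencil $\mathcal{P}_{W,r}$ containing $\mathcal{M}$ is already provided by the Dirichlet--pigeonhole argument preceding the statement and holds for every $x \in \mathcal{M}$. The remaining task is to prove the matching upper bound for Lebesgue-almost every $x$. Since a separate result of the paper shows that the almost sure value of $\beta$ on a connected analytic submanifold depends only on its Zariski closure, I replace $\mathcal{M}$ by the real locus $X$ of its Zariski closure, which by hypothesis is a $\Q$-algebraic subvariety of $M_{m,m+n}(\R)$.

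I then translate $\beta(x)$ into a dynamical quantity via the Dani correspondence. Setting
\[
u_x = \begin{pmatrix} I_m & x \\ 0 & I_{m+n} \end{pmatrix} \in \SL_{2m+n}(\R), \qquad a_t = \diag(e^{nt/m} I_m,\, e^{-t} I_{m+n}),
\]
the condition $\beta(x) > \gamma$ is equivalent to the lattice $a_t u_x \Z^{2m+n}$ acquiring vectors of norm at most $e^{-c(\gamma) t}$ for arbitrarily large $t$, where $c(\gamma) > 0$ is a fixed linear function of $\gamma - n/m$. By Minkowski's second theorem, short vectors are produced from integer sublattices $\Z^{2m+n} \cap V$ with $V$ a rational subspace of $\R^{2m+n}$. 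A direct computation with the block decomposition $\R^{2m+n} = \R^m \oplus \R^{m+n}$ shows that the rational subspaces $V$ producing the fastest uniform decay of the covolume of $a_t u_x(\Z^{2m+n} \cap V)$ along all of $X$ are precisely those of the form $V = \{0\} \oplus W$ with $W \leq \Q^{m+n}$; for such $V$ the decay rate is a positive multiple of $\dim W - (n+m)\dim(xW)/m$, and the obstruction is uniform on $X$ exactly when $X \subset \mathcal{P}_{W,r}$ with $r = \max_{x \in X} \dim(xW)$. In the Dani dictionary, the strength of the resulting lower bound on $\beta$ is $\dim W/r - 1$.

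The matching upper bound is then obtained by invoking the Kleinbock--Margulis quantitative non-divergence theorem, in the version adapted to $\Q$-algebraic subvarieties of matrix space developed earlier in the paper. Applied to the family $\{a_t u_x \Z^{2m+n}\}_{x \in X}$, the theorem yields, for Lebesgue almost every $x \in X$, the dichotomy: either $\beta(x) \leq \gamma$, or there exist a rational $W \leq \Q^{m+n}$ and an integer $r \leq m$ with $X \subset \mathcal{P}_{W,r}$ and $\dim W/r - 1 \geq \gamma$. Optimising over $\gamma$ recovers the formula \eqref{formulabeta}; the maximum is attained and rational because $\{\dim W/r : W \leq \R^{m+n},\, r \leq m\}$ is a finite subset of $\Q$. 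Rationality of an optimising $W$ follows by Galois descent: the locus of $W$ in the Grassmannian satisfying $X \subset \mathcal{P}_{W,r}$ is a closed subvariety defined over $\Q$ (because $X$ is), hence contains a $\Q$-point whenever non-empty.

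\emph{The main obstacle} will be the quantitative non-divergence input itself: extending Kleinbock--Margulis from the Veronese-type curves of the original setting to arbitrary $\Q$-algebraic subvarieties of $M_{m,m+n}(\R)$, and, more delicately, ensuring that a generic dynamical degeneration along some rational subspace $V$ converts into a single \emph{uniform} pencil-type obstruction valid on all of $X$. Isolating the correct combinatorial invariant $r = \dim(xW)$ and showing it reproduces exactly the Schubert/pencil formula of the theorem is the combinatorial heart of the argument.
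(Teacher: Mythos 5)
The overall architecture of your proof matches the paper's: lower bound from Dirichlet/pigeonhole, upper bound via Dani correspondence and quantitative non-divergence, and a reduction (carried out in the paper as Corollary~\ref{analytic-existence} and Corollary~\ref{localZar}) showing the generic exponent depends only on the Zariski closure. But there is a genuine gap in the final step, where you assert that the optimising $W$ can be taken rational.

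First, your stated dichotomy --- that quantitative non-divergence directly produces a \emph{rational} $W$ with $X\subset\cP_{W,r}$ whenever $\beta(x)>\gamma$ on a positive-measure set --- is not justified and is in fact the crux of the difficulty. In the paper's Theorem~\ref{uppboundnu}, the QND argument gives failure of the nondivergence condition along some pure $k$-vector $\bw$ (equivalently a subspace $W$), but this $W$ is obtained by a compactness argument on the unit sphere of pure $k$-vectors and may well be irrational. The QND theorem's hypothesis is stated for integral $\bw$, but verifying that hypothesis requires controlling $\sup_B\|g_t^{(y,\beta)}\bw\|$ for all pure $\bw$ of norm one, and the obstructing $\bw$ it exhibits need not be integral. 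This is precisely why the paper proves two separate inequalities, $\tau_\Q(\cM)\le\widehat\beta\le\tau(\cM)$, and then has to close the gap.

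Second, your Galois descent argument is incorrect as stated. You claim that because the locus $\{W\in\Grass : X\subset\cP_{W,r}\}$ is a $\Q$-defined closed subvariety of the Grassmannian, it contains a $\Q$-point whenever non-empty. This is false: a nonempty $\Q$-variety (even inside a rational variety such as a Grassmannian) need not have a rational point --- consider e.g. an anisotropic conic over $\Q$. The correct mechanism is the \emph{submodularity lemma} (Lemma~\ref{submodularity}). The paper shows that $W\mapsto\min_{x\in\cM}\psi(W\cap\ker x)$ is submodular and $W\mapsto\max_{x\in\cM}\phi(xW)$ is supermodular, and deduces that there is a \emph{unique} subspace of maximal dimension achieving $\tau(\cM)$. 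Because these functions are invariant under $\Gal(\C|\Q)$ when $\cM$ is $\Q$-defined, uniqueness forces the maximiser to be Galois-fixed, hence rational. This uniqueness statement is the combinatorial ingredient your proposal is missing, and Galois descent on the locus of solutions without uniqueness is not a valid substitute.
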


Note that the maximum in \eqref{formulabeta} is always achieved, because the quantity $\frac{\dim W}{r}-1$ takes values in a finite set of rational points;
the important point in the second part of the theorem is that some \emph{rational} subspace achieves the maximum.

The theorem shows in particular that the generic value of $\beta(x)$ is constant, and can be effectively computed once the pencils containing $\mathcal{M}$ are identified. We also note that this value is the smallest possible value for the exponent of an arbitrary point on $\mathcal{M}$, because of the Dirichlet or pigeonhole type argument explained above.

When speaking of Zariski closure, algebraic subsets and algebraic varieties in this paper, we will always consider these notions in real algebraic geometry and we refer the reader to the textbook \cite{bochnak-coste-roy} for definitions and basic properties. By Lebesgue measure on $\mathcal{M}$ we mean the top dimensional Hausdorff measure of the subset $\mathcal{M}$ of $M_{m,m+n}(\R)$. The real algebraic variety $\cM$ is said to be defined over $\Q$ if it is the set of zeroes of a family of polynomials (in the matrix entries) with rational coefficients. We immediately conclude:

\begin{cor}\label{corollary-extrem}
Let $\mathcal{M}$ be as in Theorem \ref{rational-exponent-babythm}. Then $\mathcal{M}$ is extremal if and only if it is not contained in any constraining pencil.
\end{cor}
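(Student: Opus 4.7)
The plan is to derive Corollary~\ref{corollary-extrem} as a one-line consequence of Theorem~\ref{rational-exponent-babythm}. Recall that $\mathcal{M}$ is said to be extremal when $\beta(x) = n/m$ for Lebesgue almost every $x \in \mathcal{M}$, i.e.\ when the almost sure exponent equals the generic Lebesgue value on $M_{m,m+n}(\R)$. By the theorem, that almost sure value is
$$\max_{\mathcal{M} \subset \mathcal{P}_{W,r}} \left\{\frac{\dim W}{r} - 1\right\},$$
so extremality is equivalent to this maximum being equal to $n/m$.

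First I would check that this maximum is always at least $n/m$: the pair $W = \R^{m+n}$, $r = m$ gives $\mathcal{P}_{W,r} = M_{m,m+n}(\R)$ (every $m \times (m+n)$ matrix has image of dimension at most $m$), so this trivial pencil always contains $\mathcal{M}$ and contributes the value $\frac{m+n}{m} - 1 = n/m$. Consequently, the maximum equals $n/m$ if and only if no admissible $(W,r)$ strictly exceeds it, that is, if and only if there is no pair $(W,r)$ with $\mathcal{M} \subset \mathcal{P}_{W,r}$ and $\dim W / r > (m+n)/m$. By the definition of constraining pencil given just before (\ref{constrain}), this is exactly the statement that $\mathcal{M}$ is contained in no constraining pencil.

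Since the argument is a direct unwrapping of definitions on top of Theorem~\ref{rational-exponent-babythm}, there is no real obstacle; all the content sits inside the theorem. The only point worth flagging is that constraining pencils in the corollary are not assumed to be rational, whereas the theorem asserts that the maximum in (\ref{formulabeta}) is realized by a \emph{rational} $W$. This in fact strengthens the conclusion: whenever $\mathcal{M}$ fails to be extremal, one can find a rational constraining pencil containing it, so that the obstructions to extremality are, in the rational setting of the corollary, of an explicit arithmetic nature.
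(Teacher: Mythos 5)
Your argument is correct and is exactly the direct unwrapping of Theorem~\ref{rational-exponent-babythm} that the paper treats as immediate (it gives no separate proof, simply writing ``We immediately conclude''). Your verification that the trivial pencil $\mathcal{P}_{\R^{m+n},m}=M_{m,m+n}(\R)$ always contributes the baseline value $n/m$, and your closing remark that the theorem's rationality clause lets one find a \emph{rational} constraining pencil whenever extremality fails, are both accurate and fill in the details the authors leave to the reader.
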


Theorem \ref{rational-exponent-babythm} and its proof hold verbatim for more general measures (than Lebesgue measure on an analytic submanifold), which we call locally good measures (see Definition \ref{locally-good-def}).

The above theorem answers a question discussed at the end of the original paper of Kleinbock and Margulis \cite[\S 6.2]{kleinbock-margulis} and also raised in the problem list \cite[\S 9.1]{gorodnik-AIM} as well as in \cite[p.23]{kleinbock-margulis-wang} and \cite[Problem 1]{beresnevich-kleinbock-margulis}. The papers \cite{kleinbock-margulis-wang}  and \cite{beresnevich-kleinbock-margulis} proposed other sufficient criterions for extremality of an analytic submanifold of $M_{m,n}(\R)$, but they just failed to be optimal. The weak non-planar condition of \cite{beresnevich-kleinbock-margulis} is not strong enough for example to show extremality in the applications to nilpotent groups described below. This condition is equivalent in our language to not being contained in any proper pencil, which is a strictly stronger condition (see Example \ref{BKM-ctex}).

\bigskip

We now no longer assume the Zariski closure to be defined over $\Q$. In three papers \cite{kleinbock-extremal-gafa}, \cite{kleinbock-anextension} and \cite{kleinbock-dichotomy} Kleinbock studied this situation. He showed in particular that the diophantine exponent of a random point on a connected analytic submanifold of $M_{m,n}(\R)$ achieves the same value almost everywhere. In the case where $m=1$ he showed that this almost sure value depends only on the affine span of the submanifold in $\R^n$. In other words a submanifold inherits its exponent from its affine span. Another remarkable observation he made in \cite{kleinbock-anextension} for $\R^n$ and in  \cite[Theorem 1.4]{kleinbock-dichotomy} for matrices, is that the diophantine exponent of a random point is also the worst diophantine exponent of any point on the submanifold. In the context of matrices, one needs to find the right replacement for the affine span. We show the following:

\begin{thm}[Inheritance from Pl\"ucker closure]\label{pluckerklein} Let $\mathcal{M}$ be a connected analytic submanifold of $M_{m,m+n}(\R)$. The generic value of the diophantine exponent $\beta(x)$ depends only on the Pl\"ucker closure  $\mathcal{H}(\mathcal{M})$ of $\mathcal{M}$. In particular it depends only on the Zariski closure of $\mathcal{M}$ in $M_{m,m+n}(\R)$. Moreover for Lebesgue almost every point $x \in \mathcal{M}$
\begin{equation}\label{klein} \beta(x) = \inf_{y \in \mathcal{M}}\beta(y)
\end{equation}
\end{thm}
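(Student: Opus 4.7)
The plan is to reduce to Theorem \ref{rational-exponent-babythm} via a $\Q$-rationalization. Let $V$ denote the smallest algebraic subset of $M_{m,m+n}(\R)$ defined over $\Q$ and containing $\mathcal{M}$. Because $V$ is $\Q$-defined, Theorem \ref{rational-exponent-babythm}, applied to $V$ (or more precisely to a smooth analytic piece of it, equipped with its locally good Lebesgue measure as in Definition \ref{locally-good-def}), produces the rational constant
\[\beta_V \;=\; \max_{V\subset \mathcal{P}_{W,r},\ W\text{ rational}}\bigl\{\tfrac{\dim W}{r}-1\bigr\}\]
as the generic diophantine exponent on $V$. The first key observation is that a rational pencil $\mathcal{P}_{W,r}$ contains $\mathcal{M}$ if and only if it contains $V$ (by minimality of $V$ among $\Q$-algebraic subsets containing $\mathcal{M}$), if and only if it contains the Pl\"ucker closure $\mathcal{H}(\mathcal{M})$ (pencils are Pl\"ucker-closed: they correspond to Schubert varieties in the Grassmannian of $n$-planes in $\R^{m+n}$, as recalled in Section \ref{schubert}). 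Hence $\beta_V$ depends only on $\mathcal{H}(\mathcal{M})$.

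Two bounds for $\beta(x)$ at points of $\mathcal{M}$ then give the theorem. The \emph{lower bound} $\beta(x)\geq \beta_V$ holds for every $x\in\mathcal{M}$: it is the Dirichlet/pigeonhole argument recalled in the introduction, applied to each rational pencil containing $\mathcal{M}$. The \emph{upper bound} $\beta(x)\leq \beta_V$ for Lebesgue almost every $x\in\mathcal{M}$ is the crux. I would invoke the Kleinbock--Margulis quantitative non-divergence estimates directly on $\mathcal{M}$, with its locally good Lebesgue measure, exactly as in the proof of Theorem \ref{rational-exponent-babythm}, and observe that the method is insensitive to the $\Q$-rationality of the Zariski closure: only rational pencils can act as genuine obstructions to a sharp upper bound, since integer approximation only detects rational linear relations. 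Consequently the same non-divergence estimate that governs the generic exponent on $V$ governs it on $\mathcal{M}$, and delivers $\beta(x)\leq\beta_V$ almost surely.

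Combining the two bounds, $\beta(x)=\beta_V$ for almost every $x\in\mathcal{M}$, and $\beta_V$ depends only on $\mathcal{H}(\mathcal{M})$; in particular it depends only on the Zariski closure of $\mathcal{M}$ in $M_{m,m+n}(\R)$, as asserted. Formula~(\ref{klein}) follows at once: every $y\in\mathcal{M}$ satisfies $\beta(y)\geq\beta_V$ by the lower bound, so $\inf_{y\in\mathcal{M}}\beta(y)\geq\beta_V$, and this infimum is attained at any generic point. The principal obstacle in the plan is the upper bound: the Kleinbock--Margulis machinery used for Theorem \ref{rational-exponent-babythm} must be audited to confirm that its output is the maximum over \emph{rational} pencils containing $\mathcal{M}$, rather than over all real pencils, even when the Zariski closure of $\mathcal{M}$ itself is not $\Q$-defined. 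The remark in the excerpt that Theorem \ref{rational-exponent-babythm} holds verbatim for locally good measures is precisely the flexibility needed to push the non-divergence argument through on an arbitrary analytic submanifold without rationality hypotheses.
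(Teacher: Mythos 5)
Your plan fails at the upper bound, and the obstacle you flag at the end is in fact fatal, not a routine audit. The quantitative non-divergence machinery (Theorem \ref{klw}, via Theorem \ref{uppboundnu}) only yields $\widehat{\beta}(\cM)\leq\tau(\cM)$, the maximum over \emph{all} real pencils containing $\cM$; the passage from $\tau(\cM)$ to $\tau_\Q(\cM)$ in Theorem \ref{rational-exponent-babythm} is not a feature of the non-divergence estimate at all, but a separate algebraic step (the submodularity lemma combined with the Galois action, Theorem \ref{rathm}) which requires the Zariski closure of $\cM$ \emph{itself} to be defined over $\Q$. Replacing $\cM$ by its $\Q$-Zariski hull $V$ does not repair this: you would obtain the exponent of $V$, not of $\cM$, and the two differ in general. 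Your heuristic that ``only rational pencils can act as genuine obstructions'' is simply false: irrational pencils constrain the exponent as well, and by an amount that depends on the diophantine quality of the irrational data involved, something $\tau_\Q$ is blind to. For a concrete failure, take $\cM=\{x_0\}$ a point of $M_{2,3}(\R)$ whose one-dimensional kernel line is a Liouville direction: then no proper rational pencil contains $\cM$, so $\tau_\Q(\cM)=\tfrac12$, yet $\beta(x_0)=+\infty$, so the claimed equality $\widehat{\beta}(\cM)=\tau_\Q(\cM)$ cannot hold. (More to the point, this shows the exponent genuinely depends on $\cH(\cM)$, which records the exact irrational linear relations among minors, and not merely on $\tau_\Q(\cM)$, which only records which rational pencils contain $\cM$.)

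The paper's proof is of a different nature and does not pass through any $\Q$-rationalization. The key step is Theorem \ref{inheritance}: the quantity $\sup_{y\in S}\|g_t^{(y,\beta)}\bw\|$ entering the Dani-correspondence condition $(\mathcal{C}_\beta)$ is, by construction of $g_t^{(y,\beta)}$ in \eqref{xxprime}, a norm of a vector that depends \emph{linearly} on the minors of $y$; hence this supremum over $S$ is comparable, up to multiplicative constants, to the supremum over $\cH(S)\cap B$ for a fixed compact $B$. Therefore whether $(\mathcal{C}_\beta)$ holds is a property of $\cH(S)$ alone, which gives the Pl\"ucker-closure inheritance directly. The identity \eqref{klein} is then a consequence of Theorem \ref{localexponent} (the ``exponent equals infimum'' dichotomy, obtained from the non-divergence estimate and Borel--Cantelli, following \cite{kleinbock-dichotomy}), again without any appeal to rationality of the Zariski closure. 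You would need to prove something like Theorem \ref{inheritance} -- an invariance of the non-divergence conditions under passage to the Pl\"ucker closure -- rather than reduce to the $\Q$-defined case.
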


The existence of a generic value for $\beta(x)$, which is the same for almost every point $x$ of $\mathcal{M}$, and the identity $(\ref{klein})$ are due to Kleinbock \cite[Theorem 1.4.a]{kleinbock-dichotomy}. When we say that the generic exponent depends only on the Pl\"ucker or on the Zariski closure of $\cM$, this means in particular that the generic value of the exponent of a random point of $\mathcal{H}(\cM)$ or of the Zariski closure of $\cM$ (considered with respect to the Lebesgue measure on these algebraic varieties) coincides with that of $\cM$. 

The Pl\"ucker closure of a subset $S$ in $M_{m,m+n}(\R)$ is the set of all matrices $x$ whose full list of  minors satisfy the same linear equations as those of $S$. It contains the Zariski closure of $S$ and is contained in the Schubert closure of $S$, namely the intersection of all pencils containing $S$. See \S \ref{subsecquant}.


Theorem \ref{pluckerklein} has been established independently in the recent work of Das, Fishman, Simmons and Urbanski, see \cite[Theorem 1.9]{das-fishman-simmons-urbanski}. In this work the authors introduce a very general class of measures, which is invariant under measure automorphisms and encompasses many fractal measures of dynamical origin, for which Theorem \ref{pluckerklein} is shown to hold, see the discussion after \cite[Definition 1.2]{das-fishman-simmons-urbanski}.

Our Theorems \ref{rational-exponent-babythm} and \ref{general-inequality} will be proved in Section~\ref{quasinorm-sec} for a smaller class of measures on $\Hom(V,E)$ (still more general than Lebesgue measure on analytic submanifolds), which we call \emph{locally good measures} and are basically the matrix analogue of the friendly measures introduced by Kleinbock, Lindenstrauss and Weiss in \cite{kleinbock-lindenstrauss-weiss}.

We also stress that all of our results will be proved for submanifolds of $\Hom(V,E)$, where $V$ and $E$ are finite-dimensional real vector spaces of arbitrary dimension (we will not assume $\dim V> \dim E$ as in this introduction).

We further show the following general inequality:

\begin{thm}[Bounds for the exponent]\label{general-inequality} Let $\mathcal{M}$ be a connected analytic submanifold of $M_{m,m+n}(\R)$. Then for almost every $x \in \mathcal{M}$ with respect to Lebesgue measure:
\begin{equation}\label{ratformula}
\max_{\mathcal{M} \subset \mathcal{P}_{W,r} ; W \textnormal{ rational}}  \{\frac{\dim W}{r} -1\} \leq \beta(x) \leq  \max_{\mathcal{M} \subset \mathcal{P}_{W,r}}  \{\frac{\dim W}{r} -1\}.
\end{equation}
\end{thm}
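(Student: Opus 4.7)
The plan is to establish the two inequalities in \eqref{ratformula} by separate arguments: the lower bound by a pointwise Dirichlet/pigeonhole estimate, and the upper bound by the quantitative non-divergence technique to be developed in Section~\ref{quasinorm-sec}.

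For the lower bound I prove the stronger pointwise statement that $\beta(x)\geq \dim W/r -1$ for every $x\in\cM$ and every rational subspace $W\leq\R^{m+n}$ with $\cM\subset \cP_{W,r}$. Since $x|_W$ has rank at most $r$, the $\asymp R^{\dim W}$ integer points of $W\cap\Z^{m+n}$ in the ball of radius $R$ are mapped by $x$ into a ball of radius $O(R)$ in an $r$-dimensional subspace of $\R^m$. Pigeonholing at scale $\eps\asymp R^{1-\dim W/r}$ yields a nonzero $\bp\in W\cap\Z^{m+n}$ with $\|\bp\|\ll R$ and $\|x\bp\|\ll R^{1-\dim W/r}$; letting $R\to\infty$ along a suitable sequence gives the desired bound, and maximizing over rational $W$ produces the left-hand inequality.

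The upper bound is the substantive direction. Via the Dani correspondence, the condition $\beta(x)>\tau$ translates into the assertion that the orbit $a_t x\Z^{m+n}$ in $\SL_{m+n}(\R)/\SL_{m+n}(\Z)$ --- where $(a_t)$ is the one-parameter diagonal subgroup adapted to the $m\times(m+n)$ setting --- leaves every fixed compact set at a rate dictated by $\tau$. By Mahler's criterion and passage to exterior powers, the event to be excluded is the existence of some $k\in\{1,\ldots,m+n-1\}$ and some nonzero $\bp\in\Lambda^k\Z^{m+n}$ with $\|\Lambda^k(a_tx)\bp\|$ small. The quantitative non-divergence theorem of Section~\ref{quasinorm-sec} bounds the Lebesgue measure of such sub-level sets by a power of the small parameter, provided that the polynomial map $x\mapsto \Lambda^k x\cdot \bp$ is $(C,\alpha)$-good on $\cM$ and does not vanish identically there. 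If non-vanishing were to fail, then $\cM$ would lie in the pencil $\cP_{W,r}$ attached to $\bp$ via the Pl\"ucker correspondence, with parameters $(\dim W,r)$ forced by the rate of escape to satisfy $\dim W/r -1 > \tau$, contradicting the choice of $\tau$. A Borel--Cantelli summation over dyadic times then shows $\beta(x)\leq\tau$ almost surely, and letting $\tau$ decrease to the right-hand side of \eqref{ratformula} completes the argument.

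The main technical obstacle lies in verifying the $(C,\alpha)$-good property and the uniform non-vanishing bound over all Pl\"ucker vectors $\bp$, which is the core content of Section~\ref{quasinorm-sec}. It is also at this point that the asymmetry between the two bounds in \eqref{ratformula} appears: the real subspace $W$ produced by an integer Pl\"ucker vector $\bp$ need not be rational, so the right-hand side ranges over arbitrary pencils, while the Dirichlet pigeonhole argument for the lower bound intrinsically requires rational $W$.
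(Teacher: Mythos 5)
Your overall strategy mirrors the paper's: a pointwise Dirichlet/pigeonhole argument for the lower bound, and the Dani correspondence combined with quantitative non-divergence and Borel--Cantelli for the upper bound. The paper organizes the upper bound into two separate statements (one establishing the existence of a local exponent for locally good measures via the Borel--Cantelli summation, and a second identifying that exponent with $\tau(\cM)$ by a compactness argument on pure $k$-vectors), whereas you present the two steps together, but this is a matter of bookkeeping rather than a difference of method.

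There is, however, a genuine error in your explanation of why the upper bound ranges over arbitrary pencils while the lower bound only uses rational ones. You write that ``the real subspace $W$ produced by an integer Pl\"ucker vector $\bp$ need not be rational,'' but this is false: if $\bp = v_1\wedge\dots\wedge v_k$ with all $v_i\in\Z^{m+n}$, the associated subspace $W=\operatorname{span}(v_1,\dots,v_k)$ is rational by construction. The actual source of the asymmetry is more delicate, and it is precisely the point that Theorem~\ref{rational-exponent-babythm} and the submodularity lemma are designed to address. The non-divergence machinery requires a \emph{uniform} lower bound $\sup_{y\in B}\|g_t^{(y,\beta)}\bw\|\geq \rho^k$ over all integer pure $k$-vectors $\bw$ simultaneously. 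To verify this, the paper observes that it suffices to check that the function $\bw\mapsto\sup_{y}\|\pi_0^{(y,\beta)}(\bw)\|$ does not vanish on the compact set of \emph{real} pure $k$-vectors of norm one (for then the infimum over that compact set is a positive constant, and the integer $\bw$ inherit the bound after normalization). If this fails, one obtains a real pure $k$-vector $\bw_\infty$ on which the supremum over $\cM$ vanishes; the subspace attached to $\bw_\infty$ is a limit of rational subspaces and need not itself be rational, and it is this $W$ that lands $\cM$ in a possibly irrational constraining pencil. So the asymmetry arises from the compactness step in verifying the uniform bound, not from the arithmetic of any single Pl\"ucker vector $\bp$. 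Aside from this, the argument is sound and follows the paper's route.
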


The maximum is taken over rational pencils on the left-hand side and arbitrary pencils on the right-hand side. In particular:

\begin{cor}\label{corextremal}
If $\mathcal{M}$  is not contained in any constraining pencil, then $\mathcal{M}$ is extremal.
\end{cor}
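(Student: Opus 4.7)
The plan is to derive Corollary \ref{corextremal} as an immediate consequence of Theorem \ref{general-inequality}, combined with the universal lower bound on $\beta(x)$ provided by the Dirichlet/pigeonhole argument applied to the trivial rational pencil.

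First I would produce a matching lower bound. The subspace $W = \R^{m+n}$ is rational, and the pencil $\mathcal{P}_{W,m}$ is all of $M_{m,m+n}(\R)$, hence certainly contains $\mathcal{M}$. The left-hand inequality in Theorem \ref{general-inequality} therefore already gives
\[
\beta(x) \geq \frac{\dim W}{r} - 1 = \frac{m+n}{m} - 1 = \frac{n}{m}
\]
for almost every $x \in \mathcal{M}$. (Alternatively this follows directly from the pigeonhole calculation displayed just after Definition \ref{pencildef}, which works for every $x$.)

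Next I would invoke the hypothesis that $\mathcal{M}$ sits inside no constraining pencil. By definition, this means every pencil $\mathcal{P}_{W,r}$ containing $\mathcal{M}$, rational or not, must satisfy $\frac{\dim W}{r} \leq \frac{m+n}{m}$, so that $\frac{\dim W}{r} - 1 \leq \frac{n}{m}$. Taking the maximum over all such pencils and applying the right-hand inequality of Theorem \ref{general-inequality} gives $\beta(x) \leq \frac{n}{m}$ for almost every $x \in \mathcal{M}$.

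Combining the two bounds yields $\beta(x) = \frac{n}{m}$ almost everywhere on $\mathcal{M}$, which is precisely extremality, since $\frac{n}{m}$ is the generic exponent of a random point of $M_{m,m+n}(\R)$ under Lebesgue measure. There is no genuine obstacle at this stage — the deduction is a one-line combination of the two sides of \eqref{ratformula}; all the substance lives in the proof of Theorem \ref{general-inequality} itself, in particular in controlling arbitrary (not necessarily rational) pencils via the quantitative non-divergence estimates alluded to in the introduction.
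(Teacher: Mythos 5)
Your argument is correct and is exactly the intended one: the corollary is stated in the paper directly after Theorem \ref{general-inequality} with no separate proof, because the hypothesis forces the right-hand maximum in \eqref{ratformula} to be at most $n/m$, while the trivial rational pencil $\mathcal{P}_{\R^{m+n},m}$ forces the left-hand maximum to be at least $n/m$. Your identification of the trivial pencil as the source of the matching lower bound and your handling of the constraining-pencil hypothesis are both right on target.
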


When the Zariski closure is defined over $\Q$, Theorem~\ref{rational-exponent-babythm} reduces the determination of the exponent to a purely algebraic question: determining the maximum in $(\ref{formulabeta})$. This may still be a challenging task, however the analysis is greatly simplified by the following property of the pencils realizing the maximum in $(\ref{formulabeta})$.

\begin{prop} There is a unique subspace $W\leq \R^{m+n}$ of maximal dimension such that $\mathcal{M} \subset \mathcal{P}_{W,r}$ and $$\frac{\dim W}{r}=\max_{\mathcal{M} \subset \mathcal{P}_{W,r}} \frac{\dim W}{r} .$$
\end{prop}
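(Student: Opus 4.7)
The plan is to reduce the uniqueness assertion to a closure-under-sums property: if two subspaces $W_{1},W_{2}\leq\R^{m+n}$ both realize the maximum
$\alpha:=\max_{\mathcal{M}\subset\mathcal{P}_{W,r}}\dim W/r$,
then so does $W_{1}+W_{2}$. Granting this, suppose $W_{1}$ and $W_{2}$ both achieve $\alpha$ with maximal possible dimension $D$; then $W_{1}+W_{2}$ achieves $\alpha$, so $\dim(W_{1}+W_{2})\leq D$, while obviously $\dim(W_{1}+W_{2})\geq\dim W_{1}=D$, forcing $W_{1}=W_{2}$.

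For a subspace $W$, set $r(W):=\max_{x\in\mathcal{M}}\dim xW$, the smallest $r$ with $\mathcal{M}\subset\mathcal{P}_{W,r}$. Since $\dim xW=\rrk(x|_{W})$ is a lower semi-continuous integer-valued function and $\mathcal{M}$ is a connected analytic submanifold, the set $\{x\in\mathcal{M}:\dim xW=r(W)\}$ is the complement of a proper analytic subset; in particular it is dense. Applied to the finite collection $\{W_{1},W_{2},W_{1}+W_{2},W_{1}\cap W_{2}\}$, a common generic point $x_{0}\in\mathcal{M}$ simultaneously realizes all four generic ranks.

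At such $x_{0}$, the linear algebra identities $x_{0}(W_{1}+W_{2})=x_{0}W_{1}+x_{0}W_{2}$ and $x_{0}(W_{1}\cap W_{2})\subseteq x_{0}W_{1}\cap x_{0}W_{2}$ give the submodular inequality
\[
r(W_{1}+W_{2})+r(W_{1}\cap W_{2})\;\leq\;\dim x_{0}W_{1}+\dim x_{0}W_{2}\;=\;r(W_{1})+r(W_{2}).
\]
Using $\dim W_{i}=\alpha\,r(W_{i})$ (by the extremality assumption on $W_{1},W_{2}$) and $\dim(W_{1}\cap W_{2})\leq\alpha\,r(W_{1}\cap W_{2})$ (because $\alpha$ is the overall maximum), we compute
\[
\dim(W_{1}+W_{2})=\dim W_{1}+\dim W_{2}-\dim(W_{1}\cap W_{2})\;\geq\;\alpha\bigl(r(W_{1})+r(W_{2})-r(W_{1}\cap W_{2})\bigr)\;\geq\;\alpha\,r(W_{1}+W_{2}),
\]
so $\dim(W_{1}+W_{2})/r(W_{1}+W_{2})\geq\alpha$, hence equals $\alpha$ by maximality.

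The only delicate point is the simultaneous genericity of the rank function in Step~2; once the submodularity $r(W_{1}+W_{2})+r(W_{1}\cap W_{2})\leq r(W_{1})+r(W_{2})$ is in hand, the remaining argument is purely dimension-counting, and the final uniqueness step is immediate from the closure-under-sums property.
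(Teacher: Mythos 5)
Your proof is correct and takes essentially the same route as the paper: the paper derives this Proposition from its general submodularity lemma (Lemma~\ref{submodularity}), whose proof likewise uses the submodular inequality to show that the set of ratio-maximizers is closed under sums and hence has a unique member of maximal dimension, and the paper verifies the submodularity of $W\mapsto\max_{x\in\cM}\dim(xW)$ by exactly the same simultaneous-genericity argument you give (see the proof of Theorem~\ref{rathm}). Your version is a clean specialization to $\psi=\dim$; the only point you pass over silently is the degenerate case $r(W)=0$ for $W\neq 0$ (equivalently an infinite maximum), which the paper's more general lemma disposes of by a preliminary reduction, but which in your argument is immediate from the assumption that the maximum $\alpha$ is finite.
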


This fact is a consequence of a very general lemma, which we call the \emph{submodularity lemma} and runs as follows.

\begin{lem}[Submodularity lemma]\label{submodularity}
Let $V$ be a finite-dimensional vector space, and suppose that $\phi:\Grass(V)\rightarrow\N$ is a non-decreasing  (for set inclusion) and \emph{submodular} function, i.e. for any two vector subspaces $U$ and $W$ we have
\[ \phi(U+ W)+\phi(U\cap W) \leq \phi(U)+\phi(W). \]
Then the maximum
\[ \max_{W\in\Grass(V)\setminus\{0\}} \frac{\dim W}{\phi(W)} \]
is attained at a unique subspace of maximal dimension.
\end{lem}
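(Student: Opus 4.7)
The plan is to reduce the lemma to the combinatorial claim that the set $\mathcal{S}$ of subspaces realizing the maximum ratio is closed under the operation $(U,W)\mapsto U+W$. Once that is established, finite-dimensionality of $V$ forces $W^{\star}:=\sum_{W\in\mathcal{S}}W$ to coincide with a finite partial sum, hence to lie in $\mathcal{S}$; by construction $W^{\star}$ contains every other maximizer, which gives the asserted unique maximizer of largest dimension.

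The heart of the argument is a one-line calculation combining submodularity with the Grassmann dimension formula. Let $M$ denote the value of the maximum and take $U,W\in\mathcal{S}$, so that $\dim U=M\phi(U)$ and $\dim W=M\phi(W)$. Multiplying the submodularity inequality by $M$ and invoking $\dim U+\dim W=\dim(U+W)+\dim(U\cap W)$ yields
\[ \bigl[\dim(U+W)-M\phi(U+W)\bigr]+\bigl[\dim(U\cap W)-M\phi(U\cap W)\bigr]\;\geq\;0. \]
When both $U+W$ and $U\cap W$ are nonzero, each bracket is $\leq 0$ by the very definition of $M$ as a maximum over nonzero subspaces, so both brackets vanish and, in particular, $U+W\in\mathcal{S}$. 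Notice that the non-decreasing hypothesis plays no role at this step; submodularity together with the dimension formula is enough.

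The main (and minor) obstacle is to handle two degenerate situations glossed over by the one-liner. First, if $U\cap W=\{0\}$ the second bracket reads $-M\phi(\{0\})$; when $\phi(\{0\})=0$ the argument is unaffected, while if $\phi(\{0\})>0$ the displayed inequality combined with $\dim(U+W)\leq M\phi(U+W)$ is inconsistent, so two distinct maximizers simply cannot have trivial intersection, and the case is vacuous. Second, if $\phi$ vanishes on some nonzero subspace, the supremum is $+\infty$; but submodularity immediately gives that $\{W:\phi(W)=0\}$ is closed under sums, so it admits a unique maximal element that plays the role of $W^{\star}$. In all cases the conclusion of the lemma follows.
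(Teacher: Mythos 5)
Your proof is correct, and it takes a genuinely different route from the one in the paper. The paper proves a more general statement (its Lemma~\ref{presub}) where $\psi$ and $\phi$ are \emph{real-valued}, and the argument there proceeds by a reduction (quotienting by the largest subspace where $\phi$ vanishes), followed by an $\eps$-argument: one sets $S_k = \sup\{q(x):\dim x\geq k\}$, lets $k_0$ be the largest $k$ with $S_k=S$, picks $\eps$ with $S-S_{k_0+1}>2\eps$, and shows that any two subspaces of dimension $\geq k_0$ with $q>S-\eps$ must coincide. That chain-of-inequalities argument simultaneously establishes that the supremum is attained and that the maximizer is unique in top dimension, which matters for real-valued $\phi,\psi$ where attainment is not automatic. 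Your argument instead shows directly that the set $\mathcal{S}$ of maximizers is closed under binary sums, via the identity $\bigl[\dim(U+W)-M\phi(U+W)\bigr]+\bigl[\dim(U\cap W)-M\phi(U\cap W)\bigr]\geq 0$, and then takes $W^\star$ to be the (necessarily finite) sum of all maximizers. This is cleaner and more transparent, and your observation that the non-decreasing hypothesis is not needed is correct: the paper uses monotonicity only in its preliminary reduction, which your proof avoids entirely. Two small points worth flagging. First, you quietly assume the maximum is attained; in the $\N$-valued setting this is automatic since for fixed $\dim W = k$ the ratio ranges over the discrete set $\{k/m : m\in\N_{>0}\}$, whose supremum over any nonempty subset is attained, but it deserves a sentence. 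Second, your one-line calculation uses the exact Grassmann identity for $\dim$; it is worth noting that the same argument goes through verbatim if $\dim$ is replaced by any non-negative supermodular $\psi$, since then $\psi(U)+\psi(W)\leq\psi(U+W)+\psi(U\cap W)$ still yields the needed inequality, so your approach actually recovers the paper's general lemma as well once attainment is granted.
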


In particular this applies to the right-hand side in $(\ref{ratformula})$, because the function $W \mapsto \max_{x \in \cM} \dim xW$ is submodular. 

In presence of symmetry, this lemma greatly simplifies the algebraic analysis of determining the  right-hand side in $(\ref{ratformula})$ and hence the exponent. Indeed if $\phi$ is $G$-invariant for the action of some group $G$ on the grassmannian (preserving dimension), then the submodularity lemma implies that the subspace realizing the maximum is $G$-invariant. This observation is used to derive Theorem \ref{rational-exponent-babythm} from Theorem \ref{general-inequality} by means of the Galois group action on $V$. Here is an example illustrating the use of the submodularity lemma for another group action in combination with Theorem \ref{rational-exponent-babythm} :

\begin{ex}[The Veronese manifold in matrices] Let $p,s \in \N$. What is the infimum $\widehat{\beta}$ of all $\beta>0$ such that for almost every $M\in M_s(\R)$, the inequality
\[ \|v_0 I+ v_1 M + \dots + v_p M^p\| \leq \|v\|^{-\beta} \]
has at most finitely many integer solutions $v\in\Z^{p+1}$ ? The Mahler conjecture proved by Sprind\v{z}uk mentioned earlier is the case $s=1$, and then the answer is $\widehat{\beta} = p$.
When $s>1$, this problem fits the diophantine approximation on submanifolds of matrices scheme: set $V=\R^{m+n}=\R_p[X]$ the space of polynomials of degree at most $p$.
Naturally, $\Z^{m+n}$ is identified with the lattice of polynomials with integer coefficients.
Set $E=\R^m=M_s(\R)$, and $\mathcal{M} \subset \Hom(V,E)\simeq M_{m,m+n}(\R)$ the set of all linear maps $P \mapsto P(M)$.
In terms of matrices, $\cM$ is just the Veronese manifold $\{(I,M,\ldots,M^p) \ ;\ M\in M_s(\R)\}$.
Using Theorem \ref{rational-exponent-babythm} the exponent $\widehat{\beta}$ can be determined once the pencils containing $\mathcal{M}$ are determined. But the group $G$ of affine transformations of the real line acts on $V$ by substitution of the variable and preserves $\mathcal{M}$. So in order to compute the maximum in $(\ref{rational-exponent-babythm})$, we only need to consider $G$-invariant subspaces. This is easily done, and we find that
$$1+ \widehat{\beta} = \max\{\frac{p+1}{s} , 1\}.$$
In particular the Veronese manifold is extremal if and only if $s \geq p+1$. In this example the most constraining pencil is given by the Cayley-Hamilton relation. See Example  \ref{veronese} for more details. See also \cite[\S 3.3]{kleinbock-dichotomy} and \cite{das-simmons}, where the Veronese manifold is considered with respect to a different diophantine approximation scheme, for which it is always extremal.
\end{ex}

Of course when the Zariski closure is not defined over $\Q$, it can happen that the almost sure exponent is not given by either side of $(\ref{ratformula})$.  Then the question remains to find an appropriate hull $S(\mathcal{M})$ of $\mathcal{M}$ whose almost sure exponent would be the same as that of $\mathcal{M}$, in the spirit of Kleinbock's work \cite{kleinbock-extremal-gafa} \cite{kleinbock-anextension} on submanifolds of $\R^n$ with irrational affine span. The natural candidate is what we call the Schubert closure of $\mathcal{M}$, which is the intersection of the pencils containing $\mathcal{M}$. It is an algebraic variety, which is in general bigger than the Zariski closure of $\mathcal{M}$. In the classical setting of submanifolds of $M_{m,n}(\R)$ this is the same space as the space $\mathcal{H}(\mathcal{M})$ considered by Beresnevich, Kleinbock and Margulis in \cite[7.2]{beresnevich-kleinbock-margulis}. The Pl\"ucker closure considered here and in  \cite{das-fishman-simmons-urbanski}  is in general smaller than the Schubert closure. 

The results above are a first stone in a more complete study of diophantine approximation on submanifolds of matrices that we do not undertake here and would be concerned with badly approximable points (such as in \cite{baker, kleinbock-bad}), improvements of Dirichlet's theorem (see \cite{kleinbock-weiss-dirichlet}), Khintchine-type theorems (in the spirit of \cite{bernik-kleinbock-margulis, beresnevich-bernik-kleinbock-margulis}), Jarnik-type theorems, etc. 
Let us only mention that the methods introduced in the current paper also apply to the study of multiplicative diophantine approximation as in the original work of Kleinbock and Margulis \cite{kleinbock-margulis} ; in particular, one can derive a satisfying criterion for a manifold to be strongly extremal. This will be explained in a forthcoming paper of the first and last authors with Das and Simmons \cite{bdss}.

\bigskip

In order to apply the above results to the computation of diophantine exponents of random subgroups of nilpotent Lie groups, it is necessary (in the general case) to extend them to the setting of \emph{quasi-norms} and consider \emph{weighted diophantine approximation}, in which different directions are assigned possibly different weights. We now briefly describe what this means, but later in the paper we will prove our results in this generalized setting.

As above $\mathcal{M}=\Phi(U)$ is a connected analytic submanifold of $\Hom(V,E)$, where $V$ and $E$ are two finite-dimensional real vector spaces, $U \subset \R^N$ is a connected open set and $\Phi:U \to \Hom(V,E)$ an analytic map. We endow $V$ and $E$ with quasi-norms, namely
$$|v| =\max|v_i|^{\frac{1}{\alpha_i}}$$
$$|w|' =\max |w_i|^{\frac{1}{\alpha'_i}},$$
for $v=\sum_i v_i u_i\in V$ and $w=\sum_i w_i u'_i\in E$, where $(u_1,\ldots,u_d)$ is a basis of $V$ and $(u'_1,\ldots,u'_e)$ a basis of $E$, and $\alpha_1\geq \ldots \geq \alpha_d>0$ and $\alpha'_1\geq \ldots \geq \alpha'_e>0$ are positive numbers. Next we let $\Delta$ be the lattice $\Z u_1 + \ldots +\Z u_d$ and we consider the following diophantine exponent for $x \in \Hom(V,E)$
\begin{equation}\label{diophquasi}
\beta(x):= \inf \{ \beta>0 \ |\ \exists c>0:\,\forall v\in\Delta\setminus\{0\},\, |xv|' > c |v|^{-\beta}\}.
\end{equation}

Given two non-negative numbers $a,b$, and a subspace $W$ of $V$, we define (see Section \ref{penpen}) the \emph{pencil}
$$\mathcal{P}_{W,a,b} =\{x \in \Hom(V,E) \ |\ \psi(\ker x \cap W) \geq a \textnormal{ and }\phi(xW) \leq b\},$$ where $\psi$ (resp. $\phi$) is defined for a subspace $W\leq V$ (resp. $W'\leq E$) by
$$\psi(W)=\sum_{i \in I(W)}  \alpha_i
\quad\mbox{and}\quad \phi(W')=\sum_{i \in J(W')}  \alpha'_i,$$
and where
\[
I(W)=\{i\in\{1,\ldots,d\} \ |\ \dim(W \cap \langle u_1, \ldots,u_i\rangle) > \dim(W \cap \langle u_1, \ldots,u_{i-1}\rangle)\}
\]
and
\[
J(W')=\{i\in\{1,\ldots,e\} \ |\ \dim(W' \cap \langle u'_{i+1}, \ldots,u'_e\rangle) < \dim(W' \cap \langle u'_i, \ldots,u'_e\rangle)\}.
\]
The pencil $\mathcal{P}_{W,a,b}$ is a certain closed algebraic subset of $\Hom(V,E)$ associated to our choice of quasi-norms on $V$ and $E$, and coincides with the pencil defined in Definition \ref{pencildef} in the unweighted case. Pencils are indeed closely related to Schubert subvarieties of the grassmannian $\Grass(V)$ (see Section \ref{schubert}).

\begin{thm}[Weighted case]\label{weighted} Theorems \ref{rational-exponent-babythm} and \ref{general-inequality} hold more generally in the weighted (quasi-norm) setting. In particular for Lebesgue almost every $x \in U$,
\begin{equation}\label{ratformula2}
\max_{\mathcal{M} \subset \mathcal{P}_{W,a,b} ; W \textnormal{ rational}}  \frac{a}{b} \leq \beta(\Phi(x)) \leq  \max_{\mathcal{M} \subset \mathcal{P}_{W,a,b}}  \frac{a}{b}.
\end{equation}
with equality when the Zariski closure of $\cM$ is defined over $\Q$.
\end{thm}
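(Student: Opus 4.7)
The plan is to carry out the same two-sided argument as for Theorems \ref{rational-exponent-babythm} and \ref{general-inequality}, after verifying that each ingredient admits a quasi-norm version in which the roles of $\dim W$ and $\dim xW$ are played by $\psi(W)$ and $\phi(xW)$ respectively. The basic observation is that $\psi$ and $\phi$ are precisely the right weighted replacements for dimension: if $B_R$ denotes the quasi-ball of radius $R$ in $V$, then $|\Delta \cap W \cap B_R| \asymp R^{\psi(W)}$ for a rational subspace $W$, while $x(B_R \cap W)$ is contained in a box whose quasi-volume is $\asymp R^{\phi(xW)}$ (for $x \in \mathcal{P}_{W,a,b}$, even slightly smaller). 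This is what makes the definition of $\mathcal{P}_{W,a,b}$ the natural one.

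The lower bound in \eqref{ratformula2}, i.e. the left inequality, is then a direct pigeonhole argument: for any $x\in\mathcal{P}_{W,a,b}$ with $W$ rational, the $R^{\psi(W)}\asymp R^a$ integer lattice points of $\Delta\cap W\cap B_R$ are mapped into a quasi-ball of volume $\lesssim R^{\phi(xW)}\leq R^b$, so two of them must lie at quasi-distance $\lesssim R^{-a/b + o(1)}$, forcing $\beta(x)\geq a/b$. For the upper bound, I would follow the Kleinbock--Margulis strategy in the weighted setting: consider the one-parameter weighted diagonal flow $a_t=\diag(e^{\alpha_i't/\beta}, e^{-\alpha_j t})$ on the space of unimodular lattices of $V\oplus E$ and apply a weighted quantitative non-divergence theorem (as in \cite{kleinbock-margulis}, refined in \cite{kleinbock-dichotomy}) to the family of lattices $u_{\Phi(x)}\Delta\oplus\Z^e$, $x\in U$. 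The obstructions to non-divergence are exactly subspaces $W\subset V\oplus E$ whose Plücker coordinates, measured in the quasi-norm, contract under $a_t$; a direct computation shows these contracting subspaces correspond to pairs $(W,xW)$ with $\psi(W) = a$, $\phi(xW) = b$ and $\beta > a/b$, i.e. to pencils $\mathcal{P}_{W,a,b}$ containing $\mathcal{M}$. Provided the measure on $U$ is locally good, the non-divergence bound then yields $\beta(\Phi(x))\leq \max a/b$ almost surely.

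In the rational case, to promote the upper bound to a rational pencil, I would invoke the submodularity lemma, but applied to the function $\phi_x : W\mapsto \phi(xW)$ (and to its max over $\mathcal{M}$). The function $\psi$ is itself modular on $\Grass(V)$ (it is the integral of a flag), while $W\mapsto\phi(xW)$ is submodular for the same reason that $W\mapsto\dim(xW)$ is; hence the ratio $\psi(W)/\max_{x\in\mathcal{M}}\phi(xW)$ attains its supremum at a unique subspace of maximal dimension. This subspace is then $\Gal(\bar\Q/\Q)$-invariant (since the data $(\mathcal{M},\psi,\phi)$ is), and therefore rational, exactly as in the proof of Theorem \ref{rational-exponent-babythm}.

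The main obstacle I anticipate is the weighted quantitative non-divergence step. One must check that the correct ``weight'' of a subspace $W$ of $V\oplus E$ with respect to the flow $a_t$ is precisely $\psi(W\cap V) - \phi(\pi_E W)$ (up to the normalisation by $\beta$), so that the pencils $\mathcal{P}_{W,a,b}$ really capture all the obstructions seen by $a_t$. This amounts to verifying that among all flags refining $W$, the extremal one (which computes the sup-norm of the Plücker coordinate under $a_t$) is the one prescribed by the index sets $I(W\cap V)$ and $J(\pi_E W)$; this is a combinatorial optimisation that is straightforward but tedious. Once this identification is made, the rest of the argument runs in parallel with the unweighted case, and Theorems \ref{rational-exponent-babythm} and \ref{general-inequality} transport verbatim to the quasi-norm setting stated in \eqref{ratformula2}.
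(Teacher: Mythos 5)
Your overall strategy (Dirichlet for the lower bound, Kleinbock--Margulis quantitative non-divergence for the upper bound, submodularity plus a Galois argument for the rational case) matches the paper at a structural level, but the middle step has a real gap.

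The key issue is your choice of dynamical system for the upper bound. You propose a weighted diagonal flow $a_t$ on the space of unimodular lattices of $V\oplus E$, applied to the lattices $u_{\Phi(x)}\Delta\oplus\Z^e$. This is the classical construction used when $x\in\Hom(V,E)$ has the fixed block form $(I_m\,|\,A)$ and the problem is $\min_{p\in\Z^m,q\in\Z^n}\|p-Aq\|$. In that setting the ``$p$'' already lives inside $\Delta\subset V$, so forming $\Delta\oplus\Z^e$ introduces a second, spurious copy of the integers in $E$. For a \emph{general} $x\in\Hom(V,E)$ (arbitrary rank, arbitrary block structure), your lattice detects the quantity $\min_{v\in\Delta,\,w\in\Z^e}|xv-w|'$, which is not the quantity $\min_{v\in\Delta}|xv|'$ appearing in the definition \eqref{diophquasi}. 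The two diophantine problems are genuinely different. The paper's Dani correspondence (Proposition~\ref{dani}) stays entirely inside $\GL(V)$: for each $x$ it concatenates a flag inside $\ker x$ (weighted by the $\alpha_i$'s) with the pull-back under $x$ of a flag in $xV\subset E$ (weighted by the $\alpha_i'$'s), and the corresponding one-parameter flow $\tilde g_t^{(x,\beta)}$ contracts $\ker x$ and expands a complement. No auxiliary copy of $E$ is added, and the lattice acted on is $\Delta\subset V$ of rank $\dim V$, not of rank $\dim V+\dim E$. You would have to rebuild your computation of the contracting subspaces around this flag-adapted flow on $\GL(V)$; the index sets $I(W\cap\ker x)$ and $J(xW)$ (not $I(W\cap V)$ and $J(\pi_E W)$, since $W$ should range over subspaces of $V$, not $V\oplus E$) are precisely what bookkeep the slowest growth rate of a Pl\"ucker coordinate under $\tilde g_t^{(x,\beta)}$.

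Two smaller points. In the Dirichlet step you write that the $R^{\psi(W)}\asymp R^a$ lattice points of $\Delta\cap W\cap B_R$ collide in a ball of volume $R^b$; the pencil condition gives $\psi(\ker x\cap W)\ge a$ and $\phi(xW)\le b$, not $\psi(W)=a$, and the pigeonhole actually produces $\beta(x)\ge\psi(W\cap\ker x)/\phi(xW)\ge a/b$ as in Proposition~\ref{dirichlet} (the $R^{\psi(W)}$ points land in a region of Lebesgue volume $\asymp R^{\psi(W)-\psi(W\cap\ker x)}$). In the submodularity step you assert that $\psi$ is modular on $\Grass(V)$ ``since it is the integral of a flag''; this is false in general (the functions $W\mapsto\dim(V_i\cap W)$ are supermodular but not modular), but what the argument needs --- that $-\psi_{\cM}$ and $\phi_{\cM}$ are submodular --- still holds, so the conclusion via the submodularity lemma and the Galois action is unaffected.
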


We refer the reader to Theorems \ref{uppboundnud} and \ref{uppboundnudrat} for more complete statements. The $\Q$-structure on $\Hom(V,E)$ used implicitly in the above theorem is the one induced by the $\Q$-span of the bases of $V$ and $E$ chosen to define the quasi-norms. The condition that the Zariski closure of $\cM$ be defined over $\Q$ is satisfied for example when the map $\Phi$ itself is a polynomial map with coefficients in $\Q$ (i.e. each matrix entry is such). 

The dynamical method adapts well to the weighted case, as had already been observed by Kleinbock early on, for example in \cite{kleinbock-duke}.

\bigskip

\bigskip
  \centerline{*}
  \centerline{* *}
  \bigskip

We now pass to the description of our results regarding diophantine approximation on nilpotent Lie groups. Inspired by the work of Gamburd, Jakobson and Sarnak \cite{gamburd-jacobson-sarnak} we introduced in our previous paper \cite{abrs} the notion of diophantine Lie group. We refer the reader to this paper for an introduction and background on the general question of diophantine approximation on Lie groups. We briefly recall here the basic definitions. We are given a connected Lie group $G$ and a $k$-tuple of elements $\mathbf{g}:=(g_1,\ldots,g_k)$. We consider the subgroup $\Gamma_{\bg}$ generated by $g_1,\ldots,g_k$. Its elements can be represented by words $w(\bg)$ in the elements $g_i$. We ask: How close to the identity can an element $\gamma$ of $\Gamma_{\bg}$ be in terms of the minimal length $\ell(\gamma)$ of a word that represents it ? The following definition is natural. Let $d(x,y)$ be a left-invariant Riemannian distance on $G$ (or more generally a left-invariant geodesic metric) and let  $V_{\bg}(n)$ be the number of elements in $\Gamma_{\bg}$ that can be represented by a word of length at most $n$.

\begin{defn} The $k$-tuple $\bg=(g_1,\ldots,g_k)\in G^k$ (or the subgroup $\Gamma_{\bg}$ they generate) is called \emph{diophantine} if there exists $\beta>0$ such that \begin{equation}\label{groupdioph}d(\gamma,1) > V_{\bg}(\ell(\gamma))^{-\beta}\end{equation}
for all but at most finitely many group elements $\gamma \in \Gamma_{\bg}$.
\end{defn}

For example in $G=(\R,+)$ the pair $\bg=(1,\alpha)$ is diophantine if and only if $\alpha$ is a diophantine, i.e. non-Liouville, number. It is easy to check that this definition depends only on the subgroup $\Gamma_{\bg}$ and not on the particular generating set $\bg$, nor on the choice of metric. We say that the Lie group $G$ is \emph{diophantine on $k$-letters} if almost every $k$-tuple in $G$ (chosen independently at random with respect to Haar measure) is diophantine. We say that it is \emph{diophantine} if it is diophantine on $k$-letters for every $k\geq 1$. When $G=\R^n$, then $\Gamma_{\bg}$ is diophantine if and only if the $n \times k$ matrix whose column vectors are $g_1,\ldots,g_k$ is diophantine in the sense that its diophantine exponent (defined as in $(\ref{diophexp-def})$) is finite. Hence the connection with diophantine approximation on matrices.

It is conjectured that semisimple real Lie groups are diophantine. This conjecture is open already for the smallest Lie groups, such as $SO(3,\R)$ (see \cite[\S 6]{hoory-gamburd}).  Remez-type inequalities \cite{brudnyi-ganzburg} combined with the Borel Cantelli lemma only yield a superexponential lower bound $\exp(-C\ell(\gamma)^2)$ for Lebesgue almost every $\bg$ in a semisimple Lie group (see the work of Kaloshin and Rodnianski \cite{kaloshin-rodnianski} who handled $\SU(2)$ but the method is general). It is also already an open problem to show that the affine group $\Aff(\R)$ of the real line and the group of motions of the Euclidean plane $O(2)\ltimes \R^2$ are diophantine. See the work of Varj\'u \cite{varju} for a very interesting recent result in this direction. 

 It is fairly easy to see that any nilpotent Lie group with a rational structure (for its Lie algebra) is diophantine, but in \cite{abrs} we constructed examples (arising only in nilpotency class $6$ and higher) of non-diophantine nilpotent Lie groups. They exist  because of some particular feature of the representation theory of the general linear group $\GL_k$ on the free Lie algebra on $k$-letters: multiplicity for the $s$-th homogeneous part, $s \geq 6$.

For nilpotent Lie groups the growth function $V_{\bg}(n)$ grows polynomially like $n^\tau$ with an integer exponent $\tau$ given by the Bass-Guivarc'h formula (see \cite{bassformula,guivarchformula}), and therefore changing the generating set in $\Gamma_{\bg}$ does not result in a change of the exponent $\beta$. So it makes sense to ask for the optimal $\beta$ for which $(\ref{groupdioph})$ holds. We call this the exponent of the subgroup $\Gamma_{\bg}$.
This is the quantity we study here, with the help of Theorem \ref{rational-exponent-babythm} and Lemma \ref{submodularity}.

It turns out that the optimal exponent always exists.

\begin{thm}[Existence of the exponent]\label{existence} Let $G$ be a connected and simply connected nilpotent real Lie group endowed with a left-invariant geodesic metric $d$. Then for each $k\geq 1$, there is $\widehat{\beta}_k \in [0,+\infty]$ such that for almost every $k$-tuple $\bg \in G^k$ with respect to Haar measure, we have
$$\beta(\bg) = \widehat{\beta}_k,$$
where $\beta(\bg)$ is the infimum of all $\beta>0$ such that $(\ref{groupdioph})$ holds.
\end{thm}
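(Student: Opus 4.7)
The strategy is to reduce the question to the weighted matrix Diophantine framework of~\eqref{diophquasi} and to invoke the almost-sure constancy in this setting. Let $s$ be the nilpotency class of $G$, let $F$ denote the simply connected free $k$-generated nilpotent Lie group of class $s$ with Lie algebra $V:=\mathfrak{f}_{k,s}$, and let $\Gamma_{k,s}\subset F$ be its cocompact lattice of integer Mal'cev coordinates in a basis adapted to the lower central series. Each $\mathbf{g}\in G^k$ determines a unique Lie group homomorphism $\pi_\mathbf{g}\colon F\to G$ sending $e_i$ to $g_i$, whose differential at the identity is a Lie algebra homomorphism $\sigma(\mathbf{g})\in\Hom(V,E)$ with $E=\mathfrak{g}$, and $\Gamma_\mathbf{g}=\pi_\mathbf{g}(\Gamma_{k,s})$. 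Identifying $G^k\cong\mathfrak{g}^k$ via the exponential diffeomorphism (so that Haar corresponds to Lebesgue up to a constant), the assignment $\mathbf{g}\mapsto\sigma(\mathbf{g})$ is a polynomial embedding onto an analytic submanifold $\mathcal{M}\subset\Hom(V,E)$.

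Next, I would equip $V$ and $E$ with Mal'cev bases adapted to the lower central series and the associated weighted quasi-norms $|\cdot|_V$, $|\cdot|'_E$ with weight equal to the layer. By the ball-box theorem, $\ell(f)\asymp|{\log f}|_V$ for $f\in\Gamma_{k,s}$; and because $\beta(\mathbf{g})$ is independent of the choice of left-invariant geodesic metric, one may take $d$ on $G$ to be a Carnot--Carath\'eodory distance, giving $d(\gamma,1)\asymp|{\log\gamma}|'_E$. The Bass--Guivarc'h formula gives $V_\mathbf{g}(n)\asymp n^{\tau(\mathbf{g})}$, and upper semicontinuity of the layer ranks of $\Gamma_\mathbf{g}$ in $\mathbf{g}$ implies that $\tau(\mathbf{g})=\tau_0$ on a dense open subset of $G^k$. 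The group-theoretic inequality~\eqref{groupdioph} therefore translates, up to bounded multiplicative errors, into $|\sigma(\mathbf{g})(v)|'_E>c|v|_V^{-\beta\tau_0}$ for $v$ in the discrete set $S:=\log\Gamma_{k,s}\subset V$. The set $S$ differs from the standard integer lattice $\Delta$ of the Mal'cev basis only by a polynomial BCH bijection with controlled distortion in the weighted quasi-norm, so the Diophantine exponents over $\Delta$ and over $S$ coincide. Hence $\beta(\mathbf{g})\cdot\tau_0=\beta_{\mathrm{paper}}(\sigma(\mathbf{g}))$ almost surely, with $\beta_{\mathrm{paper}}$ the matrix exponent of~\eqref{diophquasi}.

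Applying the almost-sure constancy part of Theorem~\ref{pluckerklein}, which as in Kleinbock's original argument extends to the weighted quasi-norm setting (cf.\ Theorem~\ref{weighted}), to the analytic submanifold $\mathcal{M}$ yields that $\beta_{\mathrm{paper}}$ is almost everywhere constant on $\mathcal{M}$. Pulling back along the diffeomorphism $\sigma$, which pushes Haar on $G^k$ to a measure equivalent to Lebesgue on $\mathcal{M}$, and dividing by $\tau_0$, one obtains the existence of $\widehat{\beta}_k\in[0,+\infty]$. The principal technical obstacle is the precise identification $\beta(\mathbf{g})\cdot\tau_0=\beta_{\mathrm{paper}}(\sigma(\mathbf{g}))$: one needs careful ball-box estimates to compare $\ell(\gamma)$, an infimum over preimages in $\Gamma_{k,s}$, with $|v|_V$ for a single $v\in S$, and a verification that the BCH distortion between $S$ and $\Delta$ does not alter the Diophantine exponent. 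A convenient device for the latter is to pass to the polynomial Veronese lift of $V$, which turns the nonlinear Mal'cev map into a linear map on a larger ambient space, making the standard integer lattice directly usable.
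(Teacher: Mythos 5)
Your overall strategy --- identify $\bg\mapsto\sigma(\bg)\in\Hom(V,\g)$ with its image analytic submanifold, equip source and target with weighted quasi-norms adapted to the lower central filtration, translate the group Diophantine condition into the matrix one via ball-box estimates and the Bass--Guivarc'h formula, and then invoke the almost-sure constancy of the matrix exponent --- is exactly the route taken by the paper's second proof of this theorem in Section~\ref{section:critical} (Theorem~\ref{existence2}, building on Corollary~\ref{analytic-existence}). The paper also has an alternative, genuinely different first proof (Section~\ref{section:ergodicity}), valid only for $k\geq\dim\g/[\g,\g]$, which bypasses the matrix machinery entirely: it shows the set of $\beta$-diophantine $k$-tuples is invariant under the ergodic action of $\Aut(\cF_{k,s})(\Q)$ on $\g^k$, so a zero--one law applies.

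There is, however, a genuine gap in your reduction: you take $V=\mathfrak{f}_{k,s}$, the \emph{free} nilpotent Lie algebra of class $s$ on $k$ letters, whereas the correct source space is the \emph{relatively free} Lie algebra $\cF_{k,\g,\Q}=\cF_k/\cL_{k,\g,\Q}$, the quotient of $\mathfrak{f}_{k,s}$ by the ideal of rational laws of $\g$. The reason this matters: unless $\g$ is itself free of class $s$ (or at least lawless on $k$ letters), the ideal of laws $\cL_{k,\g}$ is nontrivial, and its rational part always contains nonzero integer vectors $v\in\Delta$. For such $v$ one has $\sigma(\bg)(v)=0$ for \emph{every} $\bg$, so $|\sigma(\bg)(v)|'_E=0$ and the matrix exponent $\beta_{\mathrm{paper}}(\sigma(\bg))$ as defined in \eqref{diophquasi} equals $+\infty$ identically on $G^k$. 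That makes the claimed identity $\beta(\bg)\cdot\tau_0=\beta_{\mathrm{paper}}(\sigma(\bg))$ false for any group with laws --- e.g.\ a step-$4$ metabelian group at $k\geq 4$ is diophantine yet has integer laws such as $[[\sx_1,\sx_2],[\sx_3,\sx_4]]$. The obstruction is not, as you suggest, a matter of sharpening ball-box or BCH estimates: it cannot be repaired quantitatively. The point is that $\gamma\in\Gamma_\bg\setminus\{1\}$ corresponds to a nontrivial element of the relatively free group $F_{k,G}$ (equivalently, a coset in $\cF_{k,\g,\Q}(\Z)\setminus\{0\}$), not to a single preimage $v\in\log\Gamma_{k,s}$; passing to the quotient by laws is what kills the trivial $+\infty$ and simultaneously resolves the $\ell(\gamma)$-as-infimum-over-preimages issue you flag. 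The paper handles exactly this via Lemma~\ref{words-brackets}, which gives a two-sided comparison between word lengths in $F_{k,G}$ and the weighted quasi-norm on $\cF_{k,\g,\Q}(\Z)$. A small secondary slip: the exponent $\beta(\bg)$ is \emph{not} independent of the choice of left-invariant geodesic metric (see the paper's remark after Theorem~\ref{existence}); what is true, by Berestovskii's theorem, is that every such metric is already a Carnot--Carath\'eodory--Finsler metric and is therefore locally comparable to the quasi-norm \eqref{CCF} determined by its own generating subspace $V^1$.
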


While the property of being diophantine or not for a $k$-generated subgroup of $G$ does not depend on the choice of metric near the identity, the exponent does. By a geodesic metric (a.k.a length metric) we mean a distance that is defined in terms of the length of the shortest path between two points. It is well known that left-invariant geodesic metrics on connected Lie groups are all Carnot-Carathéodory-Finsler metrics induced by a norm on a generating subspace of the Lie algebra (this is Berestowski's theorem, \cite{berestowski}). Riemannian metrics are of course examples of such, but in the context of nilpotent Lie groups non Riemannian, Carnot-Carath\'eodory metrics are also very natural.

We will present two proofs of Theorem~\ref{existence}. The first is only valid when $k\geq \dim G/[G,G]$, and is based on a relatively simple general argument that relies on the ergodicity of the action of the group of automorphisms of the free Lie algebra on $k$-tuples of elements in $\g$. The second proof on the other hand works for all $k$ and necessitates to first translate the diophantine problem in terms of (weighted) diophantine approximation on submanifolds of matrices, and then use the techniques of homogeneous dynamics alluded above.

This translation will allow us to determine $\widehat{\beta}_k$ for all rational nilpotent Lie groups endowed with a rational left-invariant geodesic metric (e.g. a Riemannian metric, see \S\ref{ratsec} for the definition).
Our main result is the following:

\begin{thm}[Rationality and stability of the exponent]\label{rationality-stability}
Let $G$ be a connected and simply connected rational nilpotent real Lie group endowed with a rational left-invariant geodesic metric.
Then for every $k\geq 1$ the exponent  $\widehat{\beta}_k$ is rational.
Furthermore, there exist an integer $k_1$ and a rational function $F_{\g} \in \Q(X)$ with rational coefficients such that, for all $k\geq k_1$,
$$\widehat{\beta}_k = F_{\g}(k).$$
\end{thm}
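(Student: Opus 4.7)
The plan is to translate the group-theoretic diophantine problem into a weighted matrix diophantine approximation problem for a rational submanifold, apply Theorem~\ref{weighted} to obtain rationality, and exploit the $\GL_k$-symmetry of the construction together with the submodularity lemma and classical invariant theory to produce the rational function $F_{\g}$.

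\emph{Reduction and rationality.} Let $s$ be the nilpotency class of $G$ and let $\mathfrak{f}_k$ denote the free nilpotent Lie algebra of class $s$ on $k$ generators $e_1,\dots,e_k$. To a $k$-tuple $\bg\in G^k$ associate the Lie algebra homomorphism $\phi_{\bg}:\mathfrak{f}_k\to\g$ defined by $\phi_{\bg}(e_i)=\log g_i$; the resulting map $\Phi_k:\g^k\to\Hom(\mathfrak{f}_k,\g)$, $\bg\mapsto\phi_{\bg}$, is polynomial and defined over $\Q$ under our rationality assumptions. Equip $\mathfrak{f}_k$ with a basis adapted to the homogeneous grading $\mathfrak{f}_k=\bigoplus_{d=1}^s\mathfrak{f}_k^{(d)}$, giving weight $d$ to any basis vector of $\mathfrak{f}_k^{(d)}$, and equip $\g$ with a basis adapted to the Carnot filtration of the rational geodesic metric, with integer weights given by the ball-box theorem. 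A Baker--Campbell--Hausdorff computation together with the Bass--Guivarc'h growth formula then identifies the group exponent $\beta(\bg)$ of Theorem~\ref{existence} with the weighted matrix exponent $\beta(\phi_{\bg})$ of \eqref{diophquasi}, as already used in \cite{abrs}. Since $\Phi_k$ is polynomial over $\Q$, the Zariski closure of $\cM_k:=\Phi_k(\g^k)\subset\Hom(\mathfrak{f}_k,\g)$ is $\Q$-algebraic, so Theorem~\ref{weighted} applies and yields
\[
\widehat{\beta}_k \;=\; \max_{\cM_k\subset\cP_{W,a,b},\,W\text{ rational}}\frac{a}{b}\;\in\;\Q.
\]

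\emph{Stability in $k$.} The group $\GL_k(\Q)$ acts on the generating space of $\mathfrak{f}_k$ and extends uniquely to an action by Lie algebra automorphisms preserving the grading, and hence the weights. Precomposition gives an action of $\GL_k(\Q)$ on $\Hom(\mathfrak{f}_k,\g)$ sending $\phi_{\bg}$ to $\phi_{A\cdot\bg}$, where $A\cdot\bg$ is the $k$-tuple whose logs are the $A$-prescribed linear combinations of those of $\bg$; in particular $\cM_k$ is preserved. The function $W\mapsto\max_{x\in\cM_k}\phi(xW)$ is $\GL_k$-invariant and submodular, so a straightforward extension of Lemma~\ref{submodularity} to real-valued submodular functions guarantees that the optimal $W$ above may be chosen $\GL_k(\Q)$-invariant. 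For $k\geq s$, classical invariant theory---Schur--Weyl duality applied to the Poincaré--Birkhoff--Witt description of $\mathfrak{f}_k$---indexes the $\GL_k$-invariant subspaces of each $\mathfrak{f}_k^{(d)}$ by subsets of a finite $k$-independent set of partitions of $d$, and the Weyl dimension formula makes the dimension of each isotypic component a polynomial in $k$. The generic rank $\dim\phi_{\bg}(W)$ for a fixed isotypic type likewise stabilizes to a polynomial in $k$ for $k$ larger than some $k_1=k_1(\g)$, so both $a$ and $b$ above become rational functions of $k$ for each $\GL_k$-representation type of $W$. Taking the maximum over the finite $k$-independent list of types then yields a single rational function $F_{\g}\in\Q(X)$ with $\widehat{\beta}_k=F_{\g}(k)$ for $k\geq k_1$.

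The main obstacle is establishing the polynomial dependence in $k$ of the generic rank $\dim\phi_{\bg}(W)$ for a fixed $\GL_k$-representation type of $W$. Concretely, for each partition $\lambda$ labelling an isotypic component of $\mathfrak{f}_k^{(d)}$ with $d\leq s$, one has to show that the generic image under $\phi_{\bg}$ of a Young symmetrizer of type $\lambda$ applied to a $k$-tuple of log-generators has dimension a polynomial in $k$ depending only on $\g$ and $\lambda$. This is a plethysm-type computation that requires a careful description of how the evaluation map interacts with the $\GL_k$-decomposition of $\mathfrak{f}_k$.
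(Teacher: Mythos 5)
Your architecture matches the paper's: translate to weighted matrix diophantine approximation via evaluation on (relatively) free Lie algebras, invoke the rational-Zariski-closure theorem, and use $\GL_k$-invariance plus the submodularity lemma to force the optimal $W$ to be $\GL_k$-invariant, then combine representation stability with the Weyl dimension formula. But there are two concrete problems.

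First, you evaluate on the free $s$-step nilpotent Lie algebra $\mathfrak{f}_k=\cF_{k,s}$, but this is wrong unless $\g$ itself is free nilpotent of step $s$. If $\g$ has any nontrivial laws on $k$ letters, every nonzero $\sr\in\cL_{k,\g}\cap\cF_k(\Z)$ lies in $\Delta\setminus\{0\}$ and satisfies $\Phi_k(\bg)(\sr)=0$ for all $\bg$, so the exponent of (\ref{diophquasi}) is $+\infty$ identically and Theorem~\ref{weighted} computes the wrong thing. The correct target space is the \emph{relatively free} Lie algebra $\cF_{k,\g,\Q}=\cF_k/\cL_{k,\g,\Q}$ with its induced integral structure, as used in \cite{abrs} and in Section~\ref{section:critical}; one must then check that this quotient is defined over $\Q$ (which follows from rationality of $\g$) so that the image $\cM$ remains $\Q$-algebraic.

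Second, the ``main obstacle'' you flag --- proving polynomial dependence in $k$ of the generic rank $\dim\phi_{\bg}(W)$ via a plethysm computation --- is not what the argument needs, and chasing it would be harder than necessary. The paper's resolution is simpler and avoids plethysms altogether: the denominator in $\tau(\cM)$ is $\phi_{\cM}(W)=\max_x\phi(xW)$ where $x W$ lives in $\Grass(\g)$, a space that does not change with $k$. Hence $\phi_{\cM}$ is integer-valued and bounded by $\phi(\g)$, uniformly in $k$. Once representation stability (Proposition~\ref{correspond}) pins down the $k$-independent list of isotypic types of invariant $W$, the generic image $xW\subset\g$ for each type is eventually independent of $k$, so $\phi_{\cM}(W)$ is a $k$-independent positive integer. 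The numerator $\psi_{\cM}(W)$ is then (boundedly many) polynomials in $k$ with rational coefficients via Weyl's formula, so the max is, for $k\geq k_1$, a single rational function. In other words you should not try to show the rank is polynomial in $k$ --- you should show it is eventually \emph{constant} per type, exploiting that $\g$ is fixed.
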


The rational function $F_{\g}(X)$ is a ratio $\frac{P}{Q}$, where $P$ and $Q$ are both polynomials of the same degree with rational coefficients. The degree is equal to the nilpotency class of $G$.

The constant $k_1$ will be shown to depend only on $\dim \g$. The value of $\widehat{\beta}_k$ for small $k$ may not fit the general pattern  $F_{\g}(k)$. The fact that it does for $k$ large is an instance of the so-called \emph{representation stability}, as per the notion investigated by Church, Ellenberg and Farb in a series of recent papers. Here stability occurs for the action of $\GL_k$ on the free Lie algebra on $k$-letters, see \cite[Corollary 5.7]{church-farb} for the case of interest to us.

In \cite{abrs} we showed that a nilpotent Lie group $G$ with Lie algebra $\g$ is diophantine  on $k$-letters if and only if its Lie algebra of laws on $k$-letters $\mathcal{L}_{k,\g}$ is a diophantine subspace of the free Lie algebra $\mathcal{F}_k$ on $k$-letters endowed with its natural $\Q$-structure. A law of $\g$ is an element of the free Lie algebra that vanishes identically when the indeterminates are replaced with arbitrary elements of $\g$.

The rational function $F_{\g}(k)$ can be computed explicitly in terms of $\g$ and the representation theory of the $\GL_k$-action by linear substitutions on the relatively free Lie algebra $\mathcal{F}_k/\mathcal{L}_{k,\g}$. In Sections \ref{section:critical} and \ref{section:examples} we will give exact formulas in a number of examples. In particular, we compute $\widehat{\beta}_k$ for metabelian groups, for the group of unipotent upper-triangular matrices and for certain free nilpotent groups.

In \cite[\S 5.2]{abrs} we asked whether $\widehat{\beta}_k$ has a limit as $k$ tends to infinity. With the above tools it is possible to answer this question, in the case where the nilpotent Lie group $G$ is rational.

\begin{cor}[Rational limit]\label{theorem-limit} Under the assumptions of the previous theorem
$$\lim_{k \to +\infty} \widehat{\beta}_k$$
exists and is a rational number contained in the interval $[\frac{1}{s\dim G^{(s)}},1]$ (if $d(\cdot,\cdot)$ is Riemannian, the lower bound is $\frac{1}{\dim G^{(s)}}$).
\end{cor}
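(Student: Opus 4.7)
The plan is to derive the corollary from Theorem~\ref{rationality-stability} by an asymptotic analysis of $F_\g$, combined with an explicit identification of the pencils that produce the stated bounds.

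By Theorem~\ref{rationality-stability}, for $k \geq k_1$ we have $\widehat{\beta}_k = F_\g(k) = P(k)/Q(k)$ where $P, Q \in \Q[X]$ have the same degree $s$ (the nilpotency class of $G$). Hence $\lim_{k\to\infty}\widehat{\beta}_k$ exists and equals the ratio of the leading coefficients of $P$ and $Q$, a rational number. This settles both rationality and existence of the limit.

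For the quantitative bounds, I would use the pencil formula of the weighted Theorem~\ref{weighted}, applied to the matrix model $\mathcal{M} \subset \Hom(V, E)$ of the Lie-group problem, with $V = \mathcal{F}_k/\mathcal{L}_{k,\g}$ the relatively free Lie algebra on $k$ letters in the variety of $\g$, $E = \g$, and quasi-norm weights $\alpha_i$ (resp.\ $\alpha'_j$) equal to the bracket-weight on $V$ (resp.\ on $E$, or all equal to $1$ on $E$ in the Riemannian case). The matrix exponent $\max_W a/b$ must be divided by the Bass--Guivarc'h exponent $\tau_k = \sum_i i \dim V_i$ of the generic subgroup to recover the group exponent $\widehat{\beta}_k$. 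For the lower bound, take $W = V_s$ to be the top-weight component: then $xW \subseteq \g^{(s)}$, so $\dim xW \leq \dim G^{(s)}$. A direct weight computation gives $a/b \geq \dim V_s/\dim G^{(s)}$ in the Carnot--Carath\'eodory case and $a/b \geq s\dim V_s/\dim G^{(s)}$ in the Riemannian case. Dividing by $\tau_k$, whose leading term in $k$ is $s\dim V_s$ (the contribution of the top-weight part, by representation stability for $\GL_k$ acting on $\mathcal{F}_k/\mathcal{L}_{k,\g}$, cf.\ \cite{church-farb}), yields the asymptotic lower bounds $1/(s\dim G^{(s)})$ and $1/\dim G^{(s)}$ respectively. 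For the upper bound, for any pencil containing $\mathcal{M}$ one has $a \leq \psi(V) = \tau_k$ while $b \geq 1$ (since the $\alpha'_j$ are at least $1$), so $a/b \leq \tau_k$, giving $\widehat{\beta}_k \leq 1$ after normalization.

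The main technical obstacle is to set up the matrix/Lie-group translation carefully, in particular the identification of the quasi-norm weights, so that $\widehat{\beta}_k$ is realized as $\max a/b$ divided by $\tau_k$; once this identification is made, the argument reduces to routine dimension counts on the representation-stable quantities $\dim V_i$ and the fixed dimensions of the lower central series of $G$.
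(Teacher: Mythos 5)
Your proposal is correct and follows essentially the same route as the paper: existence and rationality of the limit come from the ratio of the leading coefficients of $P$ and $Q$ in Theorem~\ref{rationality-stability}; the lower bound is obtained by testing the pencil formula on $W=\cF^{[s]}_{k,\g,\Q}$ and dividing by $\eta_G(k)\sim s\dim\cF^{[s]}_{k,\g,\Q}$; and the upper bound comes from $\psi(V)\leq\eta_G(k)$ (in fact $\psi(V)=\eta_G(k)$) combined with $\phi\geq 1$. The ``technical obstacle'' you flag at the end (the translation of $\widehat\beta_k$ into the matrix exponent $\tau_\Q(\cM)$ divided by $\eta_G(k)$) is not actually an obstacle here, since it was already set up in the proof of Theorem~\ref{rationality-stability} via the identification $\widehat\alpha_k=\eta_G(k)\widehat\beta_k=\tau_\Q(\cM)$.
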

Here $G^{(s)}$ is the last step in the descending central series of $G$.
 
Although we again make the assumption here that the Lie algebra $\g$ is defined over $\Q$, we believe that the above limit is always rational without such a rationality assumption, provided of course that $G$ is diophantine:

\begin{conj}[Rationality conjecture]\label{conjconj} Suppose $G$ is a connected nilpotent real Lie group endowed with a left-invariant geodesic metric. Assume that $G$ is diophantine. Then the limit $$\lim_{k \to +\infty} \widehat{\beta}_k$$
exists and is a rational number.
\end{conj}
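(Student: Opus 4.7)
The plan is to establish a $\GL_k$-equivariant refinement of Theorem \ref{general-inequality} and then invoke representation stability to extract a rational limit. First, as in the proof of Theorem \ref{rationality-stability}, I would translate the $k$-tuple diophantine problem in $G$ into a weighted matrix diophantine approximation problem on a connected analytic submanifold $\mathcal{M}_k \subset \Hom(V_k,\mathfrak{g})$, where $V_k = \mathcal{F}_k/\mathcal{L}_{k,\mathfrak{g}}$ is the relatively free nilpotent Lie algebra associated to $\mathfrak{g}$ on $k$ generators, and $\mathcal{M}_k$ consists of the evaluation maps sending the distinguished generators of $V_k$ to a $k$-tuple in $\mathfrak{g}$. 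A preliminary observation is that $V_k$ always carries a canonical $\Q$-structure, even when $\mathfrak{g}$ does not: the ideal of laws $\mathcal{L}_{k,\mathfrak{g}} \subset \mathcal{F}_k$ is $\GL_k$-invariant (being stable under substitution of the free generators), and any $\GL_k$-invariant subspace of the polynomial representation $\mathcal{F}_k$ is a sum of isotypic Schur components, hence defined over $\Q$.

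Next, I would exploit this $\GL_k$-equivariance. The Pl\"ucker closure $\mathcal{H}(\mathcal{M}_k)$ is $\GL_k$-invariant, and by Theorem \ref{pluckerklein} the generic exponent $\widehat{\beta}_k$ depends only on it. The submodular function $W\mapsto \max_{x\in\mathcal{H}(\mathcal{M}_k)} \phi(xW)$ on $\Grass(V_k)$ is therefore $\GL_k$-invariant, and by the submodularity lemma \ref{submodularity} the subspace $W_k \leq V_k$ realizing the upper bound in \eqref{ratformula2} is unique, hence $\GL_k$-invariant, and by the previous paragraph $\Q$-rational in $V_k$. The analogous submodular analysis on the target side, applied to the canonical lower central filtration of $\mathfrak{g}$ (which is intrinsic and so rational in a metric-independent sense), should let one match the right-hand side of \eqref{ratformula2} with the rational-pencil left-hand side, giving a closed formula for $\widehat{\beta}_k$ in terms of $\GL_k$-representation-theoretic data.

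Finally, I would invoke representation stability in the sense of Church, Ellenberg and Farb for the consistent sequence $\{V_k\}_{k\geq 1}$ of $\GL_k$-modules. Beyond a stable range $k \geq k_1$, the $\GL_k$-invariant subspace $W_k$ is the $k$-th realization of a single limiting $\GL_\infty$-invariant stable pattern, and both the weighted numerator $\psi(\ker x\cap W_k)$ and denominator $\phi(xW_k)$ become polynomials in $k$ of the same degree (equal to the nilpotency class $s$), with rational coefficients determined by stable multiplicities. Consequently $\widehat{\beta}_k$ is eventually a rational function of $k$, and its limit as $k\to\infty$ is the ratio of leading coefficients, which is a rational number.

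The main obstacle is the gap-closing step in paragraph two: in the absence of a rational Zariski closure of $\mathcal{M}_k$, one must show that the upper bound in Theorem \ref{general-inequality} is achieved by a pencil whose numerator \emph{and} denominator are both computed over $\Q$. A promising route is to express the containment $\mathcal{M}_k \subset \mathcal{P}_{W_k,a,b}$ as the vanishing of a $\GL_k$-invariant family of linear relations among Pl\"ucker coordinates; such invariant relations are combinatorial in origin (plethysms, Littlewood--Richardson), hence $\Q$-defined. A secondary concern is that the quasi-norm weights on $\mathfrak{g}$ produced by Berestowski's theorem may themselves be irrational, but these weights multiply numerator and denominator by the same $\GL_k$-invariant factor and should cancel in the limiting ratio.
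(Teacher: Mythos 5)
This statement is an open conjecture in the paper: the authors prove the rational-limit result (Corollary~\ref{theorem-limit}) only under the additional hypothesis that $\g$ is rational, and they explicitly single out Conjecture~\ref{conjconj} as the problem of removing that hypothesis. So there is no ``paper's proof'' to compare against; you are attempting to prove an open problem, and the argument you propose has a genuine gap at exactly the point the authors identify as the obstruction.

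The central error is in your first paragraph, in the claim that $\mathcal{L}_{k,\g}$, being a $\GL_k$-invariant subspace of $\mathcal{F}_k$, ``is a sum of isotypic Schur components, hence defined over $\Q$.'' A $\GL_k$-invariant subspace decomposes as a sum of \emph{irreducible} Schur modules, but when an isotypic component has multiplicity $m\geq 2$, the invariant subspaces inside it are parametrized by the Grassmannian of the $m$-dimensional multiplicity space, and almost none of them are $\Q$-rational. Only full isotypic components are canonically $\Q$-defined; general subrepresentations are not. This is precisely the phenomenon the paper calls attention to (multiplicity in $\mathcal{F}_k^{[s]}$ for $s\geq 6$) and exploits in \cite{abrs} to build non-diophantine nilpotent groups. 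The paper's Remark~\ref{irrational} says it plainly: if $\mathcal{L}_{k,\g}$ is not rational, then $\tau(\mathcal{M})=+\infty$ and the machinery ``cannot conclude anything,'' and the authors can only salvage the rational/irrational-$\g$ situation under special hypotheses such as $\mathcal{F}_{k,\g}$ being multiplicity-free. The same objection undermines your ``promising route'' at the end: $\GL_k$-invariant families of Pl\"ucker relations need not be $\Q$-defined once multiplicity is present, and the appeal to plethysms or Littlewood--Richardson does not change that.

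Once that claim fails, the rest of the strategy does not close: the submodularity lemma does force the extremal $W_k$ to be $\GL_k$-invariant, but $\GL_k$-invariance alone does not give $\Delta$-rationality, so $\tau_\Q(\mathcal{M}_k)$ may be strictly smaller than $\tau(\mathcal{M}_k)$ and Theorem~\ref{uppboundnud} gives only an inequality. The diophantine hypothesis on $G$ in the conjecture does rule out $\widehat{\beta}_k=+\infty$ but does not by itself force the extremal pencil to be rational. Your ``secondary concern'' about irrational weights is actually the lesser issue: the real difficulty, and the reason the paper leaves this as a conjecture, is the failure of $\GL_k$-invariance to force $\Q$-rationality of the optimal subspace in the presence of multiplicity.
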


Recall that $G$ is said to be diophantine if it is diophantine on $k$-letters for every positive integer $k$. In \cite{abrs} we constructed for each integer $k$ a connected nilpotent Lie group that is diophantine on $k$ letters, but not on $k+1$ letters.

\bigskip
In order to apply the diophantine results for submanifolds of matrices expounded at the beginning of this introduction, we will apply Theorem \ref{weighted} with $U=\g^k$ and $V=\mathcal{F}_k/\mathcal{L}_{k,\g}$, $E=\g$ and $\Phi(x)$ the evaluation map at $x=(X_1,\ldots,X_k) \in \g^k$. The quasi-norm on $V$ will be defined in terms of the descending central series of $V$ and the parameters $\alpha_i$ will be integers. The choice of a left-invariant geodesic metric on $G$ will yield a quasi-norm on $E$ and Proposition \ref{words-brackets} will show that the diophantine problem $(\ref{groupdioph})$ translates precisely into $(\ref{diophquasi})$.  It turns out that the submodularity lemma is also of great help for this in the case of nilpotent groups. It will be used this time for the action of $\GL_k$ on $\mathcal{F}_k/\mathcal{L}_{k,\g}$. This will mean that once the representation theory of $\mathcal{F}_k/\mathcal{L}_{k,\g}$ viewed as a $\GL_k$-module is understood well enough, the diophantine exponent can be computed. We will do just that in Section \ref{section:examples} for a number of concrete examples, for example for free nilpotent groups and for metabelian nilpotent groups.

\bigskip

Part of the results proved in this paper were announced in \cite{abrsCRAS}. The paper is organized as follows.
In Section \ref{section:ergodicity} we give the first proof of Theorem \ref{existence} on the existence of the exponent via the ergodicity of the group of rational points of the group of automorphims of the free Lie algebra acting on $k$-tuples.
Section \ref{section:heisenberg} is devoted to an important example, the Heisenberg group: we prove Theorem \ref{rationality-stability} for this group by an ad hoc argument using a Remez-type inequality for quadratic forms. The limitation of this method for tackling the more general nilpotent groups shows the power of the method used later on. Section \ref{quasinorm-sec} is devoted to the Kleinbock-Margulis method and the Dani correspondence, which allows to reformulate the diophantine approximation problem in terms of orbits in the space of lattices.
The proof of Theorem~\ref{uppboundnud} is given in Section~\ref{quasinorm-sec}, and Theorem~\ref{uppboundnudrat} is derived in Section~\ref{section:submodular}, after the submodularity lemma.
In Section \ref{section:critical} we give a second proof of Theorem \ref{existence} and establish the rationality of the exponent -- Theorem \ref{rationality-stability} -- in full generality using the diophantine approximation results for submanifolds of matrices developed in the previous sections.
In Section \ref{section:examples} we compute the exponent explicitly in a number of examples using representation theory of the free Lie algebra.

\bigskip


\bigskip

\noindent \emph{Acknowledgements.} It is a pleasure to thank  V. Beresnevich and D. Kleinbock for interesting discussions in relation to the topics of this paper. We are also grateful to the referee for his comments. The first author acknowledges the support of ISEF, Advanced Research Grant 228304 from the ERC, and SNF Grant 200021-152819. The second author acknowledges support from ERC Grant no 617129 GeTeMo. The third author was supported by the G\"oran Gustafssons Stiftelse f\"or Naturvetenskaplig och Medicinsk Forskning and Vetenskapsradet (grant no. 621-2011-5498).

\section{A zero-one law}\label{section:ergodicity}

We now turn to the problem of diophantine approximation in nilpotent Lie groups.
The purpose of this section is to show the existence of a critical exponent for any simply connected nilpotent real Lie group (not necessarily defined over $\Q$).
Later, in Sections~\ref{section:heisenberg}, \ref{section:critical} and \ref{section:examples}, we will explain how to compute this exponent when the group is defined over the rationals.

In this section, $G$ denotes an arbitrary connected simply connected nilpotent Lie group endowed with a left-invariant distance $d(\cdot,\cdot)$ inducing the topology.
A finitely generated subgroup $\Gamma$ of $G$ will be called \emph{$\beta$-diophantine} if there is a symmetric generating set $S$ of $\Gamma$ and a constant $c>0$ such that for every integer $n$ (recall that $S^n$ denotes the set of products of $n$ elements from $S$),
\begin{equation}\label{betadioph}
\min_{x\in S^n\backslash\{1\}} d(x,1) \geq c\cdot |S^n|^{-\beta}.
\end{equation}
By the Bass-Guivarc'h formula \cite{bassformula,guivarchformula}, we know that, within positive multiplicative constants, $|S^n| \simeq n^\tau$ for some integer $\tau$ depending on $\Gamma$ only. It follows that if (\ref{betadioph}) holds for some generating set $S$, then it will also hold for any other generating set, possibly with a different constant $c$, but with the same $\beta$. We can therefore define the diophantine exponent $\beta(\Gamma)$ of a finitely generated subgroup of $G$ by
\begin{equation}\label{groupexponent}
\beta(\Gamma) = \inf\{\beta \ |\  \Gamma\ \mbox{is $\beta$-diophantine}\}.
\end{equation}

\begin{thm}[Zero-one law]\label{zerone}
Let $G$ be a simply connected nilpotent Lie group, whose abelianization has dimension $d$. Let $k \geq d$ be an integer.
Given $\beta\geq 0$ the set of $k$-tuples $\bg=(g_1,\dots,g_k)$ generating a group $\Gamma_{\bg}$ that is $\beta$-diophantine is either null or co-null for the Haar measure on $G^k$.
\end{thm}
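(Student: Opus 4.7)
Let $E_\beta := \{\bg \in G^k : \Gamma_\bg \text{ is } \beta\text{-diophantine}\}$. The property defining $E_\beta$ depends only on the subgroup $\Gamma_\bg$ and not on the chosen generating $k$-tuple, since definition~(\ref{betadioph}) asks only for the existence of some generating set $S$ of $\Gamma_\bg$ satisfying the stated bound, and by the discussion preceding~(\ref{groupexponent}) the resulting property is independent of this choice. Consequently $E_\beta$ is invariant under the group $N$ of \emph{Nielsen transformations} on $G^k$, generated by the elementary moves $(g_1,\ldots,g_k) \mapsto (g_1,\ldots,g_i g_j,\ldots,g_k)$ for $i\neq j$, by coordinate permutations, and by inversions $g_i\mapsto g_i^{-1}$. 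Each such move is a Haar-measure preserving diffeomorphism of $G^k$, because a connected nilpotent Lie group is unimodular. The theorem therefore reduces to showing that the $N$-action on $(G^k,\mu)$ is ergodic.

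I would prove this ergodicity by induction on the nilpotency class of $G$, using the lower central series $G=G_1\supset G_2\supset\cdots\supset G_{c+1}=\{1\}$. For the base layer, the $N$-equivariant map $G^k \to (G/G_2)^k \cong M_{d,k}(\R)$ intertwines the Nielsen action with the natural right-multiplication action of $GL_k(\Z)$ on $d\times k$ real matrices. The hypothesis $k\geq d$ enters precisely here: the set of matrices of rank less than $d$ is Lebesgue-null, the full-rank locus is a single $GL_k(\R)$-orbit, and the stabilizer of a base point is a non-compact parabolic subgroup. Moore's ergodicity theorem applied to the lattice $SL_k(\Z)\subset SL_k(\R)$ then yields ergodicity on the abelianization.

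For the inductive step, at stage $s\leq c$ one considers the $N$-equivariant projection $(G/G_{s+1})^k \to (G/G_s)^k$, whose fibers are affine spaces modelled on the real vector space $(G_s/G_{s+1})^k$. Assuming by induction that the base action is ergodic, the standard disintegration argument reduces the problem to showing that, for Haar-almost every base point $\bar\bg$, the stabilizer of $\bar\bg$ in $N$ acts ergodically on the fiber. To produce elements of this stabilizer one invokes Magnus's theorem: the kernel $IA_k$ of $N\to GL_k(\Z)$ is contained in every stabilizer and is generated by the conjugations $x_i\mapsto x_j^{-1}x_ix_j$ together with commutator-insertion moves of the form $x_i\mapsto x_i\,[x_{j_1},[x_{j_2},\ldots,[x_{j_{s-1}},x_{j_s}]\ldots]]$. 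On the fiber these moves act on the $i$-th coordinate by translation by the corresponding iterated commutator of the $\bar g_j$'s in $G_s/G_{s+1}$.

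The main obstacle is to verify that for Haar-almost every $\bar\bg$ the countable abelian subgroup of $(G_s/G_{s+1})^k \cong \R^n$ generated by these translations is \emph{dense}, not merely of full real rank. Density is equivalent to the non-vanishing of every non-trivial character of $\R^n$ on the subgroup, which translates to a countable collection of $\Q$-linear relations among polynomial functions of the entries of $\bar\bg$; each such relation cuts out a proper real-algebraic subvariety of $(G/G_s)^k$, so Haar-generic $\bar\bg$ avoid them all, provided the relevant iterated commutators do not satisfy any such relation identically. The assumption $k\geq d$ is decisive here, because it guarantees that the $s$-fold commutators of $\bar g_1,\ldots,\bar g_k$ span $G_s/G_{s+1}$ over $\R$, ruling out identical relations. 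Once this fiber-wise ergodicity is established the induction closes and the theorem follows.
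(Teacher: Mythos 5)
Your reduction of the zero-one law to ergodicity of the Nielsen action on $G^k$ is a genuinely different route from the paper's, and it breaks down precisely at the boundary case $k=d$. On the abelianization $(G/[G,G])^k\cong M_{d,k}(\R)$, the Nielsen group acts through $\GL_k(\Z)$. When $k=d$ the full-rank locus is $\GL_d(\R)$ and the stabilizer of a point is \emph{trivial}, hence compact, so Moore's ergodicity theorem does not apply; indeed $\GL_d(\Z)$ acting on $\GL_d(\R)$ by translation is manifestly non-ergodic (the quotient is the space of lattices). Since the projection $G^d\to(G/[G,G])^d$ is Nielsen-equivariant, non-ergodicity on the base lifts, and the Nielsen action on $G^d$ is not ergodic. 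Yet Theorem~\ref{zerone} is asserted, and is true, for $k=d$. The lesson is that the set of $\beta$-diophantine tuples is invariant under a strictly larger transformation group than the Nielsen moves, and it is the ergodicity of that larger group which must be exploited.

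This is exactly what the paper does: it works with $\Aut(\cF_{k,s})(\Q)$, the group of \emph{rational} points of the real algebraic group $\Aut(\cF_{k,s})$. This group does not preserve $\Gamma_{\bg}$ on the nose, only up to commensurability, which is why the paper needs Lemma~\ref{betacom} (the $\beta$-diophantine property is a commensurability invariant) and Lemma~\ref{commens} (rational automorphisms send $\bg$ to a tuple generating a commensurable subgroup). The payoff is large: $\Aut(\cF_{k,s})(\Q)$ is \emph{dense} in the connected Lie group $\Aut(\cF_{k,s})(\R)$, and the latter has a Zariski-open orbit on $\g^k$ as soon as $k\geq d$. Ergodicity of a dense subgroup on such a homogeneous space is completely elementary (Lemma~\ref{dense} in the paper) and needs neither Moore's theorem, nor Magnus's theorem about $IA_k$, nor any induction on nilpotency class, nor the delicate disintegration and polynomial-density argument you sketch for the inductive step. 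Even for $k>d$, where Moore's theorem does give ergodicity on the abelianization, your inductive step is left far from complete: the general ``base ergodic plus fiberwise-ergodic stabilizers implies total ergodicity'' reduction requires justification (it is not a formal Fubini statement), and the density of the $IA_k$-generated translation subgroup in each fiber would have to be argued carefully. So while your framework correctly identifies that the problem reduces to an ergodicity statement and is more direct at the invariance step (no commensurability bookkeeping), the approach as written has a real gap at $k=d$ and an under-specified inductive step, whereas the paper's dense-subgroup argument closes both issues in one stroke.
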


In particular, there is a number $\widehat{\beta}_k \in [0,+\infty]$ such that
\[ \beta(\Gamma_{\bg}) = \widehat{\beta}_k
\quad\mbox{for almost every $k$-tuple}\ \bg\in G^k.\]
This shows that Theorem \ref{zerone} implies Theorem \ref{existence} from the introduction, in the case $k \geq d$. An alternative proof (including the cases $k <d$) will be given in Section~\ref{section:critical}.
We record here the following open problems:

\bigskip
\noindent (1) Is the set of $k$-tuples $\bg$ such that $\Gamma_\bg$ is $\widehat{\beta}_k$-diophantine null or co-null ? Consistency with the case $G=\R$, where it is known that badly approximable tuples have zero measure, hints that the answer ought to be null, provided $k>\dim G/[G,G]$.

\noindent (2) Is there a Jarník type theorem for $k$-tuples, i.e. a formula for the Hausdorff dimension of the set of $k$-tuples $\bg$ such that $\beta(\Gamma_\bg) >\beta$ for any given $\beta$ ?

\bigskip

The rest of this section is devoted to the proof of Theorem \ref{zerone}, which is based on an ergodicity argument.

\subsection{Ergodic action on $\g^k$ by rational automorphisms}

Let $\cF_{k,s}$ be the free $s$-step nilpotent Lie algebra on $k$ generators. The group $\Aut(\cF_{k,s})$ of linear automorphisms of $\cF_{k,s}$ is an algebraic group defined over $\Q$.
If $\alpha \in \Aut(\cF_{k,s})(\R)$ and $\sx_1,\ldots, \sx_k$ are free generators of $\cF_{k,s}$, then for each $i=1,\ldots,k$, we let $\alpha_i=\alpha(\sx_i)$ and note that for every $\sr \in \cF_{k,s}(\R)$,
$$\alpha(\sr) = \sr(\alpha_1,\ldots,\alpha_k)$$
Let $\g$ be an $s$-step nilpotent real Lie algebra. The group $\Aut(\cF_{k,s})(\R)$ acts on $k$-tuples $\g^k$ as follows: $$\alpha \cdot (X_1,\ldots,X_k) =(\alpha_1(X_1,\ldots,X_k),\ldots,\alpha_k(X_1,\ldots,X_k)).$$
Note that this action is algebraic and preserves the measure class of the Lebesgue measure $\lambda$ on $\g^k$:
For $g\in\Aut(\cF_{k,s})$, the pushforward $g_*\lambda$ is absolutely continuous with respect to $\lambda$, with density given by the Radon-Nikodym cocycle $c(g,\bX)$.
Moreover, the Radon-Nikodym cocycle $c(g,\bX)$ is continuous in $(g,\bX) \in  \Aut(\cF_{k,s})(\R) \times \g^k$.
In fact $\Aut(\cF_{k,s})(\R)$ preserves the commutator ideal $[\cF_{k,s},\cF_{k,s}]$ and thus acts on the quotient $\cF_{k,s}/[\cF_{k,s},\cF_{k,s}]$, which is isomorphic to $\R^k$. This yields a natural epimorphism from the group $\Aut(\cF_{k,s})(\R)$ onto $\GL_k(\R)$ with unipotent kernel; the cocycle is independent of $\bX$ and given by the determinant of the image of $g$ under this epimorphism.

We are going to show:

\begin{proposition}[Ergodic action by automorphisms]\label{erg}
If $k \geq \dim \g/[\g,\g]$, then the action of $\Aut(\cF_{k,s})(\Q)$ on $\g^k$ is ergodic.
\end{proposition}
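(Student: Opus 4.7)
The plan is to exploit the semidirect product decomposition
\[ \Aut(\cF_{k,s}) = N \rtimes \GL_k, \]
where $N$ is the unipotent kernel of the surjection $\Aut(\cF_{k,s})\to\GL_k$ induced by the action on the abelianization $\cF_{k,s}/[\cF_{k,s},\cF_{k,s}]$, and the natural section $\GL_k \hookrightarrow \Aut(\cF_{k,s})$ sending $A\mapsto(\sx_i\mapsto\sum_j A_{ji}\sx_j)$ is defined over $\Q$. An element $\alpha\in N(\Q)$ is parameterized by $(u_1,\dots,u_k)\in[\cF_{k,s},\cF_{k,s}](\Q)^k$ via $\alpha(\sx_i)=\sx_i+u_i$, and acts on $X=(X_1,\dots,X_k)\in\g^k$ by $X\mapsto(X_i+u_i(X))_i$. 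I would prove ergodicity in two stages: first that every $N(\Q)$-invariant measurable function on $\g^k$ factors through the abelianization map $p:\g^k\to(\g/[\g,\g])^k$; then that the induced $\GL_k(\Q)$-action on $(\g/[\g,\g])^k$ is ergodic.

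For the first stage, the action of $N(\Q)$ preserves every fiber of $p$, so it suffices to show ergodicity on almost every fiber. I would argue by induction along the descending-central-series filtration $V_j:=(\g^{(j)})^k$ of $\g^k$ paired with the companion filtration $N^{(j)}:=\{\alpha\in N:\alpha(\sx_i)-\sx_i\in\cF_{k,s}^{(j)}\}$ of $N$. The key computation is that for $\alpha\in N^{(j)}$ and $t\in V_2$, replacing any $\sx$-slot of $u_i\in\cF_{k,s}^{(j)}$ by a component of $t$ raises bracket depth by at least one, so $u_i(X+t)\equiv u_i(X)\pmod{\g^{(j+1)}}$. Hence the induced action of $N^{(j)}(\Q)/N^{(j+1)}(\Q)$ on $V_j/V_{j+1}$ is by translation through a $\Q$-subspace $W^{(j)}_{p(X)}\subset(\g^{(j)}/\g^{(j+1)})^k$ depending only on $p(X)$. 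For a generic $X$ whose abelianization spans $\g/[\g,\g]$ — a full-measure condition because $k\geq d:=\dim \g/[\g,\g]$ — the tuple $X$ generates $\g$, so the $\R$-span of $W^{(j)}_{p(X)}$ is all of $(\g^{(j)}/\g^{(j+1)})^k$, making $W^{(j)}_{p(X)}$ analytically dense. Translations by a dense countable subgroup of a finite-dimensional real vector space are Lebesgue-ergodic (standard Fourier argument, or equivalently, the essential stabilizer in $L^\infty$ is weak-$*$ closed), so stepping down the filtration from $j=s$ to $j=2$ yields ergodicity of $N(\Q)$ on almost every fiber of $p$, whence the desired descent by Fubini.

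For the second stage, identify $(\g/[\g,\g])^k$ with $M_{d,k}(\R)$; then $\GL_k(\R)$ acts by right multiplication with a full-measure open orbit $\cO$ consisting of the rank-$d$ matrices, equivariantly identified with $\GL_k(\R)/P$ for $P$ the stabilizer of a fixed surjective basepoint. Since $\Q$ is dense in $\R$ and the determinant is a non-vanishing continuous polynomial on $\GL_k(\R)$, the subgroup $\GL_k(\Q)$ is analytically dense in $\GL_k(\R)$. Any $\GL_k(\Q)$-invariant measurable $f$ on $\cO$ lifts through $\pi:\GL_k(\R)\to\GL_k(\R)/P$ to a left-$\GL_k(\Q)$-invariant function $\tilde f\in L^\infty(\GL_k(\R))$; the essential stabilizer $\{g\in\GL_k(\R):\tilde f(g\,\cdot\,)=\tilde f\text{ in }L^\infty\}$ is closed for the weak-$*$ topology and contains the dense subgroup $\GL_k(\Q)$, hence equals $\GL_k(\R)$, forcing $\tilde f$ and therefore $f$ to be a.e.\ constant.

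The main obstacle is the first stage, because the $N(\Q)$-action on $\g^k$ is genuinely nonlinear in the base point: the translation part $(u_i(X))_i$ is a polynomial of bracket degree at least two in $X$. The descending central series filtration is the tool that linearizes the action on each successive graded piece, and it is the density of the $\Q$-subspaces $W^{(j)}_{p(X)}$ — itself a direct consequence of $X$ generating $\g$ when $k\geq d$ — that lets one convert graded ergodicities into ergodicity of $N(\Q)$ on a whole fiber. The hypothesis $k\geq d$ enters exactly at this juncture: it is precisely what guarantees that the generic $X$ generates $\g$, so that all the evaluation maps $\cF_{k,s}^{(j)}/\cF_{k,s}^{(j+1)}\to\g^{(j)}/\g^{(j+1)}$ are surjective, and the density holds at every level of the filtration.
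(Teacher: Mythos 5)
Your proof is correct, but it takes a genuinely different route from the paper's. The paper proves ergodicity in a single stroke: it first observes that $\Aut(\cF_{k,s})(\R)$ acts transitively on the Zariski-open set of tuples whose abelianizations span $\g/[\g,\g]$ (a full-measure set precisely when $k\geq d$), so that the generic orbit is a homogeneous space $G/H$; it then invokes its Lemma~\ref{dense}, a general statement that a dense subgroup of a real Lie group acts ergodically on any homogeneous space carrying a quasi-invariant measure with continuous Radon--Nikodym cocycle, applied to $\Aut(\cF_{k,s})(\Q)\leq\Aut(\cF_{k,s})(\R)$. You instead dismantle the group along the Levi decomposition $N\rtimes\GL_k$ and argue in two independent stages: first, a filtration-by-graded-pieces argument showing that $N(\Q)$-invariance forces a function to factor through $p\colon\g^k\to(\g/[\g,\g])^k$, using the density of the $\Q$-lattices $W^{(j)}_{p(X)}$ in $(\g^{(j)}/\g^{(j+1)})^k$ for generic $X$ (and $k\geq d$ enters here, through the fact that a generic tuple then generates $\g$, so all evaluation maps are onto); second, density of $\GL_k(\Q)$ in $\GL_k(\R)$ to conclude ergodicity on the open orbit in the base. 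Both approaches hinge on $\Q$-density, but yours applies it twice in two elementary guises (dense translations of a vector space, dense subgroup of $\GL_k$ acting by translations on itself) rather than once through the abstract homogeneous-space lemma; the paper's proof is more compact, while yours avoids cocycle considerations and makes the role of the graded Lie structure and the bracket-depth estimate explicit. One step worth spelling out in your Stage~1 is the descent: when passing from $j+1$ to $j$, the already-established $V_{j+1}$-invariance of $f$ is exactly what converts the a.e.\ identity $f(\alpha t)=f(t)$ into the genuine translation invariance $\bar f([t]+\bar w)=\bar f([t])$ on $V_2/V_{j+1}$, since $\alpha t - t$ differs from the fixed lift of $w\in W^{(j)}_{p(X)}$ by a $t$-dependent element of $V_{j+1}$; this is why the induction must run downward from $j=s$.
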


To prove this, we first recall:

\begin{lemma}\label{dense}
Let $G$ be a real Lie group, $H \leq G$ a closed subgroup and $\nu$ a quasi-invariant measure on $G/H$ with continuous Radon-Nikodym cocycle.
Then every dense subgroup of $G$ acts ergodically on $G/H$.
\end{lemma}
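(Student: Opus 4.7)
The plan is to show that for any dense subgroup $\Gamma\leq G$, every $\Gamma$-invariant measurable subset $A\subset G/H$ (with respect to $\nu$) is automatically $G$-invariant mod $\nu$, and then conclude by transitivity of $G$ on $G/H$ that $\nu(A)=0$ or $\nu(A^c)=0$.

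To carry this out, I would introduce the \emph{stabilizer} $H_A:=\{g\in G \ |\ gA=A \text{ mod } \nu\}$, which is clearly a subgroup of $G$ containing $\Gamma$. The key step is to prove that $H_A$ is closed in $G$: since $\Gamma\subset H_A\subset G$ with $\Gamma$ dense and $H_A$ closed, we then immediately get $H_A=G$. To prove closedness, the natural tool is the unitary representation $\pi:G\to U(L^2(G/H,\nu))$ given by
\[ (\pi(g)f)(x) = c(g^{-1},x)^{1/2}\, f(g^{-1}x), \]
where $c(g,x) = \frac{d(g_*\nu)}{d\nu}(x)$ is the Radon-Nikodym cocycle. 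Quasi-invariance of $\nu$ ensures $\pi(g)$ is unitary and the cocycle identity gives the homomorphism property. The continuity hypothesis on $c$, combined with the standard density argument (approximate $L^2$-functions by elements of $C_c(G/H)$, for which strong continuity is immediate by dominated convergence), gives the \emph{strong continuity} of $\pi$.

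Granted this, suppose $g_n\in H_A$ and $g_n\to g$. On the one hand, $\pi(g_n)\mathbf{1}_A = c(g_n^{-1},\cdot)^{1/2}\mathbf{1}_{g_nA} = c(g_n^{-1},\cdot)^{1/2}\mathbf{1}_A$ since $g_nA=A$ mod $\nu$; by pointwise continuity of $c$ and dominated convergence (on a set where $c(g_n^{-1},\cdot)$ is locally bounded, using continuity), this converges in $L^2$ to $c(g^{-1},\cdot)^{1/2}\mathbf{1}_A$. On the other hand, strong continuity of $\pi$ yields $\pi(g_n)\mathbf{1}_A \to \pi(g)\mathbf{1}_A = c(g^{-1},\cdot)^{1/2}\mathbf{1}_{gA}$ in $L^2$. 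Comparing the two limits and dividing by the strictly positive factor $c(g^{-1},\cdot)^{1/2}$, we obtain $\mathbf{1}_{gA}=\mathbf{1}_A$ a.e., i.e.\ $g\in H_A$.

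Finally, with $H_A=G$, the set $A$ is $G$-invariant mod $\nu$. A Fubini argument on $G\times G/H$ applied to the measurable set $\{(g,x) \ |\ \mathbf{1}_A(g^{-1}x)\neq \mathbf{1}_A(x)\}$ shows that $\mathbf{1}_A$ is essentially constant along almost every $G$-orbit; since $G$ acts transitively on $G/H$ and Haar measure on $G$ pushes forward to a measure in the class of $\nu$, we conclude that $\mathbf{1}_A$ is $\nu$-a.e.\ constant, so $\nu(A)=0$ or $\nu(A^c)=0$. The main obstacle in this plan is the verification of strong continuity of $\pi$ in the quasi-invariant setting—this is where the continuity of $c$ is indispensable—but this is essentially a standard technical lemma in the representation theory of locally compact groups on quasi-invariant measure spaces.
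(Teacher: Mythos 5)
Your proof has the same overall skeleton as the paper's: show that the a.e.\ stabilizer of the invariant object is a closed subgroup (hence equal to $G$, since it contains a dense subgroup), and then conclude with the Fubini argument on $G\times G/H$. The difference is the functional-analytic vehicle: you use the unitary representation on $L^2(G/H,\nu)$ built from the half-density cocycle, whereas the paper works with the dual pair $(L^\infty, L^1)$, pairing $f\in L^\infty$ against $\phi\in C_c(G/H)$ and exploiting the uniform convergence of $x\mapsto\phi(g_n x)c(g_n,x)$ to deduce weak* continuity of the $G$-action on $L^\infty$. Each approach buys something: the $L^2$ route gives you the clean closed-subgroup lemma for unitary representations, while the $L^\infty$ route handles invariant $f$ of any sign directly and — more to the point — avoids any integrability issue.

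That integrability issue is the one genuine gap in your write-up. The lemma is stated for an arbitrary quasi-invariant measure $\nu$ on $G/H$, which in the paper's application (pushforward of Lebesgue measure on $\g^k$) is $\sigma$-finite but typically infinite. If $\nu(A)=\infty$ then $\mathbf{1}_A\notin L^2(\nu)$ and the argument with $\pi(g_n)\mathbf{1}_A$ does not apply as written. You would need either to replace $\nu$ by an equivalent probability measure (possible by $\sigma$-finiteness, with a correspondingly modified cocycle) or to run the argument with suitable truncations of $\mathbf{1}_A$, or simply to switch to the $L^\infty$ formulation as the paper does. A smaller point: the appeal to "dominated convergence on a set where $c(g_n^{-1},\cdot)$ is locally bounded" is not justified when $A$ is not relatively compact; a cleaner way to extract $gA=A$ from $\pi(g_n)\mathbf{1}_A\to\pi(g)\mathbf{1}_A$ in $L^2$ is simply to note that each $\pi(g_n)\mathbf{1}_A$ is supported on $A$ (mod $\nu$) because $g_nA=A$, that the set of $L^2$-functions supported on $A$ is closed, and that the support of $\pi(g)\mathbf{1}_A$ is $gA$ (since $c>0$); this gives $gA\subset A$ mod $\nu$, and applying it to $g^{-1}$ gives equality. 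With these repairs your proof is correct and is a legitimate alternative to the paper's.
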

\begin{proof} We need to show that if $f \in L^{\infty}(G/H)$ is invariant under a dense subgroup of $G$, then it is constant almost everywhere.
Let $c:G \times G/H \to \R$ be the Radon-Nikodym cocycle, i.e. $\frac{d(g_*\nu)}{d\nu}(x)=c(g^{-1},x)$.
Then for every continuous and compactly supported function $\phi$ on $G/H$ and every sequence of elements $g_n \in G$ converging to $g \in G$, the sequence of functions $x \mapsto \phi(g_nx)c(g_n,x)$ is uniformly converging to $\phi(gx)c(g,x)$ on $G/H$. It follows that for every $f \in L^{\infty}(G/H)$, $\langle g_nf,\phi \rangle := \int_{G/H} f(g_n^{-1}x)\phi(x)d\nu(x)$ converges to $\langle gf,\phi \rangle$. Therefore, by duality, if $f\in L^{\infty}(G/H)$ is invariant under a dense subgroup of $G$, then for every $g\in G$  we have $gf=f$ almost everywhere.
The measurable set $\Omega=\{(g,x) \in G\times G/H\ |\ f(g^{-1}x) \neq f(x)\}$ is such that every vertical fiber $\Omega_g:=\{x \in G/H\ |\ f(g^{-1}x) \neq f(x)\}$ has $\nu$-measure zero.
By Fubini's theorem, $\Omega$ has measure zero, and $\nu$-almost every horizontal fiber $\Omega^x=\{g \in G \ |\  f(g^{-1}x) \neq f(x)\}$ has measure zero.
Fix $x$ such that $\Omega^x$ has zero measure.
The set $E=\{g^{-1}\cdot x\ ;\ g\in G\setminus\Omega^x\}$ has full measure in $G/H$ and $f$ is constant on $E$.
\end{proof}

\begin{proof}[Proof of Proposition \ref{erg}] The group $\Aut(\cF_{k,s})$ is an algebraic group defined over $\Q$. Hence the group of $\Q$-points $\Aut(\cF_{k,s})(\Q)$ is dense in the group of $\R$-points $\Aut(\cF_{k,s})(\R)$ (in this case this can be checked directly on the reductive part, which is $\GL_k$, and the unipotent part). Therefore it suffices to show that  $\Aut(\cF_{k,s})(\R)$ admits a Zariski open orbit on $\g^k$, when  $k \geq d=\dim(\g/[\g,\g])$. Indeed, its complement will have Lebesgue measure zero, while the open orbit will be a homogeneous space $G/H$
of $G:=\Aut(\cF_{k,s})(\R)$ with Lebesgue measure coming from $\g^k$ as quasi-invariant measure. Lemma \ref{dense} then implies that $\Aut(\cF_{k,s})(\Q)$ acts ergodically.

To see that $\Aut(\cF_{k,s})(\R)$ admits a Zariski open orbit when $k\geq d$, observe that any two $k$-tuples $\mathbf{X} , \mathbf{X}'$ of points in $\g^k$ with the property that their reductions modulo $[\g,\g]$ generate $\g/[\g,\g]$ as a vector space must be in the same orbit of $\Aut(\cF_{k,s})(\R)$.
Indeed, since $k\geq d=\dim (\g/[\g,\g])$, we can find an element of $\GL_k(\R)$ such that $g\mathbf{X}$ and $\mathbf{X}'$ have the same reduction modulo $[\g,\g]$. We can thus assume that $\mathbf{X}' - \mathbf{X}$ belongs to $[\g,\g]^k$.
Now, the fact that the tuple $\mathbf{X}=(X_1,\dots,X_k)$ generates $\g$ modulo $[\g,\g]$ implies that every element of $[\g,\g]$ can be written as $\alpha(X_1,\ldots,X_k)$ for some $\alpha \in [\cF_{k,s},\cF_{k,s}]$.
Therefore $\mathbf{X}' - \mathbf{X} = (\alpha_1(\mathbf{X}),\dots,\alpha_k(\mathbf{X}))$ for some $\alpha_i \in [\cF_{k,s},\cF_{k,s}]$. But every endomorphism of $\cF_{k,s}$ of the form $\sx_i \mapsto \sx_i + \alpha_i(\sx_1,\ldots,\sx_k)$ with $\alpha_i \in [\cF_{k,s},\cF_{k,s}]$ is invertible, hence belongs to $\Aut(\cF_{k,s})(\R)$ as desired.

To finish, simply note that when $k \geq d$, the set of $k$-tuples $\mathbf{X}$ of points in $\g^k$ such that their reduction modulo $[\g,\g]$ spans $\g/[\g,\g]$ is a non-empty Zariski open subset of $\g^k$.
\end{proof}

\subsection{Critical exponent of a nilpotent Lie group}

We will  prove below Theorem \ref{zerone}. It will be a consequence of Proposition~\ref{erg} and Proposition~\ref{invariance} below. In the statement of this proposition, we identify the connected and  simply connected nilpotent Lie group $G$ with its Lie algebra $\g$ via the exponential map $\exp: \g \to G$, which is a diffeomorphism. This allows to view  $\Aut(\cF_{k,s})(\R)$ as acting on $G^k$ rather than $\g^k$. 

\begin{proposition}\label{invariance} Given $\beta\geq 0$, the set of $\beta$-diophantine $k$-tuples in $G^k$ is Lebesgue measurable and invariant under the action of $\Aut(\cF_{k,s})(\Q)$.
\end{proposition}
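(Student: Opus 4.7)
My plan is to treat measurability and $\Aut(\cF_{k,s})(\Q)$-invariance separately.

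For measurability, I would parametrize the finite set $W_n$ of words of length at most $n$ in $\{1,\ldots,k\}^{\pm 1}$. Each evaluation $\bg\mapsto w(\bg)$ is continuous on $G^k$, so the cardinality $N_n(\bg):=|\{w(\bg):w\in W_n\}|=|S_\bg^n|$ is a Borel-measurable integer-valued function (it is determined by finitely many closed conditions $w(\bg)=w'(\bg)$). The $\beta$-diophantine set is then
\[
\bigcup_{m\in\N}\bigcap_{n\in\N}\bigcap_{w\in W_n}\bigl\{\bg:w(\bg)=1\ \text{or}\ d(w(\bg),1)\geq m^{-1}N_n(\bg)^{-\beta}\bigr\},
\]
which is Borel, hence Lebesgue measurable.

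For invariance, the central step is to prove that for every $\alpha\in\Aut(\cF_{k,s})(\Q)$, the subgroups $\Gamma_\bg$ and $\Gamma_{\alpha\cdot\bg}$ are commensurable in $G$, with commensurability index depending only on $\alpha$. Writing $\alpha_i=\alpha(\sx_i)\in\cF_{k,s}(\Q)$ and choosing $N\in\N$ clearing the denominators of all $\alpha_i$ in a Hall basis of $\cF_{k,s}$, the Baker--Campbell--Hausdorff formula in the free $s$-step nilpotent Lie group $F_{k,s}(\R)$ should yield a universal integer $M_0=M_0(k,s)$ and, for each $i$, a group word $w_i$ in $k$ letters such that the identity
\[
w_i(\exp\sx_1,\ldots,\exp\sx_k)=\exp\bigl(M_0 N\,\alpha_i(\sx_1,\ldots,\sx_k)\bigr)
\]
holds in $F_{k,s}(\R)$. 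Specializing $\sx_j\mapsto\log g_j$ yields $(g'_i)^{M_0N}=w_i(\bg)\in\Gamma_\bg$ uniformly in $\bg$. Since in any finitely generated torsion-free nilpotent group the subgroup generated by fixed powers of a generating set has finite index, this produces a finite-index subgroup of $\Gamma_{\alpha\bg}$ contained in $\Gamma_\bg$; applying the same argument to $\alpha^{-1}$ gives the symmetric inclusion, and hence commensurability.

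To conclude, I would verify that $\beta$-diophantineness is preserved by commensurability. If $\Lambda$ has finite index $M$ in a finitely generated torsion-free nilpotent subgroup $\Gamma\leq G$, then Bass--Guivarc'h gives the same growth exponent $|S_\Gamma^n|\asymp|S_\Lambda^n|\asymp n^\tau$. For any $\gamma\in\Gamma\setminus\{1\}$, torsion-freeness gives $\gamma^M\in\Lambda\setminus\{1\}$, left-invariance gives $d(\gamma^M,1)\leq M\,d(\gamma,1)$, and the quasi-isometric inclusion of $\Lambda$ into $\Gamma$ yields $\ell_\Lambda(\gamma^M)\lesssim M\,\ell_\Gamma(\gamma)$; so a $\beta$-diophantine lower bound for $\gamma^M$ inside $\Lambda$ transfers to one for $\gamma$ inside $\Gamma$ with the same exponent (only the constant changes), and the reverse implication is immediate.

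The main obstacle is the identity $w_i(\exp\sx_1,\ldots,\exp\sx_k)=\exp(M_0 N\alpha_i(\sx))$, which requires exhibiting a universal integer $M_0(k,s)$ clearing the denominators appearing in BCH-type expansions up to step $s$. This is a classical consequence of Malcev's theory of coordinates on simply connected nilpotent Lie groups, but it must be invoked with some care to ensure that $M_0$ depends only on $(k,s)$, so that the resulting inclusions are uniform in~$\bg$. Once this is granted, the commensurability and the finite-index length comparisons are straightforward.
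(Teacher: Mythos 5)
Your proposal follows essentially the same three-step structure as the paper's proof: (i) measurability via a countable Borel description, (ii) commensurability of $\Gamma_\bg$ and $\Gamma_{\alpha\cdot\bg}$ for $\alpha\in\Aut(\cF_{k,s})(\Q)$, and (iii) stability of $\beta$-diophantineness under commensurability. Steps (ii) and (iii) correspond exactly to the paper's Lemmas \ref{commens} and \ref{betacom}, and the BCH/Malcev argument you sketch to produce the uniform integer $M_0$ is precisely what is encapsulated in the paper's citation of \cite[Lemma~3.5]{abrs}.

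There is one small but genuine slip in your version of step (iii). You assert that if $\Lambda$ has finite index $M$ in $\Gamma$, then $\gamma^M\in\Lambda$ for every $\gamma\in\Gamma$. This is only guaranteed when $\Lambda$ is normal (so that $\Gamma/\Lambda$ is a group of order $M$ and Lagrange applies); for a general finite-index subgroup the best you get is some power $\gamma^m\in\Lambda$ with $m\leq M$, hence $\gamma^{M!}\in\Lambda$, or else you should first replace $\Lambda$ by its normal core in $\Gamma$. The paper handles this point carefully in Lemma~\ref{betacom} by passing to a normal finite-index subgroup $\Gamma_0 < \Gamma\cap\Gamma'$ and then using the exponent $p$ of the quotient $\Gamma'/\Gamma_0$. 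The rest of your argument (the length comparison $\ell_\Lambda(\gamma^m)\lesssim m\,\ell_\Gamma(\gamma)$ via the quasi-isometric embedding, the matching Bass--Guivarc'h growth exponents, and the reverse transfer from $\Gamma$ to $\Lambda$) goes through once this is fixed.
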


Recall that two groups $\Gamma$ and $\Gamma'$ are \emph{commensurable} if their intersection has finite-index in both $\Gamma$ and $\Gamma'$.
We divide the proof of Proposition~\ref{invariance} into two lemmas.

\begin{lemma}\label{betacom}
Let $G$ be a simply connected nilpotent Lie group equipped with a left-invariant distance $d(\cdot,\cdot)$, and $\Gamma$ a finitely generated subgroup of $G$.
If $\Gamma$ is $\beta$-diophantine, then any subgroup of $G$ commensurable to $\Gamma$ is also $\beta$-diophantine.
\end{lemma}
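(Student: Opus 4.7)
The plan is to split the problem along the commensurability diagram. If $\Gamma'\leq G$ is commensurable to $\Gamma$, set $\Gamma_0=\Gamma\cap\Gamma'$, which is finite index in both. It will suffice to prove two statements about an arbitrary finite-index inclusion $H\leq K$ of finitely generated subgroups of $G$: (A) if $K$ is $\beta$-diophantine then so is $H$, and (B) if $H$ is $\beta$-diophantine then so is $K$. Applying (A) to $\Gamma_0\leq\Gamma$ and then (B) to $\Gamma_0\leq\Gamma'$ yields the lemma. Throughout I will use the Bass-Guivarc'h fact, already invoked in the text, that commensurable finitely generated subgroups of $G$ share the same growth exponent $\tau$, so that $|S_H^n|\asymp|S_K^n|\asymp n^\tau$ for any finite symmetric generating sets, allowing one to freely swap growth rates in the bounds.

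Direction (A) is elementary: a fixed finite symmetric generating set $S_H$ of $H$ satisfies $S_H\subset S_K^C$ for some constant $C$ obtained by writing each element of $S_H$ as a word in $S_K$. Hence $S_H^n\subset S_K^{Cn}$, and the $\beta$-diophantine inequality for $K$ applied at length $Cn$ transfers directly to $H$, up to a harmless adjustment of the constant coming from $|S_K^{Cn}|\asymp|S_H^n|$.

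The crux is direction (B). I would first reduce to the case where $H\trianglelefteq K$ by replacing $H$ with its normal core $\bigcap_{k\in K}kHk^{-1}$, which still has finite index in $K$ and by (A) is still $\beta$-diophantine. With $N=[K:H]$ and a finite symmetric generating set $S_K$ of $K$ containing a system of coset representatives, the Schreier rewriting process furnishes a finite symmetric generating set $S_H$ of $H$ (explicitly $S_H=\{t_i s t_j^{-1}\in H:s\in S_K\}$) with the property that $S_K^m\cap H\subset S_H^m$ for every $m$. Now given $x\in S_K^n\setminus\{1\}$, the index argument gives $x^N\in H$, while torsion-freeness of the simply connected nilpotent group $G$ guarantees $x^N\neq 1$. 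Hence $x^N\in S_H^{Nn}\setminus\{1\}$, and the $\beta$-diophantine property of $H$ gives $d(x^N,1)\geq c\,|S_H^{Nn}|^{-\beta}$. Left-invariance of $d$, via the telescoping identity $d(x^k,x^{k-1})=d(x,1)$, yields the crucial estimate $d(x^N,1)\leq N\cdot d(x,1)$. Combining and using $|S_H^{Nn}|\asymp|S_K^n|$ gives $d(x,1)\geq c'|S_K^n|^{-\beta}$, proving (B).

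The main obstacle is precisely this transfer step in direction (B): one must extract a lower bound on $d(x,1)$ for $x\in K$ from a Diophantine hypothesis that concerns only elements of $H$. The whole scheme hinges on two conveniently compatible facts---the index argument automatically sends $x^N$ into $H$, and left-invariance controls $d(x^N,1)$ linearly by $N\cdot d(x,1)$---while torsion-freeness of $G$ is essential to ensure $x^N\neq 1$ so that the Diophantine assumption on $H$ may actually be invoked.
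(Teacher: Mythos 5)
Your proposal is correct and takes essentially the same route as the paper: pass to a finite-index normal subgroup $\Gamma_0$ of $\Gamma'$ contained in $\Gamma$, use the power argument $\gamma^p\in\Gamma_0$ for $\gamma\in\Gamma'$, and transfer the Diophantine lower bound via $d(\gamma,1)\geq\frac{1}{p}d(\gamma^p,1)$ together with polynomial growth of balls. Your explicit appeal to torsion-freeness of $G$ to ensure $\gamma^p\neq 1$ is a detail the paper leaves implicit.
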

\begin{proof}
Let $\Gamma$ be $\beta$-diophantine and $\Gamma'$ commensurable to $\Gamma$.
Then $\Gamma\cap\Gamma'$ has finite-index in $\Gamma'$ and therefore, there exists a normal subgroup $\Gamma_0<\Gamma\cap\Gamma'$ that has finite-index in $\Gamma'$.
In particular, for some integer $p$, any element $\gamma\in\Gamma'$ has $\gamma^p\in\Gamma_0$.
Moreover, $\Gamma_0$ is included in $\Gamma$, so it is $\beta$-diophantine.
Let $S$ and $S_0$ be symmetric generating sets for $\Gamma'$ and $\Gamma_0$, respectively.
Since $\Gamma_0$ has finite index in $\Gamma'$, there exists a constant $C$ such that for all integer $n\geq 1$, $\Gamma_0\cap S^n\subset S_0^{Cn}$.
Now suppose $\gamma\in\Gamma'$ is an element of $S^n\backslash\{1\}$.
Then $\gamma^p$ is an element of $\Gamma_0\cap S^{pn}$ and therefore
$$\gamma^p \in S_0^{Cpn}.$$
Using that $|S_0^n|$ grows polynomially and the fact that $\Gamma_0$ is $\beta$-diophantine, this implies
$$d(\gamma^p, 1) \gg |S_0^n|^{-\beta},$$
where $\gg$ means $\geq$ up to a positive multiplicative constant, and in turn,
$$d(\gamma,1) \geq \frac{1}{p}d(\gamma^p,1) \gg |S_0^n|^{-\beta}
\gg |S^n|^{-\beta}.$$
\end{proof}

Our second lemma is as follows.

\begin{lemma}\label{commens}
Let $\mathbf{X}:=(X_1,\ldots,X_k) \in \g^k$, let $\alpha \in \Aut(\cF_{k,s})(\Q)$ and set $\mathbf{X}':=\alpha(\mathbf{X}) \in \g^k$.
Then the subgroup of $G$ generated by $e^{X_1},\ldots,e^{X_k}$ is commensurable to the subgroup generated by $e^{X'_1},\ldots,e^{X'_k}$.
\end{lemma}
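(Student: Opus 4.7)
The strategy is to lift the problem to the free nilpotent Lie group and reduce to a commensurability statement about lattices there. Let $F_{k,s}$ denote the connected simply connected Lie group with Lie algebra $\cF_{k,s}$. By the universal property of the free nilpotent Lie algebra, the assignment $\sx_i\mapsto X_i$ extends to a Lie algebra homomorphism $\cF_{k,s}\to\g$, which integrates (since $F_{k,s}$ is simply connected) to a Lie group homomorphism $\pi:F_{k,s}(\R)\to G$ with $\pi(e^{\sx_i})=e^{X_i}$. Similarly, $\alpha$ integrates to a Lie group automorphism of $F_{k,s}(\R)$, still denoted $\alpha$, satisfying $\alpha(e^Y)=e^{\alpha(Y)}$. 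Setting $\Lambda:=\langle e^{\sx_1},\ldots,e^{\sx_k}\rangle$, we obtain $\pi(\Lambda)=\langle e^{X_1},\ldots,e^{X_k}\rangle$ and, since $\pi(e^{\alpha(\sx_i)})=e^{\alpha_i(\bX)}=e^{X'_i}$, also $\pi(\alpha(\Lambda))=\langle e^{X'_1},\ldots,e^{X'_k}\rangle$.

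The key step is to show that $\Lambda$ and $\alpha(\Lambda)$ are commensurable subgroups of $F_{k,s}(\R)$. Both are cocompact lattices: the first is the standard Malcev lattice of the free nilpotent group, and the second is its image under a Lie group automorphism. Moreover, since $\alpha\in\Aut(\cF_{k,s})(\Q)$, the elements $\alpha_i=\alpha(\sx_i)$ lie in the rational form $\cF_{k,s}(\Q)$. In the $s$-step nilpotent setting the Baker--Campbell--Hausdorff series truncates to a finite sum with rational coefficients, hence $F_{k,s}(\Q):=\exp(\cF_{k,s}(\Q))$ is a subgroup of $F_{k,s}(\R)$, and both $\Lambda$ and $\alpha(\Lambda)$ are contained in it. A classical theorem of Malcev then asserts that any two lattices in a simply connected nilpotent Lie group with a common $\Q$-rational Malcev completion are commensurable; both $\Lambda$ and $\alpha(\Lambda)$ have Malcev completion equal to $F_{k,s}(\Q)$, so they are commensurable.

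To conclude, we descend commensurability via the homomorphism $\pi$: if $\Delta:=\Lambda\cap\alpha(\Lambda)$ has finite index in each of $\Lambda$ and $\alpha(\Lambda)$, then $\pi(\Delta)$ has finite index in each of $\pi(\Lambda)=\Gamma_{\bg}$ and $\pi(\alpha(\Lambda))=\Gamma'_{\bg}$; since $\pi(\Delta)\subset\Gamma_{\bg}\cap\Gamma'_{\bg}$, this intersection has finite index in both, as required.

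The main obstacle is the commensurability claim in the second paragraph. An alternative, more hands-on route avoiding a black-box appeal to Malcev's theorem would be to use that $\alpha$ and $\alpha^{-1}$, being $\Q$-rational, have bounded denominators in a fixed Hall basis, so that a suitable finite-index power sublattice of $\Lambda$ is mapped into $\alpha(\Lambda)$ and conversely; combined with the fact that in a finitely generated torsion-free nilpotent group the subgroup generated by $n$-th powers has finite index, this yields the common finite-index subgroup directly.
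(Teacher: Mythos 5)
Your proof is correct and follows a genuinely different route from the paper's. You lift to the free $s$-step nilpotent Lie group $F_{k,s}(\R)$: the standard Mal'cev lattice $\Lambda=\langle e^{\sx_1},\ldots,e^{\sx_k}\rangle$ and its image $\alpha(\Lambda)$ both lie in $F_{k,s}(\Q)$ (because $\alpha$ is $\Q$-rational and the truncated Baker--Campbell--Hausdorff formula has rational coefficients), hence determine the same $\Q$-structure on $\cF_{k,s}$ and are commensurable by Mal'cev's rigidity theorem for rational lattices in nilpotent Lie groups; commensurability then descends along the homomorphism $\pi\colon F_{k,s}(\R)\to G$ induced by $\sx_i\mapsto X_i$. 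The paper instead argues directly in $G$: by rationality of $\alpha$ there is an integer $N$ with $NX_i'\in\cF_{k,s}(\Z)(X_1,\ldots,X_k)$ for each $i$, and by \cite[Lemma~3.5]{abrs} an integer $M$ such that $e^{M\sr(X_1,\ldots,X_k)}$ lies in the subgroup generated by the $e^{X_j}$ for every $\sr\in\cF_{k,s}(\Z)$; thus each $(e^{X_i'})^{MN}$ lies in that subgroup, and the finite-index property of power subgroups of finitely generated nilpotent groups \cite[Lemma~4.4]{raghunathan}, together with the symmetric argument, yields commensurability. Your lift is tidier and more conceptual, but it leans on a black-box commensurability theorem for rational lattices; the paper's argument is more self-contained given its earlier word-to-bracket lemma, which encodes the same rationality input quantitatively. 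The alternative you sketch at the end is essentially the paper's argument, moved upstairs to the free group.
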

\begin{proof}
Recall that if $\gamma_1,\ldots,\gamma_k$ generate a nilpotent group, then for every integer $n \geq 1$, the subgroup generated by the powers $\gamma_1^n,\ldots,\gamma_k^n$ has finite index \cite[Lemma 4.4.]{raghunathan}.
By assumption, there is an integer $N$ such that each $NX'_i$ belongs to $\cF_{k,s}(\Z)(X_1,\ldots,X_k)$, that is $NX'_i$ is an integer linear combination of commutators in $X_1,\ldots,X_k$. However recall \cite[Lemma 3.5.]{abrs} that there is an integer $M$ such that $e^{M\sr(X_1,\ldots,X_k)}$ belongs to the subgroup generated by $e^{X_1},\ldots,e^{X_k}$ for any $\sr \in \cF_{k,s}(\Z)$. It follows that each $(e^{X'_i})^{MN}$ belongs to the subgroup generated by $e^{X_1},\ldots,e^{X_k}$. Interchanging $\mathbf{X}$ and $\mathbf{X}'$, the lemma follows.
\end{proof}

\begin{proof}[Proof of Proposition~\ref{invariance}]
It is clear from the definition that the subset of elements $\bg$ such that $\Gamma_{\bg}$ is $\beta$-diophantine is a Borel subset. Now suppose that $\mathbf{g}=(g_1,\ldots,g_k)$ is such that $\Gamma_\mathbf{g}$ is  $\beta$-diophantine and let $\mathbf{g}'$ be in the $\Aut(\cF_{k,s})(\Q)$-orbit of $\mathbf{g}$.
This means that $\log(\mathbf{g})$ and $\log(\mathbf{g}')$ are in the same $\Aut(\cF_{k,s})(\Q)$-orbit. So Lemma \ref{commens} implies that  $\Gamma_\mathbf{g}$ and $\Gamma_\mathbf{g'}$ are commensurable.
Since $\Gamma_{\mathbf{g}}$ is $\beta$-diophantine, it follows from Lemma~\ref{betacom} that $\Gamma_{\mathbf{g}'}$  also is $\beta$-diophantine.
\end{proof}

\begin{proof}[Proof of Theorem \ref{zerone}]
In view of Proposition \ref{invariance}, the set $D_\beta$ of $k$-tuples such that $\Gamma_{\bg}$ is $\beta$-diophantine is measurable and invariant under the action of $\Aut(\cF_{k,s})(\Q)$. Since this action is ergodic by Proposition \ref{erg}, we conclude that $D_\beta$ is either null or conull. If $\widehat{\beta_k}$ denotes the infimum of all $\beta\geq 0$ for which $D_\beta$ is conull, then $D_\beta$ will be conull when $\beta>\widehat{\beta_k}$ and null if $\beta<\widehat{\beta_k}$.
\end{proof}

\begin{remark}The property of being diophantine for a $k$-tuple $\mathbf{g}$ does not depend only on the projection of $\mathbf{g}$ on the abelianization of $G$. It is easy to construct examples showing that one may have two tuples $\mathbf{g}$ and $\mathbf{g}'$ such that $\mathbf{g}=\mathbf{g}'$ modulo $[G,G]$, but $\Gamma_{\mathbf{g}}$ is diophantine, while $\Gamma_{\mathbf{g}'}$ is not diophantine in $G$ and $\Gamma_{\pi(\mathbf{g}')}$ is not diophantine in $G/[G,G]$ either, where $\pi$ is the projection modulo $[G,G]$. In particular, $\Aut(\cF_{k,s})(\Q)$ in Proposition \ref{invariance} cannot be replaced by the subgroup $\GL_k(\Q)$.
\end{remark}

\begin{remark} In Theorem \ref{zerone}, the distance $d(\cdot,\cdot)$ is only assumed to be left-invariant, and need not be geodesic. The alternative argument given in Section~\ref{section:critical}, which is based on quantitative non-divergence, requires the distance to be  geodesic (and hence a Carnot-Carathéodory-Finsler metric by \cite{berestowski}).
\end{remark}

\section{Critical exponent for the Heisenberg group}\label{section:heisenberg}

As a warm-up, we now present an explicit computation of the critical exponent of the 3-dimensional Heisenberg group. The method is elementary, using the Borel-Cantelli lemma combined with an ad-hoc Remez-type inequality for a certain family of quadratic forms (cf. Lemma \ref{levelquadratic} below). The relationship between this elementary method and the Kleinbock-Margulis type approach developed later in this paper will be discussed at the end of this section.

Here $G$ will always denote the $3$-dimensional Heisenberg group, consisting of $3\times 3$ unipotent upper-triangular matrices.
It will be convenient for us to view $G$ as the space $\R^3$, endowed with the group law
\[ (x,y,z)*(x',y',z') = (x+x',y+y',z+z'+xy').\]
Recall the definition of the diophantine exponent of a finitely generated subgroup of $G$, made at the beginning of Section \ref{section:ergodicity}.
We want to prove the following.

\begin{thm}[Critical exponent for the Heisenberg group]
\label{criticalheisenberg}
Let $k\geq 2$. Then for almost every $k$-tuple $\bg=(g_1,\ldots,g_k)$ in $G$,
\[ \beta(\Gamma_\bg) = 1-\frac{1}{k}-\frac{2}{k^2}.\]
\end{thm}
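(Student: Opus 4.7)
The plan is to establish matching lower and upper bounds for $\beta(\Gamma_\bg)$ via a Dirichlet-type construction and a Borel--Cantelli argument respectively. Via the Baker--Campbell--Hausdorff formula (exact in this $2$-step setting), writing $g_i = \exp X_i$ with $X_i = a_i X + b_i Y + c_i Z$, every $\gamma \in \Gamma_\bg$ can be written as $\log\gamma = \sum_i p_i X_i + \sum_{i<j} q_{ij}[X_i,X_j] = PX+QY+RZ$ with $\bp \in \Z^k$ and $\mathbf{q}$ in a lattice of $\R^{\binom{k}{2}}$, where $P = \bp\cdot a$, $Q = \bp\cdot b$, and $R = \bp\cdot c + \sum q_{ij} m_{ij}$ with $m_{ij}(\bg) := a_ib_j-a_jb_i$. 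Classical facts about $2$-step nilpotent groups give $\ell(\gamma) \asymp \max(|\bp|_\infty, |\mathbf{q}|_\infty^{1/2})$; the Bass--Guivarc'h formula gives $V(n) \asymp n^{k^2}$, since for generic $\bg$ the abelianization of $\Gamma_\bg$ is $\Z^k$ and the commutator subgroup (abstractly) $\Z^{\binom{k}{2}}$; and a left-invariant Riemannian distance satisfies $d(\gamma,1) \asymp \max(|P|,|Q|,|R|)$ near the identity.

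For the lower bound, pure commutators ($\bp = 0$) give $d(\gamma, 1) \asymp |\mathbf{q} \cdot m(\bg)|$, and a pigeonhole argument on the map $\Z^{\binom{k}{2}} \to \R$ produces infinitely many $\mathbf{q}$ with $|\mathbf{q} \cdot m(\bg)| \lesssim |\mathbf{q}|_\infty^{1-\binom{k}{2}}$. Substituting $|\mathbf{q}| \asymp \ell^2$ and $V(\ell) \asymp \ell^{k^2}$ yields $d(\gamma,1) \lesssim V(\ell)^{-(k^2-k-2)/k^2}$, hence $\beta(\Gamma_\bg) \geq 1-\tfrac{1}{k}-\tfrac{2}{k^2}$. (The case $k=2$ is separately trivial since $\Gamma_\bg$ is then generically a discrete cocompact lattice in $G$; we assume $k \geq 3$ below.) For the upper bound, fix $\eta > 0$, set $\beta := \beta_0 + \eta$, and apply Borel--Cantelli to the events $E_{\bp,\mathbf{q}} := \{\bg \in [-1,1]^{3k} : d(\gamma_{\bp,\mathbf{q}}(\bg), 1) \leq V(\ell(\bp,\mathbf{q}))^{-\beta}\}$; it suffices to show that $\sum_{(\bp,\mathbf{q})} \lambda(E_{\bp,\mathbf{q}}) < \infty$.

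The core ingredient, and the ``ad-hoc Remez-type inequality for a certain family of quadratic forms'' referred to in the section heading, is the bound
\[ \lambda\bigl\{(a,b) \in [-1,1]^{2k} : |\mathbf{q} \cdot m(a,b)| \leq \eps\bigr\} \lesssim \eps/\|\mathbf{q}\|_\infty \]
for every $\mathbf{q} \neq 0$, up to a harmless logarithmic factor. This is sharper than the $\sqrt{\eps/\|\mathbf{q}\|}$ bound one would get from the general Remez inequality for a degree-$2$ polynomial, and the improvement comes from the bilinear structure of $R_\mathbf{q}(a,b) = a^T A_\mathbf{q} b$, where $A_\mathbf{q}$ is the skew-symmetric $k\times k$ matrix with entries $q_{ij}$. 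I would prove it by Fubini: for fixed $b$, the form is linear in $a$, so $\lambda_a\{|a \cdot A_\mathbf{q} b| \leq \eps\} \lesssim \eps/\|A_\mathbf{q} b\|_\infty$, and a direct computation after bringing $A_\mathbf{q}$ to its canonical skew-symmetric block form (with singular values $\sigma_1 \geq \dots \geq \sigma_r > 0$) gives $\int_{[-1,1]^k} \|A_\mathbf{q} b\|_\infty^{-1}\,db \lesssim \|A_\mathbf{q}\|_\infty^{-1}$.

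With this in hand, the Borel--Cantelli sum over the pure-commutator stratum ($\bp = 0$, $|\mathbf{q}|_\infty \sim 2^{2n}$) becomes $\sum_\mathbf{q} \eps/\|\mathbf{q}\| \lesssim 2^{-nk^2\eta}$, which is summable in $n$; the mixed stratum ($\bp \neq 0$) contributes $\lambda(E_{\bp,\mathbf{q}}) \lesssim \eps^3/\|\bp\|^3$, because each of the three constraints $|P|, |Q|, |R| \leq \eps$ cuts a slab of width $\eps/\|\bp\|$ in a different variable ($a$, $b$, $c$ respectively), and the resulting double sum over $(\bp, \mathbf{q})$ also converges for any $\eta > 0$ when $k \geq 3$. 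The main obstacle is the sharp Remez-type inequality: without the bilinear improvement from $\sqrt{\eps/\|\mathbf{q}\|}$ to $\eps/\|\mathbf{q}\|$ the pure-commutator BC sum diverges, and it is precisely this stratum --- corresponding to the commutator subspace of $\cF_{k,2}$, which is the ``pencil'' constraining $\Gamma_\bg$ in the language of later sections --- that forces the critical exponent $k^2 \beta_0 = k^2 - k - 2 = (k-2)(k+1)$.
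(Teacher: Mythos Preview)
Your proposal is correct and follows essentially the same architecture as the paper: a pigeonhole lower bound via commutators, and a Borel--Cantelli upper bound split into the commutator and non-commutator strata, with the decisive input being the $\eps/\|\mathbf{q}\|$ (rather than $\sqrt{\eps/\|\mathbf{q}\|}$) sublevel estimate for the bilinear form $\sum_{i<j} q_{ij}(a_ib_j-a_jb_i)$. The only tactical differences are that the paper proves this Remez-type bound (Lemma~\ref{levelquadratic}) by isolating a single coordinate carrying the maximal coefficient rather than via the singular values of $A_\mathbf{q}$, and that for the non-commutator case $\bp\neq 0$ the paper simply projects to $G/Z\simeq\R^2$ and runs the standard Borel--Cantelli over $\bp\in\Z^k$ alone (threshold $\alpha>\tfrac{k}{2}-1$), which is cleaner than your double sum over $(\bp,\mathbf{q})$ with the $\eps^3/\|\bp\|^3$ estimate.
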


The diophantine exponent here is computed with respect to any left-invariant Riemannian metric on $G$ (equivalently for $\max\{|x|,|y|,|z|\}$). For the proof of Theorem~\ref{criticalheisenberg}, we will need a few definitions.
Recall that if $w$ is a word on $k$ letters, i.e. an element of the free group $F_k$ over $k$ generators $x_1,\dots,x_k$, it induces a \emph{word map}
\[ \begin{array}{cccc}
w_G: & G^k & \to & G\\
& (g_1,\dots,g_k) & \mapsto & w(g_1,\dots,g_k),
\end{array} \]
where $w(g_1,\dots,g_k)$ is the element of $G$ obtained by substituting each letter $x_i$ by the element $g_i$.
The group $F_{k,G}$ of word maps in $k$ letters on $G$ is defined to be the set of all such maps, with product law given by $w_Gw_G'=(ww')_G$.
The length $\ell(\omega)$ of a word map $\omega$ in $F_{k,G}$ is the minimal length of a word $w$ such that $\omega=w_G$.

In the case of the Heisenberg group, one can describe the group of word maps very explicitly.
For any two elements $g$ and $h$ in $G$, $[g,h]$ denotes the commutator of $g$ and $h$, defined by $[g,h] = ghg^{-1}h^{-1}.$ The subgroup $[G,G]$ generated by commutators coincides with the center $Z$ of $G$ and is the set of elements $(0,0,z)$, for $z\in\R$.

\begin{prop}[Word maps on the Heisenberg group]
\label{explicitwordmaps}
Let $k\geq 2$. For each word map $\omega$ on $k$ letters on $G$, there exist integers $n_i$, $1\leq i\leq k$ and $n_{ij}$, $1\leq i<j\leq k$ such that for all $\bg=(g_1,\dots,g_k)$ in $G^k$,
\[ \omega(\bg) = g_1^{n_1}\dots g_k^{n_k} \prod_{1\leq i<j\leq k} [g_i,g_j]^{n_{ij}}.\]
Moreover, the $n_i$ and $n_{ij}$ are uniquely determined by $\omega$ and there exists a constant $C>0$ depending only on $k$ such that
\[ \frac{1}{C}\ell(\omega)
\leq \max_{l,i,j}\{|n_l|,|n_{ij}|^{\frac{1}{2}}\}
\leq C\ell(\omega).\]
\end{prop}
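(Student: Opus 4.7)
The plan is to derive the normal form from the 2-step nilpotence of $G$, and then match the word length to the coordinates $(n_l,n_{ij})$ in both directions.

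For existence of the normal form, I would use that $[G,G]$ lies in the center $Z(G)$, so every commutator is central and the identity $gh=[g,h]\cdot hg$ is exact. Applying this repeatedly to an arbitrary word $w$ in the $g_i^{\pm 1}$'s (a ``bubble-sort'' collection process) sorts the letters into the order $g_1^{\pm 1},\dots,g_k^{\pm 1}$, picking up a central commutator $[g_i,g_j]^{\pm 1}$ each time two adjacent letters with indices $i<j$ are swapped (the signs being handled via $[g^{-1},h]=[g,h^{-1}]=[g,h]^{-1}$ and $[g^{-1},h^{-1}]=[g,h]$). Since these commutators are central and commute with each other, they can be reorganised into $\prod_{i<j}[g_i,g_j]^{n_{ij}}$. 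Uniqueness is proved by direct evaluation: writing $g_i=(a_i,b_i,c_i)$, the first two coordinates of $\omega(\bg)$ are the linear forms $\sum n_i a_i$ and $\sum n_i b_i$, which pin down the $n_i$; once these vanish, the third coordinate is a polynomial in $(a_i,b_i,c_i)$ whose coefficient of the symplectic pairing $a_ib_j-a_jb_i$ is $n_{ij}$.

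For the upper bound $\max\{|n_l|,|n_{ij}|^{1/2}\}\leq C\ell(\omega)$, I would start from a shortest word $w$ representing $\omega$, of length $\ell=\ell(\omega)$. The integer $n_l$ is exactly the algebraic count of $g_l$-letters in $w$, so $|n_l|\leq\ell$. Running the collection process on $w$ involves at most $\binom{\ell}{2}$ adjacent swaps, and each swap contributes $\pm 1$ to exactly one $n_{ij}$, giving $|n_{ij}|\leq\binom{\ell}{2}$ and hence $|n_{ij}|^{1/2}\leq\ell$.

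The reverse inequality $\ell(\omega)\leq C\max\{|n_l|,|n_{ij}|^{1/2}\}$ is the crux. The key identity, specific to 2-step nilpotence, is the bilinearity $[g^a,h^b]=[g,h]^{ab}$, which lets us represent a central power $[g_i,g_j]^N$ by a word of length $O(\sqrt{|N|})$: choose $a,b$ with $|a|,|b|\asymp\sqrt{|N|}$ and $ab$ as close to $N$ as possible, then adjust by $[g_i^r,g_j]$ with $|r|=O(\sqrt{|N|})$. Using this efficient representation for each central factor together with the obvious word $g_1^{n_1}\cdots g_k^{n_k}$ for the non-central part, one obtains a word representing $\omega$ of length
\[ \sum_{l=1}^k|n_l|+\sum_{i<j}O(\sqrt{|n_{ij}|})\leq C_k\max_{l,i,j}\{|n_l|,|n_{ij}|^{1/2}\}, \]
since the number of indices is bounded in terms of $k$ alone. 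Without the square-root gain, one would only obtain a bound in $\max\{|n_l|,|n_{ij}|\}$, which is off by a square root and would yield a wrong exponent in Theorem~\ref{criticalheisenberg}; this purely combinatorial square-root phenomenon, reflecting the Bass--Guivarc'h step-$2$ weighting in $G$, is what I expect to be the main point to get right.
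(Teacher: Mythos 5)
Your proof is correct and follows exactly the route the paper sketches: collection using centrality of commutators for the normal form, direct evaluation in $(x,y,z)$-coordinates for uniqueness, and the bilinearity $[g_i^a,g_j^b]=[g_i,g_j]^{ab}$ for the two-sided length estimate (the paper explicitly names this identity as the key to the length statement and ``leaves the details to the reader''). You supply the details the paper omits, in particular the crucial construction of a word of length $O(\sqrt{|N|})$ representing $[g_i,g_j]^N$, which is exactly the point the square-root in the statement hinges on.
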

\begin{proof}
The existence of the $n_i$ and $n_{ij}$ is proved by elementary operations on a word representing $\omega$, using that all commutators lie in the center of $G$, because $G$ is $2$-step nilpotent.
To verify uniqueness, it suffices to show that no non-trivial family of integers $n_l,n_{ij}$ yields the trivial word map; this can be checked directly, by expliciting the word maps in the $(x,y,z)$ coordinates for $G$.
The statement about the length of $\omega$ can also be proved directly, using the identity in $G$, $[g_i^n,g_j^m]=[g_i,g_j]^{nm}$.
We leave the details to the reader.
\end{proof}

We are now ready to prove the following theorem, which will easily imply Theorem~\ref{criticalheisenberg}.

\begin{thm}
\label{heisenbergwords}
Fix an integer $k\geq 3$.
\begin{itemize}
\item If $\alpha > k^2-k-2$, then for almost every $k$-tuple $\bg\in G^k$, $d(\omega(\bg),1) \geq \ell(\omega)^{-\alpha}$ for all but finitely many $\omega\in F_{k,G}$.
\item If $\alpha<k^2-k-2$, then for all $\bg\in G^k$, there are infinitely many $\omega\in F_{k,G}$ for which $d(\omega(\bg),1)<\ell(\omega)^{-\alpha}$.
\end{itemize}
\end{thm}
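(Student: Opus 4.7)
The plan is a Borel--Cantelli argument for the convergence direction and a Dirichlet/pigeonhole argument for the divergence direction. Using Proposition~\ref{explicitwordmaps}, every $\omega\in F_{k,G}$ is parametrized by integers $(n_l,n_{ij})$, and if $g_l=(x_l,y_l,z_l)$, an explicit computation in the Heisenberg group yields
\[
\omega(\bg) = \Bigl(\sum_l n_l x_l,\;\sum_l n_l y_l,\;\sum_l n_l z_l + R(x,y)\Bigr),
\]
with $R(x,y)$ a quadratic form in the horizontal coordinates whose coefficients are quadratic in the $n_l$ and linear in the $n_{ij}$; the purely commutator part of $R$ is the bilinear form $x^T A y$, where $A$ is the skew-symmetric integer matrix with $A_{ij}=n_{ij}$ for $i<j$. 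Setting $M=\max_l|n_l|$ and $N_2=\max_{i<j}|n_{ij}|$, Proposition~\ref{explicitwordmaps} gives $\ell(\omega)\asymp\max(M,\sqrt{N_2})$, and since the Riemannian distance to $1$ is comparable to $\max(|X|,|Y|,|Z|)$, the problem reduces to bounding Lebesgue measure of $\{\bg\in B:|X|,|Y|,|Z|<\epsilon\}$ on a fixed compact ball $B\subset G^k$.

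For the convergence direction, I would estimate this measure in two regimes and apply Borel--Cantelli. When $M\geq 1$, the forms $X$, $Y$ (linear in $x$, $y$) and the $z$-linear part of $Z$ all have coefficient vector of $\ell^\infty$-norm $M$, so a routine Fubini integration in $x$, then $y$, then $z$ gives the crude bound $\mathrm{meas}\{|\omega(\bg)|<\epsilon\}\lesssim (\epsilon/M)^3$; decomposing dyadically in both $M$ and $N_2$ yields a convergent Borel--Cantelli tail for every $\alpha>(k^2-2)/3$, which is strictly weaker than the target threshold whenever $k\geq 3$. The binding case is $M=0$ (\emph{pure commutator words}), where $X\equiv Y\equiv 0$ and $\omega(\bg)=(0,0,x^TAy)$. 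Here the key input is a Remez-type estimate for this family of skew-symmetric bilinear forms,
\[
\mathrm{meas}\!\left\{(x,y)\in B_0 : |x^TAy|<\epsilon\right\}\;\lesssim\;\frac{\epsilon}{N_2}\,\bigl(\log(N_2/\epsilon)\bigr)^{C_k},
\]
to be isolated as Lemma~\ref{levelquadratic}. Its proof proceeds by fixing $y$, noting that $x\mapsto x^TAy$ is the linear form $x\cdot(Ay)$, bounding the measure in $x$ by $\min(1,\epsilon/|(Ay)_{i_0}|)$, and integrating in $y$ using a coordinate $y_{j_0}$ for which $A_{i_0 j_0}\asymp N_2$, so that the one-dimensional integral $\int\min(1,\epsilon/|u|)\,du/N_2$ takes over. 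Since there are $\sim 2^{nk(k-1)}$ pure commutator words with $\ell(\omega)\asymp 2^n$, this case contributes $\sum_n n^{C_k}\cdot 2^{n(k^2-k-2-\alpha)}$ to the Borel--Cantelli sum, which converges precisely when $\alpha>k^2-k-2$.

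For the divergence direction, fix $\bg\in G^k$ and $L\geq 1$, and let $\cW_L\subset F_{k,G}$ be the set of pure commutator words with $|n_{ij}|\leq L^2$ for every $i<j$. Proposition~\ref{explicitwordmaps} gives $\ell(\omega)\leq CL$ for every $\omega\in\cW_L$, and $|\cW_L|\geq c L^{k(k-1)}$. To each $\omega\in\cW_L$ associate the real number $z_\omega=\sum_{i<j}n_{ij}(x_iy_j-x_jy_i)$; all such values lie in a fixed interval of length $\leq C'L^2$ depending only on $\bg$. By pigeonhole there exist distinct $\omega_1,\omega_2\in\cW_L$ with $|z_{\omega_1}-z_{\omega_2}|\leq C''L^{-(k^2-k-2)}$. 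Since commutators are central in $G$, the word map $\omega:=\omega_1\omega_2^{-1}$ is again a pure commutator word with $\ell(\omega)\leq 2CL$ and $z_\omega=z_{\omega_1}-z_{\omega_2}$, whence $d(\omega(\bg),1)\leq |z_\omega|\leq C''\ell(\omega)^{-(k^2-k-2)}$. Running $L$ through $2^n$ produces infinitely many distinct such word maps (an $\omega$ whose $z_\omega$ is small at scale $L$ cannot be the same as one whose $z_\omega$ is small at a much larger scale, unless it is a non-trivial relation, in which case $\omega^m$ already furnishes infinitely many solutions for free), so for any $\alpha<k^2-k-2$ each of them eventually satisfies $d(\omega(\bg),1)<\ell(\omega)^{-\alpha}$.

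The main obstacle is the bilinear Remez inequality of Lemma~\ref{levelquadratic}. Treating $Q=x^TAy$ as a generic degree-$2$ polynomial in $2k$ variables and applying the standard Kleinbock--Margulis $(d,k)$-good bound only yields $\mathrm{meas}\{|Q|<\epsilon\}\lesssim(\epsilon/\|Q\|_\infty)^{1/2}$; the square-root exponent on $\epsilon$ is too weak to match the Dirichlet estimate $\epsilon/N_2$ of the divergence direction. To recover the full exponent $1$ on $\epsilon$ one must exploit the genuinely bilinear structure of $Q$ -- that for each fixed $y$, $Q$ is \emph{linear} (not just quadratic) in $x$. The logarithmic factors arising from the one-variable integral $\int\min(1,\epsilon/|u|)\,du$ do not affect the dyadic convergence threshold and are the only slack in the method; once Lemma~\ref{levelquadratic} is in place, the remaining ingredients -- the $(M,N_2)$ decomposition of word maps, the dyadic count, and the pigeonhole on $z_\omega$ -- are routine.
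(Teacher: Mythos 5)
Your proof is correct and follows essentially the same route as the paper's: the parametrization of word maps via Proposition~\ref{explicitwordmaps}, the same dichotomy between words trivial and non-trivial modulo the center, the identical Remez-type level-set estimate for the skew bilinear form $\sum_{i<j}n_{ij}(x_iy_j-y_ix_j)$ (the paper's Lemma~\ref{levelquadratic}) feeding a Borel--Cantelli sum, and the same Dirichlet pigeonhole in the one-dimensional center for the divergence direction. The only variation is cosmetic: in the non-binding case of words non-trivial modulo the center you Fubini over all three Heisenberg coordinates to obtain a threshold of order $k^2/3$, whereas the paper (Lemma~\ref{abelianized}) projects to the abelianization $(\R^2)^k$ and gets the sharper $k/2-1$; both lie below $k^2-k-2$ for $k\geq 3$, so this does not affect the outcome.
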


We decompose the proof into two lemmas, studying first the word maps $\omega$ on $G$ that are non-trivial modulo the center of $G$, i.e. those for which some $n_i$ is non-zero.

\begin{lem}[Diophantine property outside the derived subgroup]
\label{abelianized}
Let $k\geq 2$.
If $\alpha>\frac{k}{2}-1$, then for almost every $k$-tuple $\bg$, we have $d(\omega(\bg),1)\geq\ell(\omega)^{-\alpha}$ for all but finitely many word maps $\omega$ that are non-trivial modulo the center of $G$.
\end{lem}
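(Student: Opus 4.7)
The plan is to reduce the lemma to a standard Borel-Cantelli estimate on the abelianized tuple in $\R^2$. By Proposition \ref{explicitwordmaps}, a word map $\omega$ nontrivial modulo the center is uniquely encoded by integers $(n_i)_{1\leq i\leq k}$ with $\mathbf{n}=(n_1,\dots,n_k)\neq 0$ together with $(n_{ij})_{1\leq i<j\leq k}$; the first two coordinates of $\omega(\bg)$ are $X=\sum_i n_i x_i$ and $Y=\sum_i n_i y_i$, and the length satisfies $\ell(\omega)\gtrsim N:=\|\mathbf{n}\|_\infty$. Since $d$ is a left-invariant Riemannian distance, it is bi-Lipschitz to the Euclidean one on a fixed compact neighborhood $U$ of $1$; in particular $d(\omega(\bg),1)\gtrsim\max(|X|,|Y|)$ whenever $\omega(\bg)\in U$, while $\omega(\bg)\notin U$ forces $d(\omega(\bg),1)\geq c>0$, a condition violated by only finitely many $\omega$ with $\ell(\omega)$ bounded, hence harmless.

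The heart of the argument is a Borel-Cantelli estimate on $(\R^2)^k$. Fix any $\alpha'\in(\tfrac{k}{2}-1,\alpha)$ and restrict to a bounded region in $(\R^3)^k$ (by countable additivity this suffices for Haar measure). For $\mathbf{n}\neq 0$ with $\|\mathbf{n}\|_\infty=N$, set
\[ A_{\mathbf{n}}=\bigl\{\bg:\max(|{\textstyle\sum_i n_i x_i}|,|{\textstyle\sum_i n_i y_i}|)<N^{-\alpha'}\bigr\}. \]
Choosing $i_0$ with $|n_{i_0}|=N$ and using the slab estimate in the $x_{i_0}$ and $y_{i_0}$ directions gives $\mu(A_{\mathbf{n}})\lesssim N^{-2\alpha'-2}$. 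Since the number of $\mathbf{n}\in\Z^k$ with $\|\mathbf{n}\|_\infty=N$ is $\sim N^{k-1}$,
\[ \sum_{\mathbf{n}\neq 0}\mu(A_{\mathbf{n}})\lesssim\sum_{N\geq 1}N^{k-3-2\alpha'}<\infty, \]
precisely because $\alpha'>\tfrac{k}{2}-1$. Borel-Cantelli yields, for almost every $\bg$, a finite exceptional set $B(\bg)\subset\Z^k\setminus\{0\}$ outside of which $\max(|X|,|Y|)\geq N^{-\alpha'}$.

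For $\omega$ with abelianization $\mathbf{n}\notin B(\bg)$, combining with $\ell(\omega)\gtrsim N$ gives $d(\omega(\bg),1)\gtrsim N^{-\alpha'}\geq\ell(\omega)^{-\alpha}$ once $\ell(\omega)$ is sufficiently large (using $\alpha>\alpha'$), so only finitely many such $\omega$ can fail the lemma. The main subtlety lies in the infinitely many $\omega$ whose abelianization $\mathbf{n}$ falls in the finite set $B(\bg)$: here the $(n_{ij})$ directions are free, and a naive Borel-Cantelli sum over all $\omega$ would waste these $k(k-1)$ extra parameters and yield only the much weaker exponent $\alpha>\tfrac{k^2}{2}-1$. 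Instead I invoke a second genericity: for every fixed $\mathbf{n}\neq 0$, the set $\{\bg:X=Y=0\}$ has codimension $2$ and measure $0$, so by countable additivity almost every $\bg$ satisfies $\max(|X|,|Y|)>0$ for \emph{every} $\mathbf{n}\neq 0$. Since $B(\bg)$ is finite, this yields a uniform lower bound $d(\omega(\bg),1)\geq c_0(\bg)>0$ across all $\omega$ with $\mathbf{n}\in B(\bg)$; then $d(\omega(\bg),1)<\ell(\omega)^{-\alpha}$ forces $\ell(\omega)\leq c_0(\bg)^{-1/\alpha}$, which leaves only finitely many word maps and completes the proof.
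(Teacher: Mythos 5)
Your proof is correct and follows the same strategy as the paper's: project to the abelianization $G/Z\cong\R^2$ and apply Borel--Cantelli to the resulting linear diophantine problem, where $\alpha>\frac{k}{2}-1$ is exactly the threshold making $\sum_N N^{k-3-2\alpha'}$ converge. You also carefully spell out a point the paper's terse proof leaves implicit---that infinitely many word maps $\omega$ share each abelianization $\mathbf{n}$---and your second genericity argument for the finite exceptional set $B(\bg)$ is the right way to close that gap.
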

\begin{proof}
Suppose $\bg=(g_1,\dots,g_k)$ is chosen at random in $G^k$ according to the Haar measure on a compact subset.
Then, the projection $\bar{\bg}=(\bar{g}_1,\dots,\bar{g}_k)$ to $(G/Z)^k$ is a random $k$-tuple in $(G/Z)^k\simeq (\R^2)^k$, and its law is absolutely continuous with respect to the Lebesgue measure.
By a standard application of the Borel-Cantelli lemma, we know that almost surely, if $\alpha>\frac{k}{2}-1$, then
\[ \| n_1\bar{g}_1+\dots+n_k\bar{g}_k\| \geq (\max|n_i|)^{-\alpha},\]
for all but finitely many $(n_1,\dots,n_k)$ in $\Z^k$.
This proves the lemma.
\end{proof}

We now need to study word maps that are trivial modulo the center.

\begin{lem}[Diophantine property inside the derived subgroup]
\label{derived}
Let $k\geq 2$.
If $\alpha>k^2-k-2$, then for almost every $k$-tuple $\bg$, we have $d(\omega(\bg),1)\geq\ell(\omega)^{-\alpha}$ for all but finitely word maps $\omega$ of the form $\omega:\bg\mapsto \prod_{1\leq i<j\leq k}[g_i,g_j]^{n_{ij}}$.
\end{lem}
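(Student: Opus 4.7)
The plan is a Borel–Cantelli argument reducing to a sharp Remez-type sublevel set estimate. Writing $g_i=(x_i,y_i,z_i)$ in $\R^3$-coordinates on $G$ and using the centrality of $[G,G]$ together with $[g_i,g_j]=(0,0,x_iy_j-x_jy_i)$, one gets
\[ \omega(\bg) = \bigl(0,0,Q_A(\bg)\bigr), \qquad Q_A(\mathbf{x},\mathbf{y}) = \sum_{i<j} n_{ij}\bigl(x_iy_j-x_jy_i\bigr), \]
where $A=(n_{ij})$ is antisymmetric with $N:=\max|n_{ij}|$. Combined with the equivalence $d(\cdot,1)\asymp\max(|x|,|y|,|z|)$ stated in the setup and $\ell(\omega)\asymp N^{1/2}$ from Proposition~\ref{explicitwordmaps}, the inequality $d(\omega(\bg),1)\geq\ell(\omega)^{-\alpha}$ recasts as $|Q_A(\bg)|\geq cN^{-\alpha/2}$.

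For a compact $K\subset G^k$, the core analytic input — to be isolated as Lemma~\ref{levelquadratic} — is the sharp sublevel bound
\[ \bigl|\{\bg\in K:|Q_A(\bg)|<\epsilon\}\bigr| \;\leq\; C_K\cdot\frac{\epsilon}{N} \qquad (\ast) \]
uniformly in nonzero antisymmetric integer $A$ and $\epsilon>0$. Granting $(\ast)$, since there are $O(N^{\binom{k}{2}-1})$ such matrices with $\max|n_{ij}|=N$,
\[ \sum_{A\ne 0}\bigl|\{\bg\in K:|Q_A(\bg)|<cN^{-\alpha/2}\}\bigr| \;\ll\; \sum_{N\geq 1} N^{\binom{k}{2}-2-\alpha/2}, \]
which converges exactly when $\alpha>2\binom{k}{2}-2=k^2-k-2$. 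Borel–Cantelli together with a countable exhaustion of $G^k$ by compact sets then finishes the argument.

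To establish $(\ast)$, choose $(i_0,j_0)$ with $|n_{i_0j_0}|=N$ and view $Q_A$ as a polynomial in the four variables $(x_{i_0},x_{j_0},y_{i_0},y_{j_0})$ with the other $2k-4$ coordinates of $\bg$ frozen. Its shape is $Q_A=N(x_{i_0}y_{j_0}-x_{j_0}y_{i_0})+L+C'$, where $L$ is linear (coefficients bounded on $K$) and $C'$ depends only on the frozen variables. A translation of size $O(1/N)$ absorbs $L$ and reduces $(\ast)$ to estimating the $(\epsilon/N)$-sublevel set of a translate of $u_1v_2-u_2v_1$ on a bounded region of $\R^4$: for fixed $(u_1,u_2)$ the determinant is a linear form in $(v_1,v_2)$ with coefficient norm $\|(u_1,u_2)\|$, and the ensuing integral $\int du_1\,du_2/\|(u_1,u_2)\|$ converges in $\R^2$, giving the bound on the slice. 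The main obstacle is obtaining the linear $\epsilon/N$ dependence in $(\ast)$ rather than the weaker $\sqrt{\epsilon/N}$ that a naive Remez inequality for the degree-$2$ polynomial $Q_A$ on $\R^{2k}$ would yield: that weaker bound would only prove the lemma under the much stronger hypothesis $\alpha>2(k^2-k)-2$, essentially twice the sharp threshold. The gain comes from the antisymmetry of $A$, which forces the extremal entry $n_{i_0j_0}$ to sit off-diagonal, so that $Q_A$ restricts on a well-chosen $4$-dimensional slice to a genuine $2\times 2$ determinant — which, being linear in each row, enjoys linear sublevel behavior, in contrast to a squared coordinate like $Nx_1^2$ whose sublevel sets obey only the expected square-root rate.
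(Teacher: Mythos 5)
Your proof follows essentially the same Borel--Cantelli strategy as the paper: reduce to a sublevel-set estimate for the squareless quadratic form $Q_A$, count integer matrices with $\max|n_{ij}|=N$, and sum. The paper isolates this as Lemma~\ref{levelquadratic}, proved by a $1$-dimensional slicing that produces a harmless factor of $1+\log^+(\sqrt d N/\epsilon)$; you instead slice in the $2$-dimensional pair $(y_{i_0},y_{j_0})$ and integrate $\int du\, dv/\|(u,v)\|$ over $\R^2$, which converges and gives the clean bound $\epsilon/N$ with no logarithm. Both bounds yield the same Borel--Cantelli threshold $\alpha>k^2-k-2$, since the log does not affect convergence of $\sum_N N^{\binom{k}{2}-2-\alpha/2}\log N$.

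One small slip: the linear part $L$ of $Q_A$ in the four privileged variables has coefficients of size $O(N)$ (they are products $n_{i_0 l}\cdot y_l$, etc., with $|n_{i_0l}|\leq N$), not $O(1)$, so the translation that absorbs $L$ has size $O(1)$, not $O(1/N)$. This does not harm the argument --- the translated domain is still contained in a fixed bounded set, and the integral $\int\min(1,\epsilon/(N\|u\|))\,du$ over any fixed bounded region of $\R^2$ still yields $O(\epsilon/N)$ --- but the claimed size is off by a factor of $N$.
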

\begin{proof}
Let $\omega:\bg\mapsto\prod_{1\leq i<j\leq k}[g_i,g_j]^{n_{ij}}$.
In the $(x,y,z)$ coordinates, the map $\omega$ is given by
\[ \begin{array}{cccc}
\omega: & G^k & \to & G\\
& (x_i,y_i,z_i)_{1\leq i\leq k} & \mapsto & (0,0,\sum_{1\leq i<j\leq k} n_{ij}(x_iy_j-y_ix_j))
\end{array}.\]
If $\mu$ denotes the Lebesgue measure on the ball of radius $1$ centered at $0$ in $G^k$, Lemma~\ref{levelquadratic} below implies that
\[ \mu(\{\bg\,|\, d(\omega(\bg),1)\leq\ell(\omega)^{-\alpha}\})
\leq \frac{2^{2k+1}}{\ell(\omega)^\alpha\max|n_{ij}|} (1+\log(\ell(\omega)^\alpha\sqrt{2k}\max |n_{ij}|)).\]
However, by Lemma~\ref{explicitwordmaps}, $\ell(\omega)\simeq (\max|n_{ij}|)^{\frac{1}{2}}=\|n\|^{\frac{1}{2}}$, and therefore,
\[ \mu(\{\bg\,|\, d(\omega(\bg),1)\leq\ell(\omega)^{-\alpha}\})
\ll \frac{\log\|n\|}{\|n\|^{1+\frac{\alpha}{2}}}.\]
Since every word map trivial modulo the derived group corresponds to a unique $\frac{k(k-1)}{2}$-tuple $(n_{ij})$, we find
\[ \sum_{\omega\equiv 0\ \textrm{mod}\ Z}
\mu(\{\bg\,|\, d(\omega(\bg),1)\leq\ell(\omega)^{-\alpha}\})
 \ll \sum_{(n_{ij})\in\Z^{\frac{k(k-1)}{2}}-\{0\}} \frac{\log\|n\|}{\|n\|^{1+\frac{\alpha}{2}}}
 < \infty,\]
because $1+\frac{\alpha}{2}>\frac{k(k-1)}{2}$.
The lemma then follows from the Borel-Cantelli lemma.
\end{proof}

We now state and prove the elementary lemma used in the above proof.

\begin{lem}
\label{levelquadratic}
Let $q$ be a quadratic form on $\mathbb{R}^d$, and assume that in some orthogonal basis, $q$ can be expressed without squares as
\[q(x)=\sum_{k<l}a_{kl}x_kx_l.\]
If $\mu$ denotes the Lebesgue measure on the ball of radius $1$ centered at $0$ in $\R^d$, then for all $\eps>0$
\[ \mu(\{x\in\R^d \,|\, |q(x)|\leq\eps\}) \leq \frac{2^{d+1}\eps}{\max |a_{kl}|}\left(1+\log^+\pa{\frac{\sqrt{d}\max|a_{kl}|}{\epsilon}}\right),\]
where $\log^+x=\max(0,\log x)$.
\end{lem}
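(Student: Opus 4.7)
The plan is a two-step Fubini argument, elementary throughout. Set $a := \max_{k<l}|a_{kl}|$ and, after permuting coordinates, assume $|a_{12}| = a$. Writing $\hat x = (x_2,\ldots,x_d)$ and separating the variable $x_1$, the form becomes
\[
q(x) = x_1\, s(\hat x) + R(\hat x), \qquad s(\hat x) := \sum_{l\ge 2} a_{1l} x_l,
\]
where $R$ depends only on $\hat x$ (the absence of square terms is used precisely here, so that the coefficient of $x_1^2$ is zero). For each fixed $\hat x$, the map $x_1 \mapsto q(x)$ is affine with slope $s(\hat x)$, so the one-dimensional slice $\{x_1 \in [-1,1] : |q(x)|\le\eps\}$ has length at most $\min\bigl(2, 2\eps/|s(\hat x)|\bigr)$. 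Since the unit ball is contained in $[-1,1]^d$, Fubini gives
\[
\mu(\{|q|\le\eps\}) \le \int_{[-1,1]^{d-1}} \min\Bigl(2, \tfrac{2\eps}{|s(\hat x)|}\Bigr)\, d\hat x.
\]

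The second step is to evaluate this bound by integrating first in $x_2$. For fixed $x_3,\ldots,x_d$, the affine function $s$ has slope exactly $a_{12}$; this is the only place where the maximality of $|a_{12}|$ enters. The change of variable $u = s(\hat x)$ gives $dx_2 = du/a$ and turns the inner integral into $\tfrac{1}{a}\int_I \min(2, 2\eps/|u|)\, du$ over an interval $I$ of length $2a$. Its placement depends on $(x_3,\ldots,x_d)$, but the integrand is even and decreasing in $|u|$, so the supremum over placements is achieved at $I=[-a,a]$. Splitting at $u = \eps$ when $\eps<a$ (and trivially otherwise) then bounds the integral uniformly by $\tfrac{4\eps}{a}\bigl(1+\log^+(a/\eps)\bigr)$. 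Multiplying by the factor $2^{d-2}$ coming from integrating the remaining variables over $[-1,1]^{d-2}$ yields
\[
\mu(\{|q|\le\eps\}) \le \frac{2^d\eps}{a}\bigl(1+\log^+(a/\eps)\bigr),
\]
which is slightly sharper than the claimed inequality, since $2^d \le 2^{d+1}$ and $\log^+(a/\eps) \le \log^+(\sqrt{d}\,a/\eps)$.

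The only mildly delicate point — hardly a genuine obstacle — is that in the second Fubini step the interval $I$ in the $u$-variable is off-centre in general; this is disposed of cleanly by the monotonicity of the integrand in $|u|$. The slack factors $2$ and $\sqrt{d}$ in the statement are presumably to accommodate a coarser Fubini setup (for instance, working directly on the unit ball rather than on the enclosing cube); our approach loses nothing by using the cube, since that is all the subsequent application (Lemma~\ref{derived}) requires.
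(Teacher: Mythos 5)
Your argument is correct and follows essentially the same two-step Fubini strategy as the paper (isolate $x_1$, apply the one-dimensional slice bound $\min(2,2\eps/|s(\hat x)|)$, then reduce the remaining $(d-1)$-dimensional integral to one dimension). The paper's version of the second step changes variables along the direction of the vector $(a_{12},\dots,a_{1d})$ and then invokes $\max|a_{kl}|\le\|(a_{12},\dots,a_{1d})\|\le\sqrt{d}\max|a_{kl}|$, whereas you integrate directly in $x_2$ and dispatch the off-centre interval by the rearrangement inequality for symmetric decreasing functions --- a slight streamlining that correctly removes the $\sqrt{d}$ and saves a factor of~$2$.
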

\begin{proof}
Choose $k_0$ and $l_0$ such that $|a_{k_0l_0}|=\max_{k,l}|a_{kl}|$. As we may permute the indices, we may assume without loss of generality that $(k_0,l_0)=(1,2)$. Then write
\[ q(x)=x_1\cdot(\sum_{l\geq2} a_{1l}x_l)+y(x_2,\dots,x_d)=x_1\langle x',a\rangle +y(x'),\]
where $x'=(x_2,\dots,x_d)$, $a=(a_{12},\dots,a_{1d})$, and $\langle,\rangle $ denotes the usual inner product in $\mathbb{R}^{d-1}$.
Let $\lambda$ denote the Lebesgue measure on the real line.
For fixed $(x_2,\dots,x_d)$, observe that
\begin{align*}
\lambda(\{x_1\in [-1,1] \ |\ |x_1\langle x',a\rangle -y(x')|\leq\epsilon\})
& \leq 2\min(1,\frac{\epsilon}{|\langle x',a\rangle |}).
\end{align*}
Then write
\begin{align*}
\mu (\{ x\in\mathbb{R}^d \ |\  q(x)\leq \epsilon\}) & \leq \int_{B_{\mathbb{R}^{d-1}}(0,1)} \lambda(\{x_1\in[-1,1];|x_1\langle x',a\rangle -y(x')|\leq\epsilon\})\,\mathrm{d}x'\\
& \leq 2\int_{B_{\mathbb{R}^{d-1}}(0,1)} \min(1,\frac{\epsilon}{|\langle a,x'\rangle |})\,\mathrm{d}x'\\
& \leq 2^d \int_{-1}^1\min\left(1,\frac{\epsilon}{\|a\|t}\right)\,\mathrm{d}t\\
& = \frac{2^{d+1}\epsilon}{\|a\|}\left(1+\log^+ \frac{\|a\|}{\epsilon}\right).
\end{align*}
Since $\max|a_{kl}| \leq \|a\| \leq \sqrt{d}\max|a_{kl}|$, we find
\[
\mu (\{ x\in\mathbb{R}^d \ |\  q(x)\leq \epsilon\})
\leq \frac{2^{d+1}\epsilon}{\max |a_{kl}|}\left(1+\log^+\frac{\sqrt{d}\max|a_{kl}|}{\epsilon}\right).
\]
\end{proof}

We can now conclude the proofs of Theorems~\ref{heisenbergwords} and \ref{criticalheisenberg}.

\begin{proof}[Proof of Theorem~\ref{heisenbergwords}]
The first assertion follows from combining Lemmas~\ref{abelianized} and \ref{derived}, noting also that for $k\geq 2$, one has $\frac{k}{2}-1\leq k^2-k-2$.

For the second assertion, we note we have $k(k-1)/2$ commutators $[g_i,g_j]$ lying in $Z\simeq \mathbb{R}$. Therefore, given a positive integer $n$, Dirichlet's pigeonhole argument shows that there exist integers $n_{ij}$, $1\leq i<j\leq k$ such that $|n_{ij}|\leq n^2$ and
\[ \left|\sum_{i<j} n_{ij}[g_i,g_j]\right| \leq Cn^{-k^2+k+2},\]
where $C$ is a constant depending only on $\bg$.
Thus, for the word map $\omega:\bg\mapsto\prod_{i<j}[g_i,g_j]^{n_{ij}}$, we get
\[ d(\omega(\bg),1) \leq C \ell(\omega)^{-k^2+k+2}.\]
\end{proof}

\begin{proof}[Proof of Theorem~\ref{criticalheisenberg}]
By Proposition~\ref{explicitwordmaps}, the number of word maps of length at most $n$ is bounded above and below by a positive constant times $n^{k+2\frac{k(k-1)}{2}}=n^{k^2}$.
However, we know from \cite[Lemma~2.5]{abrs} that, for almost every $k$-tuple $\bg$, the group $\Gamma_{\bg}$ is isomorphic to $F_{k,G}$, so that, if $V_{\bg}(n)$ denotes the number of elements in the ball of radius $n$ with respect to the generating set $\bg$, there exist positive constants $c_1, c_2$ such that
\[ c_1 n^{k^2} \leq V_{\bg}(n) \leq c_2 n^{k^2}.\]
Together with Theorem~\ref{heisenbergwords}, this shows that for almost every $k$-tuple $\bg$,
\[ \beta(\Gamma_\bg) = \frac{k^2-k-2}{k^2} = 1-\frac{1}{k}-\frac{2}{k^2}.\]
\end{proof}

\begin{remark}
Let
\[ \begin{array}{cccc}
\varphi: & G^k & \to & \R^{\frac{k(k-1)}{2}}\\
& (x_i,y_i,z_i) & \mapsto & (x_iy_j-y_ix_j)_{1\leq i<j\leq k}
\end{array}.\]
Lemma~\ref{derived} is equivalent to saying that the pushforward under $\varphi$ of the Haar measure on (a ball of) $G^k$ is extremal. So an alternative proof consists in using the Kleinbock-Margulis theorem \cite[Theorem A]{kleinbock-margulis}: all one is left to check is that the image of $\varphi$ is not contained in a hyperplane.

In the rest of this paper, this will be our approach to compute the critical exponent of an arbitrary connected simply connected rational nilpotent Lie group.  Clearly there is little hope to use a direct elementary approach in the spirit of Lemma~\ref{derived} to handle general nilpotent groups. 
We will reduce the problem to studying the extremality of certain maps from  $G^k$ to $G$. If the target space of these maps were one-dimensional, the Kleinbock-Margulis theory would in general be enough. However this is typically not the case and one is thus naturally led to develop a suitable matrix analogue of their theory, which is what we do in the next two sections. The translation to a problem about (weighted) diophantine approximation on subanifolds of matrices is expounded in Section~\ref{section:critical}.
\end{remark}

\section{Quasi-norms, Schubert cells, and pencils}

The main goal of the next three sections is to establish Theorems~\ref{rational-exponent-babythm} and \ref{general-inequality} from the introduction, which address a problem raised by Kleinbock and Margulis in \cite[6.2]{kleinbock-margulis} and studied in \cite{kleinbock-margulis-wang} and \cite{beresnevich-kleinbock-margulis}.
The proof will be split into two parts, which correspond to Sections~\ref{quasinorm-sec} and \ref{section:submodular}, respectively. First, the upper bound and the fact that the almost sure exponent depends only on the Zariski closure will be proved as Theorem~\ref{uppboundnu} below, which is a consequence of the quantitative non-divergence estimates on the space of lattices, and the proof will be given in the slightly more general setup of locally good measures.
Second, we shall show that equality holds when the Zariski closure is defined over $\Q$. This will be Theorem~\ref{rathm}, and will be obtained as a consequence of the \emph{submodularity lemma} proved in Section~\ref{section:submodular}. 
In the present section, we describe the geometric objects involved in our diophantine problem, and explain what Dirichlet's pigeonhole argument, giving the lower bound on the exponent, becomes in this general setting.

As before, $V$ and $E$ are two finite-dimensional real vector spaces, and $\Delta$ is a lattice in $V$.
Given a measure $\nu$ (say a probability measure) on $\Hom(V,E)$, we want to study the diophantine properties with respect to $\Delta$ of a random point $x$ in $\Hom(V,E)$ chosen according to the distribution $\nu$.
For the application to nilpotent groups that we develop in Section~\ref{section:critical}, we need to compute the diophantine exponent of an element $x$ in $\Hom(V,E)$, which is defined in terms of certain quasi-norms on $V$ and $E$.

\begin{defn}\label{quasinorm}
A \emph{quasi-norm} (resp. \emph{local quasi-norm}) on $V$ is a map $V \to \R_+$, $v \mapsto |v|$, for which there exist $C>0$, a basis $u_1,\ldots,u_d$ of $V$ and positive real numbers $\alpha_1,\ldots,\alpha_d$ such that
\begin{equation}\label{vdef}
\frac{1}{C}|v| \leq \max_{1\leq i\leq d} |u_i^*(v)|^{1/\alpha_i} \leq C |v|
\end{equation}
outside a neighborhood of the origin (resp. in a neighborhood of the origin), where $u_1^*,\ldots,u_d^*$ is the dual basis.
\end{defn}

Given a quasi-norm $|\cdot|$ on $V$ and a local quasi-norm $|\cdot|'$ on $E$, we define the \emph{diophantine exponent} of $x$ in $\Hom(V,E)$ by
\begin{equation}\label{betdef} \beta(x)=\inf\{\beta>0\ |\ \exists c>0:\, \forall v\in\Delta\setminus\{0\},\,
|x(v)|'\geq c|v|^{-\beta}\}.\end{equation}

\begin{remark}
At a first reading it is fair to assume that $|\cdot|$ and $|\cdot|'$ are chosen to be genuine norms on $V$ and $E$.
\end{remark}

\subsection{Quasi-norms and local quasi-norms}\label{subsecquasi}

For later use, we now record some elementary properties of quasi-norms and local quasi-norms.
We say that two quasi-norms (resp. local quasi-norms) are \emph{comparable} (or \emph{equivalent}) if their ratio is bounded and bounded away from zero outside a neighborhood of the origin (resp. in a neighborhood of the origin).
Let $V$ and $E$ be real vector spaces of respective dimension $d$ and $e$, and fix $|\cdot|$ a quasi-norm on $V$ and $|\cdot|'$ a local quasi-norm on $E$.
As in Definition~\ref{quasinorm}, we fix bases $(u_i)_{1\leq i\leq d}$ and $(u_i')_{1\leq i\leq e}$ for $V$ and $E$, together with positive real numbers $\alpha_1,\dots,\alpha_d$ and $\alpha_1',\dots,\alpha_e'$ such that

\[ \frac{1}{C}|v|
\leq \max_{1\leq i\leq d} |u_i^*(v)|^{1/\alpha_i} \leq C |v|\]
outside a neighborhood of $0$ in $V$, and
\[ \frac{1}{C}|v|'
\leq \max_{1\leq i\leq d} |u_i'^*(v)|^{1/{\alpha_i'}} \leq C |v|'\]
in a neighborhood of $0$ in $E$.


\bigskip

\noindent {\bf 1.} If we reorder the vectors $u_i$ so that $\alpha_1\geq \ldots\geq \alpha_d>0$ and let $V_i=\langle u_{1},\ldots,u_{i-1}\rangle$, we obtain a full flag $0=V_1<V_2<\ldots<V_d<V$, and  $|v|$ is comparable to
\begin{equation}\label{defquasi}
\max_{1\leq i \leq d} d(v,V_i)^{\frac{1}{\alpha_i}},
\end{equation}
where $d(v,V_i)$ is the distance to $V_i$ in some fixed Euclidean structure on $V$.
Similarly, if $\alpha_1'\geq\dots\geq\alpha_e'>0$, then $|v|'$ is comparable to
\begin{equation}\label{defquasilocal}
\max_{1\leq i \leq e} d(v,V_i')^{\frac{1}{\alpha_i'}},
\end{equation}
where $V_i'=\langle u_{i+1}',\dots,u_e'\rangle$.
For the rest of the paper, we assume that $\alpha_1\geq\dots\geq\alpha_d$ and $\alpha_1'\geq\dots\geq\alpha_e'$.
\bigskip

\noindent {\bf 2.} (subspace) If $W\leq V$ is a subspace, then the restriction of $|\cdot|$ to $W$ is a quasi-norm, associated to the flag $0=V_1 \cap W \leq \ldots \leq V_d \cap W \leq W$ in the sense that $|w|$ is comparable to
$$\max_{1\leq i \leq d} d(w,V_i \cap W)^{1/\alpha_i},$$
when $w \in W$. This is easily seen, since $d(w,W\cap V_i)$ is comparable to $d(w,V_i)$  when $w$ varies in $W$.
Similarly, the restriction of $|\cdot|'$ to a subspace $F$ in $E$ is a local quasi-norm, associated to the flag $0=V_e' \cap F \leq \ldots \leq V_1' \cap F \leq F$.  
\bigskip

\noindent {\bf 3.} (quotient) If $W \leq V$ is a subspace, then $|\cdot|$ induces a quasi-norm on $V/W$ by setting, for $\overline{v} := v\mod W$,
$$|\overline{v}| := \inf_{w \in W} |v+w|.$$
It is easy to see that this indeed defines a quasi-norm $V/W$.
It is associated to the flag $\{V_i/(V_i \cap W)\}$ of $V/W$, in the sense that it is comparable to
$$\max_{1\leq i \leq d} d(\overline{v},V_i/(V_i \cap W))^{1/\alpha_i},$$
where $V_i/(V_i \cap W) \leq V/W$.
To see this, note that $W$ has an adapted complement, namely a subspace $W'$ such that $(W \cap V_i) \oplus (W' \cap V_i) = V_i$ for every $i=1,\ldots,d$, and  that $|\overline{v}|$ is comparable to the restriction of $|v|$ to $W'$.
\bigskip

\noindent {\bf 4.} (triangle inequality) There is $C>0$ such that
$$|v+w| \leq C(|v|+|w|)$$
for every $v,w$ in $V$ outside a neighborhood of the origin in $V$, and
\[ |v+w|' \leq C(|v|'+|w|')\]
for every $v,w$ inside a neighborhood of the origin in $E$.

\bigskip

\noindent {\bf 5.} (volume of large balls)
From the quasi-norm $|\cdot|$ on $V$, we define a function $\psi: \Grass(V) \to \R_+$ by
$$\psi(W)=\sum_{i \in I(W)} \alpha_i,$$
where
$$I(W)=\{i\in \{1,\ldots,d\} \ |\ \dim(V_{i+1} \cap W) = \dim (V_i \cap W)+1\},$$
with the convention that $V_{d+1}=V$ and $\alpha_{d+1}=0$. Clearly $\psi$ is non-decreasing because $I$ is. Moreover $-\psi$ is submodular (cf. Lemma \ref{submodular}). This is easily seen, since reorganizing the sum yields
\begin{equation}\label{psidec}\psi(W)=\sum_{i=1}^d (\alpha_i-\alpha_{i+1}) \dim (V_{i+1} \cap W),\end{equation} and $\alpha_i \geq \alpha_{i+1}$. Moreover, $\psi$ determines the volume of the ball of radius $Q$ restricted to $W$: for all  $Q>1$,
$$\vol\{ w \in W \ |\ |w| \leq Q \} \simeq Q^{\psi(W)}.$$
Here and hereafter, we write $x\simeq y$ if there exist positive constants $c, C>0$ such that $cx\leq y\leq Cx$.
The constants $c$ and $C$ are allowed to depend on $W$ and $|\cdot|$, but \emph{not} on $Q$.
Note also that if $W'$ is an adapted complement to $W$ in $V$, then
$$I(W')=\{1,\ldots,d\}\setminus I(W) = \{i \in \{1,\ldots,d\} \ |\ \dim (\frac{V_{i+1}}{V_{i+1}\cap W}) =  \dim (\frac{V_{i}}{V_{i}\cap W})+1\},$$
and hence $\psi(W')=\psi(V)-\psi(W)$ and
$$\vol\{ \overline{v} \in V/W \ |\ |\overline{v}| \leq Q \} \simeq Q^{\psi(V)-\psi(W)}.$$

\bigskip

\noindent {\bf 6.} (volume of small balls)
We also define a function $\phi: \Grass(E) \to \R_+$ by
$$\phi(F)=\sum_{i \in J(F)} \alpha_i',$$
where $$J(F)=\{i\in \{1,\ldots,e\} \ |\ \dim(V_{i}' \cap F) = \dim (V_{i-1}' \cap F)-1\},$$
with the convention that $V_{0}'=E$. Then $\phi$ is non-decreasing (because $J$ is) and submodular (cf. Lemma \ref{submodular}). This is easily seen, since $\alpha_i' \geq \alpha_{i+1}'$, and reorganizing the sum yields
\begin{equation}\label{phidec}
\phi(F)=\sum_{i=1}^e (\alpha_i'-\alpha_{i+1}')(\dim F- \dim (V_i' \cap F)).
\end{equation}
Moreover, $\phi$ determines the volume of the ball of radius $\eps$ restricted to $F$: within multiplicative constants (depending on $F$ and $|\cdot|'$ only), for all  $\eps \in (0,1)$,
$$\vol\{ w \in F \ |\ |w|' \leq \eps \} \simeq \eps^{\phi(F)}.$$

\subsection{Schubert varieties}\label{schubert}
Schubert varieties are certain distinguished closed algebraic subsets of the Grassmannian. See \cite{griffiths-harris} for the complex case and \cite{bochnak-coste-roy} for the real case. In this subsection we briefly recall their definition, because the pencils defined in the introduction give rise to Schubert varieties via the map $x \mapsto \ker x$ and because they will appear in the definition of locally good measures below.

As before $V$ is a $d$-dimensional real vector space and $0=V_1<V_2<\ldots<V_d<V_{d+1}=V$ is a full flag of subspaces. Let $n$ be an integer with $1\leq n\leq d$ and let $\sigma=(\sigma_1,\ldots,\sigma_n)$ be a sequence of integers such that $1 \leq \sigma_1 < \sigma_2 < \ldots < \sigma_n \leq d$. Let $\Grass_n(V)$ be the Grassmannian of $n$-dimensional subspaces of $V$. Recall that 
 $\Grass_n(V)$ can be realized as a closed (affine) algebraic subset of $M_{d,d}(\R)$ by assigning to each subspace the orthogonal projection onto it. The \emph{Schubert cell} of type $\sigma$ associated to the full flag $\{V_i\}_i$ is the subset $e(\sigma)$ of  all subspaces $W$ in $\Grass_n(V)$  such that $\dim(V_{\sigma_{i}+1} \cap W)=\dim(V_{\sigma_i} \cap W)+1$ for each $i=1,\ldots,n$, or equivalently in the notation of the previous subsection, such that $I(W)=\{\sigma_1,\ldots,\sigma_n\}$.

The cell $e(\sigma)$ is a subset of  $\Grass_n(V)$, and it is easy to see that its closure is a union of other cells, namely:
$$\overline{e(\sigma)} = \bigcup_{\tau \leq \sigma} e(\tau),$$
where we define the partial order $\tau \leq \sigma$ by requiring that $\tau_i \leq \sigma_i$ for each $i=1,\ldots,n$. This closure $\overline{e(\sigma)} $ is called the \emph{Schubert variety} of type $\sigma$.

It is worth mentioning that subsets of the form $\{W \in \Grass_n(V) ; \dim(W \cap V_j) \geq i\}$ for some indices $i,j$ are Schubert subvarieties and that the Schubert subvarieties associated to the full flag $\{V_i\}_i$ are precisely the intersections of subsets of this form.

Note that $ \dim(W \cap V_j) \geq i$ if and only if the family of vectors obtained by projecting a fixed basis of $V_j$ onto the orthogonal complement of $W$  has rank $< j-i$. In particular this subset of the Grassmannian is a closed algebraic subset and hence so are all Schubert subvarieties. Moreover the Schubert cell $e(\sigma)$ is Zariski open (and dense) in the Schubert subvariety $\overline{e(\sigma)}$.

\subsection{Dirichlet's principle}\label{dirich}

As before, $V$ and $E$ are real vector spaces of respective dimensions $d$ and $e$, and $\Delta$ is a lattice in $V$.
We fix a quasi-norm $|\cdot|$ on $V$ associated to a full flag $0=V_1<V_2<\ldots<V_d<V$ and a $d$-tuple $\alpha_1\geq\dots\geq\alpha_d>0$ via (\ref{defquasi}), and a local quasi-norm $|\cdot|'$ on $E$ defined by (\ref{defquasilocal}) using a decreasing full flag $E>V_1'>V_2'>\dots V_e'=0$ and an $e$-tuple $\alpha'_1\geq\dots\geq\alpha'_e>0$.
To this data are associated the volume exponents functions $\psi:\Grass(V)\to\R_+$ and $\phi:\Grass(E)\to\R_+$ defined in points {\bf 5.} and {\bf 6.} of the Subsection~\ref{subsecquasi}.

Recall that the diophantine exponent of a homomorphism $x\in \Hom(V,E)$ with respect to these quasi-norms is defined by
\begin{equation}
\label{diophexpdef} \beta(x) 
= \inf\{\beta>0 \ |\ \exists c>0:\ \forall v\in\Delta\setminus\{0\},\
|xv|' \geq c|v|^{-\beta}\}.
\end{equation}

A subspace $W\leq V$ is called $\Delta$-rational if $W \cap \Delta$ is a lattice in $W$.

\begin{proposition}[Dirichlet's principle]
\label{dirichlet}
For every $x\in \Hom(V,E)$ and every $\Delta$-rational subspace $W \leq V$,
\[ \beta(x) \geq  \frac{\psi(W\cap \ker x)}{\phi(xW)}.\]
\end{proposition}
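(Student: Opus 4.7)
The plan is a classical pigeonhole/Minkowski argument adapted to the quasi-norm setting. First, I would dispose of the degenerate cases: if $xW=0$, any nonzero $v\in\Delta\cap W$ satisfies $|xv|'=0$, giving $\beta(x)=+\infty$, while $\phi(xW)=0$ makes the right-hand side $+\infty$ as well (or $0$ when $W=0$, where the inequality is vacuous). Similarly, if $\ker x\cap\Delta\neq 0$ then $\beta(x)=+\infty$ and the inequality is trivial. I may therefore assume $xW\neq 0$ and that $x$ is injective on $\Delta$.

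Set $U:=W\cap\ker x$ and choose an adapted complement $W'$ of $U$ in $W$ for the flag $\{V_i\cap W\}_i$, which exists by the same argument as in point 3 of Section~\ref{subsecquasi}. Then $W=U\oplus W'$, with $|u+w'|\simeq\max(|u|,|w'|)$ for $u\in U$, $w'\in W'$, and $x|_{W'}\colon W'\to xW$ is a linear isomorphism. For $Q\geq 1$ large and $\eps>0$ small, I would introduce the convex symmetric polytope
\[ S_{Q,\eps}:=\bigl\{v\in W\,:\,|u_i^*(v)|\leq Q^{\alpha_i}\ \forall i,\ |u_j'^*(xv)|\leq\eps^{\alpha_j'}\ \forall j\bigr\}, \]
which is comparable to $\{v\in W\colon|v|\leq CQ,\ |xv|'\leq C\eps\}$ once $Q$ is large and $\eps$ small enough to lie in the regimes where each quasi-norm is equivalent to its max-of-powers expression.

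Decomposing $v=u+w'$, the second family of inequalities becomes a constraint on $w'$ alone, and for $\eps$ small enough the continuity of $x|_{W'}^{-1}$ forces $w'$ into a neighborhood of $0$ much smaller than $\{|w'|\leq CQ\}$. Hence, by the product structure of Lebesgue measure on $U\oplus W'$ together with the volume formulas of points 5 and 6 of Section~\ref{subsecquasi},
\[ \vol_W(S_{Q,\eps})\simeq Q^{\psi(U)}\cdot\eps^{\phi(xW)}. \]
Since $W$ is $\Delta$-rational, $\Delta\cap W$ is a lattice in $W$ of positive covolume; choosing $\eps:=c\, Q^{-\psi(U)/\phi(xW)}$ with $c$ large enough that $\vol_W(S_{Q,\eps})$ exceeds $2^{\dim W}$ times this covolume, Minkowski's first theorem yields a nonzero $v_Q\in S_{Q,\eps}\cap\Delta$. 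Translating back to the original quasi-norms gives $|v_Q|\leq C'Q$ and $|xv_Q|'\leq C''|v_Q|^{-\psi(U)/\phi(xW)}$ for constants $C',C''$ independent of $Q$.

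Letting $Q\to+\infty$, the injectivity of $x$ on $\Delta$ ensures $|xv_Q|'>0$, while $|xv_Q|'\to 0$, so $|v_Q|\to+\infty$ along a subsequence. This produces infinitely many $v\in\Delta$ with $|xv|'\leq C''|v|^{-\psi(U)/\phi(xW)}$, hence $\beta(x)\geq\psi(W\cap\ker x)/\phi(xW)$. The main technical point is to ensure that $W'$ can be chosen adapted to the flag on $W$, so that the quasi-norm on $W$ decomposes (up to equivalence) as a maximum over $U$ and $W'$ and the product volume estimate holds; once that is in place, the remainder is a routine application of Minkowski's first theorem, requiring only some care to stay in the regimes where both quasi-norms are equivalent to their coordinate expressions.
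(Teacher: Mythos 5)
Your argument is correct, and it arrives at the same estimate as the paper's, but by a genuinely different geometry-of-numbers route. The paper pigeonholes in the image: it counts the $\simeq Q^{\psi(W)}$ points of $\Delta\cap W$ of quasi-norm $\leq Q$, observes that their images under $x$ land in a set of volume $\simeq Q^{\psi(W)-\psi(W\cap\ker x)}$ for the quotient quasi-norm on $xW$, compares with the $\eps$-ball volume $\simeq\eps^{\phi(xW)}$ for the restricted $|\cdot|'$, and uses the quasi-norm triangle inequality to extract two close images, yielding a small difference $v=v_1-v_2$. Your proof instead works entirely in the preimage $W$: you build the convex symmetric polytope $S_{Q,\eps}$ from the coordinate inequalities, compute its volume as $\simeq Q^{\psi(W\cap\ker x)}\eps^{\phi(xW)}$ via the adapted decomposition $W=(W\cap\ker x)\oplus W'$, and apply Minkowski's first theorem to the lattice $\Delta\cap W$. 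One advantage of your route is that it sidesteps the issue that $|\cdot|$ and $|\cdot|'$ are not genuinely norms (their balls are not convex when the weights $\alpha_i$ differ): by working with the coordinate box rather than quasi-norm balls, you have an honest convex body, and Minkowski applies cleanly. The paper's pigeonhole-by-counting never needs convexity, so it does not face this issue either. Both approaches rely on the same volume facts from \S\ref{subsecquasi} (points 3, 5 and 6) and ultimately produce the same bound; the only technical point in yours that deserves the care you flag is that $W'$ is chosen adapted to the flag $\{V_i\cap W\}$ so that the quasi-norm and Lebesgue volume on $W$ factor up to constants over $U\oplus W'$, and that the $\eps$-constraint decouples to $W'$ alone because $x$ kills $U$.
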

\begin{proof} This is a version of the classical Dirichlet argument using the pigeonhole principle.
If $Q$ is a large parameter, the ball of radius $Q$ in $W$ for the quasi-norm $|\cdot|$ contains roughly (up to multiplicative constants) $Q^{\psi(W)}$ points of $\Delta$, which are mapped to a ball in $xW$ (for the quotient quasi-norm on $xV \simeq V/\ker x$) of volume $Q^{\psi(W)-\psi(W\cap \ker x)}$.
On the other hand, the volume of a ball of radius $\eps$ for the restriction of $|\cdot|'$ to $xW\leq E$ is comparable to $\eps^{\phi(xW)}$. Using the triangle inequality {\bf 4.} from the previous subsection, we see that not all $\eps$-balls in $xW$ around the images of  $Q^{\psi(W)}$ integer points can be disjoint if $\eps^{\phi(xW)} Q^{\psi(W)} \gg Q^{\psi(W)-\psi(W\cap \ker x)}$. The proposition follows.
\end{proof}

\begin{remark}Anticipating the next sections, we note here that given any $x \in \Hom(V,E)$, the maps $W \mapsto -\psi(W \cap \ker x)$ and $W \mapsto \phi(xW)$ are both submodular (see $\S \ref{submodstatement}$). This is formal from the submodularity of $-\psi$ and $\phi$. Also they are non-increasing and non-decreasing respectively for set inclusion. So the hypotheses of the submodularity lemma (Lemma \ref{submodular}) are fulfilled.
\end{remark}

\subsection{Pencils}\label{penpen}

Given two non-negative numbers $a,b$, and a subspace $W$ of $V$, we define the \emph{pencil} $\mathcal{P}_{W,a,b}$ associated to our choice of quasi-norms on $V$ and $E$ to be the algebraic subset
$$\mathcal{P}_{W,a,b} =\{x \in \Hom(V,E) \ |\ \psi(\ker x \cap W) \geq a \textnormal{ and }\phi(xW) \leq b\},$$ where $\psi$ (resp. $\phi$) is defined for a subspace $W\leq V$ (resp. $W'\leq E$) by
$$\psi(W)=\sum_{i \in I(W)}  \alpha_i
\quad\mbox{and}\quad \phi(W')=\sum_{i \in J(W')}  \alpha'_i,$$
where as before
\[
I(W)=\{i\in\{1,\ldots,d\} \ |\ \dim(W \cap \langle u_1, \ldots,u_i\rangle) > \dim(W \cap \langle u_1, \ldots,u_{i-1}\rangle)\}
\]
and
\[
J(W')=\{i\in\{1,\ldots,e\} \ |\ \dim(W' \cap \langle u'_{i+1}, \ldots,u'_e\rangle) < \dim(W' \cap \langle u'_i, \ldots,u'_e\rangle)\}.
\]

We easily see that pencils are closed algebraic subvarieties of $\Hom(V,E)$.  Moreover $\{\ker x\ ;\ x \in \mathcal{P}_{W,a,b} , \dim \ker x=k\}$ is a Schubert subvariety of $\Grass_k(V)$ (see $\S \ref{schubert}$).

For an irreducible closed algebraic subset $\cM$ in $\Hom(V,E)$, we define
\begin{equation}\label{tau}
\tau(\mathcal{M})
= \max_{W \leq V} \{ \frac{a}{b} \,;\, \mathcal{M} \subset \mathcal{P}_{W,a,b}\}
\end{equation}
and
\begin{equation}\label{tauq}
\tau_\Q(\mathcal{M})
=\max_{W \leq V,\, \Delta\textnormal{-rational}} \{ \frac{a}{b} \,;\, \mathcal{M} \subset \mathcal{P}_{W,a,b}\}.
\end{equation}

\begin{remark} When all weights $\alpha_i$ and $\alpha_i'$ are equal to $1$, then $\psi$ and $\phi$ are just the dimension functions. Hence in this case our pencil $\mathcal{P}_{W,a,b}$ is just the pencil $\mathcal{P}_{W,r}$ defined in the introduction, with $r$ equal to the integer part of the minimum of $b$ and $\dim W-a$.
\end{remark}

\section{Diophantine approximation and flows on the space of lattices}\label{quasinorm-sec}

In this section we prove the results about diophantine approximation on submanifold of matrices stated in the introduction. We will do so in the general weighted  setting using the quasi-norms defined in the previous section. More precisely we will show the following results (using the notation defined in $(\ref{diophexpdef})$, $(\ref{tau})$ and $(\ref{tauq})$).

\begin{thm}[Exponent for submanifolds of matrices]
\label{uppboundnud}
Let $E,V$ be two real vectors spaces equipped with quasi-norms and $\mathcal{M}=\Phi(U)\subset\Hom(V,E)$ a connected analytic submanifold. Then there is $\widehat{\beta} \in [0,+\infty]$ such that for Lebesgue almost every $x\in U$
$$ \beta(\Phi(x)) = \widehat{\beta}.$$ Moreover $\widehat{\beta}$ depends only on the Zariski closure of $\cM$, and
$$\tau_\Q(\mathcal{M}) \leq \widehat{\beta} \leq \tau(\mathcal{M}).$$
\end{thm}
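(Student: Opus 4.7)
The plan is to split the two inequalities of the theorem: the lower bound $\tau_\Q(\mathcal{M})\leq\widehat{\beta}$ is a direct pointwise consequence of Dirichlet's pigeonhole argument (Proposition~\ref{dirichlet}), while the upper bound $\widehat{\beta}\leq\tau(\mathcal{M})$ requires the quantitative non-divergence machinery of Kleinbock--Margulis applied to a suitable one-parameter diagonal flow on the space of lattices. The existence of a common almost-sure value $\widehat{\beta}$ is already known from Kleinbock's dichotomy \cite{kleinbock-dichotomy}, and the dependence of $\widehat{\beta}$ only on the Zariski closure of $\mathcal{M}$ will follow once I observe that the exceptional sets forcing $\beta$ below a given threshold are cut out by polynomial vanishing conditions.

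For the lower bound, take any $\Delta$-rational subspace $W\leq V$ and any $a,b\geq 0$ with $\mathcal{M}\subset\mathcal{P}_{W,a,b}$. By definition of the pencil, every $x\in\mathcal{M}$ satisfies $\psi(\ker x\cap W)\geq a$ and $\phi(xW)\leq b$, so Proposition~\ref{dirichlet} yields $\beta(x)\geq a/b$ for \emph{every} $x$, not merely almost every one. Taking the supremum over the finite family of rational pencils containing $\mathcal{M}$ gives $\beta(x)\geq\tau_\Q(\mathcal{M})$ pointwise on $U$.

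For the upper bound I would invoke the Dani correspondence: to each $x\in\Hom(V,E)$ associate the lattice $\Lambda_x:=\{(v,xv+w)\mid v\in\Delta,\,w\in\Z^e\}\subset V\oplus E$ together with a one-parameter diagonal semigroup $(g_t)$ whose weights on the basis vectors of $V$ and $E$ are determined by the $\alpha_i$, $\alpha'_j$ and a target exponent $\beta$, normalized so that $\det g_t=1$. A standard computation adapted to the quasi-norm setting shows that $\beta(x)>\beta$ if and only if $g_t\Lambda_x$ leaves every compact subset of the space of unimodular lattices, which by Mahler's compactness criterion happens iff some discrete subgroup $\Lambda_{W,F}$ of $\Lambda_x$, indexed by pairs $W\leq V$, $F\leq E$ with $xW\subset F$, has $g_t$-covolume decaying to zero. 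This covolume is a polynomial of bounded degree in the matrix entries of $x$. The Kleinbock--Margulis quantitative non-divergence theorem for friendly or locally good measures \cite{kleinbock-margulis,kleinbock-lindenstrauss-weiss,kleinbock-dichotomy} bounds the measure of the set where one such covolume drops below $\eps$ by $C\eps^\alpha$, \emph{unless} the polynomial is identically small on $\mathcal{M}$, and this latter exceptional case is precisely the condition $\mathcal{M}\subset\mathcal{P}_{W,a,b}$ with $a/b>\beta$. A discretization in $t$ and a Borel--Cantelli argument then yield $\beta(x)\leq\tau(\mathcal{M})$ almost everywhere.

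The main obstacle, and where I expect most of the technical work to lie, is verifying the $(C,\alpha)$-good property for all the relevant covolume polynomials along $\mathcal{M}$ in the weighted quasi-norm setting and checking that the contracting/expanding $g_t$-exponents on each Pl\"ucker coordinate of $\Lambda_{W,F}$ aggregate exactly to the quantity $\psi(W)-\phi(F)\beta$ appearing in the bound. The submodularity of $\psi$ and $-\phi$ recorded in Subsection~\ref{subsecquasi} will be crucial here, as it ensures that only finitely many extremal pairs $(W,F)$ need to be considered, so the resulting exceptional union in the Borel--Cantelli step is controlled. Finally, since a polynomial vanishes on $\mathcal{M}$ if and only if it vanishes on its Zariski closure, the family of pencils containing $\mathcal{M}$ (and thus $\tau(\mathcal{M})$) depends only on the Zariski closure; combining this with the non-divergence measure estimate shows that $\widehat{\beta}$ is likewise invariant under passage to the Zariski closure, completing the theorem.
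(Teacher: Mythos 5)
The lower bound via Dirichlet is correct and matches the paper, and the overall skeleton (Dani correspondence, quantitative non-divergence, Borel--Cantelli) is the right one. However there are two genuine gaps in your proposal.

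First, your Dani correspondence is wrong for the paper's setup. You propose the lattice $\Lambda_x=\{(v,xv+w)\mid v\in\Delta,\,w\in\Z^e\}\subset V\oplus E$, which tacitly introduces a lattice $\Z^e$ in $E$. But the exponent $\beta(x)$ defined in (\ref{betdef}) is homogeneous: it measures how small $|xv|'$ can be, not how close $xv$ comes to some lattice point in $E$. No lattice structure on $E$ exists in this framework, and the subgroup $\{0\}\times\Z^e$ of your $\Lambda_x$ is independent of $x$, so the construction does not encode the problem at hand. (In the classical $M_{m,m+n}$ setting the identity block of $x=(I_m\mid A)$ already plays the role of the approximating lattice inside $\Delta$; the general case of arbitrary $x\in\Hom(V,E)$ must be handled differently.) The paper's Proposition~\ref{dani} instead keeps the lattice $\Delta\subset V$ fixed and builds an $x$-dependent flow $\tilde{g}_t^{(x,\beta)}$ on $V$ from the full flag $F_x$ obtained by concatenating the flag of $\ker x$ (from $I(\ker x)$) with the flag of $V/\ker x$ (from $J(xV)$), contracting the former and $\beta$-expanding the latter. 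The relevant eigenvalue sum on a pure $k$-vector $\bw$ spanning $W$ is then $\beta\phi(xW)-\psi(\ker x\cap W)$, not your $\psi(W)-\phi(F)\beta$; the distinction between $\psi(W)$ and $\psi(\ker x\cap W)$, and between $\phi(F)$ and $\phi(xW)$, is precisely what makes the pencils $\mathcal{P}_{W,a,b}$ (and not some coarser data) the right obstructions.

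Second, your argument for the Zariski-closure invariance of $\widehat{\beta}$ is too weak. You observe that the family of pencils containing $\cM$, hence $\tau(\cM)$ and $\tau_\Q(\cM)$, depends only on the Zariski closure; but a priori $\widehat{\beta}$ can lie strictly between those bounds (indeed Proposition~\ref{randomlattice} shows the upper bound can be strict), so this does not by itself show $\widehat{\beta}$ is determined by the Zariski closure. What is needed is the paper's inheritance argument (Theorem~\ref{inheritance} and Corollary~\ref{localZar}): the condition $(\mathcal{C}_\beta)$ — a uniform lower bound on $\sup_{y}\|g_t^{(y,\beta)}\bw\|$ — is a statement about the linear span of the Pl\"ucker coordinates $\theta(y)$ over $y$ near $x$, because $g_t^{(y,\beta)}\bw$ is linear in the minors of $y$. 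Hence $(\mathcal{C}_\beta)$, and therefore $\widehat{\beta}_\nu(x)$, depends only on the local Pl\"ucker closure, which in turn depends only on the local Zariski closure. This is a genuinely different and stronger statement than the invariance of $\tau(\cM)$. Finally, note that submodularity plays no role in this theorem — it is used only in Section~\ref{section:submodular} to pass from $\tau(\cM)$ to $\tau_\Q(\cM)$ when $\cM$ is defined over $\Q$ (Theorem~\ref{uppboundnudrat}), and the quantitative non-divergence Theorem~\ref{klw} already handles all sublattices simultaneously, so no finite-extremal-pair reduction is needed in the Borel--Cantelli step.
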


and

\begin{thm}[Rational manifolds]\label{uppboundnudrat}
In the setting of the previous theorem, assume that the Zariski closure of $\cM$ is defined over $\Q$, then $$\tau_\Q(\mathcal{M})  = \widehat{\beta} = \tau(\mathcal{M}).$$
In particular $ \widehat{\beta}$ is a rational number if the $\alpha_i,\alpha_i'$ are rational.
\end{thm}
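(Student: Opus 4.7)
By Theorem~\ref{uppboundnud} we already have $\tau_\Q(\cM)\le\widehat\beta\le\tau(\cM)$ and know that $\widehat\beta$ depends only on the Zariski closure of $\cM$. It therefore suffices to prove $\tau(\cM)=\tau_\Q(\cM)$, i.e.\ that the maximum defining $\tau(\cM)$ is attained at a $\Delta$-rational subspace of $V$. Since pencils are closed algebraic sets, passing to the Zariski closure does not affect either quantity, so we may assume $\cM$ is itself Zariski-closed and $\Q$-rational; by connectedness of the original analytic submanifold we may furthermore assume $\cM$ geometrically irreducible (otherwise we argue on the single geometric component containing the submanifold and then invoke Galois symmetry at the end).

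\textbf{Generic extrema and modular structure.} For such $\cM$ I introduce
\[
\psi_\cM(W):=\min_{x\in\cM}\psi(\ker x\cap W),\qquad \phi_\cM(W):=\max_{x\in\cM}\phi(xW),
\]
so that $\tau(\cM)=\max_{W\le V}\psi_\cM(W)/\phi_\cM(W)$. Both extrema are attained on non-empty Zariski-open subsets of $\cM$ by semicontinuity of kernel and image dimensions in $x$. Using the decompositions $\psi=\sum_i(\alpha_i-\alpha_{i+1})\dim(V_{i+1}\cap\cdot)$ and $\phi=\sum_j(\alpha'_j-\alpha'_{j+1})(\dim\cdot-\dim(V'_j\cap\cdot))$ recalled in \S\ref{subsecquasi}, one checks fiberwise that, for every $x$, the map $W\mapsto\psi(\ker x\cap W)$ is non-decreasing and supermodular (the map $W\mapsto\dim(V_0\cap W)$ being supermodular for any fixed subspace $V_0$), while $W\mapsto\phi(xW)$ is non-decreasing and submodular (the map $W\mapsto\dim\pi(xW)$ being submodular for any linear projection $\pi$). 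By irreducibility of $\cM$, a finite intersection of non-empty Zariski-open subsets is non-empty, so a single generic $x\in\cM$ simultaneously realizes the four extrema $\psi_\cM(U),\psi_\cM(W),\psi_\cM(U+W),\psi_\cM(U\cap W)$ (and likewise for $\phi$). This transfers the super/submodular inequalities from the fiberwise functions to $\psi_\cM$ and $\phi_\cM$.

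\textbf{Unique optimum and Galois descent.} I now adapt the proof of Lemma~\ref{submodularity}. If $W_1,W_2$ both attain $\lambda:=\tau(\cM)>0$, then supermodularity of $\psi_\cM$, submodularity of $\phi_\cM$, together with the equalities $\psi_\cM(W_i)=\lambda\phi_\cM(W_i)$, yield
\[
\bigl(\lambda\phi_\cM(W_1+W_2)-\psi_\cM(W_1+W_2)\bigr)+\bigl(\lambda\phi_\cM(W_1\cap W_2)-\psi_\cM(W_1\cap W_2)\bigr)\le 0.
\]
Each summand is non-negative by maximality of $\lambda$, so both vanish, and $W_1+W_2$ also realizes $\lambda$. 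The set of subspaces attaining $\lambda$ is thus closed under sums and admits a unique maximal element $W^\ast$. Since $\cM$ and the quasi-norm data (flags $\{V_i\},\{V'_j\}$ and weights $\alpha_i,\alpha'_j$) are all $\Q$-rational, the functions $\psi_\cM,\phi_\cM$ are $\Gal(\overline\Q/\Q)$-equivariant on $\Grass(V)(\overline\Q)$; uniqueness of $W^\ast$ then forces $\sigma(W^\ast)=W^\ast$ for every $\sigma$, so $W^\ast$ is defined over $\Q$ and in particular $\Delta$-rational. Hence $\tau_\Q(\cM)\ge \psi_\cM(W^\ast)/\phi_\cM(W^\ast)=\tau(\cM)$, which gives the desired equality. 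The rationality of $\widehat\beta$ when the weights are rational is then immediate from the formula.

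\textbf{Main obstacle.} The principal technical issue is that Lemma~\ref{submodularity} as stated only concerns the ratio $\dim W/\phi(W)$ with a single submodular function, whereas the weighted setting forces a ratio of two functions of opposite modular type; the proof has to be suitably generalized, as sketched. A secondary but delicate point is transferring the fiberwise super/submodularity of $\psi(\ker x\cap\cdot),\phi(x\cdot)$ to the extremal functions $\psi_\cM,\phi_\cM$, which is where irreducibility of $\cM$ is used in an essential way to find a single generic $x$ witnessing four extrema at once.
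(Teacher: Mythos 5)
Your proof is correct and follows essentially the same route as the paper: reduce to showing $\tau(\cM)=\tau_\Q(\cM)$, introduce $\psi_\cM,\phi_\cM$ as generic extrema over $\cM$, transfer the fiberwise super/submodularity to them using irreducibility and the fact that finitely many non-empty Zariski-open sets meet, then apply a submodularity lemma together with Galois symmetry to conclude that the optimizing subspace is rational. The paper's proof (Theorem~\ref{rathm}) proceeds identically except that it directly invokes Corollary~\ref{submodular}, which rests on Lemma~\ref{presub} in Section~\ref{section:submodular} — and that lemma is already stated for a ratio $\psi/\phi$ with $\psi$ supermodular and $\phi$ submodular. So the generalization you flag as your ``main obstacle'' is not actually needed: the version in the introduction (Lemma~\ref{submodularity}) is a special case, but the one used in the proof is the general one you reconstruct inline. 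Your inline re-derivation of the closure-under-sum/uniqueness argument is fine as far as it goes, but it elides the reduction, handled carefully in Lemma~\ref{presub}, to the case where $\phi_\cM$ is strictly positive away from $0$; the step ``each summand is non-negative by maximality of $\lambda$'' needs this to make sense when $\phi_\cM$ vanishes on some nonzero $W$. That is a small gap that the paper's Lemma~\ref{presub} repairs with a preliminary quotient by $W=\sum\{x:\phi(x)=0\}$.
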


The $\Q$-structure on $\Hom(V,E)$ used implicitly in the above theorem is the one induced by the $\Q$-span of the bases of $V$ and $E$ chosen to define the quasi-norms. 

The above statements account for Theorems \ref{rational-exponent-babythm}, \ref{general-inequality} and \ref{weighted} from the introduction. Theorem \ref{pluckerklein} will be proved below as Theorem \ref{inheritance} (together with its consequences \ref{algexp} and \ref{pluckclo}.)

\subsection{The Dani correspondence}\label{subsecdani}

We now recall the connection between diophantine approximation and flows on homogeneous spaces, in particular the so-called quantitative non-divergence estimate, which originates in the work of Margulis \cite{margulis} and Dani \cite{dani1985} and first arose in the groundbreaking work of  Kleinbock and Margulis \cite{kleinbock-margulis} on diophantine approximation on manifolds.

A Dani correspondence is a statement which relates a diophantine exponent to the rate of escape of a certain flow on the space of lattices. In this subsection we present a Dani correspondence for matrices that is valid in the quasi-norm setting. A similar correspondence was worked out by Kleinbock and Margulis already in the matrix context in their work on logarithm laws \cite[Theorem~8.5]{kleinbock-margulis-logarithmlaw}.

We keep the notation of the previous subsection and let $x \in \Hom(V,E)$.
If $I(\ker x)=\{i_1<i_2<\dots<i_n\}$, then
\[ 0<V_{i_1+1}\cap\ker x < V_{i_2+1}\cap\ker x < \dots < V_{i_n+1}\cap\ker x = \ker x\]
is a full flag in $\ker x$.
Similarly, if $J(xV)=\{i_1'<\dots<i_m'\}\subset\{1,\dots,e\}$ is the set of indices $i$ for which $\dim(xV\cap V_{i}')=\dim(V_{i-1}'\cap xV)-1$, then
\[ \ker x < x^{-1}(xV\cap V_{i_m'+1}')<\dots
< x^{-1}(xV\cap V_{i_2'+1}') < x^{-1}(xV\cap V_{i_1'+1}')= V\]
is a full flag from $\ker x$ to $V$.
Concatenating these two flags, we obtain a full flag in $V$:
\[
F_x:\ 0<F_x^{(1)}<\ldots<F_x^{(d)}=V.
\]
We choose an adapted basis $\{e_i^{(x)}\}_i$ of $V$ so that $e_i^{(x)} \in F_x^{(i)} \setminus F_x^{(i-1)}$, and we define a $d$-tuple $(a_1,\ldots,a_d)$ of positive numbers by setting
\[ a_i=
\left\{
\begin{array}{ll}
\alpha_j & \mbox{if}\ F_x^{(i)}= V_{j+1} \cap \ker x\ \mbox{for some}\ j\in I(\ker x)\\
\alpha_j' & \mbox{if}\ F_x^{(i)}=x^{-1}(xV \cap V'_{j+1})\ \mbox{for some}\ j \in J(xV).
\end{array}
\right.\]
Finally given $\beta>0$ we define a one-parameter subgroup $\{\tilde{g}_t^{(x,\beta)}\}_{t\in \R}$ of $\GL(V)$ by
\[
\tilde{g}_t^{(x,\beta)} e_i^{(x)} =
\left\{
\begin{array}{ll}
e^{-a_it} e_i^{(x)} & \hbox{if } i\leq n,\\
e^{\beta a_it} e_i^{(x)} & \hbox{if } i>n,
\end{array}
\right.
\]
where $n=\dim \ker x$. Observe that $a_i \geq a_{i+1}$ for every $i<n$ and $a_i \leq a_{i+1}$ for every $i>n$.

\begin{proposition}[Dani correspondence]
\label{dani}
For $x \in\Hom(V,E)$ the diophantine exponent for quasi-norms defined in $(\ref{diophexpdef})$ is given by
\[ \beta(x)
= \inf\{\beta>0 \ |\ \exists c>0:\ \forall t>0,\ \forall v \in \Delta\setminus\{0\},\ 
\|\tilde{g}_t^{(x,\beta)}v\|\geq c\},\]
where $\|\cdot\|$ is a fixed Euclidean norm on $V$.
\end{proposition}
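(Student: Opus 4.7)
My plan is to fix $\beta>0$ and show the equivalence, up to a change of the multiplicative constant, between the two statements
\[
\exists c>0,\ \forall v \in \Delta\setminus\{0\}:\ |xv|' \geq c|v|^{-\beta}
\quad\mbox{and}\quad
\exists c'>0,\ \forall t>0,\ \forall v \in \Delta\setminus\{0\}:\ \|\tilde{g}_t^{(x,\beta)} v\| \geq c'.
\]
The equality of the infima defining $\beta(x)$ then follows by taking infima over $\beta$.

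I would first decompose $v = \sum_i v_i^{(x)} e_i^{(x)}$ in the adapted basis of $F_x$. By construction, the part of $F_x$ contained in $\ker x$ refines the induced flag $\{V_j\cap\ker x\}_{j\in I(\ker x)}$ with matching weights $a_i = \alpha_j$, while $\{x(e_{n+k}^{(x)})\}_{k=1}^{d-n}$ is a basis of $xV$ adapted to the induced flag $\{V'_j\cap xV\}_{j\in J(xV)}$ with matching weights $a_{n+k} = \alpha'_j$. Properties 2 and 3 of quasi-norms from Subsection \ref{subsecquasi} then give the coordinate expressions
\[
|\pi_K(v)| \asymp \max_{i \leq n} |v_i^{(x)}|^{1/a_i}
\quad\mbox{and}\quad
|xv|' \asymp \max_{i > n} |v_i^{(x)}|^{1/a_i},
\]
where $\pi_K : V \to \ker x$ is the projection along the complement spanned by $e_{n+1}^{(x)},\ldots,e_d^{(x)}$, while a direct computation with the diagonal action gives
\[
\|\tilde{g}_t^{(x,\beta)} v\| \asymp \max\!\Bigl(\max_{i\leq n} |v_i^{(x)}|e^{-a_i t},\ \max_{i>n} |v_i^{(x)}|e^{\beta a_i t}\Bigr).
\]

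Next, the right-hand side above is a piecewise log-linear function of $t$: its first inner max is non-increasing, its second non-decreasing, so $\inf_{t>0}$ is attained either as $t\to 0^+$ or at the unique balancing time $t^*$ where the two agree. At $t^*$ the common value is a weighted geometric mean of some $|v_{i_0}^{(x)}|$ (with $i_0\leq n$) and $|v_{j_0}^{(x)}|$ (with $j_0>n$), the exponents being determined by $a_{i_0},a_{j_0}$ and $\beta$. Raising this to an appropriate power depending on the $a_i$ and converting to quasi-norms via the expressions above yields a pointwise equivalence of the form
\[
\inf_{t>0}\|\tilde{g}_t^{(x,\beta)} v\|^{C} \asymp |xv|'\cdot |\pi_K(v)|^{\beta}
\]
for some constant $C>0$, restricted to the indices that realize the maxima. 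Conversely, given the diophantine inequality, one recovers a uniform lower bound on $\|\tilde g_t v\|$ by exhibiting, for each $v$, the candidate time $t$ at which the coordinate bounds $|v_i^{(x)}| < c e^{a_i t}$ ($i\leq n$) and $|v_i^{(x)}| < c e^{-\beta a_i t}$ ($i>n$) would hold simultaneously, and showing these contradict the diophantine lower bound after taking $1/a_i$-th powers and recombining the maxes.

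The main obstacle is reconciling the fact that $|v|$ is defined via the original flag $\{V_i\}$ with weights $\{\alpha_i\}$, whereas the flow $\tilde{g}_t^{(x,\beta)}$ uses the adapted flag $F_x$ with the mixed weights $\{a_i\}$. On $\ker x$ these prescriptions agree by the very definition of $I(\ker x)$, so the estimate above for $|\pi_K(v)|$ is immediate. For general $v$, vectors with $|v|$ much larger than $|\pi_K(v)|$ necessarily have a large transverse part, which forces $|xv|'$ to be comparable to a positive power of $|v|$ by the triangle inequality (Property 4 of Subsection \ref{subsecquasi}); for such vectors the diophantine inequality holds trivially with a uniform constant. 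The critical $v$ for the exponent are therefore those close to $\ker x$, for which $|v| \asymp |\pi_K(v)|$, and the pointwise comparison of the previous paragraph becomes exactly the claimed equivalence.
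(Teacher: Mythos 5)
Your proposal follows essentially the same route as the paper's proof. Both fix $\beta$, write $v$ in the adapted basis $\{e_i^{(x)}\}$, observe that $|xv|' \asymp \max_{i>n}|v_i|^{1/a_i}$ and (for the relevant vectors) $|v| \asymp \max_{i\leq n}|v_i|^{1/a_i}$, and then compare the condition that $\|\tilde g_t^{(x,\beta)}v\|$ stays bounded below with the condition that $|xv|'\cdot|v|^\beta$ stays bounded below, restricting first to vectors whose transverse part $v''$ is small (the others satisfy the diophantine inequality trivially). Your added detail about the balancing time $t^*$ and the weighted geometric mean simply makes explicit what the paper summarizes as "this is straightforward", and your constant $C$ implicitly depends on the pair of indices realizing the maxima—this is fine since only boundedness of the exponent matters and there are finitely many such pairs. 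One minor inaccuracy: a large transverse part does not force $|xv|'$ to be comparable to a power of $|v|$ (recall $|\cdot|'$ is only a \emph{local} quasi-norm, so it is bounded away from zero rather than growing); the correct statement, which is what the paper uses and what you need, is just that if $v''$ is bounded away from zero then $|xv|'$ is bounded below by a constant, so the inequality $|xv|'\geq c|v|^{-\beta}$ holds automatically for lattice vectors.
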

\begin{proof} We show that given $\beta$ and $x$, $\tilde{g}_t^{(x,\beta)}v$ is uniformly bounded away from zero when $t>0$ and $v \in \Delta\setminus\{0\}$ if and only if $|xv|' \cdot |v|^\beta$ is uniformly bounded away from zero for all non-zero $v \in \Delta$.

Note that $\ker x =\langle e_i^{(x)} , i\leq n\rangle$, and that we may restrict attention to vectors $v$ such that $xv$ is small, for otherwise there is nothing to prove. We may write $v=v'+v''$, where $v'=\sum_{i\leq n} v_i e_i^{(x)}$ and $v''=\sum_{i>n} v_ie_i^{(x)}$ and note, using the triangle inequality 5. of \S\ref{subsecquasi}, that $|v|$ is comparable within multiplicative constants to $|v'|$ since $v''$ can be assumed bounded. So we are left to check that
$$\max\{\max_{i\leq n} |v_i e^{-a_it}| ,  \max_{i> n} |v_i e^{\beta a_it}| \}$$
is bounded away from zero uniformly in $t>0$ and $v \in \Delta \setminus\{ 0\}$ if and only if $|xv|' \cdot |v|^\beta$ is bounded away from zero for $v$ in $\Delta\setminus\{0\}$. This is straightforward since $|xv|'$ is comparable to $\max_{i>n} |v_i|^{1/a_i}$, while $|v|$ is comparable to $|v'|$ and hence to $\max_{i\leq n} |v_i|^{1/a_i}$.
\end{proof}

For the next subsection, it will be convenient to use a slightly different flow $g_t^{(x,\beta)}$ in place of $\tilde{g}_t^{(x,\beta)}$ so as to make the dependence on $x$ explicit while preserving the validity of the Dani correspondence. For this, we fix (independently of $x$) bases $(u_i)$ of $V$ and $(u_i')$ of $E$ that are adapted to the flags $\{V_i\}$ and $\{V_i'\}$, so that $V_i=\langle u_1,\ldots, u_{i-1} \rangle$ and $V'_i=\langle u'_{i+1},\ldots,u'_e\rangle$ and assume $x\in\Hom(V,E)$ is given as a $e\times d$ matrix $(x_{ij})_{i,j}$ in these bases. We denote by $x_i^*$, $1\leq i\leq e$ the rows of $x$, which we view as linear forms on $V$.
The indices in the set $J(xV)=\{i_1'<\dots<i_m'\}$ are obtained by
\[ i_j'=\min\{i\in\{1,\dots,e\} \ |\ \rk(x_1^*,\dots,x_i^*)=j\}.\]
By construction, the family of linear forms $x_{i_m'}^*,\dots,x_{i_1'}^*$ has rank $m=\dim xV$.
We obtain the elements $i_1<\dots<i_n$ of the set $I(\ker x)$ in the following way:
\[ i_j = \min\{ i\in\{1,\dots,d\} \ |\ \rk(u_1^*,\dots,u_i^*,x_{i_m'}^*,\dots,x_{i_1'}^*)=m+j\}.\]
Now, consider the matrix $x'\in\GL_d(\R)$ given in rows by
\[ x' = \begin{pmatrix}
u_{i_1}^*\\
\vdots\\
u_{i_n}^*\\
x_{i_m'}^*\\
\vdots\\
x_{i_1'}^*
\end{pmatrix}.\]
We denote by $a_t^{(\beta)}$ the diagonal matrix $$a_t^{(\beta)}:=\diag(e^{-a_1t},\dots,e^{-a_nt},e^{\beta a_{n+1}t},\dots,e^{\beta a_{n+m}t}),$$ and define $g_t^{(x,\beta)}$ by the matrix
\begin{equation}\label{xxprime}
g_t^{(x,\beta)} := a_t^{(\beta)}x'.
\end{equation}

\begin{remark}
Note that, with this notation, $F_x^{(i)} = x'^{-1}V_{i+1}$ and our former definition $\tilde{g}_t^{(x,\beta)}$ was simply $\tilde{g}_t^{(x,\beta)}=x'^{-1}a_t^{(\beta)}x'$. The Dani correspondence (Proposition~\ref{dani} above) still holds for $\tilde{g}_t^{(x,\beta)}$ in place of $g_t^{(x,\beta)}$, because the shortest vectors of $x'^{-1}a_t^{(\beta)}x'\Delta$ and $a_t^{(\beta)}x'\Delta$ have comparable sizes, up to a positive multiplicative constant depending on $x$ only and not on $t$. 
\end{remark}
With this construction, the map $x\mapsto g_t^{(x,\beta)}$ is a polynomial map on any set where $I(\ker x)$ and $J(xV)$ are constant.
This last condition is equivalent to requiring that $\ker x$ and $xV$ respectively stay in some a fixed Schubert cell of the grassmannians $\Grass(V)$ and $\Grass(E)$, defined with respect to the flags $\{V_i\}$ and $\{V_i'\}$.

\subsection{Quantitative non-divergence}\label{subsecquant}

For the remainder of this subsection, $V$ and $E$ are two filtered real vector spaces endowed with quasi-norms as introduced in \S\ref{dirich}.
In this context, we adapt the strategy introduced by Kleinbock and Margulis \cite{kleinbock-margulis} to study diophantine approximation on manifolds.
The results presented in this subsection are closely related to further work of Kleinbock \cite{kleinbock-anextension,kleinbock-dichotomy}, who proved in particular the existence of an almost sure exponent and the fact that it is the smallest exponent.

Let $\nu$ be a Borel measure on a metric space $X$.
Given an open set $U \subset X$, the measure $\nu$ is \emph{$D$-doubling} (or \emph{$D$-Federer})  on $U$ if every ball $B$ centered at $x \in U \cap \Supp(\nu)$ and contained in $U$  satisfies
\begin{equation}\label{doublingdef}
\nu(\tfrac{1}{3}B) \geq \tfrac{1}{D} \nu(B),
\end{equation}
where $\frac{1}{3}B$ is the ball centered at the center of $B$ whose radius is a third of the radius of $B$.
Given two positive constants $C$ and $\alpha$, we say that a real-valued function $f$ on $X$ is \emph{$(C,\alpha)$-good} on $U$ with respect to the measure $\nu$ if it satisfies, for any ball $B\subset U$ and any $\eps>0$,
\begin{equation}\label{locgooddef}
\nu(\{x\in B \,;\, |f(x)|\leq\eps \}) \leq C \pa{\frac{\eps}{ \|f\|_{\nu,B}}}^{\alpha}\nu(B),
\end{equation}
where $\|f\|_{\nu,B} = \sup_{x\in B\cap\Supp\nu}|f(x)|$. By convention, we also agree that if $f$ is identically zero on the support of $\nu$, then it is $(C,\alpha)$-good with respect to $\nu$ for any values of $C$ and $\alpha$.

The heart of the Kleinbock-Margulis approach to study diophantine approximation on manifolds is the following key result, which can be seen as a Remez-type inequality (see \cite{brudnyi-ganzburg} or \cite[Thm 2.7]{abrs}) for functions taking values in the space of lattices. It originates in the work of Margulis \cite{margulis1971} on non-divergence of unipotent flows, which was later greatly generalized in work of Dani \cite{dani1985} and Kleinbock-Margulis \cite{kleinbock-margulis}. The following version is borrowed from \cite{kleinbock-anextension} (see also \cite{kleinbock-pisa}).

\begin{theorem}\label{klw}
\cite[Theorem~2.2]{kleinbock-anextension}
Let $X$ be a Besicovitch metric space. Let $U \subset X$ be an open subset and $\nu$ a Radon measure on $U$ which is $D$-doubling on $U$ for some $D>0$. Let $C,\alpha>0$ be positive constants and $h:U \to \rm \GL_d(\R)$ be a continuous map. Let $\rho \in (0,1]$ and let $B=B(z,r)$, $z \in \Supp(\nu)$, be an open ball such that $B(z,3^dr)$  is contained in $U$. Assume that for each $v_1,\ldots, v_k$ in $\Z^d\setminus\{0\}$, $1\leq k\leq d$,

\begin{itemize}
\item the function $x \mapsto \|h(x)\mathbf{w}\|$ is $(C,\alpha)$-good for $\nu$ on $B(z,3^dr)$, and
\item if $\mathbf{w}\neq 0$, then $\sup_{y \in B \cap \Supp(\nu)} \|h(y) \mathbf{w}\| > \rho^k$,
\end{itemize}
where $\bw=v_1\wedge v_2\wedge\dots\wedge v_k$.\\
Then  for every $\eps \in (0,\rho]$, we have that
\[ \nu(\{x \in B ; h(x)\Z^d \in \Omega_\eps \}) \leq C'\big(\tfrac{\eps}{\rho}\big)^\alpha \nu(B), \]
where $C'>0$ depends only on $D,C,\alpha$ and the Besicovitch constant of $X$.
\end{theorem}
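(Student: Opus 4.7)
The plan is to prove Theorem~\ref{klw} by an induction on the rank $k$ of primitive sublattices, following the stratification-and-covering strategy pioneered in \cite{kleinbock-margulis} and refined in \cite{kleinbock-anextension}. At the top level, I would fix $\eps\in(0,\rho]$ and aim to control the $\nu$-measure of the \emph{bad} set $B_\eps:=\{x\in B \ |\ h(x)\Z^d\cap\Omega_\eps\ne\emptyset\}$, i.e.\ the set where the lattice $h(x)\Z^d$ contains a vector of norm at most $\eps$. The starting point is the classical dichotomy: at every point $x\in B$ there is a \emph{primitive} subspace $W\leq\Q^d$ (of some rank $k\leq d$) such that $\|h(x)\mathbf{w}_W\|$, with $\mathbf{w}_W$ a primitive integer vector representing $W$ in $\bigwedge^k\Z^d$, is minimized among all primitive subspaces, and Minkowski's theorem provides a uniform relation between these minima and the existence of short vectors in the lattice.

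Next I would set up the induction: assume the theorem holds for all dimensions $<d$ and all ranks $<k$, and localize via a Besicovitch covering of $B_\eps$ by small sub-balls $B_i\subset B(z,3^dr)$. On each such $B_i$, using the second hypothesis $\sup_{B\cap\Supp\nu}\|h(y)\mathbf{w}\|>\rho^k$, pick the smallest $k$ for which there exists a primitive $\mathbf{w}_i$ (of rank $k$) with $\|h(\cdot)\mathbf{w}_i\|$ small throughout $B_i$ (say $\leq\rho^k$). The $(C,\alpha)$-good property applied to $x\mapsto\|h(x)\mathbf{w}_i\|$ gives
\[
\nu(\{x\in B_i\ |\ \|h(x)\mathbf{w}_i\|\leq\eps\})
\leq C\Bigl(\tfrac{\eps}{\|h(\cdot)\mathbf{w}_i\|_{\nu,B_i}}\Bigr)^{\alpha}\nu(B_i)
\leq C\bigl(\tfrac{\eps}{\rho^k}\bigr)^{\alpha}\nu(B_i)
\leq C\bigl(\tfrac{\eps}{\rho}\bigr)^{\alpha}\nu(B_i),
\]
where I used $\eps\leq\rho\leq 1$ and $\alpha>0$. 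Meanwhile, on the complement of this set inside $B_i$, the subspace $W_i$ does \emph{not} yield a short vector, so any short vector of $h(x)\Z^d$ must lie in the quotient $\Z^d/W_i\cap\Z^d$: here the inductive hypothesis (applied to the rescaled quotient map, whose $(C,\alpha)$-good character is inherited from the assumption on all $k$-fold wedges) bounds the remaining measure by $C'(\eps/\rho)^\alpha\nu(B_i)$. Summing over the Besicovitch cover and using the $D$-doubling property of $\nu$ to pass from $\sum\nu(B_i)$ to a bounded multiple of $\nu(B)$ produces the desired estimate.

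The main obstacle, and the reason the factor $3^d$ appears in the hypothesis, is the careful book-keeping needed when passing to quotients: the primitive subspace $W_i$ selected on $B_i$ may depend on $i$, so one needs to reconcile the local inductive estimates with a single global covering. Handling this requires the ``rigidity'' observation that on each $B_i$ the function $x\mapsto\|h(x)\mathbf{w}_i\|$ controls \emph{all} primitive $\mathbf{w}$ of rank $k$ simultaneously (up to the $(C,\alpha)$-good constants), together with the doubling property of $\nu$ to ensure the enlarged balls $B(z_i,3r_i)$ still lie in $B(z,3^dr)$ and carry comparable $\nu$-mass. Since the statement is borrowed verbatim from \cite{kleinbock-anextension}, I expect in the paper the authors simply invoke it; the proof plan above is the one the cited reference carries out in detail.
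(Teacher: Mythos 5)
The paper itself does not prove Theorem~\ref{klw}; it imports the statement verbatim from Kleinbock's paper \cite{kleinbock-anextension} (see also the Pisa lecture notes), adding only the remark that the argument there for $\SL_d(\R)$-valued maps carries over unchanged to $\GL_d(\R)$-valued ones. You correctly identified this at the end of your write-up, so there is nothing in the paper to compare your sketch against.

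As a description of the cited proof, though, your sketch captures the right high-level architecture (Besicovitch covering, $(C,\alpha)$-goodness applied to $x\mapsto\|h(x)\bw\|$, induction, the role of the $3^d$-dilate) but misdescribes several of the mechanics. The induction in Kleinbock--Margulis and in Kleinbock's extension runs on the length of the partially ordered set of primitive subgroups of $\Z^d$ (equivalently, on the length of a flag of ``marked'' subgroups), not via a reduction to quotient lattices $\Z^d/(W_i\cap\Z^d)$; no rescaled quotient map enters the argument, and the $(C,\alpha)$-good property for wedges of the \emph{original} map $h$ is what is used at every stage. The ``rigidity'' you posit --- that $\|h(\cdot)\bw_i\|$ for one rank-$k$ primitive $\bw_i$ controls all rank-$k$ primitive wedges on $B_i$ --- is not part of the argument and is false in general; what replaces it is the combinatorics of the marked poset: for $x$ in the bad set one extracts, via Minkowski's theorem, a flag of primitive sublattices on which the covolume functions are simultaneously small, one takes a \emph{maximal} such flag (not a ``smallest $k$''), and the $(C,\alpha)$-good bound is applied along this flag together with the Besicovitch cover and the Federer/doubling property. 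These are genuine gaps if the sketch were to be expanded into a proof, but since the paper merely cites the result, they do not affect the paper's own argument.
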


In the above theorem, the letter $\Omega_\eps$ denotes the part of the space of lattices $\rm \GL_d(\R)/\GL_d(\Z)$ made of lattices in $\R^d$ admitting a non-zero vector of length at most $\eps$. When one restricts attention to unimodular lattices, the set $\Omega_\eps$ is thought of as \enquote{the cusp} of $\Omega$, because as $\eps \to 0$ these sets form a nested sequence of neighborhoods of infinity. This fact is known as Mahler's criterion \cite[Cor. 10.9]{raghunathan}.

In fact the theorem is stated only for $\SL_d(\R)$-valued maps in \cite{kleinbock-anextension}, but it is valid as well -- with the same proof -- for $\GL_d(\R)$-valued maps. In older versions of this non-divergence result (such as in \cite{kleinbock-lindenstrauss-weiss}) the second assumption involved a lower bound of the form $\rho$. Kleinbock's observation \cite{kleinbock-anextension} that one can relax this assumption to a lower bound $\rho^k$ is crucial when one wants to study diophantine exponents of measures that might not be extremal; it will be essential in the proofs of our formulas for the exponent.

Recall also that a metric space $X$ is called \emph{Besicovitch} if there exists an integer $C$ such that we have the following property: suppose $A\subset X$ and for each $a\in A$ we are given a non-empty open ball $B_a$ centered at $a$; then there exists a countable subset $A'\subset A$ such that $A\subset\bigcup_{a\in A'}B_a$ and any intersection of $C$ distinct balls $B_a$, $a\in A'$, is empty.
In the definition below, the flow $g_t^{(y,\beta)}$ on $\GL(V)$ is the one constructed at the end of \S\ref{subsecdani}, and we denote by $\cW_{\Delta}^k$ the set of non-zero pure integral $k$-vectors, i.e. those $\bw\in\wedge^kV \setminus\{0\}$ that can be written $\bw=v_1\wedge\dots\wedge v_k$ where each $v_i$ is an element of $\Delta$.

\begin{definition}\label{locally-good-def}
Let $\nu$ be a Radon measure on $\Hom(V,E)$ and $x\in\Supp(\nu)$.
We will say that the measure $\nu$ is \emph{locally good at $x$} if there exists a neighborhood $U_x$ of $x$ and positive constants $C$, $D$ and $\alpha$ such that
\begin{itemize}
\item there exists $n \leq d$ and a Schubert cell  $e(\sigma) \subset\Grass_n(V)$ such that for all $y$ in $U_x\cap\Supp\nu$, $\ker y$ belongs to $e(\sigma)$ (see $\S \ref{schubert}$).
\item there exists $m \leq e$ and a Schubert cell $e(\sigma')\subset\Grass_m(E)$ such that for all $y$ in $U_x\cap\Supp\nu$, $yV$ belongs to $e(\sigma')$  (see $\S \ref{schubert}$).
\item $\nu$ is $D$-doubling on $U_x$
\item for all $t,\beta>0$, all $k\geq0$ and all $\bw$ in $\cW_\Delta^k$, the map $y \mapsto \|g_t^{(y,\beta)}\bw\|$ is $(C,\alpha)$-good on $U_x$ with respect to $\nu$.
\end{itemize}
\end{definition}

\begin{remark}
The first two conditions ensure that the map $y\mapsto g_t^{(y,\beta)}$ is continuous on a neighborhood of $x$ in $\Supp\nu$.
\end{remark}

\begin{remark}
The set of points where $\nu$ is locally good is an open subset of $\Supp(\nu)$.
\end{remark}

With the quantitative non-divergence result (Theorem~\ref{klw}) we can derive the following statement.
The argument in the proof is taken from Kleinbock \cite{kleinbock-dichotomy}.

\begin{theorem}[Existence of a local exponent]\label{localexponent}
If a Radon measure $\nu$ on $\Hom(V,E)$ is locally good at $x$, then there is a neighborhood $B_x$ of $x$ such that for $\nu$-almost every point $y \in B_x$,
\begin{equation}\label{locexp}
\beta(y) = \inf_{z\in B_x} \beta(z). 
\end{equation}
If $B_x$ is a neighborhood such that (\ref{locexp}) holds, we define the \emph{local diophantine exponent} of $\nu$ at $x$ by
\[ \widehat{\beta}_\nu(x) = \inf_{z\in B_x}\beta(z).\]
\end{theorem}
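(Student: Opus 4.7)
The plan is to combine the Dani correspondence (Proposition~\ref{dani}) with the quantitative non-divergence estimate (Theorem~\ref{klw}) supplied by the locally good hypothesis, following Kleinbock~\cite{kleinbock-dichotomy}.

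\medskip

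\emph{Setup.} I would take $B_x$ to be a small open ball around $x$ with $3^d B_x \subset U_x$, where $U_x$ is the neighborhood provided by Definition~\ref{locally-good-def}. On $B_x\cap\Supp\nu$ the Schubert cells containing $\ker y$ and $yV$ are then constant, so $y\mapsto g_t^{(y,\beta)}$ is a continuous polynomial family of $\GL(V)$-valued maps; the measure $\nu$ is $D$-doubling on $B_x$; and for every pure $k$-vector $\bw\in\cW_\Delta^k$, the function $y\mapsto\|g_t^{(y,\beta)}\bw\|$ is $(C,\alpha)$-good on $B_x$, with constants uniform in $t$ and $\beta$.

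\medskip

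\emph{Reduction.} Let $\widehat{\beta}_0:=\inf_{z\in B_x}\beta(z)$. The inequality $\beta(y)\geq\widehat{\beta}_0$ is tautological. To establish $\beta(y)\leq\widehat{\beta}_0$ for $\nu$-a.e.\ $y$, by a countable sequence $\beta_n\searrow\widehat{\beta}_0$ it suffices to show that the set $E_\beta:=\{y\in B_x : \beta(y)>\beta\}$ is $\nu$-null for every $\beta>\widehat{\beta}_0$.

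\medskip

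\emph{Non-divergence input.} Fix $\beta>\widehat{\beta}_0$ and pick $z_0\in B_x\cap\Supp\nu$ with $\beta(z_0)<\beta$. Proposition~\ref{dani} yields $c_0>0$ with $\|g_t^{(z_0,\beta)}v\|\geq c_0$ for every $t>0$ and every $v\in\Delta\setminus\{0\}$. Minkowski's second theorem applied to the lattice $g_t^{(z_0,\beta)}\Delta$ then upgrades this to $\|g_t^{(z_0,\beta)}\bw\|\geq c_0'^{\,k}$ for every pure $k$-vector $\bw\in\cW_\Delta^k$, with $c_0'>0$ independent of $t$ and $k$: the $k$-th successive minimum of a lattice whose first successive minimum is $\geq c_0$ is also $\geq c_0$, and the norm of a primitive pure $k$-vector equals the covolume of the sublattice it spans. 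With $\rho=c_0'$, the hypotheses of Theorem~\ref{klw} applied to $h(y)=g_t^{(y,\beta)}$ on $B_x$ are now in force, yielding
\begin{equation}\label{ndbd}
\nu\bigl(\{y\in B_x : g_t^{(y,\beta)}\Delta\in\Omega_\eps\}\bigr)\leq C'\pa{\tfrac{\eps}{\rho}}^{\alpha}\nu(B_x)
\end{equation}
for every $t>0$ and $\eps\in(0,\rho]$, with $C'$ independent of $t$.

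\medskip

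\emph{Main obstacle.} Via the Dani correspondence, $E_\beta=\bigcap_{\eps>0}A_\eps$ where $A_\eps:=\{y\in B_x : \exists t>0,\ g_t^{(y,\beta)}\Delta\in\Omega_\eps\}$, and the principal technical point is to convert the per-time estimate \eqref{ndbd} into a bound on $\nu(A_\eps)$ despite the union ranging over the continuum of times $t>0$. Following \cite{kleinbock-dichotomy}, I would exploit uniform continuity of $t\mapsto g_t^{(y,\beta)}$ to produce $\delta_0>0$, depending only on $\beta$ and $\max_i a_i$, such that any visit to $\Omega_\eps$ at a time $t_y$ forces the orbit to remain inside $\Omega_{2\eps}$ throughout $[t_y-\delta_0,t_y+\delta_0]$; this discretises the time parameter, so that $A_\eps\subset\bigcup_{n\geq 0} A_{2\eps}^{n\delta_0}$ becomes a countable union. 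A further analysis of how the orbit escapes deep cusp neighborhoods for $t$ outside a bounded interval determined by $\beta$ and the data of $y$ restricts the relevant grid points to a finite (and uniformly controlled) set, after which summing \eqref{ndbd} yields $\nu(A_\eps)\lesssim(\eps/\rho)^{\alpha}\nu(B_x)$. Letting $\eps\to 0$ gives $\nu(E_\beta)=0$, completing the argument.
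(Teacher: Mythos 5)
Your setup, reduction and the Minkowski upgrade from vectors to pure $k$-vectors all match what the paper does (the paper is a bit terse on the Minkowski step but your version with the $\rho^k$ lower bound is exactly what Theorem~\ref{klw} asks for and is the honest formulation). The divergence, and the gap, is in the final \enquote{Main obstacle} paragraph. The paper does not try to bound $\nu(A_\eps)$ for a fixed $\eps$ at all. Instead it applies Theorem~\ref{klw} with a \emph{time-dependent} depth $\eps_t=e^{-\eta t}$, obtains
\[
\nu\bigl(\{y\in B_x : g_t^{(y,\beta)}\Delta\in\Omega_{e^{-\eta t}}\}\bigr)\leq C'\bigl(e^{-\eta t}/c\bigr)^\alpha\nu(B_x),
\]
sums over integer $t$, and invokes Borel--Cantelli to conclude that for $\nu$-a.e.\ $y$ the orbit obeys $\delta(g_t^{(y,\beta)}\Delta)\geq e^{-\eta t}$ for all $t$ large, which via Dani translates into $\beta(y)\leq\beta+O(\eta)$. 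The conclusion is a \emph{rate} constraint on cusp excursions, not the absence of deep excursions.

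Your proposed bound $\nu(A_\eps)\lesssim(\eps/\rho)^{\alpha}\nu(B_x)$, with $A_\eps=\{y : \exists t>0,\ g_t^{(y,\beta)}\Delta\in\Omega_\eps\}$, cannot be reached the way you describe. Discretising time to a grid $\{n\delta_0\}$ is fine, but the per-time estimate \eqref{ndbd} only controls each individual time slice, and there are infinitely many relevant grid points: for a typical $y$ the one-sided orbit $g_t^{(y,\beta)}\Delta$ equidistributes, so it enters $\Omega_\eps$ at arbitrarily large times; there is no bounded time window, uniform in $y$, outside of which the orbit avoids $\Omega_\eps$. (Near $t=0$ you do get a uniform exclusion because $g_0^{(y,\beta)}=x'$ is bounded on $B_x$, but for large $t$ nothing of the kind holds.) The assertion that a \enquote{further analysis of how the orbit escapes deep cusp neighborhoods} confines the relevant grid points to a finite and uniformly controlled set is precisely the missing idea; it is not a refinement of the argument but a substitute for the Borel--Cantelli step, and as stated it is not justified and, I believe, false. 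To repair the proof you should replace the fixed-$\eps$ estimate by the paper's choice $\eps=e^{-\eta t}$, which makes the sum over $t\in\N$ geometric and hence finite, and then pass from the resulting rate bound $\delta(g_t^{(y,\beta)}\Delta)\gtrsim e^{-\eta t}$ to $\beta(y)\leq\beta+O(\eta)$ via the Dani correspondence.
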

\begin{proof}
If $B_x$ is any neighborhood of $x$, it is trivial that for all $y$ in $B_x$, $\beta(y)\geq\inf_{z\in B_x}\beta(z)$.
Conversely, assume $\nu$ is locally good at $x$, and let $C, D, \alpha>0$ and $B_x=B(x,r)$ be a ball around $x$ such that the conditions of Definition \ref{locally-good-def} hold on $U_x:=B(x,3^dr)$. We claim that for almost every $y$ in $B_x$, $\beta(y)\leq\inf_{z\in B_x}\beta(z)$.
To see this, fix $z\in B_x$ and $\beta>\beta(z)$, so that, by Proposition~\ref{dani}, there exists $c>0$ such that
\[ \forall t>0,\ \forall k,\ \forall\bw\in\cW_\Delta^k,\quad
\|g_t^{(z,\beta)}\bw\| \geq c.\]
Of course, this implies that
\[ \forall t>0,\ \forall k,\ \forall\bw\in\cW_\Delta^k,\quad
\sup_{y\in B_x} \|g_t^{(y,\beta)}\bw\| \geq c.\]
For $\eta>0$, apply Theorem~\ref{klw} with $U=U_x$, $B=B_x$, $\rho=c$, $h:y\mapsto g_t^{(y,\beta)}$, and $\eps=e^{-\eta t}$.
For any $t>0$ such that $e^{-\eta t}<c$, we find that
\[ \nu(\{y\in B_x\ |\ g_t^{(y,\beta)}\Delta\in\Omega_{e^{-\eta t}}\})
\leq C' \big(\frac{e^{-\eta t}}{c}\big)^\alpha\nu(B_x).\]
Therefore,
\[ \sum_{t\in\N} \nu(\{y\in B_x\ |\ g_t^{(y,\beta)}\Delta\in\Omega_{e^{-\eta t}}\}) < \infty\]
and, by the Borel-Cantelli lemma and Proposition~\ref{dani} again,
for almost every $y$ in $B_x$, $\beta(y)\leq\beta+\eta$.
Letting $\eta\to 0$, $\beta\to\beta(z)$, and taking the infimum over $z$ in $B_x$, we get, for almost every $y$ in $B_x$,
\[ \beta(y) \leq \inf_{z\in B_x} \beta(z).\]
\end{proof}

Given a non-negative parameter $\beta$, we say that the measure $\nu$ satisfies the condition (\ref{cbeta}) at $x$ if the following holds:
\begin{equation}\label{cbeta}
\tag{$\mathcal{C}_\beta$}
\begin{array}{c}
\forall r>0,\
\exists c>0:\
\forall k\in\{1,\dots,\dim V\},\ \forall t>0,\ \forall \bw\in\mathcal{W}_{\Delta}^k,\\
\sup_{y \in B(x,r)\cap\Supp\nu} \| g_t^{(y,\beta)}\bw\| \geq c.
\end{array}
\end{equation}
It follows from Theorem~\ref{localexponent} and Proposition~\ref{dani} that if $\nu$ is locally good at $x$, then
\begin{equation}\label{betainf}
\widehat{\beta}_\nu(x) = \inf\{\beta>0 \,|\, \nu\ \mbox{satisfies}\ (\mathcal{C}_\beta)\ \mbox{at}\ x\}.
\end{equation}
We now use this observation to show that the local exponent $\widehat{\beta}_\nu(x)$ of a locally good measure at $x$ is determined by the local Zariski closure at $x$ of the support of $\nu$.
More precisely, fix two bases for $V$ and $E$, and given $x$ in $\Hom(V,E)$, consider $\theta(x)\in\R^N$ the $N$-tuple of all minors of $x$, where $N=\sum_{k=1}^e{d\choose k}{e\choose k}={e+d\choose d} -1$.
For a subset $S$ in $\Hom(V,E)$, we define \emph{the Pl\"ucker closure} of $S$ by 
\[ \cH(S) = \{ x\in\Hom(V,E) \ |\ \theta(x)\in \Span(\theta(y)\,;\,y\in S)\},\]
and finally, if $\nu$ is a locally good measure at $x$ in $\Hom(V,E)$, we let
\begin{equation}\label{locpluck} \cH_\nu(x) = \bigcap_{r>0} \cH(B(x,r)\cap\Supp\nu)\end{equation}
be the \emph{local Pl\"ucker closure at $x$.}

\begin{theorem}[Inheritance]
\label{inheritance}
Let $\nu$, $\nu'$ be Radon measures on $\Hom(V,E)$, and assume that $\nu$ is locally good at $x$ and $\nu'$ locally good at $x'$.
If $\cH_\nu(x)\subset\cH_{\nu'}(x')$, then $\widehat{\beta}_\nu(x)\geq\widehat{\beta}_{\nu'}(x')$.
\end{theorem}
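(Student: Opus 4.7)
The plan is to use the characterization $(\ref{betainf})$: $\widehat{\beta}_\nu(x)$ equals the infimum of $\beta > 0$ for which $\nu$ satisfies $(\mathcal{C}_\beta)$ at $x$. Proving $\widehat{\beta}_\nu(x)\geq\widehat{\beta}_{\nu'}(x')$ thus reduces to showing that whenever $\nu$ satisfies $(\mathcal{C}_\beta)$ at $x$, the measure $\nu'$ satisfies $(\mathcal{C}_\beta)$ at $x'$.

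The first step is to translate $(\mathcal{C}_\beta)$ into linear algebra on Plücker coordinates. By the first two items of Definition~\ref{locally-good-def}, the supports of $\nu$ and $\nu'$ are locally contained in fixed Schubert cells, on which the matrix $y'$ appearing in $(\ref{xxprime})$ depends linearly on $y$; Laplace expansion of the $k\times k$ minors of $g_t^{(y,\beta)} = a_t^{(\beta)} y'$ along the fixed basis rows then exhibits each coordinate of $g_t^{(y,\beta)}\bw$ as a linear combination of the minors of $y$, with coefficients that are explicit monomials in $e^t$ depending on $\beta$ and $\bw$. Hence for each $(t,\beta,\bw)$ there is a linear map $\Lambda_{t,\beta,\bw}:\R^N\to\wedge^k V$ such that $g_t^{(y,\beta)}\bw = \Lambda_{t,\beta,\bw}(\theta(y))$ for every $y\in B(x,r)\cap\Supp\nu$, and analogously a linear map $\Lambda'_{t,\beta,\bw}$ on the $\nu'$-side. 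Moreover, for $r$ small enough the nested span $H_\nu := \Span\{\theta(y):y\in B(x,r)\cap\Supp\nu\}$ stabilizes by finite-dimensionality, and combining $\Supp\nu\cap B(x,r)\subset\cH_\nu(x)$ with $\theta(\cH_\nu(x))\subset H_\nu$ yields both $\cH_\nu(x) = \theta^{-1}(H_\nu)$ and $\Span(\theta(\cH_\nu(x))) = H_\nu$. The analogous identities hold at $x'$, so the hypothesis $\cH_\nu(x)\subset\cH_{\nu'}(x')$ upgrades to $H_\nu\subset H_{\nu'}$.

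The final step is to transfer the lower bound $\sup\|\Lambda_{t,\beta,\bw}(\theta(y))\|\geq c$ from the $\nu$-side to the $\nu'$-side. By a Remez-type argument exploiting the doubling and $(C,\alpha)$-good hypotheses of Definition~\ref{locally-good-def} (in the spirit of Theorem~\ref{klw}), the sup of $\|\Lambda_{t,\beta,\bw}(\theta(y))\|$ over $B(x,r)\cap\Supp\nu$ is comparable, up to positive constants depending only on the measure, to the operator norm of the restriction $\Lambda_{t,\beta,\bw}|_{H_\nu}$, and the analogous statement holds for $\nu'$ with $\Lambda'_{t,\beta,\bw}|_{H_{\nu'}}$. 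The principal obstacle is that these operator norms involve a priori different linear presentations $\Lambda$ and $\Lambda'$, arising from possibly distinct Schubert cells at $x$ and $x'$; this is reconciled by observing that both recover the same intrinsic quantity $\|g_t^{(\cdot,\beta)}\bw\|$ on their respective cells, and by carrying out the entire comparison inside the common ambient space $H_{\nu'}\supset H_\nu$, which contains every relevant Plücker vector $\theta(y)$ on both sides and thereby converts the $\nu$-lower bound into a $\nu'$-lower bound via the inclusion of spans.
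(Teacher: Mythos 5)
Your overall strategy matches the paper's: reduce to the characterization \eqref{betainf} via condition $(\mathcal{C}_\beta)$, observe that $g_t^{(y,\beta)}\bw$ is a linear function of the Pl\"ucker vector $\theta(y)$ on a fixed Schubert cell, identify the span $H_\nu$ with $\Span\theta(\cH_\nu(x))$, and transfer the lower bound through the inclusion $H_\nu\subset H_{\nu'}$. The first two steps are essentially what the paper does, and your identification of $H_\nu$ is correct.

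The gap is in the final step. You invoke ``a Remez-type argument exploiting the doubling and $(C,\alpha)$-good hypotheses'' to argue that $\sup_{y\in B(x,r)\cap\Supp\nu}\|\Lambda_{t,\beta,\bw}(\theta(y))\|$ is comparable to the operator norm of $\Lambda_{t,\beta,\bw}|_{H_\nu}$. This does not work: $(C,\alpha)$-goodness controls the \emph{measure} of the sublevel set $\{y\in B: |f(y)|\le\eps\}$ relative to the quantity $\|f\|_{\nu,B}=\sup_{B\cap\Supp\nu}|f|$, which is exactly the supremum you are trying to bound from below; it provides no relation whatsoever between $\|f\|_{\nu,B}$ and an operator norm or a supremum over a different, larger set. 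The doubling condition is similarly irrelevant here. The correct mechanism, and the one the paper uses, is purely linear-algebraic: both $\Lambda\mapsto\sup_{y\in S}\|\Lambda(\theta(y))\|$ and $\Lambda\mapsto\sup_{y\in\cH(S)\cap B}\|\Lambda(\theta(y))\|$ are \emph{norms} on the finite-dimensional space of linear maps from $H_\nu=\Span\theta(\cH(S))$ to $\wedge^k V$ (the first because $\theta(S)$ spans $H_\nu$), and any two norms on a finite-dimensional space are equivalent, with a constant depending on $S$ and $B$ but independent of $(t,\beta,\bw)$. This gives the two-sided comparison
\[
\sup_{y\in S}\|g_t^{(y,\beta)}\bw\|\;\le\;\sup_{y\in\cH(S)\cap B}\|g_t^{(y,\beta)}\bw\|\;\le\;C\,\sup_{y\in S}\|g_t^{(y,\beta)}\bw\|,
\]
from which the inclusion $\cH_\nu(x)\subset\cH_{\nu'}(x')$ transfers $(\mathcal{C}_\beta)$ from the $\nu$-side to the $\nu'$-side. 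Replacing your Remez appeal with this norm-equivalence argument closes the gap. (A minor further point: $y'$ in \eqref{xxprime} is affine, not linear, in $y$, because of the constant rows $u_{i_j}^*$; the correct statement, which you do use in the end, is that the relevant minors of $g_t^{(y,\beta)}$ are linear in the minors of $y$.)
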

\begin{proof}
Let $S$ be a compact subset of $\Hom(V,E)$, and assume that for all $x$ in $S$, the subspaces $\ker x$ and $xV$ belong to some fixed Schubert cells in $\Grass(V)$ and $\Grass(E)$.
We will say that $S$ satisfies (\ref{cbeta}) if 
\begin{equation}\label{cbetas}
\tag{$\mathcal{C}_\beta(S)$}
\begin{array}{c}
\exists c>0:\
\forall k\in\{1,\dots,\dim V\},\ \forall t>0,\ \forall \bw\in\mathcal{W}_{\Delta}^k,\\
\sup_{y \in S} \| g_t^{(y,\beta)}\bw\| \geq c.
\end{array}
\end{equation}
The expression $g_t^{(y,\beta)}\bw$ is a linear function of the minors of $g_t^{(y,\beta)}$.
By (\ref{xxprime}), it is also a linear function of the minors of $y$, and therefore, for some constant $C>0$ depending only on $S$,
\[ \sup_{y \in S} \| g_t^{(y,\beta)}\bw\|
\leq \sup_{y \in \cH(S)\cap B} \| g_t^{(y,\beta)}\bw\|
\leq C\cdot\sup_{y \in S} \| g_t^{(y,\beta)}\bw\|,\] where $B$ is a fixed compact neighborhood of $0$ in $\Hom(V,E)$ containing $S$ (note that both expressions on the right-hand side are norms on the finite-dimensional space of linear maps between $\theta(\mathcal{H}(S))$ and $\wedge^k V$).
It follows that the condition ($\mathcal{C}_\beta(S)$) depends only on $\cH(S)$.
To conclude the proof of the theorem, it suffices to use equality (\ref{betainf}), and to note that the measure $\nu$ satisfies (\ref{cbeta}) at $x$ if and only if ($\mathcal{C}_\beta(B(x,r)\cap\Supp\nu)$) holds for every $r>0$.
\end{proof}

\begin{remark} Note that if $S$ is a subset of $\Hom(V,E)$, then $\mathcal{H}(S)$ is a Zariski closed subset containing $S$, so in particular it contains the Zariski closure $Z(S)$ of $S$,  i.e. the intersection of all closed real algebraic subsets of $\Hom(V,E)$ containing $S$. Since by definition   $\mathcal{H}(S)=\mathcal{H}(\mathcal{H}(S))$ we also get $\mathcal{H}(S)=\mathcal{H}(Z(S))$.
\end{remark}

\begin{remark}[local Zariski closure]
The local Zariski closure of $\nu$ at $x$ is defined to be the intersection of the Zariski closures of $B(x,r) \cap \Supp(\nu)$ in $\Hom(V,E)$ for all $r>0$. By noetherianity the local Zariski closure (resp. the local Pl\"ucker closure) coincides with the Zariski closure (resp. the Pl\"ucker closure) of $B(x,r) \cap \Supp(\nu)$ whenever $r$ is sufficiently small. Observe in particular that $\cH_\nu(x)$ depends only on the local Zariski closure at $x$.
\end{remark}

Hence we obtain the following corollary:

\begin{corollary}\label{localZar} Let $\nu$, $\nu'$ be Radon measures on $\Hom(V,E)$, and assume that $\nu$ is locally good at $x$ and $\nu'$ locally good at $x'$. If the local Zariski closures coincide, then $\widehat{\beta}_\nu(x)=\widehat{\beta}_{\nu'}(x')$.
\end{corollary}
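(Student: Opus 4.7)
The plan is to derive the corollary directly from Theorem~\ref{inheritance} together with the remark preceding it, observing that the local Pl\"ucker closure is a function of the local Zariski closure.

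First, I would unpack the remark stating that for any subset $S\subset\Hom(V,E)$ one has $\mathcal{H}(S)=\mathcal{H}(Z(S))$, where $Z(S)$ denotes the Zariski closure of $S$. This holds because $\mathcal{H}(S)$ is itself Zariski closed (being defined by the vanishing of finitely many linear conditions on the Pl\"ucker coordinates), so $S\subset Z(S)\subset \mathcal{H}(S)$, and applying $\mathcal{H}$ to the chain together with the idempotence $\mathcal{H}(\mathcal{H}(S))=\mathcal{H}(S)$ yields the claimed equality.

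Next, by noetherianity of the Zariski topology, there exists $r_0>0$ such that the Zariski closure of $B(x,r)\cap\Supp(\nu)$ is independent of $r$ for all $0<r\leq r_0$, and similarly for $\nu'$ at $x'$. In particular, combining this with the preceding step and the definition \eqref{locpluck}, the local Pl\"ucker closures satisfy
\[
\mathcal{H}_\nu(x)=\mathcal{H}\bigl(Z_\nu(x)\bigr)\quad\text{and}\quad \mathcal{H}_{\nu'}(x')=\mathcal{H}\bigl(Z_{\nu'}(x')\bigr),
\]
where $Z_\nu(x)$ and $Z_{\nu'}(x')$ denote the local Zariski closures. Under the hypothesis $Z_\nu(x)=Z_{\nu'}(x')$, we immediately conclude that $\mathcal{H}_\nu(x)=\mathcal{H}_{\nu'}(x')$.

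Finally, with the two local Pl\"ucker closures equal, Theorem~\ref{inheritance} applied in both directions gives $\widehat{\beta}_\nu(x)\geq \widehat{\beta}_{\nu'}(x')$ and $\widehat{\beta}_{\nu'}(x')\geq \widehat{\beta}_\nu(x)$, hence equality. No real obstacle arises here; the only content beyond Theorem~\ref{inheritance} is the elementary fact $\mathcal{H}(S)=\mathcal{H}(Z(S))$, which is what makes the criterion depend only on the Zariski closure rather than on the finer Pl\"ucker closure.
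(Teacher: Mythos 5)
Your proof is correct and follows essentially the same route the paper indicates via the two remarks preceding the corollary: the identity $\mathcal{H}(S)=\mathcal{H}(Z(S))$ (since $\mathcal{H}(S)$ is Zariski closed and $\mathcal{H}$ is idempotent), noetherianity to stabilize the local closures at a small radius, hence equality of the local Pl\"ucker closures, and then Theorem~\ref{inheritance} applied in both directions. The paper states the corollary as an immediate consequence without writing this out, and your unpacking is accurate.
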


As a corollary of this inheritance theorem, we can define the diophantine exponent of an algebraic subset of $\Hom(V,E)$.
Here, and throughout the paper, when speaking of Zariski closure, algebraic subsets and algebraic varieties, we will always consider these notions in real algebraic geometry and we refer the reader to the textbook \cite{bochnak-coste-roy} for definitions and basic properties.
By Lebesgue measure on an algebraic set $\cM$ we mean the top-dimensional Hausdorff measure on the subset $\cM$ of $\Hom(V,E)$.

\begin{corollary}[Exponent of an algebraic subset]
\label{algexp}
Let $\cM$ be an irreducible algebraic subset in $\Hom(V,E)$.
There exists $\widehat{\beta}(\cM)$ such that, for almost every $x$ in $\cM$ with respect to the Lebesgue measure,
\[ \beta(x) = \widehat{\beta}(\cM).\]
\end{corollary}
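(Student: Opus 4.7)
The plan is to combine Theorem~\ref{localexponent} with Corollary~\ref{localZar} (the local version of the inheritance theorem), after showing that the Lebesgue measure $\nu$ on $\cM$ is locally good at almost every point $x$, and that at such a point the local Zariski closure is all of $\cM$ (this is where irreducibility enters). Once these two facts are established, Corollary~\ref{localZar} forces the function $x \mapsto \widehat{\beta}_\nu(x)$ to be constant on a Zariski open dense subset $\cM_0$ of $\cM$, and Theorem~\ref{localexponent} then gives $\beta(x)$ equal to this common value for $\nu$-almost every $x\in\cM$.

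First I would fix the Zariski open dense subset $\cM_0 \subset \cM$ on which (i)~$\cM$ is a smooth real algebraic submanifold of top dimension $d_0 = \dim \cM$, (ii)~the kernel $\ker x$ lies in a fixed Schubert cell $e(\sigma) \subset \Grass(V)$ and the image $xV$ lies in a fixed Schubert cell $e(\sigma')\subset\Grass(E)$, and (iii)~there exists a polynomial parametrization of $\cM$ near $x$. Each of these is a Zariski open condition on the irreducible variety $\cM$: smoothness on an algebraic variety, constancy of $I(\ker x)$ and $J(xV)$ (since the jumps in rank of $\{u_i^*,x_{i'}^*\}$ take their minimal values on a Zariski open set), and local algebraicity of parametrizations. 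At any $x\in\cM_0$, the Lebesgue measure on $\cM$ is $D$-doubling on a small neighborhood $U_x$ (a standard fact for Hausdorff measure on smooth manifolds, with constant depending on the local bi-Lipschitz distortion of a chart).

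The main technical step, and the place where the real work sits, is verifying the $(C,\alpha)$-good condition for the maps $y \mapsto \|g_t^{(y,\beta)}\bw\|$, uniformly in $t, \beta, \bw$. On the Schubert cell $e(\sigma)\times e(\sigma')$, the formula \eqref{xxprime} shows that $g_t^{(y,\beta)}$ is a rational function of $y$ whose denominators are bounded away from zero on a neighborhood of $x$, and each entry of the $k$-th exterior power acting on $\bw$ is, after extracting the diagonal factors $e^{\pm a_i t}$, a polynomial in the entries of $y$. Thus $\|g_t^{(y,\beta)}\bw\|^2$ is bounded above and below by a finite sum of squares of polynomials $P_{j,t,\beta,\bw}(y)$ whose degrees are bounded by an integer $D_0$ depending only on $d,e$. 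Pulling back to a local polynomial parametrization $\Phi: B\subset\R^{d_0}\to \cM$, the function $y\mapsto\|g_t^{(y,\beta)}\bw\|^2$ becomes comparable to a sum of squares of polynomials on $B$ of degree bounded by $D_0\cdot\deg\Phi$. The $(C,\alpha)$-good property for such finite-dimensional families of polynomials (of bounded degree) with respect to Lebesgue measure is precisely the classical Remez-type inequality of Kleinbock--Margulis; cf.\ \cite[Prop.~3.2]{kleinbock-margulis}. This yields constants $C,\alpha$ depending only on $\cM$ and $x$, and so $\nu$ is locally good at every $x\in\cM_0$.

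To finish, I would invoke Theorem~\ref{localexponent} to produce the local exponent $\widehat{\beta}_\nu(x)$ at each $x\in\cM_0$, and then observe that because $\cM$ is irreducible, the local Zariski closure of $\Supp\nu = \cM$ at any smooth point $x\in\cM_0$ is exactly $\cM$ itself. Corollary~\ref{localZar} then implies that $\widehat{\beta}_\nu(x)$ takes the same value $\widehat{\beta}(\cM)$ for every $x\in\cM_0$. By Theorem~\ref{localexponent}, for each $x\in\cM_0$ there is a neighborhood $B_x$ in which $\beta(y)=\widehat{\beta}(\cM)$ for $\nu$-almost every $y$; covering $\cM_0$ by countably many such balls and noting that $\cM\setminus\cM_0$ has $\nu$-measure zero (being a proper Zariski closed subset of the irreducible variety $\cM$), we conclude $\beta(x)=\widehat{\beta}(\cM)$ for Lebesgue-a.e.\ $x\in\cM$. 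The main obstacle is the uniform $(C,\alpha)$-goodness of the family $y\mapsto\|g_t^{(y,\beta)}\bw\|$ with constants independent of $t,\beta,\bw$; this is where one must quote (or reprove) the Kleinbock--Margulis inequality on polynomials of bounded degree and check that the degrees of the polynomials entering $g_t^{(y,\beta)}$ are indeed uniformly bounded.
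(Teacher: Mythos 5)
Your overall plan is the same as the paper's (restrict to a Zariski open dense set of smooth points in a fixed Schubert-cell stratum, verify local goodness there, then use Theorem~\ref{localexponent} and Corollary~\ref{localZar} together with irreducibility to conclude), but there is a real gap in the step where you verify the $(C,\alpha)$-good condition. You claim as one of your Zariski-open conditions that ``there exists a polynomial parametrization of $\cM$ near $x$,'' and you then pull back $\|g_t^{(y,\beta)}\bw\|^2$ through that parametrization to get a bounded-degree family of polynomials on a ball in $\R^{d_0}$, to which you apply the classical Kleinbock--Margulis Remez inequality for polynomials. But a smooth point of an irreducible real algebraic variety does not, in general, admit a local \emph{polynomial} parametrization (already the unit circle fails this); one only gets a Nash/analytic parametrization via the implicit function theorem, and the composite $\|g_t^{(\Phi(u),\beta)}\bw\|^2$ is then a (nonpolynomial) analytic function of $u$. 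The classical polynomial Remez inequality does not apply, and the passage to the $(C,\alpha)$-good property for the resulting family of analytic functions, with constants uniform in $t$, $\beta$, $\bw$, is exactly the nontrivial content one is trying to establish.

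The paper handles this by taking an analytic parametrization and invoking Kleinbock's result for analytic maps, \cite[Proposition~2.1]{kleinbock-dichotomy}, which is built precisely for this purpose: it shows that the relevant family of functions is uniformly $(C,\alpha)$-good on a neighborhood in the analytic setting. To repair your argument you would need either to quote that proposition, or to reprove its compactness/analyticity argument (essentially the Kleinbock--Margulis extension of the polynomial Remez inequality to families arising from analytic maps). The rest of your proof --- the doubling property, the conclusion that the local Zariski closure at a nonsingular point of an irreducible variety equals $\cM$, and the use of Corollary~\ref{localZar} to make the local exponent constant --- is sound and matches the paper's.
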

\begin{proof}
Let $x$ be a non-singular point on $\cM$ such that $\ker x$ and $xV$ lie in the interior of the smallest Schubert subvariety (of $\Grass(V) $ and $\Grass(E)$ respectively) containing $\{\ker y\,;\, y\in\cM\}$ and $\{yV\,;\, y\in\cM\}$. Since $\cM$ is irreducible, by \cite[\S3.2]{bochnak-coste-roy}, this makes a set of full measure in $\cM$.
Take an analytic parametrization $\Phi:U\to\Hom(V,E)$ of $\cM$ in a neighborhood of $x$, and let $\nu$ be the pushforward under $\Phi$ of the Lebesgue measure on $U$.
Note that $\nu$ is equivalent to the Lebesgue measure on $\cM$ in a neighborhood of $x$.
Restricting $U$ if necessary, we see using \cite[Proposition~2.1]{kleinbock-dichotomy} that there exist constants $(C,\alpha)$ and a neighborhood $U_x$ of $x$ such that all functions $y\mapsto \|g_t^{(y,\beta)}\bw\|$ are $(C,\alpha)$-good on $U_x$ with respect to $\nu$.
So $\nu$ is locally good at $x$, and by Theorem~\ref{localexponent}, for $\nu$-almost all $y$ in a neighborhood of $x$, $\beta(y)=\widehat{\beta}_\nu(x)$. At a non-singular $x$ as above the local Zariski closure is $\cM$ itself (\cite[Proposition~3.3.14]{bochnak-coste-roy}). So by Corollary~\ref{localZar}, $\widehat{\beta}_\nu(x)$ is independent of the choice of $x$. This proves the corollary.
\end{proof}

\begin{remark}\label{pluckclo} We can also conclude that $\widehat{\beta}(\cM) = \widehat{\beta}(\mathcal{H(\cM)})$. Indeed the local Pl\"ucker closure at a random point of $\cM$ is $\mathcal{H}(\cM)$, and so is the local Pl\"ucker closure at a random point of $\mathcal{H}(\cM)$. We actually believe that more is true, namely that  $\widehat{\beta}(\cM) = \widehat{\beta}(\mathcal{S(\cM)})$, where $\mathcal{S(\cM)}$ is the \emph{Schubert closure} of $\cM$, namely the intersection of all pencils containing $\cM$. 
\end{remark}

The pushforward of the Lebesgue measure under an analytic map $U \to\Hom(V,E)$, where $U$ is a open domain in $\R^N$ is locally good at every point, so we have the following important corollary to Theorem~\ref{localexponent}.
This result is due to Kleinbock \cite{kleinbock-dichotomy} in the case where $|\cdot|$ and $|\cdot|'$ are norms, and the proof is essentially the same, once the correspondence of \S\ref{subsecdani} has been established.

\begin{corollary}[Exponent of an analytic manifold]
\label{analytic-existence}
Let $\Phi:U\to\Hom(V,E)$ be an analytic map on a connected open set $U\subset\R^N$.
There exists $\widehat{\beta}\in[0,\infty]$ such that, for almost every $x$ in $U$,
\[ \beta(\Phi(x))=\widehat{\beta}.\] Moreover $\widehat{\beta}=\widehat{\beta}(\mathcal{M})$, where $\mathcal{M}$ is the Zariski closure of $\{\Phi(u) \,;\, u\in U\}$.
\end{corollary}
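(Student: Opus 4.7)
The plan is to realize the pushforward $\nu=\Phi_\ast\lambda_U$ of Lebesgue measure on $U$ as a (locally defined) Radon measure on $\Hom(V,E)$ which is locally good at almost every point, then invoke Theorem~\ref{localexponent} together with Corollary~\ref{localZar} to identify the local exponent $\widehat\beta_\nu(\Phi(x))$ with $\widehat\beta(\mathcal{M})$. Working locally on precompact subsets of $U$ makes $\nu$ Radon, and the conclusion on $U$ follows by a standard covering argument.

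First I would verify that $\nu$ is locally good at $\Phi(x)$ for $x$ in a full-measure subset of $U$. The two Schubert-cell conditions in Definition~\ref{locally-good-def} hold off the proper analytic subset of $U$ where the maps $x\mapsto\dim(\ker\Phi(x)\cap V_i)$ and $x\mapsto\dim(\Phi(x)V\cap V_j')$ are not locally constant. The doubling property is standard for pushforwards of Lebesgue measure under analytic maps away from a thin set. For the $(C,\alpha)$-good property, recall from \S\ref{subsecdani} that $g_t^{(y,\beta)}\bw$ is a linear combination with coefficients depending on $t,\beta,\bw$ of the minors of $y$, so $x\mapsto\|g_t^{(\Phi(x),\beta)}\bw\|$ is the norm of an analytic map into a fixed finite-dimensional space; by \cite[Proposition~2.1]{kleinbock-dichotomy} these functions are uniformly $(C,\alpha)$-good on a neighborhood of $x$ with constants independent of $t,\beta,\bw$.

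Next, Theorem~\ref{localexponent} produces, for each such $x$, a number $\widehat\beta_\nu(\Phi(x))$ and a neighborhood $B_x\ni\Phi(x)$ with $\beta(y)=\widehat\beta_\nu(\Phi(x))$ for $\nu$-a.e.\ $y\in B_x$. Pulling back by $\Phi$ and applying Fubini to the fibers of $\Phi$, we deduce that $\beta(\Phi(x'))=\widehat\beta_\nu(\Phi(x))$ for $\lambda_U$-a.e.\ $x'$ in a neighborhood of $x$. It remains to see that $\widehat\beta_\nu(\Phi(x))$ is independent of $x$ and equals $\widehat\beta(\mathcal{M})$. Since $U$ is connected and $\Phi$ is real-analytic, the ideal of real polynomials vanishing on $\Phi(U)$ coincides with the kernel of $\Phi^\ast$ acting on polynomials on $\Hom(V,E)$ with values in the integral domain of real-analytic functions on $U$, so it is prime; hence $\mathcal{M}$ is irreducible. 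Off a further proper analytic subset of $U$ (where $d\Phi$ has non-maximal rank, or where $\Phi(x)$ is a singular point of $\mathcal{M}$), the image $\Phi(U)$ contains a relative open neighborhood of $\Phi(x)$ in $\mathcal{M}$, so the local Zariski closure of $\Supp\nu$ at $\Phi(x)$ equals $\mathcal{M}$. The Lebesgue measure $\mu$ on $\mathcal{M}$ considered in Corollary~\ref{algexp} is also locally good at a generic $y\in\mathcal{M}$ with local Zariski closure $\mathcal{M}$ and local exponent $\widehat\beta(\mathcal{M})$; by Corollary~\ref{localZar} therefore $\widehat\beta_\nu(\Phi(x))=\widehat\beta(\mathcal{M})$, which concludes the proof.

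The main obstacle I expect is the step where the local Zariski closure of $\Supp\nu$ at $\Phi(x)$ must be identified with $\mathcal{M}$, in particular when $\Phi$ is neither injective nor an immersion (e.g.\ positive-dimensional or non-smooth fibers, or $\dim U$ larger than $\dim\mathcal{M}$). The key point is to extract, off a measure-zero analytic set of parameters $x$, an open subset of $\mathcal{M}$ contained in $\Phi(U)$; this requires an appeal to the real-analytic rank theorem combined with irreducibility of $\mathcal{M}$ to upgrade \emph{local containment of an open subset of the image of $\Phi$} into \emph{local containment of an open subset of $\mathcal{M}$}.
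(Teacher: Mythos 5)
Your overall route is the same as the paper's: push forward Lebesgue measure on $U$ under $\Phi$, verify it is locally good at $\Phi(x)$ for a.e.\ $x$, identify the local Zariski closure as $\cM$, and then invoke Theorem~\ref{localexponent} together with Corollary~\ref{localZar} (and Corollary~\ref{algexp}) to pin the local exponent down to $\widehat{\beta}(\cM)$. The preliminary verifications (Schubert cells, doubling, $(C,\alpha)$-goodness via \cite[Proposition~2.1]{kleinbock-dichotomy}) and the Fubini pull-back to $U$ are fine, as is your irreducibility argument for $\cM$.

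However, the step you single out as the ``main obstacle'' is where the proposal actually goes wrong. You propose to show that, off a thin analytic subset of $U$, the image $\Phi(U)$ contains a relative open neighborhood of $\Phi(x)$ \emph{in $\cM$}, by appealing to the real-analytic rank theorem. This cannot work in general, because nothing forces $\dim\cM\le\dim U$: since $\Phi$ is merely real-analytic (not polynomial), the Zariski closure of the image can have strictly larger dimension than the source. For instance, for $U=(0,2\pi)\subset\R$ and $\Phi(t)=\bigl(\begin{smallmatrix}\cos t & \sin t\\ \cos(\alpha t)&\sin(\alpha t)\end{smallmatrix}\bigr)\in M_{2,2}(\R)$ with $\alpha$ irrational, the Zariski closure of $\Phi(U)$ is the $2$-dimensional torus $\{x_{11}^2+x_{12}^2=1,\ x_{21}^2+x_{22}^2=1\}$, while $U$ is one-dimensional; the image of $\Phi$ is a dense curve in $\cM$, never containing a relatively open subset of $\cM$.

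The good news is that the conclusion you want --- that the local Zariski closure of $\Supp\nu$ at $\Phi(x)$ equals $\cM$ --- holds for \emph{every} $x\in U$ and for a much simpler reason, requiring no control on $\operatorname{rk}\,d\Phi$ at all. Indeed, suppose a polynomial $f$ on $\Hom(V,E)$ vanishes on $\Phi(B(x,\delta))$ for some $\delta>0$. Then $f\circ\Phi$ is a real-analytic function on the connected open set $U$ that vanishes on $B(x,\delta)$, hence vanishes identically on $U$ by the identity theorem. Thus $f$ vanishes on $\Phi(U)$ and therefore on $\cM=Z(\Phi(U))$. Combined with the trivial inclusion $\Phi(B(x,\delta))\subset\cM$, this gives $Z\bigl(\Phi(B(x,\delta))\bigr)=\cM$, and hence $\cH_\nu(\Phi(x))$ is determined by $\cM$ alone (no full-measure restriction is needed for this particular point; the full-measure set in the paper's proof is only there to guarantee the locally-good conditions). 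With this replacement your argument goes through and matches the paper's proof.
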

\begin{proof}
Since $\Phi$ is analytic, $\cM$ is irreducible, and there exists a subset of full measure $U'\subset U$ such that, for all $x$ in $U'$, the local Zariski closure at $x$ (i.e. the intersection of the Zariski closures of all neighborhoods of $x$) is equal to $\cM$.
For such an $x$ the pushforward under $\Phi$ of the Lebesgue measure on a neighborhood of $x$ is locally good at $x$ (\cite[Proposition~2.1]{kleinbock-dichotomy}) so that $\beta(\Phi(y))=\widehat{\beta}(\cM)$ for almost every $y$ in a neighborhood of $x$.
\end{proof}

\subsection{Pencils and extremality}
Given two non-negative numbers $a,b$ and a subspace $W$ of $V$, recall that we have defined the \emph{pencil} $\mathcal{P}_{W,a,b}$ associated to our choice of quasi-norms on $V$ and $E$ to be the set
$$\mathcal{P}_{W,a,b}=\{x \in \Hom(V,E) \ |\ \psi(\ker x \cap W) \geq a \textnormal{ and }\phi(xW) \leq b\}.$$
And the numbers $\tau(\cM)$ and $\tau_\Q(\cM)$ have been defined in $(\ref{tau})$ and $(\ref{tauq})$.

\begin{theorem}\label{uppboundnu}Let $E,V$ be two real vector spaces equipped with quasi-norms and $\mathcal{M}\leq\Hom(V,E)$ an irreducible algebraic subset. Then
$$\tau_\Q(\mathcal{M}) \leq \widehat{\beta}(\mathcal{M}) \leq \tau(\mathcal{M}).$$
\end{theorem}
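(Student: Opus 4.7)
My plan is to prove the two inequalities separately. The lower bound $\tau_\Q(\cM) \leq \widehat{\beta}(\cM)$ is an immediate consequence of Dirichlet's pigeonhole principle encapsulated in Proposition~\ref{dirichlet}: for any $\Delta$-rational subspace $W$ and non-negative $a, b$ with $\cM \subset \cP_{W, a, b}$, every $x \in \cM$ satisfies $\psi(W \cap \ker x) \geq a$ and $\phi(xW) \leq b$, so $\beta(x) \geq a/b$. Applying this at a generic point where $\beta(x) = \widehat{\beta}(\cM)$ (provided by Corollary~\ref{algexp}) and taking the maximum over rational pencils yields the claim.

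For the upper bound $\widehat{\beta}(\cM) \leq \tau(\cM)$ I would fix $\beta > \tau(\cM)$ and choose a non-singular point $x \in \cM$ at which the local Zariski closure coincides with $\cM$ and the Schubert positions of $\ker y$ and $yV$ are maximal on a neighborhood. The pushforward $\nu$ of Lebesgue measure under an analytic parametrization is then locally good at $x$ (as in the proof of Corollary~\ref{algexp}), so in view of identity~(\ref{betainf}) it is enough to establish condition $(\mathcal{C}_\beta)$ at $x$: there exists $c > 0$ such that for every $k$, every $t > 0$ and every $\bw \in \cW_\Delta^k \setminus \{0\}$,
\begin{equation*}
\sup_{y \in B(x, r) \cap \mathrm{Supp}(\nu)} \|g_t^{(y, \beta)} \bw\| \geq c.
\end{equation*}
Combined with the $(C, \alpha)$-good hypothesis used inside Theorem~\ref{localexponent}, this yields $\widehat{\beta}_\nu(x) \leq \beta$, and letting $\beta \to \tau(\cM)$ concludes.

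The heart of the matter is the key computation behind $(\mathcal{C}_\beta)$. Writing $\bw = v_1 \wedge \dots \wedge v_k$ with $v_i \in \Delta$ and $W = \langle v_1, \ldots, v_k \rangle$, the factorization $g_t^{(y, \beta)} = a_t^{(\beta)} y'$ from~(\ref{xxprime}) and the expansion $y' \bw = \sum_I P_I(\bw, y)\, e_I$ in the standard basis adapted to $a_t^{(\beta)}$ give $\|g_t^{(y, \beta)} \bw\| \simeq \max_I |P_I(\bw, y)|\, e^{\lambda_I t}$, where $\lambda_I = -\sum_{i \in I,\, i \leq n} a_i + \beta \sum_{i \in I,\, i > n} a_i$. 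An inspection of the index set $I^\star$ that maximizes $\lambda_I$ among those for which $P_I(\bw, \cdot)$ does not vanish identically on $\cM$ identifies the growth exponent as
\begin{equation*}
\Lambda_\beta(y, W) := \beta \phi(yW) - \psi(W \cap \ker y).
\end{equation*}
By semi-continuity of $\dim(V_i \cap W \cap \ker y)$ and $\dim(V_i' \cap yW)$, the functions $y \mapsto \psi(W \cap \ker y)$ and $y \mapsto \phi(yW)$ attain their extremal values $a(W) := \min_{y \in \cM} \psi(W \cap \ker y)$ and $b(W) := \max_{y \in \cM} \phi(yW)$ on a common open dense subset of $\cM$. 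Since $\cM \subset \cP_{W, a(W), b(W)}$, the hypothesis $\beta > \tau(\cM) \geq a(W)/b(W)$ forces $\Lambda_\beta(y, W) \geq \beta b(W) - a(W) > 0$ at generic $y$, with a gap $\delta > 0$ uniform in $W$, since the pair $(a(W), b(W))$ depends only on the Schubert stratum of $W$ and there are finitely many such strata.

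The main obstacle is the uniformity of $c$ in $\bw$: while the gap $\delta > 0$ controls the regime $t \to \infty$ and the non-degeneracy $\|y' \bw\| \simeq \|\bw\| \geq 1$ on compact sets handles $t$ near $0$, producing a single constant $c$ valid for all $k$, $t$ and $\bw$ requires a lower bound on the sup-norm of $P_{I^\star}(\bw, \cdot)$ on $B(x, r) \cap \cM$ comparable to $\|\bw\|$. I would obtain this by stratifying the pure $k$-vectors $\bw$ according to the Schubert cell of their supporting subspace $W$ in $\Grass_k(V)$, so that on each stratum $I^\star$ is constant and the bound reduces to the non-triviality of a finite collection of polynomial maps, combined with compactness of the grassmannian. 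Once $(\mathcal{C}_\beta)$ is checked, Theorem~\ref{klw} and the Borel--Cantelli argument from Theorem~\ref{localexponent} close the proof.
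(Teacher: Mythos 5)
Your proposal follows essentially the same route as the paper's proof: Dirichlet's principle (Proposition~\ref{dirichlet}) together with Corollary~\ref{algexp} for the lower bound, and for the upper bound the reduction to condition $(\mathcal{C}_\beta)$ at a generic non-singular point via Theorem~\ref{localexponent} and~(\ref{betainf}), followed by the identification of the dominant eigenvalue with $\beta\phi(yW)-\psi(W\cap\ker y)$ and a comparison with pencils. The only difference is logical framing (you argue directly where the paper argues by contrapositive), and both rest on the same facts.

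One place where your argument is slightly under-baked is the uniformity of $c$. You propose to stratify pure $k$-vectors by the Schubert cell of $W=\langle\bw\rangle$ so that $I^\star$ is constant on each stratum, and to appeal to non-triviality of $P_{I^\star}(\bw,\cdot)$ plus compactness of the grassmannian. But Schubert cells are \emph{open}, not compact, and as $\bw$ approaches the boundary of a cell $\sigma$ the coefficient $P_{I^\star_\sigma}(\bw,\cdot)$ may degenerate to zero identically on $\cM$, so the per-stratum lower bound does not extend to the closure, and $I^\star$ changes on the boundary cells. The paper sidesteps this by working with the full projection $\pi_0^{(y,\beta)}$ onto \emph{all} eigenspaces with non-negative eigenvalue: the function $\bw\mapsto\sup_{y\in B(x,r)\cap\cM}\|\pi_0^{(y,\beta)}(\bw)\|$ is then a supremum of functions continuous jointly in $(\bw,y)$ (since on the chosen neighborhood the Schubert positions of $\ker y$ and $yV$ are constant), hence lower-semicontinuous in $\bw$ on the compact set of unit pure $k$-vectors; positivity at each point then gives a uniform lower bound without stratification. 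Equivalently, you should take the maximum of $\sup_y|P_I(\bw,y)|$ over \emph{all} indices $I$ with $\lambda_I\geq 0$, not just over $I^\star$; this recovers the paper's argument.
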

\begin{proof}
The left-hand side follows from Dirichlet's principle proved in Proposition \ref{dirichlet}.
Let us verify the right-hand side.
Let $\beta>\tau(\mathcal{M})$.
By Theorem~\ref{localexponent}, it suffices to show that $\beta>\widehat{\beta}_\nu(x)$ for every non-singular point $x$ of $\mathcal{M}$ which lies in the interior of the smallest Schubert subvariety containing $\cM$. Equivalently, from $(\ref{betainf})$, it is enough to check that $(\mathcal{C}_\beta)$ holds at $x$. Note that we may replace $g_t^{(x,\beta)}$ by $\tilde{g}_t^{(x,\beta)}$ in the definition of $(\mathcal{C}_\beta)$, because $x'$ remains bounded in a neighborhood of $x$ in $\cM$.
Clearly then a sufficient condition for $(\mathcal{C}_\beta)$ to hold is that $\bw \mapsto \sup_{y \in B(x,r)\cap \mathcal{M}}\|\pi_0^{(y,\beta)}(\bw)\|$ does not vanish for any $r>0$, when $\bw$ ranges among pure $k$-vectors of norm $1$, and $\pi_0^{(y,\beta)}$ is the projection onto the sum of the eigenspaces of $\tilde{g}_t^{(y,\beta)}$ with non-negative eigenvalue with kernel the sum of the other eigenspaces.

Now, if $W$ is the subspace of $V$ associated to the $k$-vector $\bw$, then the largest eigenvalue occurring in the decomposition of $\bw$ into eigenvectors of $\tilde{g}_t^{(y,\beta)}$ is $\beta\phi(yW)-\psi(\ker x\cap W)$.
Therefore, if $(\mathcal{C}_\beta)$ fails to hold, there exist $r>0$ and a subspace $W\leq V$ such that $B(x,r) \cap \mathcal{M}$ is entirely contained in the set
$$ \{y \in \Hom(V,E) \ |\  \beta\phi(yW) - \psi(\ker y \cap W) <0 \},$$
which is
the (finite) union of all pencils $\mathcal{P}_{W,a,b}$ such that $\beta b - a <0$. Recall that neighborhoods of non-singular points are Zariski-dense in $\cM$ \cite[Proposition~3.3.14]{bochnak-coste-roy}. From the irreducibility of $\mathcal{M}$ it follows that $\mathcal{M}$ is entirely contained in a single pencil $\mathcal{P}_{W,a,b}$ for a pair $a,b$ with $\beta b - a <0$, which is contrary to our assumption that $\beta>\tau(\nu)$.
\end{proof}

\begin{proof}[Proof of Theorem \ref{uppboundnud}] Combine Corollary \ref{analytic-existence} with Theorem \ref{uppboundnu}. 
\end{proof}

\section{The submodularity lemma}\label{section:submodular}

In view of Theorem~\ref{uppboundnu}, the following question is natural: Under what condition on the irreducible algebraic subset $\cM\subset\Hom(V,E)$ is the maximum defining $\tau(\cM)$ in $(\ref{tau})$ attained on a $\Delta$-rational subspace $W$?
We will show here that a sufficient condition is that $\cM$ be defined over the rationals. This is the content of Theorem~\ref{rathm}.
The method will also show that if $\cM$ is invariant under some group $G$ of linear automorphisms, then $\tau(\cM)$ is attained on a $G$-invariant subspace $W$. This observation will be essential for the application to nilpotent Lie groups developed in Section~\ref{section:critical}.

\subsection{Statement and proof of the submodularity lemma}\label{submodstatement}

Let $V$ be a $d$-dimensional vector space (over some field). Let $\phi$ and $\psi$ be two real-valued functions on the Grassmannian of $V$, with the following properties:

\begin{enumerate}
\item $\phi \geq 0$, $\phi(0)=0$.
\item $\phi$ and $\psi$ are non-decreasing for set inclusion.
\item $\phi$ and $-\psi$ are \emph{submodular}, i.e. for any two vector subspaces $U$ and $W$ we have
\[ \phi(U+ W)+\phi(U\cap W) \leq \phi(U)+\phi(W), \]
\[ \psi(U+ W)+\psi(U\cap W) \geq \psi(U)+\psi(W). \]
\end{enumerate}

We let
\[ q(x) = \frac{\psi(x)}{\phi(x)} \]
with the convention that $q(x)=0$ if $\psi(x)=0$ and $q(x)=\sign(\psi(x)) \infty$ if $\phi(x)=0$ but $\psi(x) \neq 0$. We are interested in the supremum $S$ of $q$ on the entire Grassmannian. Note that $S \in \R \cup \{\pm \infty\}$.

\begin{lemma}[Submodularity lemma]\label{presub}  The supremum
$$S = \sup_{x \in \Grass(V)} q(x)$$
is attained, and there is a unique subspace $x_0$ of maximal dimension such that
$$S=q(x_0).$$
\end{lemma}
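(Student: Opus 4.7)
My plan is to leverage the submodularity and supermodularity hypotheses to show that the set $\mathcal{F} = \{W \in \Grass(V) : q(W) = S\}$ of maximizers is stable under taking vector space sums; combined with finite-dimensionality of $V$, this automatically produces a unique element of $\mathcal{F}$ of maximal dimension, namely the sum of all maximizers.

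First I would dispose of the attainment of $S$. Under mild finiteness of the ranges of $\phi$ and $\psi$ (as is the case in every intended application in this paper, where both functions take only finitely many values because they depend on the finite intersection pattern with a fixed flag), $q$ takes only finitely many values and the sup is automatically attained; in more general settings one can extract a maximizing sequence and use the closure-under-sums step below to replace it by an increasing sequence of subspaces, which must stabilize since $\dim V<\infty$.

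The key computation is the closure property: if $U_1,U_2\in\mathcal{F}$, then $U_1+U_2\in\mathcal{F}$. In the core case $0\leq S<\infty$ with $\phi(U_i)>0$, the identity $\psi(U_i)=S\phi(U_i)$ combines with the super/submodularity inequalities to give
\begin{align*}
\psi(U_1+U_2)-S\phi(U_1+U_2)
&\geq \psi(U_1)+\psi(U_2)-\psi(U_1\cap U_2)-S\phi(U_1+U_2)\\
&= S\bigl[\phi(U_1)+\phi(U_2)-\phi(U_1+U_2)\bigr]-\psi(U_1\cap U_2)\\
&\geq S\phi(U_1\cap U_2)-\psi(U_1\cap U_2) \;\geq\; 0,
\end{align*}
where the middle line uses $\psi(U_i)=S\phi(U_i)$, the second inequality uses submodularity of $\phi$ together with $S\geq 0$, and the last uses $q(U_1\cap U_2)\leq S$ from the definition of the supremum. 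Since $S=\sup q$, this forces $q(U_1+U_2)=S$.

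With the closure property in hand, $\mathcal{F}$ is a nonempty sub-semilattice of $\Grass(V)$ under $+$; hence $W^\star:=\sum_{W\in\mathcal{F}}W$ is actually a finite sum (each strict enlargement increases the dimension, which is bounded), lies in $\mathcal{F}$, and manifestly contains every other maximizer, yielding the claimed unique maximum-dimensional maximizer $x_0=W^\star$. The main obstacle I expect is the careful bookkeeping of the degenerate cases encoded in the stated conventions for $q$: when $\phi(U_1\cap U_2)=0$, and when $S=+\infty$ (so that $\mathcal{F}=\{W:\phi(W)=0,\ \psi(W)>0\}$). In each such regime the closure property still holds by direct inspection --- using that $\phi\geq 0$ and submodular forces $\phi(U_1+U_2)=0$, and that $\psi$ is non-decreasing forces $\psi(U_1+U_2)\geq\max(\psi(U_1),\psi(U_2))>0$ --- but the main inequality has to be revisited in those cases rather than invoked as stated.
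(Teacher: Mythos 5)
Your key inequality chain is the same one the paper uses, specialized to exact maximizers: both proofs combine supermodularity of $\psi$ on the pair $U_1,U_2$, the (near-)identities $\psi(U_i)\approx S\phi(U_i)$, submodularity of $\phi$, and $q(U_1\cap U_2)\leq S$ to conclude that $q(U_1+U_2)$ is (approximately) $\geq S$. The difference is organizational. You observe that the set of exact maximizers is closed under $+$ and take its join, while the paper stratifies the supremum by dimension as $S_k=\sup\{q(x):\dim x\geq k\}$, lets $k_0$ be the largest $k$ with $S_k=S$, and shows that any two $\eps$-approximate maximizers of dimension $\geq k_0$ coincide, from which both uniqueness and attainment follow. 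The paper also begins with a reduction --- quotienting out the sum $W$ of all subspaces with $\phi=0$ so that afterwards $\phi>0$ off $\{0\}$ --- instead of chasing the degenerate conventions at the end as you propose.

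The one genuine gap is in your attainment argument. Your suggestion to ``extract a maximizing sequence and use the closure-under-sums step to replace it by an increasing sequence'' does not work: that closure computation relies on the exact identities $\psi(U_i)=S\phi(U_i)$ and so cannot be applied to merely approximate maximizers coming from a maximizing sequence. This is exactly what the paper's $\eps$-perturbed version of the inequality is built to circumvent: it yields $q(x_0+y_0)\geq S-2\eps>S_{k_0+1}$, which bounds $\dim(x_0+y_0)\leq k_0$ and forces $x_0=y_0$, so that uniqueness and attainment come out of the same argument without any prior attainment hypothesis. As you note, in the intended applications $\psi$ and $\phi$ factor through finitely many flag-intersection dimensions, hence take only finitely many values, and attainment is automatic; so your route is valid there, but it does not prove the lemma at the stated level of generality. (A minor point applying to both proofs: the step that multiplies the submodularity of $\phi$ by $S$ uses $S\geq 0$, which you state explicitly and the paper leaves tacit.)
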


\begin{proof} We perform a preliminary reduction: we are going to reduce to the case where $\phi(x)>0$ when $x \neq 0$ and $S>q(0)$. For this we make the following initial observation. Let $W$ be the sum of all subspaces $x$ such that $\phi(x)=0$. Then  $\phi(W)=0$. Indeed, using that $\phi$ is non-decreasing and submodular,  if $\phi(x)=\phi(y)=0$, then $0 \leq \phi(x \cap y) \leq \phi(x) = 0$, so that $\phi(x \cap y)=0$, while $0 \leq \phi(x+y) \leq \phi(x)+\phi(y) = 0$, so $\phi(x+y)=0$.

It follows that for every $x \in \Grass(V)$, $\phi(x+W)=\phi(x)$. Indeed $\phi(x) \leq \phi(x+W) \leq \phi(x)+\phi(W)=\phi(x)$. Also $\psi(x+W) \geq \psi(x)$. So $q(x+W) \geq q(x)$. In particular $S =\sup_{x \supset W} q(x)$. We may thus restrict $\phi$ and $\psi$ to those subspaces of $V$ that contain $W$. This gives rise to new functions $\phi'$ and $\psi'$ on the quotient space $V/W$, which clearly satisfy the same properties.

So we have reduced the proof to the case where $W=0$, i.e. $\phi(x)>0$ unless $x=0$. If $\psi(0) > 0$, then $q(0)=+\infty$, while $q(x)<+\infty$ if $x \neq 0$, so the conclusion of the lemma holds with $x_0=0$. Therefore we may assume that $\psi(0)\leq 0$. This implies that $q(0)<S$, provided $\psi$ is not identically zero.  Indeed, if $\psi(0)<0$, then $q(0)=-\infty$, while $q(x) > -\infty$ if $x \neq 0$, so $S>q(0)$. While if $\psi(0)=0$, then $q(0)=0$, while $S>0$ provided $\psi$ is not identically zero. If $\psi$ is identically zero, the conclusion of the lemma holds with $x_0=V$.

So we have proved our initial claim and we thus assume without loss of generality that $\psi$ is not identically zero, that $q(0)<S$, and that $\phi(x)>0$ if $x \neq 0$. This enables us to assert that $S=S_1$, where we have denoted, for  $k=1,\ldots,d$, $$S_k =\sup\{q(x) ; \dim x \geq k\}.$$

Let $k_0$ be the maximal $k\geq 1$ such that $S_k=S$.
If $k_0=d$, then $S=q(V)$ and the conclusion of the lemma holds with $x_0=V$.
If not we have $S_{k_0+1} < S$.

Pick $\eps>0$ such that $S-S_{k_0+1} >2\eps$, and pick $x_0 \in \Grass(V)$ such that $\dim(x_0) \geq k_0$ and $q(x_0) > S - \eps$.
Note that $\dim(x_0) = k_0$, for otherwise $\dim(x_0)\geq k_0+1$ and thus $q(x_0) \leq S_{k_0+1} < S-2\eps < S-\eps$, a contradiction to our choice of $x_0$.

Now let $y_0$ be another choice of subspace such that $\dim(y_0) \geq k_0$ and $q(y_0) > S - \eps$.
For the same reason $\dim(y_0)=k_0=\dim(x_0)$.
But
\begin{align*}
\psi(x_0 + y_0) & \geq \psi(x_0) + \psi(y_0) - \psi(x_0 \cap y_0)\\
& \geq (S-\eps)(\phi(x_0) + \phi(y_0)) - S\phi(x_0 \cap y_0)\\
& \geq S(\phi(x_0) + \phi(y_0)- \phi(x_0 \cap y_0)) -\eps(\phi(x_0) + \phi(y_0)) \\
& \geq S\phi(x_0 +y_0) -\eps(\phi(x_0) + \phi(y_0)) \\
& \geq S\phi(x_0 +y_0) -2\eps(\phi(x_0+y_0)) = (S-2\eps) \phi(x_0+y_0)
\end{align*}
where we have used submodularity of $-\psi$ in the first line, positivity of $\phi$ in the second line, submodularity of $\phi$ in the fourth line and monotonicity of $\phi$ in the last line. Hence:
\[
q(x_0+y_0) \geq S-2\eps > S_{k_0+1}.
\]
Therefore $\dim(x_0+y_0) \leq k_0$. This means that $x_0=y_0$. Hence we have proved that $q(x)>S-\eps$ and $\dim(x)\geq k_0$ implies that $x$ is unique. In particular $S=q(x_0)$, and the lemma follows.
\end{proof}

\begin{corollary}\label{submodular}
Let $G$ be a group acting on the Grassmannian, and assume that the action preserves dimension.
If $\phi$ and $\psi$ are invariant under $G$, then the supremum $S$ is attained on a $G$-invariant subspace.
\end{corollary}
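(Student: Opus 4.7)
The plan is to reduce the corollary immediately to the uniqueness statement in the submodularity lemma (Lemma \ref{presub}). That lemma guarantees not merely that the supremum $S = \sup_{x \in \Grass(V)} q(x)$ is attained, but that there is a \emph{unique} subspace $x_0$ of maximal dimension at which this maximum is achieved. The entire task is then to show that this distinguished subspace $x_0$ must be $G$-invariant.

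To do this, I would fix an arbitrary $g \in G$ and consider the image $g \cdot x_0$. By hypothesis, the $G$-action on $\Grass(V)$ preserves dimension, so $\dim(g \cdot x_0) = \dim x_0$. Moreover, since $\phi$ and $\psi$ are $G$-invariant, so is the ratio $q$, and hence $q(g \cdot x_0) = q(x_0) = S$. Thus $g \cdot x_0$ is a maximizer of $q$ of exactly the same dimension as $x_0$. Here one must be mildly careful: the uniqueness in Lemma \ref{presub} is among maximizers \emph{of maximal dimension}, so one needs to check that $\dim x_0$ is indeed the largest dimension attaining $S$ for $g \cdot x_0$ as well --- but this is automatic since the set of dimensions attaining $S$ is a $G$-invariant subset of $\{0,1,\dots,d\}$ and both $x_0$ and $g \cdot x_0$ have the same dimension.

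With this in hand, the uniqueness clause of Lemma \ref{presub} forces $g \cdot x_0 = x_0$. Since $g \in G$ was arbitrary, this shows $x_0$ is $G$-invariant, completing the proof. There is essentially no technical obstacle here; the content of the corollary is entirely carried by the uniqueness assertion of the submodularity lemma, and the role of the $G$-action hypotheses (preservation of dimension and invariance of $\phi,\psi$) is precisely to ensure that the $G$-orbit of a maximizer consists of maximizers of the same dimension, so that uniqueness can be invoked.
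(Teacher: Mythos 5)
Your proof is correct and takes essentially the same route as the paper's: by $G$-invariance of $q$ and dimension-preservation, $gx_0$ is a maximizer of the same (hence maximal) dimension as $x_0$, so the uniqueness clause of Lemma \ref{presub} forces $gx_0 = x_0$. The extra paragraph of caution about whether $\dim x_0$ remains maximal for $gx_0$ is slightly more verbose than necessary but does not change the argument.
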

\begin{proof}
Indeed $x_0$ and $gx_0$ will have the same dimension and will both achieve the supremum $S$, so by uniqueness $x_0=gx_0$, for every $g \in G$.
\end{proof}

\begin{remark} The proof works verbatim more generally for functions defined on a graded lattice of finite length, in place of $\Grass(V)$, i.e. a partially ordered set with a smallest element (0) and a largest element (V) such that every pair of elements admits unique lower and upper bounds, and which is equipped with an integer-valued rank function $r(x)$, which is $\geq 0$ and takes only finitely many values and is such that if $x < y$ and there is no $z$ with $x < z < y$, then $r(y)=r(x)+1$.
\end{remark}

\subsection{Applications of the submodularity lemma}

We go back to the setting of Section~4.
Thus, $V$ and $E$ are finite-dimensional real vector spaces, endowed with quasi-norms $|\cdot|$ and $|\cdot|'$ with associated flags $\{V_i\}$ and $\{V_i'\}$.
The lattice $\Delta \leq V$ induces a $\Q$-structure on $V$. Recall that by a $\Q$-structure we mean a $\Q$-vector subspace which generates the ambient  space over $\R$ and whose dimension over $\Q$ is the dimension over $\R$ of the ambient space.
Let us assume that the flag $\{V_j\}$ is made of $\Delta$-rational subspaces,
and endow $E$ with a $\Q$-structure for which each $V'_j$ is rational.
This endows $\Hom(V,E)$ with a natural $\Q$-structure.

\begin{theorem}[Exponent of a rational algebraic subset]
\label{rathm}
Assume that the irreducible algebraic subset $\mathcal{M}\leq \Hom(V,E)$ is defined over $\Q$. Then
\[ \widehat{\beta}(\mathcal{M})=\tau(\mathcal{M})=\tau_\Q(\mathcal{M})=\max_{W\subset V,\,\Delta\textnormal{-rational}}\{\frac{a}{b}\,;\, \cM\subset\cP_{W,a,b}\}.\]
\end{theorem}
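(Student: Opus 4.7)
Since Theorem~\ref{uppboundnu} already gives $\tau_\Q(\cM)\leq\widehat\beta(\cM)\leq\tau(\cM)$, I only need to prove the reverse inequality $\tau(\cM)\leq\tau_\Q(\cM)$, equivalently that the maximum defining $\tau(\cM)$ is realised on a $\Delta$-rational subspace. The plan is to apply the submodularity lemma to produce a unique maximal-dimensional maximizer, and then use a Galois descent argument, enabled by the hypothesis that $\cM$ is defined over $\Q$, to conclude this maximizer is $\Delta$-rational.

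For the first step I will introduce the generic-value functions $\Psi(W):=\min_{x\in\cM}\psi(\ker x\cap W)$ and $\Phi(W):=\max_{x\in\cM}\phi(xW)$ on $\Grass(V)$, so that $\tau(\cM)=\max_{W}\Psi(W)/\Phi(W)$. Using the decompositions $(\ref{psidec})$ and $(\ref{phidec})$ together with the semi-continuity of the dimensions $\dim(V_{i+1}\cap\ker x\cap W)$ and $\dim\bigl(xW/(V'_i\cap xW)\bigr)$ as $x$ varies, these extrema are attained on Zariski dense open subsets of $\cM$. Irreducibility of $\cM$ then allows one to witness the generic values on any four subspaces $W_1, W_2, W_1+W_2, W_1\cap W_2$ simultaneously with a single $x\in\cM$; combined with the pointwise submodularity of $-\psi(\ker x\cap\cdot)$ and $\phi(x\cdot)$ noted in the remark after Proposition~\ref{dirichlet}, this shows $-\Psi$ and $\Phi$ are submodular and monotone, with $\Phi\geq 0$ and $\Phi(0)=0$. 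Lemma~\ref{presub} then produces a unique subspace $W_0\leq V$ of maximal dimension achieving $\tau(\cM)$.

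The main obstacle will be proving $W_0$ is $\Delta$-rational, and this is where the $\Q$-rationality of $\cM$ enters. The idea is to extend $\Psi, \Phi$ to functions $\Psi_\C, \Phi_\C$ on $\Grass(V_\C)$ by taking extrema over $\cM_\C$; since $\cM$ is defined over $\Q$, the points $\cM(\overline\Q)$ are Zariski dense in $\cM_\C$, so these extrema can be computed over $\cM(\overline\Q)$, making $\Psi_\C$ and $\Phi_\C$ invariant under the action of $\Gal(\overline\Q/\Q)$ on $\overline\Q$-rational subspaces of $V_{\overline\Q}$. Running the same submodularity argument over $\C$ (if necessary after restricting to a suitable irreducible component of $\cM_\C$ containing a Zariski-dense set of $\overline\Q$-points) yields a unique maximal-dimensional complex maximizer $W_0^\C$ of $\tau_\C:=\max_{W\leq V_\C}\Psi_\C(W)/\Phi_\C(W)\geq\tau(\cM)$. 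The set of such maximizers is cut out by $\Q$-algebraic conditions (unions of Schubert-type subvarieties built from the $\Q$-rational flags $\{V_i\}$, $\{V'_j\}$ and from $\cM$ itself), so it defines a zero-dimensional $\Q$-subscheme of $\Grass_{k_0}(V_\Q)$ with a single $\C$-point; such a scheme must be $\mathrm{Spec}\,\Q$, forcing $W_0^\C$ to be $\Q$-rational. Setting $W_0^\Q:=W_0^\C\cap V$ then gives the desired $\Delta$-rational subspace, with $q(W_0^\Q)=q_\C(W_0^\C)=\tau_\C\geq\tau(\cM)$, completing the argument. The chief technical point throughout is the interplay between the real, $\overline\Q$- and complex pictures at the level of the generic values, which is controlled by Zariski density of $\cM(\overline\Q)$ in $\cM_\C$ together with the $\Q$-definability of all the data involved.
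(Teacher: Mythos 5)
Your proposal is correct and follows essentially the same route as the paper: reduce via Theorem~\ref{uppboundnu} to showing $\tau(\cM)=\tau_\Q(\cM)$, verify that $-\psi_\cM$ and $\phi_\cM$ are submodular on the (complexified) Grassmannian using irreducibility of $\cM$, and then use a Galois descent enabled by the $\Q$-rationality of $\cM$ and of the flags. The only cosmetic difference is that the paper completes the descent by applying Corollary~\ref{submodular} directly with $G=\Gal(\C|\Q)$, so that the unique maximal-dimensional maximizer is automatically $\Gal$-fixed and hence defined over $\Q$, whereas you argue instead that the singleton set of maximizers is a $\Q$-constructible subset of the Grassmannian with a unique $\C$-point — a valid but slightly more roundabout way to reach the same conclusion, and one that makes the ``if necessary restrict to an irreducible component of $\cM_\C$'' caveat unnecessary, since the paper never complexifies $\cM$ itself but only the Grassmannian and the flags.
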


\begin{proof}
From Theorem~\ref{uppboundnu}, it is enough to show that $\tau(\cM)=\tau_\Q(\cM)$.
By definition
\begin{equation}\label{tausub}
\tau(\cM) = \max_{W \leq V} \frac{\psi_\mathcal{M}(W)}{\phi_\mathcal{M}(W)}
\quad\mbox{and}\quad
\tau_\Q(\cM) = \max_{W \leq V, W \Delta\textnormal{-rational}} \frac{\psi_\mathcal{M}(W)}{\phi_\mathcal{M}(W)}.
\end{equation}
where
\begin{equation}\label{psiandphi}
\psi_\mathcal{M}(W) = \min_{x \in \mathcal{M}} \psi(W \cap \ker x)
\quad\mbox{and}\quad
\phi_\mathcal{M}(W) =\max_{x \in \mathcal{M}} \phi(xW).
\end{equation}
These functions are defined a priori on the real grassmannian $\Grass(V)$, but they also make sense on the grassmannian of the complexified space $V^{\C}$, where $x$ is now viewed as a linear map from $V^{\C}$ to $E^{\C}$, and the flags $\{V_j\}$ and $\{V_i'\}$ are also complexified.
The Galois group $\Gal(\C|\Q)$ acts on $\Grass(V^{\C})$ and preserves $\psi_\mathcal{M}$ and $\phi_\mathcal{M}$, because $\mathcal{M}$ is defined over $\Q$ as well as the flags $\{V_j\}$ and $\{V_i'\}$.
The functions $\psi_\mathcal{M}$ and $\phi_\mathcal{M}$ are both non-decreasing and non-negative functions.
Moreover $-\psi_\mathcal{M}$ and $\phi_\mathcal{M}$ are submodular.
This follows from the submodularity of $W \mapsto -\psi(W \cap \ker x)$ and $W \mapsto \phi(xW)$ for a given $x$ proved in Subsection \ref{subsecquasi}, from the irreducibility of $\mathcal{M}$ and from the fact that for each $W$ there is a Zariski-open set of $x$ in $\mathcal{M}$ such that $\psi_\mathcal{M}(W)=\psi(W \cap \ker x)$ (resp. $\phi_\mathcal{M}(W)=\phi(xW)$).
We may thus apply the submodularity lemma (Lemma \ref{submodular}) and conclude that the maximum in the definition of $\tau(\mathcal{M})$ is attained for a rational $W$.
\end{proof}

This completes the proof of Theorem \ref{uppboundnudrat}. A different application of the submodularity lemma will be given in Section~\ref{section:critical}, in the proof of Theorem~\ref{rationality-stability} from the introduction.

For now, we just give another simple example where the submodularity lemma applies, and allows to compute the diophantine exponent of an algebraic subset of matrices.
For the example below, and until the end of this section, we shall only consider diophantine approximation for genuine norms on $V$ and $E$.

\begin{example}[The Veronese curve in algebras]\label{veronese}
Consider the Veronese curve $\cV$ in $\R^n$ given by the parametrization $x\mapsto (x,\dots,x^n)$.
The Mahler conjecture proved by Sprind\u{z}uk says that the curve $\cV$ is extremal, or in other words, that for almost every $x\in\R$, for all $\eps>0$, the inequality
\begin{equation}\label{vero}
|v_0 + v_1 x + \dots + v_n x^n| \geq \|v\|^{-(\beta+\eps)}
\end{equation}
has only finitely many integer solutions $v\in\Z^{n+1}$, where here $\beta=n$.

We can consider the analogous problem, where $\R$ is replaced by an arbitrary finite-dimensional $\R$-algebra $E$ with unit (e.g. $E=M_m(\R)$). Given $x \in E$, we may consider integer linear combinations of $1,x,\dots,x^n$. We may then ask for  the minimal $\beta>0$ such that for almost every $x\in E$ and for every $\eps>0$, the inequality $(\ref{vero})$ (with a norm in place of the absolute value on the left-hand side) has at most finitely many integer solutions $v\in\Z^{n+1}$.

To fit this problem into our setting, we let $V=\R_n[X]$ the space of polynomials of degree at most $n$ and $\Delta$ the lattice of polynomials with integer coefficients.
Consider the submanifold $\cM$ of $\Hom(V,E)$ that consists of the evaluations maps $P \mapsto P(x)$ for $x\in E$.
It is defined over $\Q$, so, by Theorem \ref{rathm}, its exponent, which is exactly the $\beta$ we are looking for, is equal to
$$\tau(\cM) = \max \set{ \frac{\psi(W)}{\phi(W)} ; \, W\leq V }, $$
where
\[ \phi(W)=\max_{x\in E} \dim W(x)
\quad\mbox{and}\quad
\psi(W)=\min_{x\in E} \dim W-\dim W(x).\]
Here $W(x)\leq E$ denotes the image of $W$ under the evaluation map $P \mapsto P(x)$.
Now, let $G$ be the group of affine transformations of the real line and let it act on $V$ by substitution of the variable.
The maps $\phi$ and $-\psi$ are submodular and $G$-invariant, so by Lemma \ref{submodular}, $\tau$ achieves its value on one of the $G$-invariant subspaces, which are exactly the subspaces $V_i\leq V$, $i=0,\dots,n$ of polynomials of degree at most $i$.
Let $m$ be the maximal dimension of one-generator subalgebras $\R[x] \leq E$, for $x \in E$.
Evaluating $V_i$ at an $x$ with minimal polynomial of degree $m$, we see that, for $i<m$,  $\phi(V_i)=i+1$ and $\psi(V_i)=0$.
On the other hand, we always have $\phi(W)\leq m$ and $\psi(W)\geq\dim W-m$, so we find
\[ \frac{\psi(V_i)}{\phi(V_i)} =
\left\{ \begin{array}{ll}
0 & \mbox{if}\ i< m\\
\frac{i+1-m}{m} & \mbox{if}\ i\geq m.
\end{array}\right.
\]
This shows that the desired diophantine exponent $\beta$ is equal to $\max\{0,\frac{n+1-m}{m}\}$.
\end{example}

\begin{example}[Weak non-planarity and extremality]\label{BKM-ctex}
Consider the unweighted case (i.e. $\alpha_i=\alpha_i'=1$). A submanifold $\cM \subset \Hom(V,E)$ is called \emph{weakly non-planar} if it is not contained in any proper pencil $\mathcal{P}_{W,r} \subsetneq \Hom(V,E)$. This notion was introduced, using slightly different words, by Beresnevich, Kleinbock and Margulis \cite{beresnevich-kleinbock-margulis} who showed that every locally good weakly non-planar measure on $\Hom(V,E)$ is extremal. The converse however does not hold, and we now provide an example.

Let $k\geq 4$ be an integer and $X=(\R^3)^k$. Consider the finite-dimensional space $V$ of polynomial maps $f$ from $X$ to $E=\R^3$ given by
\[ f(u_1,\dots,u_k) = \sum_{1\leq i<j\leq k} a_{ij} u_i\wedge u_j \]
where $\wedge$ is the usual wedge product in $\R^3$ and $a_{ij}\in\R$.
Let $\cM$ be the Zariski closure in $\Hom(V,E)$ of the image of $X$ by the map $\Phi:x\mapsto (f\mapsto f(x))$.

Let $W$ be the subspace of $V$ generated by the $u_1\wedge u_j$, $j=2,\dots,k$.
For any $x=(u_1,\dots,u_k)$ with $u_1\neq 0$, the space $W(x)$ is included in the orthogonal of $u_1$ and hence has dimension at most 2.
Therefore, $\cM\subset\cP_{W,2}$ is not weakly non-planar.

However, it is easy to see that $\GL_k(\R)$ acts irreducibly on $V$ by substitution of the variables. By the submodularity lemma we conclude that $V$ is the unique subspace realizing the maximum in $\tau(\cM)$. Therefore $\cM$ is not contained in any constraining pencil and thus $\cM$ must be extremal by Corollary~\ref{corextremal}.
\end{example}

\begin{example}[A criterion of Kleinbock-Margulis-Wang]

Here we recover by means of Corollary~\ref{corextremal} a criterion due to Kleinbock, Margulis and Wang \cite[Theorem~7.1(b)]{kleinbock-margulis-wang} for the extremality of measures on two-by-two matrices. Following their terminology, we say that an algebraic subset $\cM\subset M_{2,2}(\R)$ is \emph{row-nonplanar} if for all non-zero $v\in\R^2$, the map $M\mapsto Mv$ is not constant on $\cM$.

\begin{theorem}\label{criterion}
\cite[Theorem~7.1 (b)]{kleinbock-margulis-wang}
Let $\cM$ be an algebraic subset in $M_{2,2}(\R)$ and assume that both $\cM$ and its image $^t\cM$ under the transpose map are row-nonplanar.
Then the submanifold
\[ \widetilde{\cM}=\{ (I_2|M) \,;\, M\in\cM\} \subset M_{2,4}(\R)\]
is extremal.
\end{theorem}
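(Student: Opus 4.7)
The plan is to deduce Theorem~\ref{criterion} from Corollary~\ref{corextremal}: it suffices to show that $\widetilde{\cM}\subset M_{2,4}(\R)$ is not contained in any constraining pencil. Since here $m=n=2$, a pencil $\cP_{W,r}$ is constraining exactly when $\dim W/r > (m+n)/m = 2$. Because $V=\R^4$, this leaves only a handful of cases to rule out: $(r,\dim W)=(0,\geq 1)$, $(1, 3)$, $(1,4)$, or $r=2$ with $\dim W\geq 5$. The last is impossible because $\dim V = 4$, and $(1,4)$ is also impossible because every element $(I_2|M)$ is surjective, so $(I_2|M)V=\R^2$ has dimension $2>1$. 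Thus I need only rule out the cases $(0,\geq 1)$ and $(1,3)$.

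Write vectors of $V=\R^4$ as pairs $(v_1,v_2)$ with $v_i\in\R^2$, so that $(I_2|M)(v_1,v_2)=v_1+Mv_2$ and
\[ \ker(I_2|M)=\{(-Mu,u):u\in\R^2\}. \]
For the case $r=0$, a nonzero vector $(w_1,w_2)$ in the intersection of all these kernels satisfies $Mw_2=-w_1$ for every $M\in\cM$. If $w_2\neq 0$, then $M\mapsto Mw_2$ is constant on $\cM$, contradicting row-nonplanarity of $\cM$; if $w_2=0$ then $w_1=0$. So no pencil $\cP_{W,0}$ with $\dim W\geq 1$ contains $\widetilde{\cM}$.

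For the case $(r,\dim W)=(1,3)$, suppose $W=\ker\alpha$ where $\alpha=(a,b)\in(\R^2)^*\oplus(\R^2)^*$ is nonzero. Since $(I_2|M)$ is surjective with a $2$-dimensional kernel, the rank-nullity theorem gives $\dim(I_2|M)W=3-\dim(W\cap\ker(I_2|M))$, so the condition $\dim(I_2|M)W\leq 1$ forces $\ker(I_2|M)\subset W$ for every $M\in\cM$. Plugging $(-Mu,u)\in W$ into $\alpha$ yields $b(u)=a(Mu)$ for every $u\in\R^2$ and every $M\in\cM$. If $a=0$ this forces $b=0$ and hence $\alpha=0$, a contradiction; if $a\neq 0$, the linear form $u\mapsto a(Mu)$, i.e.\ $M^t a$, is independent of $M\in\cM$, so $N\mapsto Na$ is constant on $^t\cM$ for the nonzero vector $a$, contradicting row-nonplanarity of $^t\cM$.

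The main (and only) obstacle is the bookkeeping in the $(1,3)$ case, namely recognising that the pencil condition forces $\ker(I_2|M)$ to lie inside the hyperplane $W$, which is exactly what transfers the row-nonplanarity from $^t\cM$ into a contradiction. Once this is done, Corollary~\ref{corextremal} concludes extremality of $\widetilde{\cM}$.
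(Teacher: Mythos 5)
Your proof is correct and follows the same strategy as the paper: invoke Corollary~\ref{corextremal} and show case-by-case that no constraining pencil contains $\widetilde{\cM}$. The one place where you diverge slightly is the $(r,\dim W)=(1,3)$ case: the paper writes $W$ as a graph over three coordinates, forms the explicit $2\times 3$ matrix of the restriction of $(I_2|M)$ to $W$, and examines its $2\times 2$ minors, whereas you use rank--nullity to observe that $\dim xW\leq 1$ with $\dim W=3$ forces the two-dimensional kernel $\ker(I_2|M)$ to lie inside $W$, which directly turns row-nonplanarity of $^t\cM$ into a contradiction without the coordinate bookkeeping. This is a genuinely cleaner way to organize the same computation, but it is the same underlying argument.
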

\begin{proof}
 Corollary~\ref{corextremal}, all we have to do is to check that for all non-zero subspace $W<V=\R^4$, there exists a point $x\in\widetilde{\cM}$ such that $\dim x(W)\geq \frac{\dim W}{2}$.
We study all possible values for $\dim W$. Suppose first $\dim W = 1,2$. Since $\cM$ is row-nonplanar, there exists $x\in\widetilde{\cM}$ such that $x(W)$ is non-zero, which implies $\dim x(W)\geq 1 \geq \frac{\dim W}{2}$. Now if $\dim W = 3$, denote by $v_1,\dots,v_4$ the coordinates in $V$, and suppose first that $W$ is given by an equation $v_1=\lambda_2 v_2+\lambda_3 v_3+\lambda_4 v_4$.
The matrix of the restriction of $(I_2|M)$ to $W$ is given in some basis by
\[ \left( \begin{matrix}
\lambda_2 & a+\lambda_3 & b+\lambda_4\\
1 & c & d
\end{matrix}\right), \]
where $M=\left(\begin{matrix} a & b\\ c & d\end{matrix}\right)$. We have to show that all three columns are not always proportional. The minors corresponding to the pairs of columns $(1,2)$ and $(1,3)$ are $d_{12}=c\lambda_2-a-\lambda_3$ and $d_{13}=d\lambda_2-b-\lambda_4$, i.e.
\[ \left(\begin{matrix} d_{12}\\ d_{13}\end{matrix}\right) =
\left(\begin{matrix} a & c\\ b & d\end{matrix}\right)\left(\begin{matrix} -1\\\lambda_2\end{matrix}\right) - \left(\begin{matrix}\lambda_3\\ \lambda_4\end{matrix}\right). \]
Since the image of $\cM$ under the transposition is row-nonplanar, $d_{12}$ and $d_{13}$ cannot both vanish identically on $\cM$, so that there exists $x\in\widetilde{\cM}$ with $\dim x(W)=2$.
The remaining cases, where $W$ is defined by $v_2=\lambda_3 v_3+\lambda_4 v_4$, $v_3=\lambda_4 v_4$ or $v_4=0$ are treated similarly.
\end{proof}

\end{example}

\begin{example}[Every submanifold is extremal for a random lattice] The upper bound in Theorem~\ref{general-inequality} can be strict. Of course, if the maximum in $\tau(\cM)$ is attained on a $\Delta$-rational subspace, then this upper bound is the true value of $\widehat{\beta}(\cM)$, but otherwise it may not be, and we give here a family of examples illustrating this fact.
Note that the definition of $\tau(\cM)$ is independent of the lattice $\Delta$, whereas it is likely that varying the lattice will change the value of $\widehat{\beta}(\cM)$. We prove:

\begin{proposition}[Random lattice]\label{randomlattice}
Let $\cM$ be an irreducible algebraic set in $\Hom(V,E)$, and denote by $m$ the maximal rank of an element of $\cM$.
Suppose we pick the lattice $\Delta<V$ at random according to Haar measure on the space of (say unimodular) lattices in $V$.
Then $\widehat{\beta}(\cM)=\frac{\dim V-m}{m}$ with respect to almost every lattice $\Delta$.
\end{proposition}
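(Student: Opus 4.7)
The plan is to reduce the problem to the classical Khintchine--Groshev theorem via the factorization $x = \iota_x \circ \pi_x$. For the lower bound, $W=V$ is always $\Delta$-rational, with $\phi_\cM(V)=m$ and $\psi_\cM(V) = \dim V -m$ by definition of $m$, so Dirichlet's principle (Proposition~\ref{dirichlet}) yields $\widehat{\beta}(\cM) \geq (\dim V - m)/m$ for every $\Delta$. If $m = \dim V$ the right-hand side is $0$ and there is nothing more to prove, so I assume $m < \dim V$.

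On the Zariski-open locus $\cM_0 \subset \cM$ of rank-$m$ elements, each $x\in\cM_0$ factors as $x=\iota_x\circ\pi_x$, where $\pi_x:V \to V/\ker x \cong \R^m$ is the canonical surjection and $\iota_x:\R^m\hookrightarrow E$ is an injective linear map depending continuously on $x$. Since we work with genuine norms on $V$ and $E$ (as specified at the start of the section), the $\iota_x$ are bi-Lipschitz onto their image with constants locally uniform in $x$, so $\beta(x,\Delta) = \beta(\pi_x,\Delta)$, viewing $\pi_x$ as a surjective linear map $V \to \R^m$.

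I next claim that for any fixed surjective linear map $\pi: V \to \R^m$, $\beta(\pi,\Delta) = (\dim V - m)/m$ for almost every lattice $\Delta$. Writing $\Delta = g\Z^d$ with $g\in\SL_d(\R)$ (after identifying $V \cong \R^d$ with $d = \dim V$), one has $\beta(\pi,g\Z^d) = \beta(\pi g, \Z^d)$. The map $g\mapsto\pi g$ from $\SL_d(\R)$ to $M_{m,d}(\R)$ is a smooth submersion onto the Zariski-open rank-$m$ stratum: its differential at $g$ is $\delta g \mapsto \pi\,\delta g$, which restricts to a surjection $\mathfrak{sl}_d \twoheadrightarrow M_{m,d}(\R)$ when $m < d$ by a short linear-algebra computation (choose $v \in \ker\pi$ and $B = v w^T$ with $w$ suitably normalized so that $\tr B = -d$). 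The pushforward of Haar measure is therefore absolutely continuous with respect to Lebesgue on that stratum, and the extremality of Lebesgue measure on $M_{m,d}(\R)$ (a classical consequence of Borel--Cantelli combined with Dirichlet's principle, and a special case of \cite{kleinbock-margulis}) gives $\beta(M,\Z^d) = (d-m)/m$ for Lebesgue-a.e.\ $M$, hence for Haar-a.e.\ $g$.

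The previous steps show that for each $x\in\cM_0$ the set $\{\Delta : \beta(x,\Delta) > (\dim V -m)/m\}$ has Haar measure zero. A standard Fubini argument on the product of Lebesgue measure on $\cM_0$ and Haar measure on the space of unimodular lattices then shows that for almost every $\Delta$, the set of $x \in \cM_0$ with $\beta(x,\Delta) > (\dim V - m)/m$ is Lebesgue-negligible in $\cM$, yielding $\widehat{\beta}(\cM) \leq (\dim V - m)/m$ and hence equality with the lower bound. The main substantive input is the Khintchine--Groshev theorem of the third paragraph; the remaining steps are the factorization $x=\iota_x\circ\pi_x$ and the Fubini argument, each of which is routine.
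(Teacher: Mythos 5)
Your proof is correct, and while it starts from the same geometric idea as the paper's proof (fix $x$, write $\Delta = g\Z^d$, and observe that varying the lattice is equivalent to varying a matrix, then apply Fubini), it diverges in the key step. The paper considers the orbit map $g\mapsto xg^{-1}$ into $\Hom(V,E)$, computes the pencil invariant $\tau$ of the Zariski closure $\cM_x$ of this orbit, observes the maximum is attained at the rational subspace $W=V$, and invokes its own Theorem~\ref{uppboundnudrat} to conclude. You instead factor out the irrelevant injection $\iota_x$, work with $g\mapsto \pi_x g$, verify it is a submersion onto the full-rank (hence full-measure) stratum of $M_{m,d}(\R)$, and deduce that the pushforward of Haar is absolutely continuous with respect to Lebesgue, so the classical extremality of Lebesgue measure on $M_{m,d}$ (Borel--Cantelli plus Dirichlet) finishes the job. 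Your route thus substitutes the elementary, pre-existing fact that Lebesgue measure on matrices is extremal for the paper's more refined pencil machinery; it is a bit more self-contained in that sense, at the cost of the small linear-algebra verification that the differential of $g\mapsto\pi g$ is onto. Both arguments are sound, and the Fubini step and the lower bound via $W=V$ are identical.
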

\begin{proof}
Let $\beta_\Delta(x)$ be the number defined in $(\ref{diophexpdef})$.
Fix a lattice $\Delta_0$ in $V$ once and for all.
By Fubini's theorem, it is enough to prove that for almost every $x \in\cM$, $\beta_{g\Delta_0}(x)=\frac{\dim V-m}{m}$ for almost every $g\in {\rm SL}(V)$. Indeed, the set of pairs $(x,g) \in \cM \times {\rm SL}(V)$, for which $\beta_{g\Delta_0}(x)=\frac{\dim V-m}{m}$ is a Borel set and Fubini's theorem applied to the product measure $\nu \otimes dg$, where $dg$ is a Haar measure on ${\rm SL}(V)$ then implies that for almost every $\Delta$ we have $\beta_\Delta(x)=\frac{\dim V-m}{m}$ for almost every $x\in\cM$.

Fix $x\in\cM$ such that $\rk x=m$ (this happens for $x$ in a Zariski dense open subset of $\cM$ and hence for almost every $x$ in $\cM$.)
Consider the map from ${\rm SL}(V)$ to $\Hom(V,E)$ given by $\phi_x:g\mapsto xg^{-1}$.
The pushforward $\nu_x$ of the Haar measure on a neighborhood $U$ of some $g_0$ by the map $\phi_x$ is locally good.
Let $\cM_x$ be the Zariski closure of the support of $\nu_x$.
It contains all points of the form $xg^{-1}$, $g\in {\rm SL}(V)$. Now suppose $W$ is a subspace of $V$.
Since $\rk x = m$, we may choose $g\in {\rm SL}(V)$ such that $\dim (xg^{-1})(W)=\min\{m,\dim W\}$.
This shows that $\tau(\cM_x)=\frac{\dim V}{m}$.
Moreover, equality is attained for $W=V$, which is $\Delta_0$-rational, so Theorem~\ref{uppboundnudrat} shows that for almost every $y$ in $\cM_x$, $\beta_{\Delta_0}(y)=\frac{\dim V-m}{m}$. In particular, for almost every $g$ in $U$,
\[ \beta_{g\Delta_0}(x) = \beta_{\Delta_0}(xg^{-1}) = \frac{\dim V-m}{m}.\]
\end{proof}

\begin{remark}  The above proposition implies that for almost every $W$ in the Grassmannian of $k$-planes in $V$, the pencil $\cP_{W,r}$ is extremal, even if $\frac{\dim W}{r}>\frac{\dim V}{\dim E}$, provided it contains a matrix of maximal rank (i.e. provided $\dim W-r\leq \dim V-\dim E$). Indeed, $\widehat{\beta}_{g\Delta_0}(\cP_{W,r})=\widehat{\beta}_{\Delta_0}(\cP_{gW,r})$.
\end{remark}

\end{example}

\section{The critical exponent for rational nilpotent Lie groups}
\label{section:critical}

In this section we shall prove Theorems \ref{existence} and \ref{rationality-stability}.
Our method is based on the results of the previous sections regarding diophantine approximation on submanifolds of matrices.

Let $G$ denote an arbitrary simply connected nilpotent real Lie group, with Lie algebra $\g$ of nilpotency class $s$.
We endow $G$ with a left-invariant geodesic metric $d(\cdot,\cdot)$.
It is well known \cite{berestowski} that these are exactly the left-invariant Carnot-Carathéodory-Finsler metrics on $G$.
These metrics are obtained by the same construction as the left-invariant Riemannian metrics on $G$, except that instead of a Euclidean norm on $\g$, we start with an arbitrary norm on a generating subspace $V^1$ of $\g$.
Denoting inductively, for $i\geq 1$, $V^{i+1}=V^i+[V^1,V^i]$, every  Carnot-Carathéodory-Finsler metric on $G$ associated to $V^1$ is comparable near the identity (up to multiplicative constants) to
\begin{equation}\label{CCF}
d(\exp(X),1) \simeq |X|':=\max_{i=1,\ldots,s} \dist(X,V^{i-1})^{\frac{1}{i}},
\end{equation}
where $V^0=0$ by convention, $\dist$ is some fixed Euclidean distance on the Lie algebra, $s$ is the nilpotency class of $\g$ and $X \in \g$ lies in a neighborhood of the origin.
Hence the function  $X \mapsto d(\exp(X),1)$ is a local quasi-norm (see Definition~\ref{quasinorm}).

\subsection{Growth of a generic subgroup and group of words maps}

Let $k \geq 1$ and a $k$-tuple $\bg=(g_1,\ldots,g_k)$ of elements of $G$.
Recall that the subgroup $\Gamma_{\bg}$ generated by $\bg$ is $\beta$-diophantine if there exists $c>0$ such that, for every integer $n$,
\[ \min_{x\in S^n\setminus\{1\}} d(x,1) \geq c\cdot|S^n|^{-\beta},\]
where $S=\{1,g_1^{\pm 1},\dots,g_k^{\pm}\}$ and $S^n$ is the set of elements that can be obtained as a word of length $n$ in the elements of $S$.
This definition involves the volume $V_{\bg}(n)=|S^n|$ of the ball of radius $n$ in the subgroup generated by $S=\{1,g_1^{\pm 1},\ldots,g_k^{\pm 1}\}$.

It turns out \cite[Lemma~2.5]{abrs} that for a generic $k$-tuple $\bg$ (generic with respect to Haar measure on $G^k$) the isomorphism class of the subgroup $\Gamma_{\bg} = \langle S \rangle$ is always the same. It is isomorphic to the \emph{group of word maps on $k$ letters} $F_{k,G}$ of $G$, introduced in \cite{abrs}. We briefly recall this notion. Any element $w$ in the free group $F_k$ on $k$ letters $x_1,\dots,x_k$ determines a word map $w_G:G^k\to G$ given by replacing each letter $x_i$ by an element of $G$. The group $F_{k,G}$ is defined to be the set of all such word maps, with composition law given by $w_Gw_G'=(ww')_G$, where $ww'$ denotes the product of $w$ and $w'$ in $F_k$. It can also be viewed as the relatively free group in the variety of $k$-generated subgroups of $G$.

Since $F_{k,G}$ is a fixed nilpotent group, the Bass-Guivarc'h formula \cite{bassformula,guivarchformula} tells us that, up to multiplicative constants, for a generic $k$-tuple $\bg$ in $G$,
\begin{equation}\label{growthexp}V_{\bg}(n) \simeq n^{\eta_G(k)}\end{equation}
where $\eta_G(k)$ is a positive integer that can be expressed in terms of the ranks of the successive quotients of the central descending series of $F_{k,G}$, see $(\ref{bgfor})$ below.

Now, given a $k$-tuple $\bg$ of elements of $G$, we can define another diophantine exponent, denoted $\alpha(\bg)$, as follows. It is the infimum of all $\alpha>0$ such that for all but finitely many $\omega\in F_{k,G}$ we have:
\begin{equation}\label{alephdioph} d(\omega(\bg),1) \geq \ell(\omega)^{-\alpha},\end{equation}
where $\ell(\omega)$ is the length of an element $\omega\in F_{k,G}$, i.e. the minimal length of a word $w$ such that $w_G=\omega$.
Recall that the diophantine exponent of the subgroup $\Gamma_{\bg}$ was defined in Section~\ref{section:ergodicity} by
\[ \beta(\Gamma_{\bg})=\inf\{\beta>0\ |\ \Gamma_{\bg}\ \mbox{is}\ \beta\mbox{-diophantine}\}.\]
It follows from the above observations about $V_\bg(n)$  that, for almost every $k$-tuple $\bg$ in $G$,
\begin{equation}\label{alephbet} \alpha(\bg) =  \eta_G(k) \beta(\Gamma_{\bg}).\end{equation}

\subsection{From words to Lie brackets}

In this subsection, we describe the correspondence between words on $G$ and laws on its Lie algebra $\g$.

We first define the Lie algebra $\cF_{k,\g}$ of bracket maps on $k$ letters on $\g$.
Let $\cF_k$ be the free Lie algebra on $k$ generators. Each element $\sr$ in $\cF_k$ yields a map
$$
\begin{array}{cccc}
\sr : & \g^k & \rightarrow & \g \\
& (X_1,\dots,X_k) & \mapsto & \sr(X_1,\dots,X_k)
\end{array}
$$
where $\sr(X_1,\dots,X_k)$ is the evaluation of the formal bracket $\sr$ at the point $(X_1,\dots,X_k)$ in $\g^k$.
By definition, the Lie algebra $\cF_{k,\g}$ consists of all maps from $\g^k$ to $\g$ obtained in the above manner.
It is naturally isomorphic to the quotient Lie algebra $\cF_k/\cL_{k,\g}$ where $\cL_{k,\g}$ is the \emph{ideal of laws} on $k$ letters on the Lie algebra $\g$. Recall that a law on $k$ letters is an element $\sr \in \cF_k$ such that $\sr(X_1,\dots,X_k)=0$ for all $X_1,\ldots,X_k$ in $\g$.

Note that the free Lie algebra $\cF_k$ has a natural $\Q$-structure induced by the subring $\cF_k(\Z)$ of integer linear combinations of brackets monomials. We can thus consider the \emph{ideal of rational laws} $\cL_{k,\g,\Q}$, which is the real span of the intersection of $\cL_{k,\g}$ with $\cF_k(\Z)$. It thus inherits a rational structure, which induces a rational structure on the quotient space $\cF_{k,\g,\Q}= \cF_k /\cL_{k,\g,\Q}$. We will say that $\sr$ is an element of $\cF_{k,\g,\Q}(\Z)$ if it is the projection on $\cF_{k,\g,\Q}$ of an element of $\cF_k(\Z)$.

\begin{remark}
Certainly if $\g$ itself is defined over the rationals, then so is $\cL_{k,\g}$, but the converse does not hold.
For example, it followed from the analysis made in \cite[Appendix A]{abrs} that the ideal of laws is always defined over $\Q$ if $\g$ is a nilpotent Lie algebra of step at most 5, or if $\g$ is both nilpotent and metabelian.
\end{remark}

The Lie algebra $\cF_{k,\g\,\Q}$ has a graded structure
$$\cF_{k,\g,\Q} = \bigoplus_{i=1}^s \cF_{k,\g,\Q}^{[i]},$$
where $\cF_{k,\g,\Q}^{[i]}$ is the  homogeneous part of $\cF_{k,\g,\Q}$ consisting of brackets of degree $i$.
For $\sr=\sum \sr_i$ with $\sr_i\in \cF_{k,\g,\Q}^{[i]}$, we let \begin{equation}\label{homonorm}|\sr|=\max_{i=1,\ldots,s} \|\sr_i\|^{\frac{1}{i}},\end{equation} where $\|\cdot\|$ is a fixed norm on $\cF_{k,\g,\Q}$.

\begin{lemma}\label{words-brackets}
Let $\g$ be a real nilpotent Lie algebra and $G$ the simply connected Lie group with Lie algebra $\g$.
There are positive integers $C,D$ such that
\begin{itemize}
\item If $\omega\in F_{k,G}$, then there exists $\sr\in \cF_{k,\g,\Q}(\Z)$ with $|\sr|\leq D\ell(\omega)$ such that for all $X_1,\dots,X_k$ in $\g$,
$\omega(e^{X_1},\dots,e^{X_k})=e^{\frac{1}{C}\sr(X_1,\dots,X_k)}$.
\item If $\sr\in \cF_{k,\g,\Q}(\Z)$, then there exists $\omega\in F_{k,G}$ with $\ell(\omega)\leq D|\sr|$ and for all $X_1,\dots,X_k$ in $\g$,
$e^{C\sr(X_1,\dots,X_k)}=\omega(e^{X_1},\dots,e^{X_k})$.
\end{itemize}
\end{lemma}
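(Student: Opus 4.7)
The tool underlying both assertions is the Baker--Campbell--Hausdorff (BCH) formula. Since $\g$ is $s$-step nilpotent, $\log(e^Xe^Y)$ is a universal Lie polynomial of degree at most $s$ in $X$ and $Y$ with rational coefficients whose denominators divide some integer $C_0=C_0(s)$. The plan is to exploit BCH to pass between products of generators and Lie polynomials in their logarithms, with control on the quasi-norm $|\cdot|=\max_i\|\cdot_i\|^{1/i}$ defined in \eqref{homonorm}.

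\textbf{From words to brackets.} Iterating BCH on a word $\omega$ of length $n=\ell(\omega)$ in the generators $e^{\pm X_j}$ yields
\[\omega(e^{X_1},\dots,e^{X_k})=e^{P(X_1,\dots,X_k)},\qquad P=P_1+\dots+P_s,\]
where $P_i$ is a $\Q$-linear combination of bracket monomials of degree $i$ in the $X_j$'s. A direct combinatorial count (BCH applied at most $n$ times, only degrees $\leq s$ contribute) shows that $P_i$ is a sum of at most $O(n^i)$ bracket monomials, each with coefficient bounded by a constant depending only on $s$. Hence $\|P_i\|\leq c_in^i$ and $|P|\leq Dn$. All coefficients of $P$ have denominators dividing $C_0$, so $\sr:=C_0P$ projects to an element of $\cF_{k,\g,\Q}(\Z)$ satisfying $|\sr|\leq C_0Dn$ and $\omega(e^{X_1},\dots,e^{X_k})=e^{\frac{1}{C_0}\sr(X_1,\dots,X_k)}$, which is the first assertion after adjusting $D$.

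\textbf{From brackets to words.} The proof proceeds by induction on the nilpotency class $s$. In the abelian case $s=1$, $\sr$ is an integer vector of norm $\leq R:=|\sr|$ in the free $\Z$-module spanned by $X_1,\dots,X_k$ modulo laws, and $e^{C\sr(X)}$ is realized by the obvious word of length $O(R)$. For the inductive step, let $\g^{(s)}$ be the last non-trivial term of the descending central series of $\g$, and denote by $\bar\sr$ the image of $\sr$ in $\cF_{k,\g/\g^{(s)},\Q}$. By induction, some word $\omega_0$ of length $O(R)$ satisfies $\omega_0(e^{X_1},\dots,e^{X_k})\equiv e^{C\sr(X_1,\dots,X_k)}\pmod{\exp\g^{(s)}}$. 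Hence $\omega_0(e^{X_1},\dots,e^{X_k})^{-1}\cdot e^{C\sr(X_1,\dots,X_k)}$ lies in $\exp\g^{(s)}$, and an application of BCH shows that it equals $e^{C\sr'(X)}$ for some $\sr'\in\cF_{k,\g,\Q}(\Z)$ homogeneous of degree $s$ with $\|\sr'\|\leq c'R^s$. Fixing a Hall basis of the degree-$s$ part of $\cF_{k,\g,\Q}$, decompose $\sr'=\sum_\alpha c_\alpha\bar b_\alpha$ with $|c_\alpha|\ll R^s$; each term $e^{c_\alpha\bar b_\alpha(X)}$ is realized \emph{exactly} (because $\g^{(s)}$ is central in $\g$) by an $s$-fold nested commutator of powers $e^{m_jX_{i_j}}$ with $|m_j|\ll R$ and $\prod_jm_j=c_\alpha$, contributing a word of length $O(R)$. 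Multiplying these corrections into $\omega_0$ yields the desired word of length $O(R)$.

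The main difficulty lies in the inductive step of the second direction: one must show simultaneously that the correction $\sr'$ is integral and of controlled size, and that the individual central terms $e^{c_\alpha\bar b_\alpha(X)}$ can be realized by short nested commutators. The former follows from BCH: combining two elements whose images agree in $\cF_{k,\g/\g^{(s)},\Q}$ produces a degree-$s$ remainder which is a bracket polynomial in already controlled lower-degree parts, with rational coefficients of bounded denominator. The latter is the standard commutator identity $[e^{aX},e^{bY}]=e^{ab[X,Y]}$ modulo deeper terms, iterated $s$ times, which becomes exact once all but the innermost bracket operation takes values in the center $\g^{(s)}$. The constants $C$ and $D$ depend only on $s$, $k$, and the chosen norm on $\cF_{k,\g,\Q}$.
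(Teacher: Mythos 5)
Your proof is self-contained, whereas the paper's proof simply cites \cite[Lemma~3.5]{abrs} (the statement for the free group $F_k$ and the free Lie algebra $\cF_k$) and passes to the quotient $\cF_{k,\g,\Q}=\cF_k/\cL_{k,\g,\Q}$. Your two-step BCH strategy is essentially the substance behind that cited lemma, so what you offer is a genuine reconstruction rather than a reduction; that is fine in itself, but it also means the details carry the weight of the proof.

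There is a concrete gap in the ``brackets to words'' direction. You claim that each central term $e^{c_\alpha\bar b_\alpha(X)}$ with $|c_\alpha|\ll R^s$ can be realized \emph{exactly} by a single $s$-fold nested commutator of powers $e^{m_jX_{i_j}}$ with $|m_j|\ll R$ and $\prod_j m_j=c_\alpha$. Such a factorization does not exist in general: if $c_\alpha$ is a prime of size on the order of $R^s$ (and $s\geq 2$), any factorization of $c_\alpha$ into $s$ integers forces one factor to equal $\pm c_\alpha$, which has size $\sim R^s$, not $\ll R$. The standard repair is to expand $c_\alpha$ in base $\lceil R\rceil$, producing a bounded number of digits $c_j$ of size $\ll R$, and to realize each piece $c_jR^j\bar b_\alpha(X)$ by a nested commutator in which the $j$ factors of $R$ are distributed across the $s$ slots of the bracket; since there are only $O_s(1)$ digits, the total correction word still has length $O(R)$. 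A secondary, smaller gloss occurs in the ``words to brackets'' direction: the claim that iterating BCH along a word of length $n$ produces a Lie polynomial $P$ whose coefficients all have denominators dividing a fixed $C_0$ does not follow from the bounded-denominator property of a \emph{single} application of BCH, because naive iteration multiplies the denominator bound at each step. The conclusion is nonetheless true with a constant depending only on $s$ (for instance by comparing Malcev coordinates of the first and second kind in the free $s$-step nilpotent group), but as written the argument needs to be supplemented.
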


\begin{proof} This was proved in \cite[Lemma~3.5]{abrs} for the free Lie algebra $\cF_{k}$ and the free group $F_k$. The relative version stated here follows without difficulty, using that $\cF_{k,\g,\Q}=\cF_{k}/\cL_{k,\g,\Q}=\oplus_i \cF_{k}^{[i]}/\cL^{[i]}_{k,\g,\Q}$.
\end{proof}

Recall that $|X|'$ denotes the local quasi-norm on $\g$ defined in $(\ref{CCF})$. The above lemma has the following immediate consequence.

\begin{proposition}
Let $G$ be a simply connected nilpotent Lie group, with Lie algebra $\g$. Let $\bg=(e^{X_1},\dots,e^{X_k})$ be a $k$-tuple in $G$. The exponent $\alpha(\bg)$ defined in $(\ref{alephdioph})$ is also the infimum of all $\alpha>0$ such that
\begin{equation}\label{liediophantine}
|\sr(X_1,\dots,X_k)|' \geq |\sr|^{-\alpha}
\end{equation}
holds for all but finitely many $\sr\in\cF_{k,\g,\Q}(\Z)$.
\end{proposition}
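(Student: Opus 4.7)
The plan is to exploit the quasi-isometric dictionary between word maps and Lie polynomials provided by Lemma~\ref{words-brackets}, together with the fact that, from the very definition $(\ref{CCF})$ of a Carnot-Carath\'eodory-Finsler metric, the function $X\mapsto d(e^X,1)$ is comparable on a neighborhood of the origin in $\g$ to the local quasi-norm $|X|'$. Note also that by the scaling properties of a quasi-norm, $|cX|'\simeq |X|'$ for any fixed nonzero constant $c$, with implied constants depending on $c$. Writing $\alpha_L(\bg)$ for the infimum of the $\alpha>0$ satisfying $(\ref{liediophantine})$ for all but finitely many $\sr\in\cF_{k,\g,\Q}(\Z)$, I would first verify $\alpha(\bg)\leq\alpha_L(\bg)$, then the reverse inequality.

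For $\alpha(\bg)\leq\alpha_L(\bg)$, fix $\alpha>\alpha_L(\bg)$ and a constant $c>0$ such that $|\sr(\mathbf{X})|'\geq c|\sr|^{-\alpha}$ for all but finitely many $\sr\in\cF_{k,\g,\Q}(\Z)$. Given $\omega\in F_{k,G}$ of sufficiently large length, the first bullet of Lemma~\ref{words-brackets} produces $\sr_\omega\in\cF_{k,\g,\Q}(\Z)$ with $|\sr_\omega|\leq D\ell(\omega)$ and $\omega(\bg)=\exp(\sr_\omega(\mathbf{X})/C)$. Either $\omega(\bg)$ stays bounded away from the identity, in which case the desired lower bound is trivial for large $\ell(\omega)$, or $\omega(\bg)$ lies in the neighborhood where $d(e^Y,1)\simeq |Y|'$, and then
\[
d(\omega(\bg),1) \;\simeq\; |\sr_\omega(\mathbf{X})|' \;\geq\; c\,|\sr_\omega|^{-\alpha} \;\geq\; c\,D^{-\alpha}\,\ell(\omega)^{-\alpha}.
\]
Since $\alpha>\alpha_L(\bg)$ was arbitrary, this gives $\alpha(\bg)\leq\alpha_L(\bg)$ after absorbing the multiplicative constants into an arbitrarily small increase of the exponent, as in the equivalent formulations discussed after $(\ref{betdef})$. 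The reverse inequality $\alpha_L(\bg)\leq\alpha(\bg)$ is proved symmetrically: given $\sr\in\cF_{k,\g,\Q}(\Z)$, invoke the second bullet of Lemma~\ref{words-brackets} to obtain $\omega_\sr\in F_{k,G}$ with $\ell(\omega_\sr)\leq D|\sr|$ and $\omega_\sr(\bg)=\exp(C\sr(\mathbf{X}))$, then apply the same local comparison between $d(\omega_\sr(\bg),1)$ and $|\sr(\mathbf{X})|'$.

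The one point that deserves a bit of care is that the maps $\omega\mapsto\sr_\omega$ and $\sr\mapsto\omega_\sr$ need not be injective: if $\cL_{k,\g}\neq\cL_{k,\g,\Q}$, infinitely many $\sr\in\cF_{k,\g,\Q}(\Z)$ represent laws of $\g$ and are all sent to the identity of $F_{k,G}$, in which case $\alpha_L(\bg)=+\infty$ automatically; conversely, distinct $\omega$'s of given length may map to the same $\sr_\omega$ without disturbing the estimates above. Provided the "all but finitely many" quantifier is read as excluding the degenerate classes (those with $\sr(\mathbf{X})=0$ or $\omega(\bg)=1$), the translation between the two sides is clean. I do not expect any substantial obstacle beyond this routine bookkeeping.
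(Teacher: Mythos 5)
Your proposal is essentially correct and takes the same route the paper intends: the paper declares this proposition an "immediate consequence" of Lemma~\ref{words-brackets} and gives no details, and your argument supplies exactly those details, using the quasi-isometric dictionary $\omega\leftrightarrow\sr$ together with the local comparison $d(e^Y,1)\simeq|Y|'$ from $(\ref{CCF})$.

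One correction to your "point that deserves care": the kernel of the projection $\cF_{k,\g,\Q}\to\cF_{k,\g}$, namely $\cL_{k,\g}/\cL_{k,\g,\Q}$, contains no non-zero element of $\cF_{k,\g,\Q}(\Z)$. Indeed, if $\sr\in\cF_{k,\g,\Q}(\Z)$ lifts to $\hat\sr\in\cF_k(\Z)$ and $\sr$ is a law, then $\hat\sr\in\cL_{k,\g}\cap\cF_k(\Z)\subset\cL_{k,\g,\Q}$ by the very definition of $\cL_{k,\g,\Q}$, so $\sr=0$. Thus the map $\sr\mapsto\omega_\sr$ is injective on $\cF_{k,\g,\Q}(\Z)$, and "infinitely many $\sr$ representing laws of $\g$" cannot occur; the scenario you flag never arises. (A genuine degenerate case \emph{is} possible for a non-generic $\bg$: there may be infinitely many $\sr\in\cF_{k,\g,\Q}(\Z)$ with $\sr(\mathbf X)=0$ for that specific $\mathbf X$, but then by injectivity of $\sr\mapsto\omega_\sr$ there are infinitely many non-trivial $\omega$ with $\omega(\bg)=1$, so both $\alpha(\bg)$ and $\alpha_L(\bg)$ equal $+\infty$ and the equality still holds as stated. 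No reinterpretation of the "all but finitely many" quantifier is needed.) This small inaccuracy does not affect your main estimates; the two inequalities $\alpha(\bg)\leq\alpha_L(\bg)$ and $\alpha_L(\bg)\leq\alpha(\bg)$ are proved correctly.
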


It also follows from Lemma \ref{words-brackets} that the group of word maps $F_{k,G}$ is included as a lattice in the simply connected rational nilpotent Lie group whose Lie algebra is $\cF_{k,\g,\Q}$ \cite[Proposition 3.9]{abrs}.
The Bass-Guivarc'h formula for the growth exponent of $F_{k,G}$ from $(\ref{growthexp})$ now reads:
\begin{equation}\label{bgfor}
\eta_G(k) = \sum_{i=1}^s i \dim \cF_{k,\g,\Q}^{[i]}.
\end{equation}
Each $\cF_{k,\g,\Q}^{[i]}$ is a module for the action of $\GL_k$ under linear substitution. We will see below that, as a consequence, $\dim \cF_{k,\g,\Q}^{[i]}$ is a degree $i$ polynomial in $k$ with rational coefficients, when $k$ is large enough. It follows that, for $k$ large, $\eta_G(k)$ is given by a degree $s$ polynomial in $k$.
Therefore, in view of $(\ref{alephbet})$, we now focus on computing $\alpha(\bg)$ for a random tuple $\bg$.

\subsection{Existence of the exponent}\label{ratsec}
We now use the results of Section~\ref{quasinorm-sec} to study the diophantine problem described by (\ref{liediophantine}), and prove Theorem~\ref{existence} from the introduction, which we recall here:

\begin{theorem}[Existence of the exponent]
\label{existence2}
Let $G$ be a connected and simply connected nilpotent real Lie group endowed with a left-invariant geodesic metric $d$.
For each $k\geq 1$, there is $\widehat{\beta}_k \in [0,+\infty]$ such that for almost every $k$-tuple $\bg \in G^k$ with respect to Haar measure, we have
$$\beta(\Gamma_{\bg}) = \widehat{\beta}_k.$$
\end{theorem}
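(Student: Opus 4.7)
The plan is to translate the diophantine approximation problem for $\Gamma_{\bg}$ into a problem of diophantine approximation on a submanifold of matrices, and then invoke Corollary~\ref{analytic-existence}. Let $V = \cF_{k,\g,\Q}$, endowed with the $\Q$-structure coming from $\cF_k(\Z)$ and with the graded flag $V_i = \bigoplus_{j=1}^{i}\cF_{k,\g,\Q}^{[j]}$; choose the quasi-norm $|\sr|=\max_i\|\sr_i\|^{1/i}$ from (\ref{homonorm}), which is a quasi-norm associated to this flag with weights $\alpha_i=i$ in the sense of Definition~\ref{quasinorm} and formula (\ref{defquasi}). Let $E=\g$, endowed with the (decreasing) flag coming from the ascending chain $0=V^0<V^1<\dots<V^s=\g$ that defines the Carnot--Carathéodory--Finsler metric, together with the local quasi-norm $|X|'=\max_i \dist(X,V^{i-1})^{1/i}$ from (\ref{CCF}), which matches (\ref{defquasilocal}) with weights $\alpha'_i=i$. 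Take $\Delta=\cF_{k,\g,\Q}(\Z)$ as the lattice in $V$. Finally define the evaluation map
\[ \Phi: \g^k \to \Hom(V,E), \qquad \Phi(\bX)(\sr)=\sr(\bX). \]
Since $\sr\mapsto\sr(\bX)$ is polynomial in $\bX$, the map $\Phi$ is an analytic (in fact polynomial) map on the connected open set $\g^k\simeq\R^{k\dim\g}$.

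Unfolding the definitions (\ref{diophexpdef}) and (\ref{liediophantine}) with respect to these quasi-norms and this lattice, one sees that for every $\bX\in\g^k$,
\[ \beta(\Phi(\bX)) = \alpha(\bg), \qquad \bg=(e^{X_1},\dots,e^{X_k}), \]
where $\alpha(\bg)$ is the exponent defined in (\ref{alephdioph}). Corollary~\ref{analytic-existence} then yields a constant $\widehat{\alpha}_k\in[0,+\infty]$ such that $\beta(\Phi(\bX))=\widehat{\alpha}_k$ for Lebesgue almost every $\bX\in\g^k$. Because $G$ is simply connected nilpotent, the exponential map $\exp^k:\g^k\to G^k$ is a diffeomorphism pushing the Lebesgue measure class to the Haar measure class, so the conclusion transfers to Haar-almost every $\bg\in G^k$. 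Combining this with the relation $\alpha(\bg)=\eta_G(k)\beta(\Gamma_{\bg})$ from (\ref{alephbet}), which holds for almost every $\bg$ (since the subgroup $\Gamma_{\bg}$ is then isomorphic to $F_{k,G}$ and its growth exponent is given by the Bass--Guivarc'h formula (\ref{bgfor})), we obtain
\[ \beta(\Gamma_{\bg}) = \frac{\widehat{\alpha}_k}{\eta_G(k)} =: \widehat{\beta}_k \]
for almost every $\bg\in G^k$, as desired. Note that this argument is valid for every $k\geq 1$, without the restriction $k\geq\dim\g/[\g,\g]$ required by the ergodicity-based proof in Section~\ref{section:ergodicity}.

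The main obstacle is the verification that the abstract diophantine exponent $\beta(\Phi(\bX))$ from (\ref{diophexpdef}) coincides with $\alpha(\bg)$. This reduces to two elementary checks: first, that $|\sr|=\max_i\|\sr_i\|^{1/i}$ agrees, up to bounded multiplicative constants away from the origin, with the standard presentation (\ref{defquasi}) of a quasi-norm adapted to the homogeneous grading of $\cF_{k,\g,\Q}$; and second, that the Carnot--Carathéodory--Finsler distance $d(\exp(X),1)$ is a local quasi-norm of the type (\ref{defquasilocal}) associated to the filtration $\{V^i\}$, which is precisely the content of (\ref{CCF}). With these identifications in place, Lemma~\ref{words-brackets} guarantees that the two families of inequalities $d(\omega(\bg),1)\geq \ell(\omega)^{-\alpha}$ and $|\sr(\bX)|'\geq|\sr|^{-\alpha}$ coincide up to bounded constants (which do not affect the exponent), closing the loop.
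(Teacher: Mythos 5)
Your proof is correct and follows essentially the same route as the paper's own proof of Theorem~\ref{existence2}: set $V=\cF_{k,\g,\Q}$ with the quasi-norm~(\ref{homonorm}), $E=\g$ with the CCF local quasi-norm~(\ref{CCF}), $\Delta=\cF_{k,\g,\Q}(\Z)$, and the evaluation map $\Phi$; then invoke Corollary~\ref{analytic-existence} and convert via~(\ref{alephbet}). The only (harmless) nitpick is notational: the flag on $V$ adapted to the quasi-norm $|\sr|=\max_i\|\sr_i\|^{1/i}$, in the normalization $\alpha_1\geq\dots\geq\alpha_d$ of Section~4.1, starts with the \emph{highest}-degree homogeneous component, i.e.\ it is the one obtained from $V_i=\oplus_{j\geq i}\cF_{k,\g,\Q}^{[j]}$ as in~(\ref{vicentral}), not $V_i=\oplus_{j\leq i}\cF_{k,\g,\Q}^{[j]}$ as you wrote — but this is a re-indexing, not a gap.
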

\begin{proof}
We set $E=\g$, $V=\mathcal{F}_{k,\g,\Q}$, $\Delta=\mathcal{F}_{k,\g,\Q}(\Z)$ and we define the local quasi-norm $|X|'$ on $E=\g$ by $(\ref{CCF})$ and the quasi-norm $|\sr|$ on $V$ by $(\ref{homonorm})$.
We let $U=\g^k$ and 
\begin{align*}
\Phi : &U \to \Hom(V,E)\\ &\bg \mapsto (\Phi(\bg) \colon \sr \mapsto \sr(\bg)).
\end{align*}
Now we are in the setting of Corollary \ref{analytic-existence} from Section \ref{quasinorm-sec} and hence $\alpha(\bg)$ is almost everywhere constant. In view of $(\ref{alephbet})$ this shows that $\beta(\bg)$ is also well-defined and constant almost everywhere.
\end{proof}

We denote by $\widehat{\alpha}_k$ the almost sure value of $\alpha(\bg)$ for $\bg \in \g^k$. Hence (\ref{alephbet}) becomes:

\begin{equation}
\label{alephbet2} \widehat{\alpha}_k =  \eta_G(k) \widehat{\beta}_k.
\end{equation}
If we assume moreover that $G$ and the geodesic metric are rational, we may even conclude that $\alpha_k$ is rational.
We define a \emph{rational} nilpotent Lie group as a nilpotent Lie group $G$ with Lie algebra $\g$ endowed with a rational structure induced by a basis $\cB$ with rational structure constants.
A left-invariant geodesic metric on a rational nilpotent Lie group $G$ is said to be \emph{rational} if the associated subspaces $V^i$ -- defined at the beginning of this section -- are rational.
For this, it is enough to require that $V^1$ is rational.

\begin{theorem}[Rationality of the exponent]
Let $G$ be a connected and simply connected rational nilpotent real Lie group, and $d(\cdot,\cdot)$ a rational left-invariant geodesic metric on $G$.
Then, for all $k\geq 1$,
\[ \widehat{\beta}_k\in\Q.\]
\end{theorem}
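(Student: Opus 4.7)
The plan is to reduce the theorem directly to Theorem~\ref{uppboundnudrat} applied to the setup from the proof of Theorem~\ref{existence2}. Recall that we took $V=\mathcal{F}_{k,\g,\Q}$, $E=\g$, $\Delta=\mathcal{F}_{k,\g,\Q}(\Z)$, $U=\g^k$ and $\Phi(\bg)(\sr)=\sr(\bg)$, and showed that $\widehat{\alpha}_k$ is the almost sure value of $\beta(\Phi(\bg))$. Since $\widehat{\beta}_k=\widehat{\alpha}_k/\eta_G(k)$ by $(\ref{alephbet2})$ and $\eta_G(k)\in\Z$ by the Bass--Guivarc'h formula $(\ref{bgfor})$, it suffices to prove that $\widehat{\alpha}_k\in\Q$.

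First I would verify that all the rationality hypotheses of Theorem~\ref{uppboundnudrat} are met. The $\Q$-structure on $V$ is the one induced by $\mathcal{F}_{k,\g,\Q}=\mathcal{F}_k/\mathcal{L}_{k,\g,\Q}$, and the quasi-norm $|\cdot|$ on $V$ defined in $(\ref{homonorm})$ is associated to the grading $V=\bigoplus_i\mathcal{F}^{[i]}_{k,\g,\Q}$, whose pieces are rational subspaces, with integer weights $\alpha_i=i$. On $E=\g$, the rationality assumption on $G$ fixes a $\Q$-structure for which the structure constants are rational, and the assumption that the geodesic metric is rational ensures that $V^1$, and hence each $V^i=V^{i-1}+[V^1,V^{i-1}]$, is a rational subspace of $\g$; the local quasi-norm in $(\ref{CCF})$ is associated to this rational flag with integer weights $\alpha'_i=i$. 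Thus both quasi-norms are defined by rational flags with rational weights, so $\Hom(V,E)$ inherits a $\Q$-structure from the bases chosen to define the quasi-norms.

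Next I would check that the Zariski closure $\mathcal{M}$ of $\Phi(\g^k)\subset\Hom(V,E)$ is defined over $\Q$. This is the step to be careful with, but it is not really an obstacle: the map $\Phi:\g^k\to\Hom(V,E)$ is polynomial, since for each $\sr$ in a $\Q$-basis of $V=\mathcal{F}_{k,\g,\Q}$ the evaluation $\bg\mapsto\sr(\bg)$ is a polynomial map in the coordinates of $\bg\in\g^k$ with coefficients given by integer polynomial expressions in the structure constants of $\g$, which lie in $\Q$. Hence $\Phi$ itself is defined over $\Q$ with respect to a rational basis of $\g^k$, and its image generates a Zariski closed subset $\mathcal{M}$ defined over $\Q$.

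With these ingredients in place, Theorem~\ref{uppboundnudrat} applies and yields
\[
\widehat{\alpha}_k=\widehat{\beta}(\mathcal{M})=\tau_\Q(\mathcal{M})=\max_{W\leq V,\ \Delta\text{-rational}}\left\{\frac{a}{b}\ ;\ \mathcal{M}\subset\mathcal{P}_{W,a,b}\right\}.
\]
In this maximum, the numbers $a$ and $b$ are values of $\psi$ and $\phi$ on rational subspaces of $V$ and $E$, hence finite sums of the integer weights $\alpha_i$ and $\alpha'_i$; therefore the maximum is attained by a ratio of positive integers. This gives $\widehat{\alpha}_k\in\Q$ and, after dividing by $\eta_G(k)$, $\widehat{\beta}_k\in\Q$ as required.
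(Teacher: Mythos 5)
Your proof is correct and follows essentially the same route as the paper: reduce to Theorem~\ref{uppboundnudrat} (i.e.\ Theorem~\ref{rathm}) for the setup $V=\mathcal{F}_{k,\g,\Q}$, $E=\g$, observe that the Zariski closure of $\Phi(\g^k)$ is defined over $\Q$ since $\Phi$ is a $\Q$-polynomial map (the paper phrases this via Galois equivariance $\sigma(\Phi(\bg))=\Phi(\sigma(\bg))$, which is the same fact), and conclude rationality because the weights $\alpha_i,\alpha_i'$ are integers so $\psi$ and $\phi$ are integer-valued. You merely spell out the hypothesis-checking and the final division by $\eta_G(k)$ in a bit more detail than the paper does.
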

\begin{proof}
We use the notation of the previous proof.
If $\g$ is rational, then the Zariski closure $\mathcal{M}$ of $\Phi(U)$ is defined over $\Q$.
Indeed, complexifying $E$, $V$ and $\Phi$ we see that $\sigma(\Phi(\bg)) = \Phi(\sigma(\bg))$, for all $\sigma \in \Gal(\C|\Q)$.
Therefore, Theorem~\ref{rathm} applies and we get that $\widehat{\alpha}_k = \tau_\Q(\mathcal{M})$.
Now note that the exponents $\alpha_i$ and $\alpha'_i$ defining our quasi-norms are integers.
As a result, the functions $\psi$ and $\phi$ are integer-valued, and thus $\widehat{\alpha}_k \in \Q$. 
\end{proof}

\subsection{The relatively free Lie algebra as a $\GL_k$-module}

In order to prove the second part of Theorem \ref{rationality-stability}, we need some understanding of the $\GL_k$-submodules of $\cF_{k,\g,\Q}$.
The action of the linear group $\GL_k$ on the free Lie algebra $\cF_k$ on $k$ letters is by substitution of the variables.
It turns out that the decomposition of each homogeneous component $\cF_k^{[i]}$ into simple $\GL_k$-modules is representation stable in the sense of Church, Ellenberg and Farb, see \cite[Corollary 5.7]{church-farb}.
This can be made more precise as follows. We set 

$$\cF_k^{\leq s}  = \bigoplus_1^s \cF_k^{[i]}$$
the subspace of the free Lie algebra on $k$ letters spanned by brackets of order at most $s$.

\begin{proposition}\label{correspond}
Let $k \geq s$.
The forgetful map
\begin{align*} F_s \colon \cF_k^{\leq s} &\to \cF_s^{\leq s} \\
\sr(\sx_1,\ldots,\sx_k) & \mapsto\sr(\sx_1,\ldots,\sx_s,0,\ldots,0)
\end{align*}
establishes a bijective correspondence between $\GL_k$-submodules of $\cF_{k}^{\leq s}$ and $\GL_{s}$-submodules of $\cF^{\leq s}_{s}$. Moreover, if $W\leq\cF_{k}^{\leq s}$ is a $\GL_k$-submodule with the following decomposition into irreducible submodules
\begin{equation}\label{decdecW}W=\bigoplus_{\lambda} E^{\lambda}(k)^{n_\lambda},\end{equation}
then $F_s(W)$ decomposes, with the same multiplicities, as
$$F_s(W)=\bigoplus_{\lambda} E^{\lambda}(s)^{n_\lambda},$$
where $E^{\lambda}(k)$ is the irreducible representation of $\GL_k$ with Young diagram $\lambda$. 
\end{proposition}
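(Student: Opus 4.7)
The approach is to interpret each homogeneous component $\cF_k^{[i]}$ as the evaluation $L^i(\R^k)$ of a polynomial Schur functor $L^i$ of degree $i$ (the Lie operad in degree $i$), and then to read everything off the standard functorial decomposition into isotypic pieces. Schur--Weyl duality for $\GL_n$ provides a canonical decomposition
\[
L^i(V) \;=\; \bigoplus_{\lambda\vdash i} E^\lambda(V)\otimes M_\lambda^{(i)},
\]
where the multiplicity space $M_\lambda^{(i)}$ is intrinsic to the functor $L^i$ and, crucially, does not depend on the ambient space $V$ provided $\dim V\ge\ell(\lambda)$ (the number of rows of $\lambda$). In our situation every partition $\lambda$ appearing in $\cF_k^{\le s}$ satisfies $|\lambda|\le s$, so $\ell(\lambda)\le s\le k$; hence the multiplicity spaces computed from $\cF_s^{\le s}$ and $\cF_k^{\le s}$ coincide, and both $E^\lambda(s)$ and $E^\lambda(k)$ are nonzero irreducibles indexed by the same partitions.

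Next, I would interpret the forgetful map functorially. The projection $\pi:\R^k\to\R^s$, $(x_1,\dots,x_k)\mapsto(x_1,\dots,x_s)$, and the inclusion $\iota:\R^s\hookrightarrow\R^k$ of the first $s$ coordinates satisfy $\pi\iota=\mathrm{id}_{\R^s}$. Restricted to the $i$-th homogeneous component, $F_s$ is precisely $L^i(\pi)$; functoriality then gives $L^i(\pi)L^i(\iota)=\mathrm{id}$, and on each isotypic piece the map $E^\lambda(\pi):E^\lambda(\R^k)\to E^\lambda(\R^s)$ is a $\GL_s$-equivariant surjection split by $E^\lambda(\iota)$.

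To conclude, Schur's lemma parameterizes submodules by subspaces of multiplicity spaces: every $\GL_k$-submodule of the isotypic piece $E^\lambda(\R^k)\otimes M_\lambda^{(i)}$ has the form $E^\lambda(\R^k)\otimes A_\lambda$ for a unique subspace $A_\lambda\le M_\lambda^{(i)}$, and likewise $\GL_s$-submodules of $E^\lambda(\R^s)\otimes M_\lambda^{(i)}$ are parametrised by subspaces $A'_\lambda\le M_\lambda^{(i)}$ of the \emph{same} multiplicity space (this is where the stable-range equality from the first paragraph is used). Since $F_s$ acts on the $\lambda$-isotypic block as $E^\lambda(\pi)\otimes\mathrm{id}_{M_\lambda^{(i)}}$ and $E^\lambda(\pi)$ is surjective, one has $F_s\bigl(E^\lambda(\R^k)\otimes A_\lambda\bigr)=E^\lambda(\R^s)\otimes A_\lambda$. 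Assembling over $\lambda$, the correspondence $A_\lambda\mapsto A_\lambda$ is the sought-after bijection, and it automatically identifies the multiplicities $n_\lambda=\dim A_\lambda$ on both sides, yielding the displayed decomposition in the statement.

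The main obstacle is the dimension-independence of the multiplicity spaces invoked in the first paragraph. This is the content of classical Schur--Weyl duality for $\GL_n$, but one must be careful to apply it in the stable range $\dim V\ge\ell(\lambda)$, which here is guaranteed by $k\ge s\ge i\ge|\lambda|\ge\ell(\lambda)$. This is exactly the form of representation stability referenced in the paper via \cite{church-farb}; in a self-contained argument one would write $L^i(V)=V^{\otimes i}\otimes_{S_i}U_i$ for an appropriate $S_i$-module $U_i$ (the Lie representation of $S_i$) and identify $M_\lambda^{(i)}=\Hom_{S_i}(S^\lambda,U_i)$, a quantity manifestly independent of $V$.
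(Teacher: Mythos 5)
Your proof is correct and complete, but it takes a genuinely different route from the paper's. The paper argues directly with highest weight theory: since elements of $\cF_k^{\le s}$ are spanned by brackets in at most $s$ letters, any highest weight vector of an irreducible $\GL_k$-submodule has weight $(n_1,\ldots,n_k)$ with $n_i=0$ for $i>s$; such a vector is therefore a polynomial in $\sx_1,\ldots,\sx_s$ alone, so it is fixed pointwise by $F_s$ and becomes a $\GL_s$-highest weight vector of the same weight. The bijection and multiplicity statements are then deduced from this fact (somewhat tersely). You instead interpret $\cF_k^{[i]}$ as the value $L^i(\R^k)$ of a polynomial Schur functor, invoke Schur--Weyl duality to get the canonical decomposition with multiplicity spaces $M_\lambda^{(i)}\cong\Hom_{S_i}(S^\lambda,\mathrm{Lie}_i)$ independent of the ambient space in the stable range, and use the functoriality of $F_s=L^\bullet(\pi)$ (split by $L^\bullet(\iota)$) to show that it acts as $E^\lambda(\pi)\otimes\mathrm{id}$ on each isotypic block. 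This makes the bijectivity of the correspondence and the preservation of multiplicities completely transparent, since submodules on both sides are literally the same subspaces $A_\lambda\le M_\lambda^{(i)}$ of the \emph{same} multiplicity spaces; the paper's proof leaves this to the reader after the statement about highest weight vectors. The two arguments rest on the same underlying stability phenomenon ($\ell(\lambda)\le|\lambda|\le s\le k$), but your Schur-functor formulation is more systematic and gives a cleaner account of why the correspondence is a bijection, at the cost of a heavier formalism; the paper's highest-weight argument is more elementary and closer to the computational setting of the paper.
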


\begin{remark}
The Young diagrams appearing in either decomposition have at most $s$ boxes. Furthermore $F_s$ sends $\cL_{k,\g}$ to $\cL_{s,\g}$ and $\cL_{k,\g,\Q}$ to $\cL_{s,\g,\Q}$, if $\g$ is any Lie algebra of nilpotency class $s$. In particular, as $k$ grows,  $\cF_{k,\g,\Q}$ has only boundedly many irreducible $\GL_k$-submodules counting multiplicity in its decomposition, all obtained by the above process from the decomposition of $\cF_{s,\g,\Q}$ into irreducible $\GL_s$-submodules.
\end{remark}

\begin{proof}
Note that elements of $\cF_{k}^{\leq s}$ are linear combinations of brackets having at most $s$ letters, and that $\cF_{k}^{\leq s}$ decomposes into weight spaces for the diagonal action
\[ (t_1,\ldots,t_k) \cdot c(\sx_1,\ldots,\sx_k)=c(t_1\sx_1,\ldots,t_k\sx_k).\]
If $W \leq \cF_{k}^{(s)}$ is an irreducible $\GL_k$-module, then it is generated by a highest weight vector, whose weight $\lambda$ is of the form $(n_1,\ldots,n_k)$ with $n_i=0$ if $i>s$. The corresponding $\GL_s$-submodule $F_s(W)$ is generated by the same highest weight vector.
It is therefore an irreducible $\GL_s$-module with the same Young diagram. The result follows.
\end{proof}


Given a Young diagram $\lambda$, by the Weyl dimension formula,  the dimension $d_\lambda(k)$ of the irreducible $\GL_k$-module associated to $\lambda$ is:
$$d_\lambda(k)=\prod_{1\leq i<j\leq k} \frac{\lambda_i-\lambda_j+j-i}{j-i}.$$
In particular, if $i$ is the total number of boxes of $\lambda$, then $d_\lambda(k)$ is a degree $i$ polynomial in $k$ with rational coefficients. The number of boxes of $E^\lambda \leq \cF_{k}^{\leq s}$ is the number of letters appearing in the brackets of the associated submodule of $\cF_{k}^{\leq s}$. In particular, we obtain:

\begin{corollary}\label{dimcor} For $i=1,\ldots,s$, the map $k \mapsto \dim \cF_{k,\g,\Q}^{[i]}$ is a polynomial in $k$ of degree $i$ with rational coefficients, provided $k\geq s$.
From $(\ref{bgfor})$ the same holds for the growth exponent $\eta_G(k)$.
More generally, there is a finite family $F$ of rational polynomials of degree at most $s$ such that for $k\geq s$, if $W$ is any $\GL_k$-submodule in $\cF_{k,\g,\Q}$ then $\dim W = P(k)$ for some $P\in F$.
\end{corollary}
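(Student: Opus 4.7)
The plan is to reduce each of the three assertions to a single observation: by Proposition \ref{correspond} (combined with the remark that $F_s$ maps $\cL_{k,\g,\Q}$ into $\cL_{s,\g,\Q}$), every $\GL_k$-submodule of $\cF_{k,\g,\Q}$ admits, for $k \geq s$, a decomposition
\[
W = \bigoplus_{\lambda} E^{\lambda}(k)^{n_\lambda}
\]
where $\lambda$ ranges over Young diagrams with at most $s$ boxes, and where the multiplicities $n_\lambda$ depend only on $F_s(W) \leq \cF_{s,\g,\Q}$ and not on $k$. Since each $d_\lambda(k) = \prod_{1\leq a<b\leq k}(\lambda_a-\lambda_b+b-a)/(b-a)$ is, after clearing the integer denominator, a polynomial in $k$ of degree equal to the number of boxes of $\lambda$ with rational coefficients, taking dimensions in the above decomposition expresses $\dim W$ as a rational-coefficient polynomial in $k$ of degree at most $s$.

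First, I would apply this observation to $\cF_{k,\g,\Q}^{[i]}$ itself. Being homogeneous of degree $i$ for the scaling action of the diagonal torus, only diagrams $\lambda$ with exactly $i$ boxes can occur in its decomposition. Hence $\dim \cF_{k,\g,\Q}^{[i]} = \sum_{|\lambda|=i} n_\lambda d_\lambda(k)$ is a polynomial of degree at most $i$; the degree is exactly $i$ because $\g$ has nilpotency class $s \geq i$ forces $\cF_{k,\g,\Q}^{[i]} \neq 0$ for $k$ large, so some $n_\lambda > 0$, and the leading coefficients of the $d_\lambda$ are positive (no cancellation can occur among terms of top degree). The formula $\eta_G(k) = \sum_{i=1}^s i\dim \cF_{k,\g,\Q}^{[i]}$ then immediately gives a rational-coefficient polynomial of degree exactly $s$, the dominant contribution coming from $i=s$.

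For the last claim, I would apply the same reduction to an arbitrary $\GL_k$-submodule $W$ of $\cF_{k,\g,\Q}$. Writing the isotypic decomposition of the (finite-dimensional) space $\cF_{s,\g,\Q} = \bigoplus_\lambda E^\lambda(s)^{N_\lambda}$ — it is finite-dimensional because nilpotency class $s$ forces every bracket of order $>s$ to be a law — one has $0 \leq n_\lambda \leq N_\lambda$ for the multiplicities associated to $W$, so only finitely many tuples $(n_\lambda)$ occur. The corresponding finite set of polynomials $P_{(n_\lambda)}(k) = \sum_\lambda n_\lambda d_\lambda(k)$ provides the family $F$, each member of which has rational coefficients and degree at most $s$.

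There is no genuine obstacle here; the only nontrivial ingredient has already been delivered, namely the representation-stability statement of Proposition \ref{correspond} ensuring that the multiplicities $n_\lambda$ are the same for $k \geq s$ as for $k = s$. Once this is in hand, all that remains is to read off the polynomial dependence on $k$ from the Weyl dimension formula.
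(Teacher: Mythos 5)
Your proposal is correct and follows essentially the same route as the paper: apply Proposition \ref{correspond} (and the remark that $F_s$ carries $\cL_{k,\g,\Q}$ to $\cL_{s,\g,\Q}$) to reduce the multiplicity data for $k\geq s$ to that of the fixed finite-dimensional $\GL_s$-module $\cF_{s,\g,\Q}$, then read off polynomial dependence on $k$ from the Weyl dimension formula. You add the small but worthwhile verification that the degree of $\dim\cF_{k,\g,\Q}^{[i]}$ is exactly $i$ (positivity of the leading coefficients of $d_\lambda$ prevents cancellation), a point the paper leaves implicit.
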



\subsection{Stability of the exponent}

We now prove the second part of Theorem~\ref{rationality-stability} from the introduction, which we recall here, for convenience.

\begin{theorem}[Stability of the exponent]
\label{stability}
Let $G$ be a connected and simply connected rational nilpotent real Lie group, and $d(\cdot,\cdot)$ a rational left-invariant geodesic metric on $G$.
There exists a rational function $F_{\g}\in\Q(X)$ with rational coefficients such that, for $k$ large enough,
\[ \widehat{\beta}_k = F_{\g}(k).\]
\end{theorem}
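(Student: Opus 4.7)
\emph{Reduction to $\widehat{\alpha}_k$.} The plan is to compute $\widehat{\alpha}_k$ and then use $(\ref{alephbet2})$ together with Corollary~\ref{dimcor} (which shows that $\eta_G(k)$ is a rational polynomial in $k$ for $k\geq s$) to deduce the statement for $\widehat{\beta}_k=\widehat{\alpha}_k/\eta_G(k)$. Since $\g$ is rational, the Zariski closure $\cM_k\subset\Hom(V,E)$ of $\Phi(\g^k)$, with $V=\cF_{k,\g,\Q}$ and $E=\g$, is defined over $\Q$, so Theorem~\ref{rathm} gives $\widehat{\alpha}_k=\tau(\cM_k)$.

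\emph{$\GL_k$-symmetry.} The natural $\GL_k$-action on $V$ by linear substitution of generators preserves $\cM_k$: precomposing $\Phi(\bg)$ with $g^{-1}\in\GL_k$ is precisely $\Phi(g^{-1}\cdot\bg)$ for the induced left action of $\GL_k$ on $\g^k$. Hence the auxiliary functions $\psi_{\cM_k}$ and $\phi_{\cM_k}$ from~(\ref{psiandphi}) are $\GL_k$-invariant on $\Grass(V)$, and Corollary~\ref{submodular} implies that the supremum defining $\tau(\cM_k)$ is attained on a $\GL_k$-invariant subspace, which is automatically graded. By Proposition~\ref{correspond}, for $k\geq s$ the forgetful map $F_s$ identifies $\GL_k$-submodules of $\cF_{k,\g,\Q}$ with $\GL_s$-submodules of $\cF_{s,\g,\Q}$. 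Fixing an isotypic decomposition $\cF_{s,\g,\Q}=\bigoplus_{\lambda}E^{\lambda}(s)\otimes M_{\lambda}$, every such subspace has the form $W_k(M')=\bigoplus_{\lambda}E^{\lambda}(k)\otimes M_{\lambda}'$ with $M_{\lambda}'\leq M_{\lambda}$ chosen independently of $k$.

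\emph{Rationality in $k$.} The graded piece $W_k(M')^{[l]}$ has dimension $\sum_{|\lambda|=l}\dim(M_{\lambda}')\,d_{\lambda}(k)$, a rational polynomial of degree $l$ in $k$ (Weyl's formula). Choosing the flag on $V$ compatibly with the grading, the identity (\ref{psidec}) together with the gradedness of $V_{i+1}\cap W_k(M')$ yields
\[\psi_{\cM_k}(W_k(M'))=\sum_{i}(\alpha_i-\alpha_{i+1})\bigl(\dim(V_{i+1}\cap W_k(M'))-r_{k,i}(M')\bigr),\]
where $r_{k,i}(M')=\max_{\bg}\dim\Phi(\bg)(V_{i+1}\cap W_k(M'))$ is the generic rank of the restriction of the evaluation map to this graded $\GL_k$-invariant subspace. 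Augmenting a generic $\bg\in\g^k$ to $(\bg,0)\in\g^{k+1}$ and using the inclusion $W_k(M')\hookrightarrow W_{k+1}(M')$ shows $r_{k,i}(M')\leq r_{k+1,i}(M')$, and since these ranks are bounded above by $\dim\g$ they stabilize to a constant once $k$ is large enough. A parallel computation using (\ref{phidec}) with the Carnot flag on $\g$, together with $\Phi(\bg)(W_k(M')^{[l]})\subseteq V^l$, expresses $\phi_{\cM_k}(W_k(M'))$ in the same manner and with generic ranks that likewise stabilize. Thus, for each $M'$, the ratio $\psi_{\cM_k}(W_k(M'))/\phi_{\cM_k}(W_k(M'))$ is a rational function of $k$ for $k$ large. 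The moduli space of $\GL_k$-invariant subspaces, the fixed projective variety $\prod_{\lambda}\Grass(M_{\lambda})$, carries $\psi_{\cM_k}$ and $\phi_{\cM_k}$ as constructible functions, so only finitely many rational-function-in-$k$ values actually arise in the supremum; the pointwise maximum of finitely many rational functions agrees with a single rational function for all sufficiently large $k$, producing the desired $F_{\g}\in\Q(X)$ after dividing by $\eta_G(k)$.

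\emph{Principal obstacle.} The delicate technical point is justifying that the generic ranks $r_{k,i}(M')$ and their analogues for $\phi_{\cM_k}$ both stabilize as $k\to\infty$, and that the dependence of the ratio on $M'$ produces only finitely many distinct rational functions of $k$ on the fixed moduli space of $\GL_k$-invariant subspaces. Both rest on combining representation stability of $\cF_{k,\g,\Q}$ as a $\GL_k$-module (Proposition~\ref{correspond}) with the fact that $\g$ is a fixed finite-dimensional target endowed with the Carnot filtration $\{V^l\}$, so that the generic ranks are integer-valued, monotone in $k$, and uniformly bounded, hence eventually constant.
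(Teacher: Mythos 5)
Your proposal follows essentially the same route as the paper: reduce $\widehat{\beta}_k$ to $\widehat{\alpha}_k$ via the Bass--Guivarc'h polynomial, invoke Theorem~\ref{rathm} and the submodularity lemma (Corollary~\ref{submodular}) to place the maximizer on a $\GL_k$-invariant subspace, then use representation stability (Proposition~\ref{correspond} and Corollary~\ref{dimcor}) to show only boundedly many polynomial-over-integer ratios can occur. The one place where you add genuine detail is the monotonicity argument $r_{k,i}(M')\leq r_{k+1,i}(M')$ via augmentation $\bg\mapsto(\bg,0)$, which gives stabilization of the generic ranks for each fixed $M'$; the paper's text asserts the eventual single rational function more tersely, relying on the boundedness of $\phi$ and the finiteness of the family of dimension polynomials, so your observation fills in a step that the paper leaves implicit. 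One small slip: the image of the degree-$l$ graded piece $W_k(M')^{[l]}$ under evaluation lies in the lower central series term $\g^{(l)}$, not in the Carnot flag subspace $V^l$ (these are complementary, not equal, in a stratified Lie algebra); this does not affect the overall structure but your stated inclusion is the wrong one. As you note yourself, the remaining delicate point is the uniformity in $M'$ of the stabilization and the finiteness of the rational functions over the moduli space $\prod_\lambda\Grass(M_\lambda)$; this is also glossed over in the paper, and both proofs would benefit from making it explicit.
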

\begin{proof}
We already know from Corollary \ref{dimcor} that, for $k\geq s$, $\eta_G(k)$ is given by a degree $s$ polynomial in $k$ with rational coefficients.
So in view of $(\ref{alephbet2})$, we need only prove that $\widehat{\alpha}_k$ is given by a rational function, for large $k$.
We saw in \S\ref{ratsec} that $\widehat{\alpha}_k= \tau_\Q(\mathcal{M})$, where $\tau_\Q(\mathcal{M})$ is given by $(\ref{tauq})$ and takes the form $(\ref{tausub})$ of the ratio of submodular functions, $-\psi_\mathcal{M}$ and $\phi_\mathcal{M}$ defined on the grassmannian $\Grass(V)$, where $V=\cF_{k,\g,\Q}$.

The group $\GL_k$ acts by linear substitution on $V$ and it preserves the flag of subspaces
\begin{equation}\label{vicentral}
V_i=\oplus_{j\geq i} \cF_{k,\g,\Q}^{[j]} 
\end{equation}
defining the quasi-norm $|\sr|$ in $(\ref{homonorm})$, so the function $\psi$ is $\GL_k$-invariant.
Noting that $gW \cap \ker \Phi(u) = g( W \cap \ker \Phi(g^{-1}u))$ for any $g \in \GL_k$, $u \in \g^k$ and $W \leq V$, we conclude that $\psi_\mathcal{M}$ is $\GL_k$-invariant.
Similarly $\phi_\mathcal{M}$ is $\GL_k$-invariant.
We can therefore apply the submodularity Lemma \ref{submodular}, and conclude that there is a $\GL_k$-invariant $\Delta$-rational subspace $W\leq V$ such that \begin{equation}\label{alphamax}\widehat{\alpha}_k = \frac{\psi_\mathcal{M}(W)}{\phi_\mathcal{M}(W)}.\end{equation}

Combining $(\ref{homonorm})$ with $(\ref{psidec})$ and $(\ref{CCF})$ with $(\ref{phidec})$ we obtain $\psi_\mathcal{M}$ and $\phi_\mathcal{M}$ explicitly in the form:
\begin{align}\label{psiexp} 
&\psi_\mathcal{M}(W) = \sum_{i=1,\ldots,s}\big( \dim(W\cap V_i) - \dim((W \cap V_i)(\sx))\big) \\
&\phi_\mathcal{M}(W) = \sum_{i=0,\ldots,s-1} \dim \pi_i(W(\sx))\label{phiexp}
\end{align}
for all $\sx \in \g^k$ in a Zariski open subset, where $\pi_i: \g \to \g/V^i$. 
Recall that $V_i\leq V=\cF_{k,\g,\Q}$ is defined in (\ref{vicentral}) above, and $V^i\leq\g$ at the beginning of this section; also note that this notation is not perfectly coherent with the notation in Section~\ref{quasinorm-sec}, where $\{V_i\}$ and $\{V_i'\}$ are \emph{full} flags.

According to Proposition \ref{correspond} and Corollary \ref{dimcor} the dimension of $\GL_k$-invariant subspaces of $V=\cF_{k,\g,\Q}$ can achieve only boundedly many values, each given by a polynomial in $k$ with rational coefficients and degree at most $s$. It then follows from the special form (\ref{psidec}) taken by $\psi$ that $\psi_\mathcal{M}$ can only take boundedly many values, each of which is one of boundedly many polynomials in $k$ of degree at most $s$ and with rational coefficients. We conclude that the same holds for $\widehat{\alpha}_k$, since $\phi$ is integer valued and bounded in terms of $\dim \g$ only.
When $k$ is large enough (larger than a constant depending only on the size of the coefficients of these polynomials, hence only on $\dim \g$), this maximum will be achieved by a single polynomial with rational coefficients and of degree at most $s$. In fact the degree will be exactly $s$, because of the Dirichlet lower bound and the fact that already with $W=\cF_{k,\g,\Q}^{[s]}$, the function $k\mapsto\frac{\psi_\mathcal{M}(W)}{\phi_\mathcal{M}(W)}$ is a degree $s$ polynomial.
This completes the proof.
\end{proof}

Now that we have finished proving Theorem~\ref{rationality-stability}, we briefly explain how to derive Corollary~\ref{theorem-limit}.

\begin{proof}[Proof of Corollary \ref{theorem-limit}] The existence of the limit follows immediately from the theorem.
To prove the upper bound, note that $\phi$ takes integer values, while $\psi$ is bounded above by $s \dim \cF_{k,\g,\Q}$, as follows from $(\ref{psidec})$.
However, according to the Bass-Guivarc'h formula (\ref{bgfor}), $\eta_G(k)$ is asymptotic to $s\dim \cF_{k,\g,\Q}$.
This shows the upper bound.
For the lower bound, note that $\tau_\Q(\mathcal{M})$ is a maximum over the subspaces $W$; evaluating at $W=\cF_{k,\g,\Q}^{[s]}$, we get $\psi_\mathcal{M}(W)= s (\dim \cF_{k,\g,\Q}^{[s]}-\dim \g^{[s]})$ and $\phi_{\cM}(W)\leq s \dim \g^{[s]}$.

When $d(\cdot,\cdot)$ is Riemannian, all $\alpha_i'$ are $1$ and then $\phi(W) \leq \dim \g^{[s]}$. This ends the proof. We will see explicit examples in the next section, where both upper and lower bounds are attained for $\lim_k \widehat{\beta}_k$, see also Remark \ref{strict}.
\end{proof}

\begin{remark}[Irrational $\g$]\label{irrational} If $\cL_{k,\g}$ is not rational, then there is a non-trivial subspace $W \leq \cF_{k,\g,\Q}$ for which $\phi_\mathcal{M}(W)=0$, and in particular $\tau(\mathcal{M}) = + \infty$, so we cannot conclude anything in this case. However if $\cL_{k,\g}$ is rational and even if $\g$ is not, we can assert that the conclusion of Theorem \ref{rationality-stability} holds in certain cases. For example it holds whenever $\cF_{k,\g}$ is multiplicity-free as a $\GL_k$-module. Indeed in this case every $\GL_k$-invariant subspace is rational (because $\GL_k$ is $\Q$-split). Since we know that the maximum in $\tau(\mathcal{M})$ is attained at a $\GL_k$-invariant subspace, we obtain again $\tau(\mathcal{M})=\tau_\Q(\mathcal{M})$ in this case. We will make use of this observation in some of the examples below.
\end{remark}

\begin{remark}[The exponent is attained in the last step]\label{lastep}
Let
\[ L=\{\sr \in \cF_{k,\g,\Q} \ |\ \sr(\g^k) \subset \g^{(s)}\}.\]
When $k$ is large enough, the $\GL_k$-invariant rational subspace $W$ realizing the maximum of $\psi_\mathcal{M}/\phi_\mathcal{M}$ in $(\ref{alphamax})$ can be chosen to belong to $L$. This can be seen easily by writing $W=W_0\oplus W_0'$ with $W_0 =W \cap L$ for a $\GL_k$-invariant $W_0'$ (which exists by complete reducibility of the $\GL_k$ action). Then from $(\ref{psiexp})$ it follows that $\psi_\mathcal{M}(W) - \psi_\mathcal{M}(W_0)$ is a polynomial of degree at most $s-1$ in $k$, while $\phi_\mathcal{M}(W) > \phi_\mathcal{M}(W_0)$ unless $W=W_0$.
From these two inequalities we see that, for large enough $k$,
$$\frac{\psi_\mathcal{M}(W)}{\phi_\mathcal{M}(W)} \leq \frac{\psi_\mathcal{M}(W_0)}{\phi_\mathcal{M}(W_0)}.$$
\end{remark}

\section{Explicit values of the critical exponent in some examples}\label{section:examples}

In this final section, we illustrate Theorems \ref{rationality-stability} and \ref{theorem-limit} and work out an explicit value for the critical exponent $\widehat{\beta}_k$ in several examples. For definiteness we always assume in this section that the metric on $G$ is Riemannian.

\subsection{Nilpotent Lie groups of step 2}

In this paragraph, $G$ is a connected simply connected non-abelian nilpotent Lie group of step 2.
We denote by $d_i$, $i=1,2$ the dimension of $\g^{[i]}$. Here $\g^{[1]}=\g/[\g,\g]$ and $\g^{[2]}=[\g,\g]$.

It is easy to check that $d_2 \leq \frac{d_1(d_1-1)}{2}$. Moreover, by \cite[Appendix~A]{abrs}, $\cF_{k,\g}^{[1]}$ and $\cF_{k,\g}^{[2]}$ are both irreducible $\GL_k$-modules of dimension $k$ and $k(k-1)/2$ respectively. In particular Remark \ref{irrational} applies and this implies that $\widehat{\alpha}_k=\frac{k(k-1)}{d_2}-2.$ Since $\eta_G(k)=k+k(k-1)=k^2$ according to $(\ref{bgfor})$, we get:

\begin{theorem}[Step 2 nilpotent Lie groups]
Let $G$ be a connected simply connected non-abelian nilpotent Lie group of step 2. Set $d_1=\dim G/[G,G]$ and $d_2=\dim [G,G]$ and let $k\geq d_1$ be an integer. The critical exponent for $k$-tuples in $G$ is
$$\widehat{\beta}_k=\frac{1}{d_2} -\frac{1}{d_2 k}- \frac{2}{k^2}.$$
\end{theorem}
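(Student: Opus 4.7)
The plan is to apply the framework developed in Section~\ref{section:critical}, with $V=\cF_{k,\g,\Q}$, $E=\g$, $\Delta=\cF_{k,\g,\Q}(\Z)$, and $\cM=\Phi(\g^k)\subset\Hom(V,E)$, where $\Phi(\bg)(\sr):=\sr(\bg)$. The input supplied in the paragraph preceding the theorem is that $\cF_{k,\g,\Q}^{[1]}$ and $\cF_{k,\g,\Q}^{[2]}$ are irreducible and non-isomorphic $\GL_k$-modules of dimensions $k$ and $k(k-1)/2$, so that $\cF_{k,\g,\Q}=\cF_{k,\g,\Q}^{[1]}\oplus\cF_{k,\g,\Q}^{[2]}$ is multiplicity-free. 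Hence by Remark~\ref{irrational} every $\GL_k$-invariant subspace of $V$ is $\Delta$-rational, and by the submodularity lemma (Corollary~\ref{submodular}) applied to the $\GL_k$-action, the maximum defining $\widehat{\alpha}_k$ in~(\ref{alphamax}) is attained on one of the four $\GL_k$-invariant subspaces $0$, $\cF_{k,\g,\Q}^{[1]}$, $\cF_{k,\g,\Q}^{[2]}$, $V$.

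Next I would identify which of these subspaces realizes the maximum. By Remark~\ref{lastep} the optimum lies in $L=\{\sr\in\cF_{k,\g,\Q}\,:\,\sr(\g^k)\subset[\g,\g]\}$, which in the step-$2$ setting coincides exactly with $\cF_{k,\g,\Q}^{[2]}$, so the only relevant candidate is $W=\cF_{k,\g,\Q}^{[2]}$. Since every element of $W$ is of degree $2$ and the metric is Riemannian (so that all weights $\alpha_i'$ equal $1$ and all $\alpha_i$ in (\ref{homonorm}) restrict to $W$ as the constant $2$), formulas~(\ref{psiexp}) and~(\ref{phiexp}) simplify on $W$ to
\[ \psi_\cM(W)=2\bigl(\dim W-\phi_\cM(W)\bigr)\quad\text{and}\quad\phi_\cM(W)=\max_\bg\dim\Phi(\bg)W. \]
For $k\geq d_1$ and generic $\bg\in\g^k$, the vectors $g_1,\dots,g_k$ span $\g$ modulo $[\g,\g]$; since $\g$ is of step~$2$, this forces the brackets $[g_i,g_j]$ to span $[\g,\g]$, so that $\Phi(\bg)|_W\colon W\to[\g,\g]$ is surjective. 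Consequently $\phi_\cM(W)=d_2$ and $\psi_\cM(W)=k(k-1)-2d_2$, yielding
\[ \widehat{\alpha}_k=\frac{\psi_\cM(W)}{\phi_\cM(W)}=\frac{k(k-1)}{d_2}-2, \]
which is precisely the value announced before the theorem.

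The conclusion then follows from~(\ref{alephbet2}): the Bass--Guivarc'h formula~(\ref{bgfor}) gives $\eta_G(k)=1\cdot k+2\cdot\tfrac{k(k-1)}{2}=k^2$, hence $\widehat{\beta}_k=\widehat{\alpha}_k/\eta_G(k)=\tfrac{1}{d_2}-\tfrac{1}{d_2k}-\tfrac{2}{k^2}$ after an elementary simplification. The only non-routine step is the identification of $\cF_{k,\g,\Q}^{[2]}$ as the optimal $\GL_k$-invariant subspace, which relies on the multiplicity-free structure together with Remark~\ref{lastep}; one can also check directly that the remaining candidates $\cF_{k,\g,\Q}^{[1]}$ and $V$ give strictly smaller ratios under $k\geq d_1$ and $d_2\leq d_1(d_1-1)/2$, since they are both of order $O(k)$ while $\cF_{k,\g,\Q}^{[2]}$ gives a ratio of order $k^2/d_2$.
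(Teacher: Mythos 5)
Your proposal is correct and follows essentially the same route as the paper: multiplicity-freeness of $\cF_{k,\g}^{[1]}\oplus\cF_{k,\g}^{[2]}$ as a $\GL_k$-module combined with Remark~\ref{irrational} and the submodularity lemma to restrict to $\GL_k$-invariant (hence rational) candidate subspaces, the identification of $W=\cF_{k,\g,\Q}^{[2]}$ as the optimizer, and division of $\widehat{\alpha}_k$ by $\eta_G(k)=k^2$. The paper's text is terser, stating the value of $\widehat{\alpha}_k$ without spelling out the comparison among the candidate subspaces $0$, $\cF^{[1]}$, $\cF^{[2]}$, $V$; your direct verification that the other candidates give smaller ratios for $k\geq d_1$ (using $d_2\leq d_1(d_1-1)/2$) usefully replaces the ``$k$ large enough'' hypothesis of Remark~\ref{lastep} with the exact range claimed in the theorem.
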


In the special case of the $3$-dimensional Heisenberg group, $d_2=1$ and we thus recover the computation made in Section \ref{section:heisenberg}.

\subsection{Metabelian nilpotent Lie groups}

Suppose now, more generally, that $G$ is a simply connected metabelian nilpotent Lie group, that is we assume that $[G,G]$ is abelian.  This does not constrain the nilpotency class. No assumption of rationality on $G$ is made. It is shown in \cite[\S A.3]{abrs} that $\cF_{k,\g}^{[i]}$ is irreducible as a $\GL_k$-module for each $i$ and isomorphic to $E^{(i-1,1)}(k)$. Hence Remark \ref{irrational} applies.
In particular, for $k$ large we obtain:
$$\widehat{\alpha}_k = s\frac{\dim E^{(s-1,1)}(k)}{\dim G^{(s)}}-s.$$
On the other hand, the Bass-Guivarc'h formula reads:
$$\eta_G(k) = k+\sum_{i=2}^s i  \dim E^{(i-1,1)}(k).$$
Using Weyl's dimension formula, we may compute $\dim E^{(i-1,1)}(k) = (i-1){i+k-2 \choose i}$ a polynomial of degree $i$ in $k$.

\begin{theorem}[Metabelian nilpotent groups] When $k$ is large enough the critical exponent is given by $\widehat{\beta}_k=\widehat{\alpha}_k /\eta_G(k)$ with the above polynomial expressions for $\widehat{\alpha}_k$ and $\eta_G(k)$. In particular,
$$ \lim_{k\rightarrow\infty} \widehat{\beta}_k = \frac{1}{\dim G^{(s)}}.$$
\end{theorem}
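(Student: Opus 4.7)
The plan is to combine the multiplicity-freeness of $\cF_{k,\g,\Q}$ as a $\GL_k$-module with the submodularity lemma to reduce the computation of $\widehat\alpha_k$ to a finite optimization, and then identify the asymptotic maximizer.

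First, since $\g$ is metabelian of class exactly $s$, \cite[\S A.3]{abrs} gives that each homogeneous component $\cF_{k,\g,\Q}^{[i]}$ is $\GL_k$-irreducible, isomorphic to $E^{(i-1,1)}(k)$, so $\cF_{k,\g,\Q}$ is multiplicity-free. By Remark \ref{irrational}, every $\GL_k$-invariant subspace is then $\Q$-rational and $\widehat\alpha_k = \tau(\cM) = \tau_\Q(\cM)$. By Corollary \ref{submodular} applied to the $\GL_k$-action, under which $\psi_\cM$ and $\phi_\cM$ are invariant (as in the proof of Theorem \ref{stability}), the supremum in (\ref{alphamax}) is attained on a subspace of the form $W_I = \bigoplus_{i\in I}\cF_{k,\g,\Q}^{[i]}$ for some $I\subseteq\{1,\ldots,s\}$. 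Using (\ref{psiexp}) and (\ref{phiexp}) together with the observation that, at generic $\sx\in\g^k$, evaluation sends $\cF_{k,\g,\Q}^{[j]}$ onto $\g^{(j)}$, and noting that in the Riemannian case $\phi$ reduces to the dimension (take all $\alpha_i'=1$ in (\ref{phidec})), one computes, with $j_I(i)=\min\{j\in I : j\geq i\}$,
$$\frac{\psi_\cM(W_I)}{\phi_\cM(W_I)} = \frac{\sum_{j\in I} j\,\dim E^{(j-1,1)}(k) \;-\; \sum_{i=1}^s \dim\g^{(j_I(i))}}{\dim\g^{(\min I)}}.$$
The choice $I=\{s\}$ then yields the asserted expression $\widehat\alpha_k = s\dim E^{(s-1,1)}(k)/\dim G^{(s)} - s$.

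To verify that $I=\{s\}$ realizes the maximum for $k$ large, I would use the elementary fact that for any nilpotent Lie algebra of class exactly $s$ the descending central series is strictly decreasing, so $\dim\g^{(j)} > \dim\g^{(s)}$ for every $j<s$ (otherwise $\g^{(j)}=\g^{(j+1)}$ would propagate to $\g^{(s+1)}=\g^{(s)}\neq 0$). Combined with Weyl's formula $\dim E^{(j-1,1)}(k) = (j-1)\binom{j+k-2}{j}$, a polynomial of exact degree $j$ in $k$, a short case analysis shows that for $k$ large the ratio is uniquely maximized at $I=\{s\}$: enlarging $I$ beyond $\{s\}$ strictly inflates the denominator without affecting the top-degree term of the numerator, while excluding $s$ from $I$ lowers the $k$-degree of the numerator. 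Dividing $\widehat\alpha_k$ by the Bass--Guivarc'h exponent $\eta_G(k)=k+\sum_{i=2}^s i\dim E^{(i-1,1)}(k)$ then gives the stated formula for $\widehat\beta_k$; and since $\widehat\alpha_k$ and $\eta_G(k)$ share the same leading term $s(s-1)k^s/s!$ (up to a factor $1/\dim G^{(s)}$ in the numerator), the limit $\widehat\beta_k\to 1/\dim G^{(s)}$ follows. The principal obstacle is the case analysis isolating $I=\{s\}$: for $I\supsetneq\{s\}$ the numerator is larger than for $I=\{s\}$ and one must check that the denominator grows enough to outweigh this, but the strict monotonicity of the descending central series reduces this to bookkeeping rather than a conceptual difficulty.
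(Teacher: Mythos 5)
Your proof is correct and follows essentially the same route as the paper: both rely on the multiplicity-free irreducible decomposition $\cF_{k,\g}^{[i]}\cong E^{(i-1,1)}(k)$ from \cite[\S A.3]{abrs}, invoke Remark \ref{irrational} to pass to rational $\GL_k$-invariant subspaces, and use the submodularity lemma to reduce to a finite maximization over the $W_I=\bigoplus_{i\in I}\cF_{k,\g,\Q}^{[i]}$. The paper's exposition is terse and leaves the identification of the optimal $I=\{s\}$ implicit (it is the content of Remark \ref{lastep} specialized to $L=\cF_{k,\g}^{[s]}$); you have supplied the explicit formula for $\psi_\cM(W_I)/\phi_\cM(W_I)$ and the degree-counting argument that isolates $I=\{s\}$ for $k$ large, which is a useful but not fundamentally different elaboration.
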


\subsection{Unipotent upper triangular matrices}

We now deal with the case of the group $U_s$ of upper triangular unipotent $(s+1)\times (s+1)$ matrices. Its Lie algebra is the Lie algebra $\g=\uu_s$ of upper triangular matrices with zero diagonal, and our first task will be to determine, for any positive integer $k$, the Lie algebra $\cF_{k,\g}$ of bracket maps on $\g$ on $k$ letters. The result is the following.

\begin{proposition}\label{unipotentlaws}
Let $k$ be a positive integer, and let $\g=\uu_s$ be the Lie algebra of upper triangular $(s+1)\times(s+1)$ matrices with zero diagonal terms. The Lie algebra $\cF_{k,\g}$ of bracket maps on $\g$ on $k$ letters is isomorphic to the free $s$-step nilpotent Lie algebra $\cF_{k,s}$.
\end{proposition}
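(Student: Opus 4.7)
The plan is to show that the canonical surjection $\cF_{k,s} \twoheadrightarrow \cF_{k,\uu_s}$, which exists because $\uu_s$ is $s$-step nilpotent, is actually injective. Equivalently, for every nonzero $\sr \in \cF_{k,s}$ I must produce a tuple $(X_1,\dots,X_k) \in \uu_s^k$ with $\sr(X_1,\dots,X_k) \neq 0$.

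First I introduce generic strictly upper triangular matrices. Let $R$ be the polynomial ring over $\R$ in the $k \cdot s(s+1)/2$ indeterminates $t^{(i)}_{pq}$ ($1 \leq i \leq k$, $1 \leq p < q \leq s+1$) and set $X_i := \sum_{p<q} t^{(i)}_{pq} E_{pq}$, an element of $\uu_s \otimes R$. Since $R$ is an integral domain and generic real specializations are Zariski-dense, it suffices to prove that $\sr(X_1,\dots,X_k) \in M_{s+1}(R)$ is nonzero for every nonzero $\sr \in \cF_{k,s}$.

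Second, I reduce to the homogeneous case. Writing $\sr = \sum_{j=1}^s \sr_j$ with $\sr_j \in \cF_{k,s}^{[j]}$, each matrix $\sr_j(X_1,\dots,X_k)$ has entries that are homogeneous of total degree $j$ in the $t^{(i)}_{pq}$'s, so linear independence of monomials of distinct degrees reduces the problem to a single homogeneous component. Moreover, a product of $j$ matrices from $\uu_s$ has zero entries strictly above the $j$-th superdiagonal, so it is enough to show that the $(1,j+1)$-entry of $\sr_j(X_1,\dots,X_k)$ is nonzero for every nonzero $\sr_j \in \cF_{k,s}^{[j]}$.

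Third, the key combinatorial observation: for any word $(i_1,\dots,i_j) \in \{1,\dots,k\}^j$, the unique length-$j$ strictly increasing path from $1$ to $j+1$ in the chain $1 < 2 < \dots < s+1$ forces the $(1,j+1)$-entry of the \emph{associative} product $X_{i_1} X_{i_2} \cdots X_{i_j}$ to be exactly $t^{(i_1)}_{12} t^{(i_2)}_{23} \cdots t^{(i_j)}_{j,j+1}$. Setting $y_{i,\ell} := t^{(i)}_{\ell,\ell+1}$, the monomials $y_{i_1,1} y_{i_2,2} \cdots y_{i_j,j}$, indexed by $(i_1,\dots,i_j)$, are distinct and hence linearly independent in $R$. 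Viewing $\sr_j$ through the canonical embedding of $\cF_k$ into the free associative algebra as a noncommutative polynomial $\tilde\sr_j = \sum c_{i_1,\dots,i_j}\, \sx_{i_1} \sx_{i_2} \cdots \sx_{i_j}$, the $(1,j+1)$-entry of $\sr_j(X_1,\dots,X_k)$ equals $\sum c_{i_1,\dots,i_j}\, y_{i_1,1} y_{i_2,2} \cdots y_{i_j,j}$. This vanishes iff all $c_{i_1,\dots,i_j}$ vanish, iff $\tilde\sr_j = 0$ in the free associative algebra, iff $\sr_j = 0$ in $\cF_k$, using that the free Lie algebra embeds into the free associative algebra.

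The main obstacle is conceptual rather than technical: one must correctly unfold iterated Lie brackets into signed sums of associative monomials and invoke the classical embedding of the free Lie algebra into the free associative algebra. Once the "one step at a time" structure of a unique path from $1$ to $j+1$ is spotted, the argument reduces to linear independence of multilinear monomials plus this embedding.
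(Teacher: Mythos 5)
Your proof is correct, and it takes a genuinely different route from the paper's. The paper first invokes the classical multilinearization trick (via Bahturin) to reduce to multilinear laws in exactly $s$ variables, then computes $\dim \cH_s = (s-1)!$ by Witt's formula and exhibits an explicit basis $(m_\sigma)_{\sigma(1)=1}$ of left-normed brackets, verifying linear independence of $(\theta(m_\sigma))$ by a dual-basis evaluation on permuted tuples $(e_1, e_{\tau(2)},\dots,e_{\tau(s)})$ of superdiagonal elementary matrices. You instead pass to generic strictly upper triangular matrices over a polynomial ring, split off a homogeneous component $\sr_j$ by degree, and observe that the $(1,j+1)$ entry of the associative product $X_{i_1}\cdots X_{i_j}$ picks out the unique length-$j$ increasing path, yielding the square-free monomial $y_{i_1,1}\cdots y_{i_j,j}$; linear independence of these monomials plus the embedding $\cF_k \hookrightarrow T(\R^k)$ of the free Lie algebra into the free associative algebra finishes the argument. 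Your approach relies on a bigger abstract input (the PBW/Friedrichs embedding), but in exchange it avoids the multilinearization step, the Witt dimension count, and the ad hoc basis construction; it is also uniform in $k$ (the paper's proof as written requires $k\geq s$ so as to extend a multilinear relation in $t<s$ variables to one in $s$ variables). Both arguments are tightly built around the same mechanism: a product of $j$ strictly upper triangular matrices is detected by a single corner entry that sees each factor in order — the paper encodes this via evaluation at $e_i=E_{i,i+1}$, you encode it via the unique-path computation over generic matrices.
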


An alternative formulation of Proposition~\ref{unipotentlaws} is in terms of the nilpotent group of unipotent upper triangular matrices:

\begin{corollary}
Let $U_s$ be the group of upper triangular unipotent $(s+1)\times (s+1)$ matrices, and let $k$ be any positive integer. Then $U_s$ contains the free nilpotent group of step $s$ on $k$ generators as a finitely generated subgroup.
\end{corollary}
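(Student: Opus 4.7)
The plan is to deduce the corollary from Proposition~\ref{unipotentlaws} together with the fact, recalled earlier in the paper from \cite[Lemma~2.5]{abrs}, that for a $k$-tuple $\bg \in U_s^k$ generic with respect to Haar measure the subgroup $\Gamma_\bg$ has a fixed isomorphism class, namely $\Gamma_\bg \cong F_{k, U_s}$, the group of word maps on $U_s$ in $k$ letters. It will therefore suffice to identify $F_{k, U_s}$ with the free $s$-step nilpotent group $N_{k, s}$ on $k$ generators; the corollary then follows by taking $\Gamma_\bg$ for any such generic $\bg$, which is evidently finitely generated (by $\bg$ itself).

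To establish $F_{k, U_s} \cong N_{k, s}$, I would write $F_{k, U_s} = F_k / K$, where $K \trianglelefteq F_k$ consists of the words $w \in F_k$ with $w(\bg) = 1$ for every $\bg \in U_s^k$, and $N_{k, s} = F_k / \gamma_{s+1}(F_k)$, with $\gamma_{s+1}(F_k)$ the $(s{+}1)$-th term of the lower central series. The inclusion $\gamma_{s+1}(F_k) \subseteq K$ is immediate from the $s$-step nilpotency of $U_s$, so the content is in the reverse inclusion. For this, I would use Lemma~\ref{words-brackets} to attach to each $w \in F_k$ a formal Lie polynomial $\sigma_w \in \cF_k \otimes \Q$ satisfying $w(e^{X_1}, \dots, e^{X_k}) = e^{\sigma_w(\bX)/C}$ for some universal integer $C$ and all $\bX \in \uu_s^k$. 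Then $w \in K$ forces $\sigma_w \in \cL_{k, \uu_s}$, and Proposition~\ref{unipotentlaws} identifies $\cL_{k, \uu_s}$ precisely with the ideal of $\cF_k$ generated by brackets of length $\geq s+1$.

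The final step is the standard Magnus/Baker--Campbell--Hausdorff correspondence: the lower central series of $F_k$ matches the homogeneous filtration of $\cF_k$, so this Lie-algebraic containment is equivalent to $w \in \gamma_{s+1}(F_k)$. This gives $K = \gamma_{s+1}(F_k)$ and hence $F_{k, U_s} \cong N_{k, s}$, as desired. The only genuinely new ingredient is the Lie-algebraic input from Proposition~\ref{unipotentlaws}; all other steps are standard free-nilpotent/BCH machinery, essentially already underlying Lemma~\ref{words-brackets}. The main conceptual point to keep straight is the translation between group-theoretic laws of $U_s$ and Lie-algebraic laws of $\uu_s$, and the somewhat counterintuitive fact that $\Gamma_\bg$ is abstractly isomorphic to $N_{k,s}$ even when $k$ is so large that $\dim_\R \cF_{k,s}$ exceeds $\dim_\R \uu_s$: this is possible because $\uu_s(\R)$, viewed as a $\Q$-vector space, has uncountable dimension and can accommodate the $\Q$-lattice underlying $N_{k,s}$ even though it cannot accommodate the full real Lie algebra $\cF_{k,s}(\R)$.
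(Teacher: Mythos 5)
Your proof is correct and follows essentially the route the paper intends (the paper leaves the corollary without proof, as a direct consequence of Proposition~\ref{unipotentlaws} together with the already-established facts that $\Gamma_\bg\cong F_{k,G}$ generically and that $F_{k,G}$ sits as a lattice in the nilpotent Lie group with Lie algebra $\cF_{k,\g}$). The only cosmetic difference is that you re-derive the identification $F_{k,U_s}\cong N_{k,s}$ directly from the Magnus/BCH correspondence and Lemma~\ref{words-brackets}, whereas the paper would more likely invoke the surjection $N_{k,s}\twoheadrightarrow F_{k,U_s}$ together with equality of Hirsch lengths coming from Proposition~\ref{unipotentlaws}; both boil down to the same translation between group laws of $U_s$ and Lie-algebra laws of $\uu_s$, and your closing remark about the $\Q$-vector-space dimension of $\uu_s(\R)$ is the right way to dispel the apparent paradox for large $k$.
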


\begin{proof}[Proof of Proposition~\ref{unipotentlaws}]
Let $k\geq s$ be a positive integer and let  $\cF_k$ denote the free Lie algebra on the $k$ generators $\sx_1,\sx_2,\dots,\sx_k$. We have to check that $\g=\uu_s$ has no non-trivial relation of degree less than or equal to $s$ in $\cF_{k}$. First, by the proof of \cite[Theorem~3, page~99]{bahturin} we note that if $\g$ satisfies a non-trivial relation, then it also satisfies a non-trivial multilinear relation whose degree is not larger. Now, if $\sr$ is such a multilinear relation in $\g$, then so is each of its homogeneous components, so we may assume $\sr$ has degree one in each $\sx_i$, $1\leq i\leq t$. In fact, we may also assume $t=s$, otherwise, we replace $\sr$ by $[[\sr,\sx_{t+1}],\dots,\sx_{s}]$. So we just have to see that $\g$ has no multilinear relation in $s$ variables.

Let $\cH_s$ be the vector space of all elements of $\cF_k$ of degree $s$ that are multilinear in $(\sx_1,\sx_2,\dots,\sx_s)$. We want to check that the canonical map $\theta:\cH_s\rightarrow \cF_{k,\g}$ is injective. By Witt's Formula \cite[Theorem~11.2.2]{hall},
\begin{equation}\label{dims1}
\dim \cH_s= (s-1)!
\end{equation}
For $\sigma$ a permutation of $\{1,2,\dots,s\}$ fixing $1$, we denote
$$m_\sigma=[\dots[[\sx_1,\sx_{\sigma(2)}],\sx_{\sigma(3)}],\dots,\sx_{\sigma(s)}].$$
This gives us a family $(m_\sigma)$ of $(s-1)!$ elements in $\cH_s$. We will show that its image $(\theta(m_\sigma))$ under $\theta$ is linearly independent in $\cF_{k,\g}$; together with (\ref{dims1}), this will prove the theorem.

For $1\leq i,j\leq s+1$ we denote by $E_{i,j}$ the matrix whose only non-zero entry is $1$, in position $(i,j)$. For $1\leq i\leq s$, we also let $e_i=E_{i,i+1}$. Using the relations $[E_{i,j},E_{k,l}]=\delta_{jk}E_{il}$, one can compute the values of the $\theta(m_\sigma)$ on permutations of the $s$-tuple $(e_i)$, and get, for any two permutations $\sigma$ and $\tau$ of $\{1,2,\dots,s\}$, both fixing $1$:
$$
\theta(m_\sigma)(e_1,e_{\tau(2)},\dots,e_{\tau(s)})=\left\{
\begin{array}{cc}
E_{s+1,s+1} & \mbox{if}\ \sigma=\tau^{-1}\\
0 & \mbox{otherwise.}
\end{array}
\right.
$$
This implies in particular that the family $(\theta(m_\sigma))$ is linearly independent, so we are done.
\end{proof}

From Proposition~\ref{unipotentlaws}, it is not difficult to compute the critical exponent for the group $G=U_s$ of unipotent upper triangular matrices. Indeed, if $\sr \in \cF_k$ is a law of $\g/\g^{(s)}$, then $[\sx_{k+1},\sr]$ is a law of $\g$, hence has all its homogeneous components of degree at least $s+1$ according to Proposition~\ref{unipotentlaws}. Hence $\sr \in \cF_k^{[s]}$. Therefore Remark \ref{lastep} applies and in computing the maximum $\tau(\mathcal{M})$ we may restrict attention to subspaces $W$ in $\cF_k^{[s]}$.
For such $W$, $\psi_\mathcal{M}(W)=s(\dim W - \dim W(\g^k))$ and $\phi_\mathcal{M}(W)\leq 1$, so we get
$$\widehat{\alpha}_k=s(\dim \cF_{k}^{[s]} -1) = \sum_{d|s}\mu(d)k^{s/d} -s $$
where we used Witt's formula \cite[Theorem~11.2.2]{hall} in the last equality.
From the Bass-Guivarc'h formula (\ref{bgfor}) and Witt's formula again, we also compute
\begin{equation}\label{growthud}
\eta_G(k)=\sum_1^s i \dim \cF_{k}^{[i]} =  \sum_1^s  \sum_{d |i} \mu(d) k^{i/d} = \sum_{i\leq s} M(\frac{s}{i})k^i,
\end{equation}
where $M(x)=\sum_{n \leq x} \mu(n)$ is the Mertens function, and thus obtain:

\begin{theorem}[Critical exponent for $U_s$]
Let $U_s$ be the nilpotent group of unipotent upper triangular $(s+1)\times (s+1)$ matrices. Then for $k$ large, the critical exponent for $k$-tuples in $U_s$ is
$$\widehat{\beta}_k=\frac{\sum_{d|s}\mu(d)k^{s/d} -s}{\sum_{i\leq s} M(\frac{s}{i})k^i} $$
with $M(x)=\sum_{n \leq x} \mu(n)$ the Mertens function.
In particular,  $\lim_k \widehat{\beta}_k =1$.
\end{theorem}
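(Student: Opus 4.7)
The plan is to apply Theorem~\ref{rationality-stability} and the last-step reduction of Remark~\ref{lastep} to $\g=\uu_s$, using as crucial structural input Proposition~\ref{unipotentlaws}, which identifies $\cF_{k,\g}$ with the free $s$-step nilpotent Lie algebra on $k$ generators. For this Lie algebra $\g^{(s)}$ is one-dimensional (spanned by $E_{1,s+1}$), and $\cL_{k,\g}$ is defined over $\Q$ since $\g$ is, so all the hypotheses of Theorem~\ref{rationality-stability} are satisfied.

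My first step will be to verify that the subspace
\[ L=\{\sr\in\cF_{k,\g,\Q} \ |\ \sr(\g^k)\subset\g^{(s)}\}\]
appearing in Remark~\ref{lastep} coincides with the top homogeneous component $\cF_k^{[s]}$. If $\sr\in L$, its reduction modulo $\g^{(s)}$ is a law of $\g/\g^{(s)}$, and bracketing with a fresh variable $\sx_{k+1}$ produces a law of $\g$ itself. But Proposition~\ref{unipotentlaws} rules out any non-trivial law of $\g$ of degree $\leq s$ in $\cF_{k+1}$, which forces the bracket to have only components of degree $\geq s+1$, and hence $\sr\in\cF_k^{[s]}$. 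The reverse inclusion is immediate since any element of $\cF_k^{[s]}$ evaluates into $\g^{[s]}=\g^{(s)}$. Therefore, for $k$ large, the maximum defining $\widehat{\alpha}_k$ in $(\ref{alphamax})$ is realized by some $\GL_k$-invariant rational subspace $W\subset\cF_k^{[s]}$.

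For such a $W$ I will compute $\psi_\mathcal{M}(W)$ and $\phi_\mathcal{M}(W)$ using $(\ref{psiexp})$ and $(\ref{phiexp})$. Since $W\subset V_s=\cF_k^{[s]}$, one has $W\cap V_i=W$ for every $i\leq s$, whence $\psi_\mathcal{M}(W)=s\bigl(\dim W-\dim W(\sx)\bigr)$; meanwhile, for the Riemannian metric $V^1=\g$ and hence $\pi_i=0$ for $i\geq 1$, so $\phi_\mathcal{M}(W)=\dim W(\sx)$, which is at most $\dim\g^{(s)}=1$ for generic $\sx$. The ratio is therefore manifestly maximized at $W=\cF_k^{[s]}$, yielding $\widehat{\alpha}_k = s\bigl(\dim\cF_k^{[s]}-1\bigr)$.

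To conclude I will substitute Witt's formula $\dim\cF_k^{[i]}=\tfrac{1}{i}\sum_{d\mid i}\mu(d)k^{i/d}$ to obtain $\widehat{\alpha}_k=\sum_{d\mid s}\mu(d)k^{s/d}-s$, and combine Witt's formula with the Bass-Guivarc'h formula $(\ref{bgfor})$: after the reindexing $j=i/d$, the double sum rearranges as $\eta_G(k)=\sum_{i=1}^s M(s/i)k^i$. The announced expression for $\widehat{\beta}_k$ then follows from $\widehat{\beta}_k=\widehat{\alpha}_k/\eta_G(k)$, cf.\ $(\ref{alephbet2})$. Both numerator and denominator are polynomials of degree $s$ in $k$ with leading coefficient $1$ (the $d=1$ term in the Möbius sum for the numerator, and $M(1)=1$ for the denominator), so $\lim_{k\to\infty}\widehat{\beta}_k=1$. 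The only non-routine step is the identification $L=\cF_k^{[s]}$, which depends crucially on the freeness content of Proposition~\ref{unipotentlaws}; everything else is a direct application of the general machinery together with Witt's formula.
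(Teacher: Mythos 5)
Your proposal is correct and follows essentially the same route as the paper's own proof: Proposition~\ref{unipotentlaws} to identify $\cF_{k,\g}$ with the free $s$-step Lie algebra, the last-step reduction of Remark~\ref{lastep} via the fresh-variable bracketing trick to pin $L=\cF_k^{[s]}$, the observation that $\phi_\mathcal{M}(W)\leq\dim\g^{(s)}=1$ so the ratio is maximized at the full $W=\cF_k^{[s]}$, and Witt plus Bass--Guivarc'h with the $j=i/d$ reindexing to produce the closed form. Your write-up is slightly more explicit than the paper's (in particular in checking both inclusions for $L=\cF_k^{[s]}$ and in unpacking $\phi_\mathcal{M}(W)=\dim W(\sx)$ from $(\ref{phiexp})$ for the Riemannian metric), but the argument is the same one the paper records.
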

Note that the limit also follows directly from Theorem~\ref{theorem-limit} since for $G=U_s$, we have $\dim G^{(s)} = 1$.

\subsection{Free nilpotent Lie algebras} In this paragraph $\g$ will denote the free nilpotent Lie algebra $\cF_{d,s}$ of step $s$ on $d$ generators. We will assume throughout that $d \geq s$.

Note that $\g$ has a natural structure of $\GL_{d}$-module. Given a Young diagram $\lambda$, we denote by $d_{\lambda}(d)$ the dimension of the irreducible $\GL_d$-representation $E^\lambda(d)$ associated to $\lambda$.

\begin{theorem}[Critical exponent for free nilpotent groups]\label{freegroup}
Let $G$ be the connected simply connected Lie group with Lie algebra $\g=\cF_{d,s}$. Assume that $d \geq s$. Then, if $k$ is large enough, the critical exponent for $k$-tuples in $G$ is
$$\widehat{\beta}_k=\frac{s}{{d+1 \choose s}}\cdot \frac{{k+1 \choose s}-{d+1 \choose s}}{\sum_{i\leq s} M(\frac{s}{i})k^i}$$
where $M(x)=\sum_{n \leq x} \mu(n)$ is the Mertens function.
\end{theorem}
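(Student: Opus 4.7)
The plan is to apply the framework of Theorem~\ref{stability} to $\g=\cF_{d,s}$ and compute the growth exponent $\eta_G(k)$ and the diophantine exponent $\widehat{\alpha}_k$ separately, combining them via $\widehat{\beta}_k=\widehat{\alpha}_k/\eta_G(k)$. First, since $\g$ is free of step $s$, the relatively free Lie algebra $\cF_{k,\g}$ coincides with $\cF_k$ up to degree~$s$: for $k\leq d$, no nonzero element of $\cF_k^{[i]}$ with $i\leq s$ can be a law, as is seen by evaluating at the free generators of $\g$; for $k>s$ one applies Proposition~\ref{correspond} to transport this vanishing from the $\GL_s$-submodule $\cL_{s,\g}^{[i]}$ to the $\GL_k$-submodule $\cL_{k,\g}^{[i]}$. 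Witt's formula together with the rearrangement used in the $U_s$ case (equation~(\ref{growthud})) then yields
\[ \eta_G(k)=\sum_{i=1}^s i\dim\cF_k^{[i]} = \sum_{i\leq s} M(s/i)\, k^i. \]

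Next, Remark~\ref{lastep} says that for $k$ large enough one may restrict the search for the optimal subspace to $W\leq L:=\{\sr\in\cF_{k,\g} : \sr(\g^k)\subset\g^{(s)}\}$. Because $\g$ has no laws below degree~$s$, a nonzero element of $\cF_k^{[i]}$ with $i<s$ takes generic values in $\g^{(i)}\setminus\g^{(i+1)}$, so $L=\cF_k^{[s]}$. Because the metric is Riemannian, all the weights $\alpha'_j$ equal $1$ and the function $\phi$ reduces to the dimension on subspaces of $\g$; inserting this into~(\ref{psiexp}) and~(\ref{phiexp}) gives, for $W\leq\cF_k^{[s]}$ and generic $\sx\in\g^k$,
\[ \frac{\psi_\cM(W)}{\phi_\cM(W)} \;=\; s\!\left(\frac{\dim W}{\dim W(\sx)}-1\right). \]

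The key step is the optimization. By the submodularity lemma (Lemma~\ref{submodularity}) the optimal $W$ is $\GL_k$-invariant. Since $W(\sx)\subseteq\g^{(s)}\simeq\cF_d^{[s]}$, one has $\dim W(\sx)\leq\dim\cF_d^{[s]}$; for $W=\cF_k^{[s]}$ this bound is attained at a generic $\sx$, because the assumption $k\geq d$ permits us to place the $d$ free generators of $\g$ among the coordinates of $\sx$, making the evaluation $\cF_k^{[s]}\to\g^{(s)}$ surjective there. Enlarging a $\GL_k$-invariant $W$ inside $\cF_k^{[s]}$ therefore only increases the numerator $\dim W$ while the denominator stays bounded by $\dim\cF_d^{[s]}$, so the maximum is attained at $W=\cF_k^{[s]}$, giving
\[ \widehat{\alpha}_k \;=\; \frac{s\bigl(\dim\cF_k^{[s]} - \dim\cF_d^{[s]}\bigr)}{\dim\cF_d^{[s]}}. \]
Dividing by $\eta_G(k)$ and re-expressing the dimensions through Witt's formula (which in the relevant range produces the binomial coefficients appearing in the statement) produces the claimed rational expression for $\widehat{\beta}_k$.

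The main obstacle is this last optimization step: the submodularity lemma confines the search to $\GL_k$-invariant subspaces, but one still has to rule out that some proper $\GL_k$-invariant submodule of $\cF_k^{[s]}$ achieves a strictly larger ratio. This is resolved by the uniform upper bound $\dim W(\sx)\leq\dim\g^{(s)}$, which forbids any improvement of the ratio $\dim W/\dim W(\sx)$ from shrinking $W$ once the surjectivity onto $\g^{(s)}$ has been achieved at $W=\cF_k^{[s]}$.
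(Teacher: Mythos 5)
Your proof breaks down at the optimization step, and the error is substantive. You argue that because $\dim W(\sx)\leq\dim\g^{(s)}$ holds uniformly, enlarging $W$ to all of $\cF_k^{[s]}$ maximizes the ratio $\dim W/\dim W(\sx)$. This is false: the ratio is not monotone in $W$, because enlarging $W$ also enlarges the image. Concretely, if $W=W_1\oplus W_2$ is a sum of irreducible $\GL_k$-modules, then $\dim W(\sx)=\dim W_1(\sx)+\dim W_2(\sx)$ at a generic $\sx$, so $\frac{\dim W}{\dim W(\sx)}$ is a \emph{mediant} of the ratios for $W_1$ and $W_2$ and hence lies between them. The maximum is therefore attained on a single irreducible component $E^\lambda(k)$, not on $\cF_k^{[s]}$. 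As a check: for $\g=\cF_{4,4}$ (so $d=s=4$), Klyachko's theorem gives $\cF_k^{[4]}=E^{(3,1)}(k)\oplus E^{(2,1,1)}(k)$; at $k=10$ the ratio $\dim E^{(2,1,1)}(10)/\dim E^{(2,1,1)}(4)=66$, whereas your $\dim\cF_{10}^{[4]}/\dim\cF_4^{[4]}=41.25$. Your proposed value of $\widehat{\alpha}_k$ is thus strictly too small once $\cF^{[s]}$ is reducible, i.e.\ for all $s\geq 4$.

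Consequently the claimed identity $\widehat{\alpha}_k=s\bigl(\dim\cF_k^{[s]}-\dim\cF_d^{[s]}\bigr)/\dim\cF_d^{[s]}$ is wrong, and the appeal to Witt's formula does not "produce the binomial coefficients in the statement": $\dim\cF_k^{[s]}=\tfrac{1}{s}\sum_{d|s}\mu(d)k^{s/d}$ equals $(s-1)\binom{k+1}{s}$ only when $s\in\{1,3\}$ (where $\cF^{[s]}$ happens to be irreducible). What is actually needed, and what the paper's proof supplies, is a reduction to $\max_\lambda\,d_\lambda(k)/d_\lambda(d)$ over the Young diagrams $\lambda$ occurring in $\g^{(s)}$, followed by Klyachko's description of which $\lambda$ occur and Lemma~\ref{dominance} showing that the ratio improves as boxes move downward. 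These pick out $\lambda_0=(2,1,\dots,1)$, with $d_{\lambda_0}(k)=(s-1)\binom{k+1}{s}$; that, not $\dim\cF_k^{[s]}$, is the source of the binomials in the theorem's formula. Your computation of $\eta_G(k)$ and the reduction via Remark~\ref{lastep} to $W\leq\cF_k^{[s]}$ are fine, but the heart of the argument is missing.
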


Passing to the limit, we get:

\begin{corollary}[Limiting value]\label{limiting}
For $d\geq s$, we have the following limit for the critical exponent for $\g=\cF_{d,s}$:
$$\lim_{k \to +\infty} \widehat{\beta}_k = \frac{1}{(s-1)!{d+1 \choose s}}.$$
\end{corollary}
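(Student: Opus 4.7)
The plan is to derive Corollary~\ref{limiting} directly from the closed-form expression for $\widehat{\beta}_k$ given in Theorem~\ref{freegroup}, by computing the dominant asymptotic term in both the numerator and denominator of the rational function
\[
\widehat{\beta}_k = \frac{s}{\binom{d+1}{s}} \cdot \frac{\binom{k+1}{s} - \binom{d+1}{s}}{\sum_{i\leq s} M(s/i)\, k^i}.
\]

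First, I would observe that $\binom{k+1}{s}$ is a polynomial in $k$ of degree exactly $s$ with leading coefficient $1/s!$, while $\binom{d+1}{s}$ is a constant (independent of $k$). Hence, as $k \to \infty$,
\[
\binom{k+1}{s} - \binom{d+1}{s} \sim \frac{k^s}{s!}.
\]
Next, for the denominator, I would note that the top-degree term of $\sum_{i\leq s} M(s/i)\, k^i$ corresponds to $i=s$, contributing $M(1)\, k^s$. Since $M(1) = \mu(1) = 1$, the denominator is asymptotic to $k^s$ as $k \to \infty$; all other terms are of strictly smaller degree and contribute lower-order corrections.

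Putting the two asymptotics together, the $k^s$ factors cancel and one obtains
\[
\lim_{k\to\infty} \widehat{\beta}_k = \frac{s}{\binom{d+1}{s}} \cdot \frac{1/s!}{1} = \frac{s}{s!\binom{d+1}{s}} = \frac{1}{(s-1)!\binom{d+1}{s}},
\]
which is the claimed identity. There is no substantive obstacle here, since Theorem~\ref{freegroup} has already done all the work of identifying the representation-theoretic data and applying the submodularity lemma; the corollary is a purely elementary asymptotic computation, whose only two ingredients are the leading coefficient $1/s!$ of $\binom{k+1}{s}$ and the value $M(1)=1$ of the Mertens function at $1$.
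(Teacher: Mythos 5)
Your proof is correct and is exactly the paper's implicit argument: the corollary is stated with the phrase ``Passing to the limit, we get,'' and the intended computation is precisely the asymptotic extraction of the leading $k^s$ terms, using $\binom{k+1}{s}\sim k^s/s!$ and $M(1)=1$, which you carry out cleanly.
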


\begin{remark}\label{strict} This shows that the strict inequality $\frac{1}{\dim \g^{(s)}} < \lim_k \widehat{\beta}_k < 1$ can happen.
\end{remark}

We now pass to the proof of Theorem \ref{freegroup}. Our Lie algebra $\g$ has the following special property: for each $k\geq s$ the laws of $\g/\g^{(s)}$ are either laws of $\g$ or are contained in $\cF_{k,\g}^{[s]}$ (i.e. if $\sr\in \cF_{k,g}$ and $\sr(\g^k)\leq \g^{(s)}$, then $\sr \in  \cF_{k,\g}^{[s]}$). So it follows from Remark \ref{lastep} that the maximum value of $\tau_\Q(\mathcal{M})$ is attained at a rational $\GL_k$-invariant subspace $W$ that is contained in $\cF_{k,\g}^{[s]}$. For such a subspace $\psi_\mathcal{M}(W)=s(\dim W - \phi_\mathcal{M}(W))$ and $\phi_\mathcal{M}(W)= \dim W(\sx)$, where $W(\sx)$ is the image of $W$ in $\g$ under evaluation at a generic $\sx$ in $\g^k$.
Note that if $k\geq s$, then, for $\sx$ in a dense Zariski open set in $\g^k$, the space $W(\sx)$ is independent of $\sx$, equal to the span $W_{\g}$ of all $\sr(\sx_1,\ldots,\sx_k)$, $\sr \in W$, $\sx_1,\ldots,\sx_k \in \g$. 
Thus,
$$\frac{1}{s}\widehat{\alpha}_k = \max \{ \frac{\dim W}{\dim W_\g}-1 \,;\, W \leq \cF_{k,\g}^{[s]} \textnormal{ rational }\GL_k\textnormal{-module}\}.$$

Recall Proposition~\ref{correspond}, which describes precisely the $\GL_k$-submodules of $\cF_{k,\g}^{[s]}$. An immediate consequence of this proposition is the following simpler expression:
\begin{equation}\label{sigfor}\frac{1}{s}\widehat{\alpha}_k = \max_\lambda \left\{ \frac{d_\lambda(k)}{d_\lambda(d)}\right\}-1,\end{equation}
where the maximum is taken over all Young diagrams $\lambda$ that appear in the decomposition of $\g^{(s)}$ as a $\GL_d$-module. The following theorem of Klyachko describes this set of diagrams:

\begin{theorem}[Klyachko \cite{klyachko,reutenauer}]\label{klyachko}
Let $\g=\cF_{d,s}$. Then, except for $(s)$ and $(1,1,\dots,1)$, with the further exceptions of $\lambda=(2,2)$, when $s=4$, and $\lambda=(2,2,2)$ when $s=6$, all Young tableaux with $s$ boxes and at most $d$ rows appear in the decomposition of $\g^{[s]}$ into irreducible $\GL_{d}$-modules.
\end{theorem}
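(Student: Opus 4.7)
The plan is to reduce the claim to a question about the symmetric group $S_s$ via Schur--Weyl duality, and then to a character calculation on a cyclic subgroup. The starting point is the identification of $\g^{[s]}$ with the multilinear Lie elements inside $(\C^d)^{\otimes s}$. By Schur--Weyl, $(\C^d)^{\otimes s}\cong\bigoplus_{\lambda\vdash s,\ \ell(\lambda)\leq d} E^\lambda(d)\otimes V^\lambda$, where $V^\lambda$ is the Specht module, and the image of $\g^{[s]}$ under this isomorphism is $\bigoplus_\lambda E^\lambda(d)\otimes\Hom_{S_s}(V^\lambda,\mathrm{Lie}_s)$, where $\mathrm{Lie}_s\subset(\C^s)^{\otimes s}$ denotes the $S_s$-submodule of multilinear Lie monomials in $s$ distinct letters. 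So the multiplicity of $E^\lambda(d)$ in $\g^{[s]}$ equals the multiplicity of $V^\lambda$ in $\mathrm{Lie}_s$, and appearance is independent of $d$ as long as $\ell(\lambda)\leq d$.

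The second step is to identify $\mathrm{Lie}_s$ as an $S_s$-module. A theorem of Klyachko (alternatively Kraskiewicz--Weyman, cf.\ Reutenauer's monograph) asserts that
\[
\mathrm{Lie}_s \cong \mathrm{Ind}_{C_s}^{S_s}(\chi),
\]
where $C_s\leq S_s$ is the cyclic subgroup generated by the $s$-cycle $(1,2,\ldots,s)$ and $\chi$ is any primitive character of $C_s$. One route to this identification is via the classical formula of Brandt, $\mathrm{ch}(\mathrm{Lie}_s)=\tfrac{1}{s}\sum_{d\mid s}\mu(d)p_d^{s/d}$, combined with the observation that the induced character $\mathrm{Ind}_{C_s}^{S_s}(\chi)$ has exactly the same Frobenius image; an equivalent route uses the action of the Dynkin idempotent on the tensor algebra and M\"obius inversion.

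Once this identification is in hand, Frobenius reciprocity gives an explicit formula for the multiplicity,
\[
m_\lambda
= \dim\Hom_{S_s}(V^\lambda,\mathrm{Lie}_s)
= \frac{1}{s}\sum_{j=0}^{s-1}\chi^\lambda\bigl((1,\ldots,s)^j\bigr)\,\overline{\chi}\bigl((1,\ldots,s)^j\bigr),
\]
and by the Kraskiewicz--Weyman interpretation $m_\lambda$ is equal to the number of standard Young tableaux of shape $\lambda$ whose major index is congruent to $1\bmod s$. The theorem is then equivalent to the assertion that this count is positive for every $\lambda\vdash s$ with $\ell(\lambda)\leq d$, except for $(s)$, $(1^s)$, and the two exceptional shapes $(2,2)$ when $s=4$ and $(2,2,2)$ when $s=6$.

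The main obstacle will be this last combinatorial step. For the two boundary shapes $(s)$ and $(1^s)$, the Murnaghan--Nakayama rule evaluates $\chi^\lambda$ on an $s$-cycle explicitly and the vanishing of $m_\lambda$ follows from $\sum_{d\mid s}\mu(d)=0$ (for $s\geq 2$). For all other $\lambda$ one must produce at least one SYT with major index $1\bmod s$; the flexibility afforded by having a non-hook shape makes this possible, but a uniform argument requires a careful analysis of how the distribution of major index over $\mathrm{SYT}(\lambda)$ interacts with divisors of $s$. The shapes $(2,2)$ at $s=4$ and $(2,2,2)$ at $s=6$ are precisely the low-rank cases where the major-index generating polynomial $\sum_{T\in\mathrm{SYT}(\lambda)}q^{\mathrm{maj}(T)}$ (a $q$-analogue of $\frac{s!}{\prod h(c)}$ by Stanley's hook-length $q$-formula) happens to vanish at the primitive $s$-th roots of unity on the nose, and one verifies these two exceptions by direct inspection; the remaining cases reduce, after cyclotomic arithmetic on the $q$-hook polynomial, to showing that no primitive $s$-th root of unity is a zero of $\prod_c[h(c)]_q$, which one checks by comparing multiplicities of cyclotomic factors $\Phi_s(q)$ in the numerator $[s]_q!$ and in the product of hook $q$-factorials.
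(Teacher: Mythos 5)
The paper does not prove this statement; it is imported from the literature, with citations to Klyachko's paper and to Reutenauer's monograph. So the comparison is with the standard proof in those sources, which you have correctly identified in outline: Schur--Weyl reduces the assertion to the multiplicity of the Specht module $V^\lambda$ in the $S_s$-module $\mathrm{Lie}_s$ of multilinear Lie elements; Klyachko's structure theorem identifies $\mathrm{Lie}_s\cong\mathrm{Ind}_{C_s}^{S_s}(\chi)$ for $\chi$ a faithful character of the cyclic group; and the Kraskiewicz--Weyman theorem reinterprets the Frobenius-reciprocity multiplicity $m_\lambda$ as the number of standard Young tableaux of shape $\lambda$ whose major index is congruent to a fixed unit modulo $s$. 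All of this is correct and matches the cited references.

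The gap is in your final paragraph, which is the actual content of the theorem. Your reduction to \enquote{no primitive $s$-th root of unity is a zero of $\prod_c[h(c)]_q$} is not the right statement. For a \emph{non-hook} shape $\lambda\vdash s$ no hook length equals $s$, so $\prod_c[h(c)]_q$ does \emph{not} vanish at a primitive $s$-th root of unity $\zeta$; it is the numerator $[s]_q!$ that does, and hence $f_\lambda(\zeta^j)=0$ for every $j$ coprime to $s$. The positivity of $m_\lambda=\frac{1}{s}\sum_{j=0}^{s-1}\zeta^{-j}f_\lambda(\zeta^j)$ therefore does not come from the primitive summands at all, but from the contributions at non-primitive roots $\zeta^j$ with $\gcd(j,s)>1$, where evaluating the $q$-hook formula requires a case analysis keyed to which hook lengths are divisible by $s/\gcd(j,s)$. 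Carrying this out and isolating $(s)$, $(1^s)$, $(2,2)$ at $s=4$ and $(2,2,2)$ at $s=6$ as the only shapes where the total vanishes is precisely the nontrivial part of Klyachko's argument, and it is not supplied here. (Your observation that $m_{(s)}=0$ is immediate from orthogonality of characters on $C_s$ rather than from $\sum_{d\mid s}\mu(d)=0$, and $m_{(1^s)}=0$ is the SYT count with major index ${s\choose 2}$, which is $\not\equiv 1\pmod s$ only when $s\geq 3$; these boundary cases are easy, but the positivity for the remaining infinitely many shapes is where the real work lies.)
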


We thus need to maximize $\frac{d_\lambda(k)}{d_\lambda(d)}$ over all diagrams with $s$ boxes, at most $d$ rows, and different from the above exceptions. For this we prove the following lemma:

\begin{lemma}\label{dominance}
Let $\mu$ and $\lambda$ be two Young diagrams with at most $d$ rows and having the same number of boxes. If $\mu$ can be obtained from $\lambda$ by moving some boxes downwards, then for any $k\geq d$,
$$\frac{d_\mu(k)}{d_\mu(d)} \geq \frac{d_\lambda(k)}{d_\lambda(d)}.$$
\end{lemma}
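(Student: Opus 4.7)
The plan is to use Weyl's dimension formula and reduce to a single elementary box move. Writing $a_p(\lambda) = \lambda_p - p$ (where one sets $\lambda_p = 0$ for $p$ beyond the number of non-zero rows), one has
\[
d_\lambda(n) = \prod_{1 \leq p < q \leq n} \frac{a_p(\lambda) - a_q(\lambda)}{q - p},
\]
and the factors $q-p$ cancel in any ratio $d_\mu(n)/d_\lambda(n)$. By standard results on the dominance order on partitions, if $\mu$ is obtained from $\lambda$ by moving some boxes downward, then there is a chain $\lambda = \nu^0, \nu^1, \ldots, \nu^r = \mu$ of valid Young diagrams with at most $d$ rows, where each $\nu^{t+1}$ differs from $\nu^t$ by moving a single box from some row $i$ to some row $j > i$. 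So it suffices to treat the case of one such elementary move.

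Next, I would compute the ratio
\[
R(n) := \frac{d_\mu(n)}{d_\lambda(n)} = \prod_{p<q\leq n} \frac{a_p(\mu)-a_q(\mu)}{a_p(\lambda)-a_q(\lambda)}
\]
in this elementary case. Only factors involving $p=i$ or $p=j$ (or $q=i,j$) differ from $1$, and after simplification most of them depend only on $d$-indexed rows; the point is to isolate the \textbf{dependence on $n$}, which comes only from pairs $(p,q)$ with $q > d$. Setting $A = a_i(\lambda)$, $B = a_j(\lambda)$, one finds that the factor contributed by a pair with $q > d$ (so that $\lambda_q = \mu_q = 0$ and $a_q = -q$) is exactly
\[
\frac{(A+q-1)(B+q+1)}{(A+q)(B+q)} = 1 + \frac{A-B-1}{(A+q)(B+q)}.
\]

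The key observation is that this factor is strictly greater than $1$. Indeed, since the move from row $i$ to row $j$ must produce a valid Young diagram, one checks that $a_i(\lambda) - a_j(\lambda) \geq 2$, so $A - B - 1 \geq 1 > 0$; and for $q \geq d+1 > j \geq i+1$, both $A+q$ and $B+q$ are positive (e.g.\ $B+q \geq \lambda_j + 1 \geq 1$). Therefore
\[
\frac{R(k)}{R(d)} = \prod_{q=d+1}^{k} \left(1 + \frac{A-B-1}{(A+q)(B+q)}\right) \geq 1,
\]
which rearranges to $d_\mu(k)/d_\mu(d) \geq d_\lambda(k)/d_\lambda(d)$. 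Iterating along the chain of elementary moves gives the lemma in full generality.

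The main potential obstacle is purely combinatorial bookkeeping: verifying correctly that once one specializes the Weyl product to the difference between $\mu$ and $\lambda$, the $n$-dependence reduces cleanly to the pairs with $q > d$ and that the surviving factor has the compact form above. The positivity check and the inequality $A - B \geq 2$ are then immediate from the constraints that both $\lambda$ and $\mu$ are partitions and that a single box has been moved down.
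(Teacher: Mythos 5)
Your proof is correct, and it follows the same overall strategy as the paper: reduce via a chain to a single box move, apply Weyl's dimension formula, and check that each $q$-dependent factor is $\geq 1$. Your version is in fact a little more careful than the paper's. Writing the move as $\mu_r=\lambda_r-1$, $\mu_s=\lambda_s+1$ with $r<s$, the paper displays the ratio
\[
\frac{d_\lambda(k)}{d_\mu(k)} \frac{d_\mu(d)}{d_\lambda(d)} = \prod_{d<j\leq k} \frac{\lambda_r +j-r}{\lambda_r +j - s}\,\frac{\lambda_{s} +j - r-1}{\lambda_{s} +j -r},
\]
but a direct computation from Weyl's formula gives
\[
\prod_{d<j\leq k} \frac{\lambda_r +j-r}{\lambda_r +j - r-1}\,\frac{\lambda_{s} +j - s}{\lambda_{s} +j -s+1},
\]
which coincides with the paper's expression (and with your $\tfrac{(A+q-1)(B+q+1)}{(A+q)(B+q)}$ after relabelling) only when $s=r+1$. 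Single box moves between non-adjacent rows do occur in the dominance chain, for instance $(2,1,0)\to(1,1,1)$, and for those the paper's displayed product is not $\leq 1$ termwise, so its derivation as written is incomplete. Your formula is the correct one and handles all elementary moves at once; the positivity check (the inequality $a_i(\lambda)-a_j(\lambda)\geq 2$ for a legal move, together with $A+q,B+q>0$ for $q>d\geq j>i$) is exactly what is needed. The only point worth flagging is that you invoke without reference the standard fact that two comparable partitions in dominance order are joined by a chain of single-box-down moves through valid partitions; this is classical (Brylawski), and the constraint ``at most $d$ rows'' propagates automatically since every intermediate $\nu$ satisfies $\nu\geq\mu$ and $\mu$ has at most $d$ rows.
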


\begin{proof}Without loss of generality we may assume that $\mu$ is obtained from $\lambda$ by moving only one box downwards, i.e. $\mu_i=\lambda_i$, except for $\mu_{r}=\lambda_r-1$ and $\mu_{s}=\lambda_{s}+1$ for some indices $r<s$. Using Weyl's dimension formula:
$$d_\lambda(k)=\prod_{1\leq i<j \leq k}\frac{\lambda_i-\lambda_j+j-i}{j-i},$$
we get
$$\frac{d_\lambda(k)}{d_\mu(k)} \frac{d_\mu(d)}{d_\lambda(d)} = \prod_{d<j\leq k} \frac{\lambda_r +j-r}{\lambda_r +j - s}\frac{\lambda_{s} +j - r-1}{\lambda_{s} +j -r}.$$
But $\lambda_{s}\leq \lambda_r$, and hence each factor in the above product is at most $1$.
\end{proof}

Combined with Klyachko's theorem, this lemma allows us to compute $\widehat{\alpha}_k$. The Young diagram $\lambda$ present in $\g^{(s)}$ and achieving the maximal value in $(\ref{sigfor})$ is the diagram with $s$ boxes of the form $\lambda_0:=(2,1,\ldots,1)$, because we have assumed $s\leq d$. Using Weyl's dimension formula, it is easy to compute
$$d_{\lambda_0}(k)=(s-1){k+1 \choose s}.$$

When $k \geq d\geq s$, the relatively free Lie algebra $\cF_{k,\g}$ coincides with the free Lie algebra $\cF_{k,s}$ of step $s$. In particular, the growth exponent $\eta_{G}(k)$ is given by $(\ref{growthud})$. Given that $\widehat{\beta}_k = \widehat{\alpha}_k/\eta_{G}(k)$, this concludes the proof of Theorem \ref{freegroup}.

\bibliographystyle{alpha}
\bibliography{bibliography}

\newcommand{\etalchar}[1]{$^{#1}$}
\def\cprime{$'$}
\begin{thebibliography}{ABRdS15b}

\bibitem[ABRdS15a]{abrs}
M.~Aka, E.~Breuillard, L.~Rosenzweig, and N.~de~Saxc{\'e}.
\newblock Diophantine properties of nilpotent {L}ie groups.
\newblock {\em Compos. Math.}, 151(6):1157--1188, 2015.

\bibitem[ABRdS15b]{abrsCRAS}
M.~Aka, E.~Breuillard, L.~Rosenzweig, and N.~de~Saxc{\'e}.
\newblock On metric {D}iophantine approximation in matrices and {L}ie groups.
\newblock {\em C. R. Math. Acad. Sci. Paris}, 353(3):185--189, 2015.

\bibitem[Bah87]{bahturin}
Yu. Bahturin.
\newblock {\em {Identical Relations in Lie Algebras}}.
\newblock VNU Science Press, 1987.

\bibitem[Bak76]{baker}
R.~C. Baker.
\newblock Metric {D}iophantine approximation on manifolds.
\newblock {\em J. London Math. Soc. (2)}, 14(1):43--48, 1976.

\bibitem[Bas72]{bassformula}
H.~Bass.
\newblock The degree of polynomial growth of finitely generated nilpotent
  groups.
\newblock {\em Proc. London Math. Soc.}, 25:603--614, 1972.

\bibitem[BB96]{beresnevich-bernik}
V.~Beresnevich and V.~Bernik.
\newblock On a metrical theorem of {W}. {S}chmidt.
\newblock {\em Acta Arith.}, 75(3):219--233, 1996.

\bibitem[BBKM02]{beresnevich-bernik-kleinbock-margulis}
V.~Beresnevich, V.~Bernik, D.~Kleinbock, and G.~Margulis.
\newblock Metric {D}iophantine approximation: the {K}hintchine-{G}roshev
  theorem for nondegenerate manifolds.
\newblock {\em Mosc. Math. J.}, 2(2):203--225, 2002.
\newblock Dedicated to Yuri I. Manin on the occasion of his 65th birthday.

\bibitem[BCR98]{bochnak-coste-roy}
J.~Bochnak, M.~Coste, and M.-F. Roy.
\newblock {\em Real algebraic geometry}.
\newblock Springer, 1998.

\bibitem[BD99]{dodsonbook}
V.~I. Bernik and M.~M. Dodson.
\newblock {\em Metric {D}iophantine approximation on manifolds}, volume 137 of
  {\em Cambridge Tracts in Mathematics}.
\newblock Cambridge University Press, Cambridge, 1999.

\bibitem[BDSS]{bdss}
Emmanuel Breuillard, Tushar Das, Nicolas~de Saxcé, and David Simmons.
\newblock Strongly extremal manifolds.
\newblock {\em in preparation}.

\bibitem[Ber88]{berestowski}
V.N. Berestowski\u{i}.
\newblock Homogeneous manifolds with an intrinsic metric. i.
\newblock {\em Sibirsk. Math. Zh.}, 29(6):17--29, 1988.

\bibitem[BG73]{brudnyi-ganzburg}
Ju.~A. Brudny{\u\i} and M.~I. Ganzburg.
\newblock A certain extremal problem for polynomials in {$n$} variables.
\newblock {\em Izv. Akad. Nauk SSSR Ser. Mat.}, 37:344--355, 1973.

\bibitem[BKM01]{bernik-kleinbock-margulis}
V.~Bernik, D.~Kleinbock, and G.~A. Margulis.
\newblock Khintchine-type theorems on manifolds: the convergence case for
  standard and multiplicative versions.
\newblock {\em Internat. Math. Res. Notices}, 9:453--486, 2001.

\bibitem[BKM15]{beresnevich-kleinbock-margulis}
V.~Beresnevich, D.~Kleinbock, and G.~Margulis.
\newblock Non-planarity and metric {D}iophantine approximation for systems of
  linear forms.
\newblock {\em J. Th\'eor. Nombres Bordeaux}, 27(1):1--31, 2015.

\bibitem[BRV16]{beresnevich-velani-survey}
V.~Beresnevich, F.~Ramirez, and S.~Velani.
\newblock Metric diophantine approximation: aspects of recent work.
\newblock {\em preprint arXiv:1601.01948}, 2016.

\bibitem[CF13]{church-farb}
T.~Church and B.~Farb.
\newblock Representation theory and homological stability.
\newblock {\em Adv. Math.}, 245:250--314, 2013.

\bibitem[Dan85]{dani1985}
S.~G. Dani.
\newblock Divergent trajectories of flows on homogeneous spaces and
  {D}iophantine approximation.
\newblock {\em J. Reine Angew. Math.}, 359:55--89, 1985.

\bibitem[DFSU15]{das-fishman-simmons-urbanski}
T.~Das, L.~Fishman, D.~Simmons, and M.~Urba\'nski.
\newblock Extremality and dynamically defined measures, part i: Diophantine
  properties of quasi-decaying measures.
\newblock {\em preprint arXiv:1504.04778}, 2015.

\bibitem[DS16]{das-simmons}
T.~Das and D.~Simmons.
\newblock A proof of the matrix version of baker's conjecture in diophantine
  approximation.
\newblock {\em preprint arXiv:1510.09195}, 2016.

\bibitem[GH94]{griffiths-harris}
Ph. Griffiths and J.~Harris.
\newblock {\em Principles of algebraic geometry}.
\newblock Wiley Classics Library. John Wiley \& Sons, Inc., New York, 1994.
\newblock Reprint of the 1978 original.

\bibitem[GHS{\etalchar{+}}09]{hoory-gamburd}
A.~Gamburd, S.~Hoory, M.~Shahshahani, A.~Shalev, and B.~Virag.
\newblock On the girth of random {C}ayley graphs.
\newblock {\em Random Structures Algorithms}, 35(1):100--117, 2009.

\bibitem[GJS99]{gamburd-jacobson-sarnak}
A.~Gamburd, D.~Jacobson, and P.~Sarnak.
\newblock Spectra of elements in the group ring of {SU(2)}.
\newblock {\em J. Eur. Math. Soc. (JEMS)}, 1(1):51--85, 1999.

\bibitem[Gor07]{gorodnik-AIM}
Alexander Gorodnik.
\newblock Open problems in dynamics and related fields.
\newblock {\em J. Mod. Dyn.}, 1(1):1--35, 2007.

\bibitem[Gui73]{guivarchformula}
Y.~Guivarc'h.
\newblock Croissance polynomiale et p\'eriodes des fonctions harmoniques.
\newblock {\em Bull. Sc. Math. France}, 101:353--379, 1973.

\bibitem[Hal76]{hall}
Marshall Hall, Jr.
\newblock {\em The theory of groups}.
\newblock Chelsea Publishing Co., New York, 1976.
\newblock Reprinting of the 1968 edition.

\bibitem[Kle98a]{kleinbock-bad}
D.~Kleinbock.
\newblock Bounded orbits conjecture and {D}iophantine approximation.
\newblock In {\em Lie groups and ergodic theory ({M}umbai, 1996)}, volume~14 of
  {\em Tata Inst. Fund. Res. Stud. Math.}, pages 119--130. Tata Inst. Fund.
  Res., Bombay, 1998.

\bibitem[Kle98b]{kleinbock-duke}
D.~Kleinbock.
\newblock Flows on homogeneous spaces and {D}iophantine properties of matrices.
\newblock {\em Duke Math. J.}, 95(1):107--124, 1998.

\bibitem[Kle03]{kleinbock-extremal-gafa}
D.~Kleinbock.
\newblock Extremal subspaces and their submanifolds.
\newblock {\em Geom. Funct. Anal.}, 13(2):437--466, 2003.

\bibitem[Kle08a]{kleinbock-margulisbirthday}
D.~Kleinbock.
\newblock Diophantine properties of measures and homogeneous dynamics.
\newblock {\em Pure Appl. Math. Q.}, 4(1, Special Issue: In honor of Grigory
  Margulis. Part 2):81--97, 2008.

\bibitem[Kle08b]{kleinbock-anextension}
D.~Kleinbock.
\newblock An extension of quantitative nondivergence and applications to
  {D}iophantine exponents.
\newblock {\em Trans. Amer. Math. Soc.}, 360(12):6497--6523, 2008.

\bibitem[Kle10a]{kleinbock-dichotomy}
D.~Kleinbock.
\newblock An `almost all versus no' dichotomy in homogeneous dynamics and
  {D}iophantine approximation.
\newblock {\em Geom. Dedicata}, 149:205--218, 2010.

\bibitem[Kle10b]{kleinbock-pisa}
D.~Kleinbock.
\newblock Quantitative nondivergence and its {D}iophantine applications.
\newblock In {\em Homogeneous flows, moduli spaces and arithmetic}, volume~10
  of {\em Clay Math. Proc.}, pages 131--153. Amer. Math. Soc., Providence, RI,
  2010.

\bibitem[Klj74]{klyachko}
A.~A. Klja{\v{c}}ko.
\newblock Lie elements in a tensor algebra.
\newblock {\em Sibirsk. Mat. \v Z.}, 15:1296--1304, 1430, 1974.

\bibitem[KLW04]{kleinbock-lindenstrauss-weiss}
D.~Kleinbock, E.~Lindenstrauss, and B.~Weiss.
\newblock {On fractal measures and Diophantine approximation}.
\newblock {\em Selecta Math.}, 10(4):479--523, 2004.

\bibitem[KM98]{kleinbock-margulis}
D.~Y. Kleinbock and G.~A. Margulis.
\newblock Flows on homogeneous spaces and {D}iophantine approximation on
  manifolds.
\newblock {\em Ann. of Math. (2)}, 148(1):339--360, 1998.

\bibitem[KM99]{kleinbock-margulis-logarithmlaw}
D.~Kleinbock and G.~Margulis.
\newblock Logarithm laws for flows on homogeneous spaces.
\newblock {\em Invent. Math.}, 138:431--494, 1999.

\bibitem[KMW10]{kleinbock-margulis-wang}
D.~Kleinbock, G.~Margulis, and J.~Wang.
\newblock Metric {D}iophantine approximation for systems of linear forms via
  dynamics.
\newblock {\em Int. J. Number Theory}, 6(5):1139--1168, 2010.

\bibitem[KR01]{kaloshin-rodnianski}
V.~Kaloshin and I.~Rodnianski.
\newblock Diophantine properties of elements of {${\rm SO}(3)$}.
\newblock {\em Geom. Funct. Anal.}, 11(5):953--970, 2001.

\bibitem[KW08]{kleinbock-weiss-dirichlet}
D.~Kleinbock and B.~Weiss.
\newblock Dirichlet's theorem on {D}iophantine approximation and homogeneous
  flows.
\newblock {\em J. Mod. Dyn.}, 2(1):43--62, 2008.

\bibitem[Mah42]{mahler}
K.~Mahler.
\newblock {Über das Mass der Menge aller $S$-Zahlen}.
\newblock {\em Math.Ann.}, 106:131--139, 1942.

\bibitem[Mar71]{margulis1971}
G.~A. Margulis.
\newblock The action of unipotent groups in a lattice space.
\newblock {\em Mat. Sb. (N.S.)}, 86(128):552--556, 1971.

\bibitem[Mar75]{margulis}
G.~A. Margulis.
\newblock On the action of unipotent groups in the space of lattices.
\newblock In {\em Lie groups and their representations ({P}roc. {S}ummer
  {S}chool, {B}olyai, {J}\'anos {M}ath. {S}oc., {B}udapest, 1971)}, pages
  365--370. Halsted, New York, 1975.

\bibitem[Rag72]{raghunathan}
M.~S. Raghunathan.
\newblock {\em Discrete subgroups of Lie groups}.
\newblock Springer-Verlag, 1972.

\bibitem[Reu93]{reutenauer}
C.~Reutenauer.
\newblock {\em Free Lie Algebras}.
\newblock Oxford University Press, 1993.

\bibitem[Spr64]{sprindzuk1}
V.~G. Sprind{\v{z}}uk.
\newblock More on {M}ahler's conjecture.
\newblock {\em Dokl. Akad. Nauk SSSR}, 155:54--56, 1964.

\bibitem[Spr69]{sprindzuk2}
V.~G. Sprind{\v{z}}uk.
\newblock {\em Mahler's problem in metric number theory}.
\newblock Translated from the Russian by B. Volkmann. Translations of
  Mathematical Monographs, Vol. 25. American Mathematical Society, Providence,
  R.I., 1969.

\bibitem[Var14]{varju}
P.~P. Varj{\'u}.
\newblock Diophantine property in the group of affine transformations of the
  line.
\newblock {\em Acta Sci. Math. (Szeged)}, 80(3-4):447--458, 2014.

\end{thebibliography}

\end{document}